\newcommand\irregularline[2]{%
  let \n1 = {rand*(#1)} in
  +(0,\n1)
  \foreach \a in {0.1,0.2,...,#2}{
    let \n1 = {rand*(#1)} in
    -- +(\a,\n1)
  } 
}
\date{February 28, 2023}
\newtheorem{dummy}{anything}[section]
\newtheorem{theorem}[dummy]{Theorem}
\newtheorem{lemma}[dummy]{Lemma}
\newtheorem{proposition}[dummy]{Proposition}
\newtheorem{corollary}[dummy]{Corollary}
\providecommand{\customgenericname}{}
\newcommand{\newcustomtheorem}[2]{%
  \newenvironment{#1}[1]
  {%
   \renewcommand\customgenericname{#2}%
   \renewcommand\theinnercustomgeneric{##1}%
   \innercustomgeneric
  }
  {\endinnercustomgeneric}
}
\theoremstyle{definition}
  \newtheorem{example}[dummy]{Example}
  \newtheorem{remark}[dummy]{Remark}
\newcommand
{\eqncount}{\setcounter{equation}{\value{dummy}}%
\addtocounter{dummy}{1}}
\DeclareMathOperator{\End}{\mathrm{End}}
\newcommand{\sumdd}[2]{\displaystyle \sum_{#1}^{#2}}
\newcommand{\sumd}[1]{\displaystyle \sum_{#1}}
\newcommand{\limd}[1]{\displaystyle \lim_{#1}}
\newcommand{\intd}[1]{\displaystyle \int_{#1}}
\newcommand{\bb}[1]{\mathbb{#1}}
\newcommand{\defeq}{\mathrel{\mathpalette{\vcenter{\hbox{$:$}}}=}}
\newcommand{\SU}{\mathrm{SU}}
\newcommand{\SO}{\mathrm{SO}}
\newcommand{\A}{{\mathcal{A}}}
\newcommand{\G}{{\mathcal{G}}}
\newcommand{\C}{{\mathcal{C}}}
\newcommand{\CS}{{\mathcal{CS}}}
\newcommand{\eps}{\epsilon}
\newcommand{\Hom}{{\mathrm{Hom}}}
\newcommand{\F}{{\mathcal{F}}}
\newcommand{\M}{{\mathcal{M}}}
\newcommand{\cH}{{\mathcal{H}}}
\newcommand{\E}{{\mathcal{E}}}
\renewcommand\L{\mathcal{L}}
\newcommand{\T}{\mathcal{T}}
\newcommand{\mmr}{{morgan-mrowka-ruberman1}}
\newcommand{\Sl}{\mathcal{SL}}
\newcommand{\Stab}{\textrm{Stab}}
\title[Existence of $\mASD$\ connections]{Existence of $\mASD$\ connections on $4$-manifolds with cylindrical ends}
\author{David L. Duncan}
\address{Department of Mathematics \& Statistics, James Madison University,  Harrisonburg, Virginia 22807}
\email{duncandl@jmu.edu}
\author{Ian Hambleton}
\address{Department of Mathematics \& Statistics, McMaster University, Hamilton, Ontario L8S 4K1, Canada}
\email{hambleton@mcmaster.ca}
\thanks{Research partially supported by NSERC Discovery Grant A4000.
The authors would like  to thank the Fields Institute for Research in Mathematical Sciences in Toronto,  and the Max Planck Institut f\"ur Mathematik in Bonn for their hospitality and support while parts of this work were done.}
\newcommand{\cM}{\mathcal M}
\newcommand{\ASD}{{\textup{ASD}}}
\newcommand{\mASD}{{\textup{mASD}}}
\newcommand{\rM}{\widehat{\cM}}
\newcommand{\St}{S}
\newcommand{\rt}{r}
\newcommand{\Rt}{R}
\newcommand{\Qt}{Q}
\begin{document}

\begin{abstract} 
Taubes' gluing theorems establish the existence of $\ASD$ connections on closed, oriented $4$-manifolds. We extend these gluing results to the $\mASD$ connections of Morgan--Mrowka--Ruberman on oriented $4$-manifolds with cylindrical ends. As a corollary, we obtain an $\ASD$-existence result in the presence of degenerate asymptotic flat connections.
\end{abstract}

\maketitle

\section{Introduction}\label{sec:introduction}

The results of Taubes \cite{taubes4,taubes3} on ``gluing" establish the existence of non-trivial anti-self dual ($\ASD$) connections on closed, oriented 4-manifolds, provided one works with an $\SU(2)$-bundle with sufficiently high second Chern class. This was extended by Donaldson \cite{donaldsonb} to a general gluing theorem for connected sums; see also \cite{donaldsona,freed-uhlenbeck1}. These gluing results have direct extensions to cylindrical end 4-manifolds, provided one works with $\ASD$ connections having a \emph{non-degenerate} flat connection as an asymptotic limit \cite{donaldson10}. However, in the absence of such non-degeneracy assumptions, the space of $\ASD$ connections on a cylindrical end 4-manifold is generally not well-controlled (e.g., the $\ASD$ operator is not Fredholm) and this now-standard gluing formalism breaks down. Nevertheless, the question of existence for $\ASD$ connections in this degenerate cylindrical end setting remains well-posed. One of our main results, Theorem \ref{thm:A} below, establishes one such $\ASD$-existence result in the degenerate setting.

To state this, suppose $X$ is a connected, oriented $4$-manifold with cylindrical ends. Thus, we can write $X = X_0 \cup \End X$, where $X_0$ is a compact $4$-manifold with boundary $N$, and $\End  X \cong  [0,\infty) \times N$ is diffeomorphic to a cylinder. We refer to $X_0$ as the \emph{compact part} and to $\End  X$ as the \emph{cylindrical ends}. Unless otherwise stated, we allow the case where $N$ has multiple components, or is empty. Fix a metric $g$ on $X$ that is asymptotically cylindrical in the sense described in Section \ref{sec:AuxiliaryChoices}.

\begin{customthm}{A}\label{thm:A}
Assume $b^+(X) \leq 1$. Assume further that the 3-manifold $N$ is connected and satisfies one of the following:
\begin{itemize}
\item[(i)] $N$ is a circle bundle over a surface with positive Euler class: $e(N) > 0$; or
\item[(ii)] $N$ has first Betti number at most one: $b_1(N) \leq 1$. 
\end{itemize}
Then, for any integer $\ell \geq b^+(X) + 1$, the manifold $X$ admits an irreducible $\ASD$-connection $A$ on a principal $SU(2)$-bundle over $X$, and $A$ satisfies
$$  \int_{X} \vert F_A \vert^2 \; d \mathrm{vol} = 8 \pi^2 \ell.$$
\end{customthm}

We prove this in Section \ref{sec:ProofOfThmA}. As a concrete example, the hypotheses of Theorem \ref{thm:A} hold when $X_0$ is diffeomorphic to the total space of a positive Euler class disk bundle over a surface. To the authors' knowledge, Theorem \ref{thm:A} (and its extension, Theorem \ref{prop:exists3}) is the first general $\ASD$ existence result for cylindrical end manifolds that allows for a degenerate flat limit down the end.

Our approach to Theorem \ref{thm:A} is to (locally) embed the space of $\ASD$ connections into the larger space of \emph{modified ASD ($\mASD$)} connections of Morgan, Mrowka, and Ruberman \cite{\mmr}. This larger space is obtained by modifying the $\ASD$ operator in such a way that one obtains a Fredholm operator whose zero set contains an open set in the space of finite-energy $\ASD$ connections; it may also contain some new solutions. It is shown in \cite{\mmr} that, by allowing the auxiliary choices in this construction to vary, every finite-energy $\ASD$ connection belongs to some $\mASD$ space of connections defined in this way. The other main results of the present paper, stated below, show that the gluing results of Taubes and Donaldson for connected sums have extensions to this $\mASD$ setting. We then arrive at Theorem \ref{thm:A} as a consequence of these $\mASD$-gluing results; the topological hypotheses on $N$ imply that the $\mASD$ connections thus obtained are in fact $\ASD$.

Before stating these $\mASD$-gluing results, we give several remarks to help provide further context for this $\mASD$ setting.

\begin{remark} 
(a) Our primary motivation for developing these gluing results was to use the Morgan--Mrowka--Ruberman  ``moduli space'' of $\mASD$ connections to study the action on $X$ of a finite group $\pi$. Even in the $\ASD$ setting, generic perturbations are usually not $\pi$-equivariant, so the standard transversality arguments are not available, and one must appeal to some other approach to handle singularities in the moduli space. As a sequel to this paper, we planned to study the $\pi$-equivariant compactification of the ``$\mASD$ moduli space'' as was done in \cite{hlee2}, \cite{hlee3}, and \cite{htanase1} for the $\ASD$ moduli space. 

Unfortunately, the $\mASD$ operator \emph{fails} to be gauge equivariant in any reasonable sense (see Remark \ref{rem:noneq}). This appears to be an oversight in the original text \cite{\mmr} (e.g., see \cite[p.~125]{\mmr}), and at present we do not know how to define a suitable gauge quotient of the space of $\mASD$ connections that one might call the ``$\mASD$ moduli space''. It is a fundamental and interesting open problem to construct an appropriate $\mASD$-replacement for the $\ASD$ moduli space. See Section \ref{sec:mmrerror} for more information about this issue.

\medskip

(b) The foundational work of  Mrowka \cite{mrowka}, Morgan--Mrowka--Ruberman \cite{morgan-mrowka-ruberman1}, and Taubes \cite{taubes1, taubes2} concerning instantons on cylindrical end $4$-manifolds was done shortly before the Seiberg--Witten revolution in gauge theory. One of their striking results in this setting is that a finite-energy $\ASD$ connection has a well-defined limiting flat connection on the $3$-manifold $N$ ``at infinity''.

At that time, a central problem was to understand the behaviour of $\ASD$ connections under neck-stretching within a closed $4$-manifold, as well as the reverse operation in which $\ASD$ connections on non-compact $4$-manifolds with matching data on their cylindrical ends could be glued together. Indeed, the authors of \cite[p.~12]{morgan-mrowka-ruberman1} state:   ``The [\mASD] moduli space seems to provide the correct geometric context for a general gluing theorem for $\ASD$ connections, although we do not treat this topic in this book" (see Remark \ref{rem:endglue}). This point of view was a main ingredient in a paper of Fintushel and Stern \cite{fintushel-stern94} (and in unpublished work of Morgan and Mrowka \cite{morgan-mrowka94}). An account of gluing along cylindrical ends from the perspective of Floer homology was later provided by Donaldson \cite{donaldson10}, simplified by assuming  the presence of a perturbation to avoid degeneracies (see (c), below). We note, however, that the gluing results of the present paper take place on the compact part $X_0$, and not on the ends.

\medskip

(c) Researchers have worked around the technical issues involved in gluing in the degenerate setting by various methods. Of these methods, one of the most popular is to perturb the $\ASD$ equation on the ends in such a way that all perturbed-$\ASD$ connections are asymptotic to non-degenerate perturbed-flat connections \cite{floer1}, \cite{donaldson10}. However, this approach has several drawbacks. For one, $\ASD$ connections are generally not solutions of perturbed-$\ASD$ equations of this type; this can obscure the geometric information one can infer from an abstract existence result for perturbed-$\ASD$ connections (e.g., to what extent do these connections depend on the perturbation?). Another drawback is that these perturbation schemes are not well-behaved in the presence of reducible flat connections (e.g., the trivial flat connection), and this limits the applicability of such approaches. For example, a full $\SU(2)$-instanton Floer theory for 3-manifolds $N$ with $b_1(N) \geq 1$ is still lacking, and even the existing instanton Floer theory for integer homology spheres handles the trivial flat connection separately. In summary, a more in-depth understanding of $\ASD$ connections with degenerate limits is desired, and we view the results of this paper as being a step in that direction.
\end{remark}

To state our gluing results for $\mASD$ connections, let $G$ be a compact Lie group and fix a principal $G$-bundle $E \rightarrow X$. We assume that $E$ is translation-invariant on the end; that is, we assume the diffeomorphism $\End  X \cong \left[0, \infty \right) \times N$ is covered by a bundle isomorphism $E \vert_{\End X} \cong  [0, \infty ) \times Q $ for some principal $G$-bundle $Q \rightarrow N$. We also fix a flat connection $\Gamma$ on $Q$.

Given a connection $A$ on $E$ that converges sufficiently fast down the end, one can define a quantity
 $$\kappa(E, A\vert_{\End  X}) \defeq - \frac{1}{8 \pi^2} \intd{X} \langle F_A \wedge F_A \rangle  \in \bb{R}$$ 
that we call the \emph{relative characteristic number of the adapted bundle $(E, A\vert_{\End  X})$}; see Section \ref{sec:AdaptedBundles} for more details. If $A$ is $\ASD$, then $\kappa(E, A\vert_{\End  X}) = (8 \pi^2)^{-1} \int_X \vert F_A \vert^2$ equals the usual energy of the connection $A$. The upshot for us is that the quantity $\kappa(E, A\vert_{\End  X})$ is well-defined for a much larger class of connections than those with finite energy. Indeed, this relative characteristic number depends only on the topological type of the adapted bundle $(E,  A\vert_{\End  X})$, and it is a lift of the Chern--Simons value of the connection on $Q$ to which $A$ is asymptotic. Note that if $\kappa(E, A\vert_{\End  X}) \neq 0$, then $A$ is not flat. When $X$ is closed, then this relative characteristic number is actually an integer that depends only on $E$, and we will simply write it as $\kappa(E)$ (e.g., if $G = \SU(r)$, then $\kappa(E) = c_2(E)\left[X \right]$ is the second Chern number). We will primarily use $\kappa(E, A\vert_{\End  X})$ to keep track of the topological data in our gluing operations, just as the second Chern class keeps track of the underlying bundle type when gluing in the standard $\SU(2)$-setting for $\ASD$ connections on closed 4-manifolds.

By making several auxiliary choices, collectively called \emph{thickening data}, one can define the \emph{modified $\ASD$ ($\mASD$) operator}, which is a non-linear Fredholm map $s$ defined on a suitable space of connections on {$E$ (see Section \ref{sec:Connections} for definitions)}. In particular, we note that this space of connections is defined so that all elements are asymptotic to connections close to the flat connection $\Gamma$ fixed above. By definition, the $\mASD$ connections are those in the zero set of $s$, and we say that an $\mASD$ connection $A$ is \emph{regular} if the linearization of $s$ at $A$ is surjective {when restricted to a Coulomb slice}.

For $k = 1, 2$, suppose $X_k$ is an oriented, cylindrical end 4-manifold equipped with a principal $G$-bundle $E_k \rightarrow X_k$ and thickening data, as above. Let $X = X_1 \# X_2$ be a connected sum of these manifolds, taken at points in the compact parts of the $X_k$. Then the $E_k$ can be used to form a connected sum bundle $E \rightarrow X$, and we equip this with the thickening data induced from those of the $E_k$; see Section \ref{sec:Setupforgluing}. Our basic gluing result can be stated as follows.

\begin{customthm}{B}\label{thm:B}
For $k = 1, 2$, suppose $A_k$ is a regular $\mASD$ connection on $E_k$. Then for any $\epsilon > 0$, the bundle $E = E_1 \# E_2$ admits an $\mASD$ connection $A$ with the property that the distance between $A \vert_{X_k \cap X}$ and $A_k \vert_{X_k \cap X}$ is less than $\epsilon$ for $k = 1, 2$. Moreover, 
\eqncount\begin{equation}\label{eq:relclassineq}
\Big| \kappa(E, A \vert_{\End  X}) - \sumdd{k = 1}{2} \kappa(E_k, A_k \vert_{\End  X}) \Big| < \epsilon.
\end{equation}
\end{customthm}

{In the statement of Theorem \ref{thm:B}, the distance is relative to a $L^2_2(N) \times L^{p^*}_{ \delta}(X)$-metric on the space of connections; see (\ref{eq:close}) for a precise statement (the connection $A$ of Theorem \ref{thm:B} is what is called $\mathcal{J}(A_1, A_2)$ in (\ref{eq:close})).} Theorem \ref{thm:B} is a special case of Theorem \ref{thm:1}, which works in the broader setting where the $A_k$ are not necessarily regular. In this broader setting, the connection $A$ need not be $\mASD$, but its failure to be $\mASD$ is expressed through an obstruction map. In Theorem \ref{thm:2}, we extend Theorem \ref{thm:B} to a gluing result for \emph{families} of regular $\mASD$ connections. These results are $\mASD$-extensions of results familiar from the $\ASD$ setting; see \cite[Section 7.2]{donaldson-kronheimer1}.

As an application of Theorems \ref{thm:B} and \ref{thm:2}, we establish the following existence result, extending that of Taubes \cite{taubes4,taubes3} to the present cylindrical end $\mASD$ situation.

\begin{customthm}{C}\label{thm:C}
Assume $G = \SU(2)$ and $b^+(X) \leq 1$, and fix an integer $\ell \geq b^+(X) + 1$. Then for every $\epsilon > 0$, there is a principal $\SU(2)$-bundle $E \rightarrow X$ and an $\mASD$ connection $A$ on $E$ that is irreducible, and satisfies
\eqncount\begin{equation}\label{eq:relclassineq2}
\vert \kappa(E, A \vert_{\End X}) - \ell \vert < \epsilon.
\end{equation} 
If $b^+(X) = 0$, then the connection $A$ is regular. 
\end{customthm}

The cases $b^+(X) = 0$ and $b^+(X) = 1$ are special cases of Theorem \ref{thm:exist1} and Theorem \ref{thm:exist2}, respectively. Structurally, our proof strategies for these are very similar to the analogous statements in the closed case \cite{taubes4,taubes3} by realizing $X$ as a trivial connected sum $X \cong X \# S^4$. Under the assumption that $b^+(X) = 0$, it follows that the trivial flat connection on $X$ is regular as an $\mASD$ connection (see Remark \ref{rem:RegularityRemark}). It is well-known that the 4-sphere admits irreducible $\ASD$ connections of every positive second Chern class, and these are necessarily regular for topological reasons. Then Theorem \ref{thm:C} for $b^+(X) = 0$ follows from the general gluing result of Theorem \ref{thm:B} and adjacent results designed to handle gauge transformations (more below). We note also that Theorem \ref{thm:exist1} (the more general version of Theorem \ref{thm:C}) is proved for an arbitrary compact Lie group $G$, under mild hypotheses on $\ell$ and $G$.

\begin{remark} If $X$ is simply-connected and $b^+(X) = 0$, we expect that a modified gluing construction will produce an open subset of the space of \mASD\ 
connections that are in Coulomb gauge relative to some fixed connection. The issues involved in carrying out this improvement are briefly indicated in Remark \ref{rem:G2point}(b).
\end{remark}

The strategy for our proof of Theorem \ref{thm:C} when $b^+(X) = 1$ is similar, albeit more involved since the trivial flat connection on $X$ is no longer regular. Thus a careful analysis of the obstruction map of Theorem \ref{thm:1} is required. Just as in \cite{taubes3}, we glue $\ASD$ connections on $S^4$ at several sites instead of one, and this is sufficient to show that the obstruction vanishes for some choice of gluing parameters. In this analysis, we use the assumption that $G = \SU(2)$. As Taubes mentions \cite[p.~518]{taubes3}, it is likely that the restriction to $G =SU(2)$ can be removed, but that would call for a different approach. 

We prove our general existence results only for $b^+  \leq 1$ because (i) these are the cases of interest for our applications, and (ii) extending the discussion to higher values of $b^+$ would add considerable length to the paper (this can already be seen in \cite{taubes3}). {For similar reasons we also carry out our analysis with connections that are locally in Sobolev class $L^p_1$ as opposed to, say, $L^p_k$ for $k \geq 1$. We do not see any inherent obstruction to extending our results to higher Sobolev spaces and presumably such extensions would recover our $L^p_1$-results by elliptic regularity. We leave the details of such extensions to interested parties.}

The appearance of $\epsilon > 0$ in the statements of Theorems \ref{thm:B} and \ref{thm:C} is new to this $\mASD$ setting. To explain it, we note that in the standard set-up of gluing $\ASD$ connections on a closed 4-manifold, the inequality (\ref{eq:relclassineq}) would be replaced by the equality $\kappa(E_1 \# E_2) =  \kappa(E_1) + \kappa(E_2)$; likewise (\ref{eq:relclassineq2}) would be replaced by $\kappa(E) = \ell$. The presence of an \emph{inequality} for us reflects a need to freely vary the asymptotic values in order to obtain the $\mASD$ connection $A$. Indeed, as discussed further in Remark \ref{rem:RegularityRemark}, the ability to freely vary these asymptotic values is at the heart of what makes the $\mASD$ set-up a viable candidate for the type of existence statement in Theorem \ref{thm:C} and thus Theorem \ref{thm:A}. For example, when $b^+(X) = 0$, the trivial flat connection is regular only because the $\mASD$ operator allows for this variation in the asymptotic values.

 In Section \ref{sec:PartialCompactification}, we have included a discussion of how Theorem \ref{thm:C} for $b^+(X) = 0$ provides a ``partial compactification'' of the space of $\mASD$ connections. We also discuss why this compactification is only partial, and what a more complete compactification would require. 

As mentioned above, the lack of gauge-equivariance for the $\mASD$ operator means that we are \emph{not} free to pass to the quotient modulo gauge. Indeed, to obtain a Fredholm problem for the gluing constructions, we work entirely within a fixed Coulomb slice. Since the natural Coulomb slice varies as the connections vary, this dependence becomes relevant when we glue over families of connections, which is necessary for Theorem \ref{thm:C}. This is a central obstacle with which we must contend in the present paper: In the usual $\ASD$ setting, one could apply suitable gauge transformations that put all nearby $\ASD$ connections into the same slice. However, in this $\mASD$ setting, the gauge-transformed $\mASD$ connections would no longer be $\mASD$. To account for this, we establish a pair of gauge fixing results, Proposition \ref{prop:1} and Theorem \ref{thm:3}, that show that, by a making an additional perturbation, an $\mASD$ connection in one Coulomb slice can be perturbed to an $\mASD$ connection in a nearby Coulomb slice.

Apart from the failure of gauge equivariance in the $\mASD$ setting, the main difference between the $\mASD$ and $\ASD$ settings is that we now need to handle the additional nonlinearities that arise from the term modifying the $\ASD$ operator. The key observation we use for handling this term is that it factors through a finite-dimensional manifold.

Finally, we mention that if $\Gamma$ is non-degenerate, then every $\mASD$ connection with asymptotic value near $\Gamma$ is in fact $\ASD$. E.g., this non-degeneracy hypothesis is satisfied when $N$ is a rational homology 3-sphere and $\Gamma$ is the trivial connection. As such, our results recover standard gluing results for $\ASD$ connections on cylindrical end manifolds with non-degenerate asymptotic limits; see Sections \ref{sec:Non-Deg} and \ref{sec:ProofOfThmA} for more details. More interestingly, there are situations for which $\Gamma$ is degenerate, but for which every $\mASD$ connection with asymptotic value near $\Gamma$ is $\ASD$. In such cases, our $\mASD$ gluing theorem produces an $\ASD$ connection. Theorem \ref{thm:A} is one result of this type.

\tableofcontents

\section{Background on the thickened moduli space}\label{sec:Connections}

In this section we give a rapid review of the relevant background material from \cite{\mmr}; we also expand on some of the results of \cite{\mmr}, which will assist in our discussion of gluing below. With a few exceptions, we use much of the same notation and set-up established in \cite{\mmr}. To allow for a more streamlined discussion, we assume throughout that the 3-manifold end $N$ is nonempty; however, see Section \ref{sec:SpecialCases} for an extension to the case $N = \emptyset$. 

We will write $\A(E)$ and $\G(E)$ for, respectively, the spaces of smooth connections and gauge transformations on $E \rightarrow X$. When the bundle is clear from context, we will simply write $\A(X)$ and $\G(X)$. Given a connection $A$, we denote by $F_A$ its curvature, which is a 2-form on $X$ with values in the adjoint bundle $\mathfrak{g}_E$. We will write $\Omega^\ell(X)$, and sometimes $\Omega^\ell$, for the space of smooth adjoint bundle-valued $\ell$-forms on $X$ that are rapidly decaying. 

To touch base with constants associated with characteristic classes below, we work relative to an inner product on $\mathfrak{g}$ obtained as follows. Fix a Lie group homomorphism 
\eqncount\begin{equation}\label{eq:rep}
G \longrightarrow \SU(r)
\end{equation} 
that is also an immersion. Then the induced map $\mathfrak{g} \hookrightarrow \mathfrak{su}(r)$ is an embedding of Lie algebras. Let $\langle \cdot, \cdot \rangle: \mathfrak{g} \otimes \mathfrak{g} \rightarrow \bb{R}$ denote the inner product on $\mathfrak{g}$ obtained by pulling back the inner product $A \otimes B \mapsto - \mathrm{tr}(A B)$ on $\mathfrak{su}(r)$. This inner product is $\mathrm{Ad}$-invariant, and so determines a metric on the adjoint bundle $\mathfrak{g}_E$. 

 Notation such as $L^p_{k}(\Omega^\ell(X), g)$ will denote the $L^p_k$-Sobolev completion of $\Omega^\ell(X)$, relative to a metric $g$ on $X$ and the above-defined metric on $\mathfrak{g}_E$. When $X$ or $g$ are clear from context, or not relevant, we may drop them from the notation.

\subsection{Auxiliary choices}\label{sec:AuxiliaryChoices}

\subsubsection{The center manifold}\label{sec:TheCenterManifold}

Recall from the introduction that we have fixed a bundle $Q \rightarrow N$ as well as a smooth flat connection $\Gamma$ on $Q$. Fix a metric $g_N$ on $N$. Let $U_\Gamma \subseteq L^{2}_{2}(\A(N))$ be a coordinate patch centered at $\Gamma$, in the sense of \cite[Def. 2.3.1]{\mmr}; for our purposes, it suffices to know that $U_\Gamma$ is a small open neighborhood of $\Gamma$ in the Coulomb slice $\left\{\Gamma \right\} + \ker(d_\Gamma^*)$. As in \cite[Lemma 2.5.1]{\mmr}, there is a unique $\mathrm{Stab}(\Gamma)$-equivariant map 
$$\Theta\colon  U_\Gamma \longrightarrow L^2_2(\Omega^0(N))$$
with $\Theta(a) \in (\ker \Delta_\Gamma)^\perp$ and 
$$d_\Gamma^*(* F_{a} - d_{a} \Theta(a)) = 0.$$
It follows from this last equation, and uniqueness, that if $a$ has higher regularity then so too does $\Theta(a)$. 

We will be interested in the densely-defined vector field
$$\nabla f_\Gamma \colon  U_\Gamma \longrightarrow T U_\Gamma \hspace{2cm}a \longmapsto \nabla f_\Gamma(a) \defeq - * F_a + d_a \Theta(a).$$
Note that the zeros of $\nabla f_\Gamma$ are precisely the flat connections in $U_\Gamma$. (As described in \cite[Lemma 2.5.1(1)]{\mmr}, this vector field is the (negative) gradient of the restriction to $U_\Gamma$ of the Chern--Simons functional, where the gradient is taken relative to a certain inner product that takes into account the possibility of a non-trivial stabilizer of $\Gamma$.)

For $m \geq 2$, let $\cH = \cH_\Gamma \subseteq U_\Gamma$ be a $\mathrm{Stab}(\Gamma)$-invariant $\C^m$-center manifold for the vector field $\nabla f_\Gamma$, as in \cite[Cor. 5.1.4]{\mmr}. In particular, this means that
\begin{itemize}
\item $\cH_\Gamma$ is a finite-dimensional $\C^m$-manifold containing $\Gamma$,
\item the tangent space to $\cH_\Gamma$ at $\Gamma$ is the $\Gamma$-harmonic space 
$$H^1_\Gamma \defeq \ker(d_\Gamma \oplus d_\Gamma^*) \subseteq \Omega^1(N),$$
\item $\nabla f_\Gamma$ is tangent to $\cH_\Gamma$, and
\item every zero of $\nabla f_\Gamma$ sufficiently close to $\Gamma$ is contained in $\cH_\Gamma$.
\end{itemize}
We denote by $\Xi = \Xi_\Gamma$ the restriction of $\nabla f_\Gamma$ to $\cH_\Gamma$.

Fix a compactly supported cutoff function $\beta\colon  \cH \rightarrow \left[0, 1 \right]$ that is identically 1 near $\Gamma$. The \emph{trimmed vector field} is given by
$$\Xi^{tr}(h) \defeq \beta(h) \Xi(h).$$
Set $\cH_{in} = \beta^{-1}(1)$ and $\cH_{out} = \beta^{-1}((0, 1])$.

Fix a real number $T \geq 1$. The trimmed vector field is complete and so, for each $h \in \cH$, there is a unique solution $h_T\colon  \bb{R} \rightarrow  \cH$ to the flow
$$\frac{d}{dt} h_T(t) = \Xi^{tr}(h_T(t)) \hspace{2cm} h_T(T) = h.$$
We then set
$$\alpha(h) = \alpha_{T}(h) \defeq  h_T(t) + \Theta(h_T(t)) dt.$$
Depending on context, we may view $\alpha$ as a connection on the submanifold $\End X \cong \left[0, \infty \right) \times N$, or on the cylinder $\bb{R} \times N$.

\begin{lemma}\label{lem:regularityofalpha}
For all $h \in \cH$, the connection $\alpha(h)$ is in $L^2_{2, loc}(\bb{R} \times N) \cap \C^0(\bb{R} \times N)$, and hence in $L^p_{1, loc}( \bb{R} \times N) \cap \C^0(\bb{R} \times N)$ for any $1 \leq  p < 4$. Moreover, the map $\alpha: h \mapsto \alpha(h)$ is $\C^m$ relative to the $L^2_2(N)$-topology on the domain and the $\C^0(\bb{R} \times N)$-topology on the codomain. 
\end{lemma}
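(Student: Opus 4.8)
\textbf{Proof plan for Lemma \ref{lem:regularityofalpha}.}

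The plan is to unpack the definition $\alpha(h) = h_T(t) + \Theta(h_T(t))\, dt$ and control each piece using the regularity of the flow of $\Xi^{tr}$, the regularity of $\Theta$, and the Sobolev embedding on the 4-dimensional cylinder. First I would observe that the trimmed vector field $\Xi^{tr}$ is $\C^m$ on the finite-dimensional manifold $\cH$ (it is $\beta$ times the restriction $\Xi$ of $\nabla f_\Gamma$, which is $\C^m$ by the center manifold construction of \cite[Cor. 5.1.4]{\mmr}), so by standard ODE theory the time-$t$ flow map $(t,h)\mapsto h_T(t)$ is $\C^m$ jointly in $t$ and $h$; in particular $t \mapsto h_T(t)$ is a $\C^m$ curve in $\cH \subseteq U_\Gamma \subseteq L^2_2(\A(N))$. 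Since $\cH$ is finite-dimensional and sits inside the smooth connections (elements of $U_\Gamma$ with the stated regularity, and $\Theta$ preserves higher regularity by the remark following its definition), the curve $t\mapsto h_T(t)$ is a $\C^m$ — in particular $\C^1$ — map into a fixed finite-dimensional space of smooth connections on $N$, hence its ``suspension'' to the cylinder is smooth in the $N$-directions and $\C^1$ (indeed $\C^m$) in $t$. This already gives that the $h_T(t)$-part of $\alpha(h)$ lies in $\C^0(\bb{R}\times N)$ and locally in $L^2_2$.

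Next I would handle the $\Theta(h_T(t))\, dt$ term. Since $\Theta\colon U_\Gamma \to L^2_2(\Omega^0(N))$ is $\mathrm{Stab}(\Gamma)$-equivariant and, by the uniqueness argument in the text, promotes higher regularity of $a$ to higher regularity of $\Theta(a)$, and since $h_T(t)$ ranges in the finite-dimensional center manifold consisting of smooth connections, the composite $t \mapsto \Theta(h_T(t))$ is a curve of smooth $0$-forms on $N$. The only subtlety is its regularity \emph{in} $t$: here I would invoke that $\Theta$ is at least $\C^1$ as a map on $U_\Gamma$ (this follows from implicit-function/elliptic regularity applied to the defining equation $d_\Gamma^*(*F_a - d_a\Theta(a)) = 0$, the point being that $\Theta$ is cut out by a smooth elliptic system — this is part of \cite[Lemma 2.5.1]{\mmr}), so $t\mapsto \Theta(h_T(t))$ is $\C^1$ as the composition of $\C^1$ maps. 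Therefore $\Theta(h_T(t))\,dt$ is a continuous, locally-$L^2_2$ $\mathfrak{g}_Q$-valued 1-form on $\bb{R}\times N$. Adding the two pieces gives $\alpha(h) \in L^2_{2,loc}(\bb{R}\times N)\cap \C^0(\bb{R}\times N)$; the inclusion $L^2_{2,loc}\cap \C^0 \subseteq L^p_{1,loc}$ for $1\le p<4$ on a $4$-manifold is then immediate from the Sobolev embedding $L^2_2 \hookrightarrow L^4$ together with boundedness of derivatives, controlling $\|\nabla \alpha(h)\|_{L^p_{loc}}$ on compact sets.

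Finally, for the $\C^m$-dependence statement, I would package the above into a composition of $\C^m$ maps: $h \mapsto (t\mapsto h_T(t))$ is $\C^m$ from $L^2_2(N)$ (restricted to $\cH$) into $\C^0_{loc}(\bb{R}, \cH)$ by $\C^m$-dependence of ODE flows on parameters, then post-compose with the $\C^m$ map $\cH \ni h' \mapsto h' + \Theta(h')\,dt$ viewed as landing in $\C^0$ on each slice, and patch over $t$ to land in $\C^0(\bb{R}\times N)$. The one genuine obstacle here — and the step I expect to be the crux — is bookkeeping the regularity matching: one must check that the finite-dimensionality of $\cH$ is really what makes the $L^2_2(N)$-to-$\C^0(\bb{R}\times N)$ passage legitimate (on an infinite-dimensional space of connections the flow would not gain the transverse continuity for free), and that $\Theta$'s $t$-regularity is no worse than $\C^1$ so that $dt$-component does not destroy the $L^2_{2,loc}$ claim. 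Everything else is a routine combination of Sobolev embedding, elliptic regularity for the $\Theta$-equation, and smooth dependence of ODE solutions on initial data and parameters.
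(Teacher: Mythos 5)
Your plan follows the same route as the paper's proof: apply smooth ($\C^m$) dependence of ODE flows to get regularity of $t\mapsto h_T(t)$, transfer this to $\Theta(h_T(t))$ using that $\Theta$'s regularity is controlled by that of its argument, and then use Sobolev embeddings to pass from $L^2_{2,loc}$ to $L^p_{1,loc}$ for $p<4$. So the approach matches.

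One point in your write-up is overclaimed and should be fixed, though it does not damage the conclusion. You assert that $\cH$ ``sits inside the smooth connections,'' and that the suspension of $t\mapsto h_T(t)$ is therefore ``smooth in the $N$-directions.'' That is not what the text gives you: $\cH_\Gamma$ is a finite-dimensional $\C^m$-submanifold of $U_\Gamma\subseteq L^2_2(\A(N))$, and its elements are a priori only of class $L^2_2(N)$ (the remark about $\Theta$ preserving higher regularity does not promote $h\in\cH$ itself to $\C^\infty$). What you actually need — and what the paper uses — is only the 3D Sobolev embedding $L^2_2(N)\hookrightarrow\C^0(N)$ together with finite-dimensionality and flow-invariance of $\cH$: since $\Xi^{tr}$ is tangent to $\cH$, the time derivatives $\partial_t^j h_T(t)=(D^{j-1}\Xi^{tr})\cdots$ stay in $T\cH\subseteq L^2_2(\Omega^1(N))$, so all mixed derivatives $\partial_t^a\nabla_N^b$ of total order $\le 2$ land in $L^2(N)$ uniformly on compact $t$-intervals, giving $L^2_{2,loc}(\bb R\times N)\cap\C^0(\bb R\times N)$ without any smoothness in the $N$-directions. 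The rest of your argument — elliptic regularity for $\Theta$, composition of $\C^m$ maps, and the 4D embedding $L^2_{2,loc}\hookrightarrow L^p_{1,loc}$ for $1\le p<4$ — is in line with the paper's sketch.
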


\begin{proof}
The initial condition $h$ is in $L^2_2(N) \subset \C^0(N)$, by assumption. It then follows from standard regularity arguments for flows that the path $h_T$ is in $L^2_{2, loc}\cap \C^0$ on $\bb{R} \times N$. Hence $\alpha(h)$ is in the same space as well, since the regularity of $\Theta(h_T)$ is controlled by that of $h_T$. That $\alpha$ is $\C^m$ relative to these topologies follows from a similar argument applied to its $k$th derivative for $1 \leq k \leq m$. The assertion about $L^p_1$ follows from the embedding $L^2_{2, loc} \hookrightarrow  L^p_{1, loc}$, which holds provided $1 \leq p < 4$.
\end{proof}

In $\cH_{in}$, the flow defining $h_T(t)$ is gauge equivalent to the Chern--Simons gradient flow. It follows that if $h_T(t) \in \mathrm{int}(\cH_{in})$ lies in the interior for some $t$, then $\alpha(h)$ is $\ASD$ in a neighborhood of $\left\{t \right\} \times N$. This has the following useful linear analogue for the linearization $(D \alpha)_\Gamma$ of $\alpha$ at $\Gamma$.

\begin{lemma}\label{lem:derofalpha}
If $\eta \in T_\Gamma \cH = H^1_\Gamma$, then 
$$(D \alpha)_\Gamma \eta = \pi^* \eta$$
where $\pi: \bb{R} \times N \rightarrow N$ is the projection.
\end{lemma}

\begin{proof}
Since $\Gamma$ is flat, we have $\Theta(\Gamma) =0$ and so $\alpha(\Gamma) = \pi^*  \Gamma$. This is a flat connection on $\bb{R} \times N$ and so it is $\ASD$:
$$F^+_{\alpha(\Gamma)} = F^+_{\pi^* \Gamma} = 0.$$
Linearizing this in the direction of $\eta \in T_\Gamma \cH$ implies
$$d^+_{\pi^* \Gamma} ((D \alpha)_\Gamma \eta) = 0.$$
Note that, since $\eta$ is $\Gamma$-harmonic, we also have
$$d^+_{\pi^* \Gamma} (\pi^* \eta) = 0.$$
In general, we can view the kernel of $d^+_{A}$ on the cylinder $\bb{R} \times N$ as defining a flow on the space $\Omega^1(N) \times \Omega^0(N)$. In the case of $(D \alpha)_\Gamma \eta$ and $\pi^* \eta$, these both take values in the graph
$$\mathrm{Graph}((D\Theta)_\Gamma \vert_{T_\Gamma \cH}) \subseteq T_\Gamma \cH \times L^2_2(\Omega^0(N))$$
of the linearization of $\Theta$. Observe two things: (i) this graph is finite-dimensional, and (ii) the 1-forms $(D \alpha)_\Gamma \eta$ and $\pi^* \eta$, viewed as paths in the graph, both equal $\eta$ at time $T$. Then $(D\alpha)_\Gamma \eta = \pi^* \eta$ follows by the uniqueness for flows on finite-dimensional spaces. 
\end{proof}

\subsubsection{The choice of metric}\label{sec:TheChoiceOfMetric}

We will use $t\colon  \End X \rightarrow [0, \infty)$ to denote the projection relative to the identification $\End  X \cong [0, \infty) \times N$. With the use of a cutoff function, we can view $t$ as a smooth real-valued function defined on all of $X$, which we will denote by the same symbol.

Fix a smooth cylindrical end metric $g_0$ on $X$; this means that the restriction
$$g_0 \vert_{\End  X} = dt^2 + g_N$$
is a product metric, where $g_N$ is the fixed metric on $N$. Let $B$ be a $\C^3$-neighborhood of $g_0$ in the space of $\C^{\max(m, 3)}$-metrics on $X$ so that the conclusions of \cite[Theorem 2.6.3]{\mmr} hold (the proof of Theorem 2.6.3 shows that such a set exists; the details of the theorem will not play an active role in the discussion that follows). Let $\mu_\Gamma^\pm$ be the smallest positive eigenvalue of $\mp * d_\Gamma : \Omega^1(N) \rightarrow \Omega^1(N)$. The sign convention here is to agree with that of \cite[Def. 2.1.1]{\mmr}. Then we will say that a metric $g$ on $X$ is \emph{asymptotically cylindrical} if $g \in B$ and 
$$\Vert  g - g_0 \Vert_{\C^1(\left\{t \right\} \times N)} \leq  e^{- \max(\mu_\Gamma^-, \mu_\Gamma^+) t}$$
for all $t \geq 0$. (This is effectively Condition A3 of \cite[p.~116]{\mmr}.) Throughout, we will always assume our metrics are asymptotically cylindrical in this sense. Note that every cylindrical end metric is automatically asymptotically cylindrical.

\subsubsection{Thickening data}\label{sec:ThickeningData}

Fix data as in \cite[Section 7.2]{\mmr}; we will refer to this as the \emph{thickening data} and denote it by $\mathcal{T}_\Gamma$. In particular, this includes the choice of positive numbers $\eps_0$ and $\delta$, that we will describe momentarily. The details of the remaining data in $\mathcal{T}_\Gamma$ will not play an active role in our discussion. For convenience, we also assume that $\mathcal{T}_\Gamma$ includes the choice of the fixed $T \geq 1$ from above.

The key feature for us regarding $\eps_0$ is that $\vert \CS(h_2)  - \CS(h_1) \vert < \eps_0 / 2$ for all $h_1, h_2 \in \mathrm{supp}(\beta)$, where $\CS$ is the Chern--Simons function. For any $\epsilon_0 > 0$, this inequality can be arranged by shrinking the support of $\beta$, if necessary. {The remaining requirements for $\eps_0$ will not be directly relevant to us, but see \cite[Definition 4.3.2]{\mmr} for more details.} As for $\delta$, we assume $\delta > \mu_\Gamma^-$ and that $\delta/2$ is not an eigenvalue of $-* d_\Gamma$. By shrinking the size of the coordinate patch $U_\Gamma$, if necessary, we may assume further that $\delta / 2$ is not an eigenvalue of $-* d_{a}$ for any $a \in U_\Gamma$. At various times, we may place additional restrictions on $\delta$.

\subsubsection{Weighted spaces}

We define the space $L^p_{k, \delta}(X)$ to be the completion of the set of compactly supported smooth forms $f$ on $X$, relative to the weighted Sobolev norm:
$$\Vert f \Vert_{L^p_{k,\delta}} \defeq \Vert e^{\delta t / 2} f \Vert_{L^p_k}$$
When $p = 2$, this recovers the family of norms used in \cite{\mmr}. The subspace of $\ell$-forms will be denoted by $L^p_{k, \delta}(\Omega^\ell)$ or  $L^p_{k, \delta}(\Omega^\ell(X))$. Following standard conventions, when $k = 0$, we will write $L^p_{\delta} $ for $ L^p_{0,\delta}$.  Note that the norm $\Vert f \Vert_{L^p_{k,\delta}} $ is equivalent to the norm:
$$\sum_{0 \leq j \leq k} \Vert e^{\delta t/ 2} \nabla^j f \Vert_{L^p}$$
In particular, we can use this equivalence to transfer Sobolev embedding results for $L^p_k$ to the weighted setting; e.g., see the proof of Lemma \ref{lem:quad1}.

\subsection{Gauge theory}\label{sec:GaugeTheory}

\subsubsection{The space of connections}\label{sec:TheSpaceOfConnections}

For $1 \leq p < 4$, define $\A^{1,p}(\mathcal{T}_\Gamma)  $ to consist of the connections $A$ on $E$ satisfying the following:
\begin{itemize}
\item $A$ has regularity $L^p_{1,{loc}}$,
\item there is some $h \in \cH_{out}$ so that $A - \alpha(h) \in L^p_{1, \delta}(\Omega^1(\End  X))$,
\item for each $t \geq T$, the connection $A \vert_{\left\{t \right\} \times N}$ is gauge equivalent to a connection in the coordinate patch $U_\Gamma$ centered at $\Gamma$.
\end{itemize}
This space of connections is generally \emph{not} an affine subspace of $L^p_{1, loc}(\A(E))$; this reflects the nonlinearities in the definition of the map $h \mapsto \alpha(h)$. We give $\A^{1,p}(\mathcal{T}_\Gamma)$ the structure of a $\C^m$-Banach manifold, as in \cite[Section 7.2.2]{\mmr}. (Equivalently, this $\C^m$-Banach manifold structure is precisely the one for which the map $\iota$, defined in (\ref{eq:iotamapdiff}) below, is a $\C^m$-diffeomorphism.) By \cite[Prop.~7.2.3]{\mmr} (see also \cite[p.~120]{\mmr}), given $A \in \A^{1,p}(\mathcal{T}_\Gamma) $, the element $h \in \cH_{out}$ from the second bullet point is uniquely determined; this uses the assumption that $\delta > \mu_\Gamma^-$. As such, there is a well-defined map
$$p_T\colon  \A^{1,p}(\mathcal{T}_\Gamma)  \longrightarrow \cH_{out}$$
that is $\C^m$-smooth. 

\begin{remark}\label{rem:pbound}
(a) Note that our space $\A^{1,p}(\mathcal{T}_\Gamma)$ consists of connections with weaker regularity than the one in \cite[Ch. 7]{\mmr}, which is modeled on $L^2_2$ instead of $L^p_1$. This changes little as far as the exposition of \cite{\mmr} is concerned; the only significant exception to this is the gauge group, which we will discuss in the next section.

\medskip

(b) The restriction to $p$ less than 4 is, at the moment, coming from Lemma \ref{lem:regularityofalpha}. A much deeper reason for restricting to $p$ less than 4 will appear in our gluing analysis of Section \ref{sec:GenGluing} (in particular, (\ref{eq:estyp0})), where this condition ensures we have a right inverse to our linearized operator that is uniformly bounded; see also \cite[p.~293]{donaldson-kronheimer1}.
\end{remark}

Fix a smooth cutoff function $\beta''$ on $X$ supported on $\left[T - 1/2, \infty \right) \times N$ and identically 1 on $[T, \infty) \times N$. Fix also a smooth reference connection $A_{ref}$ on $E$; we assume this belongs to the space $\A^{1,p}(\mathcal{T}_\Gamma)  $. Using these objects, we can form the map
$$\begin{array}{rcl}
i  \colon  \cH_{out} & \longrightarrow & \A^{1,p}(\mathcal{T}_\Gamma)\\
h & \longmapsto  & A_{ref} + \beta''(\alpha(h) - A_{ref})
\end{array}$$
where $\alpha(h) = h_T(t) + \Theta(h_T(t)) dt$ is as above. This map $i$ is $\C^m$-smooth. As in \cite[Lemma 10.1.1]{\mmr}, it is convenient to introduce the map
\eqncount\begin{equation}\label{eq:iotamapdiff}
\begin{array}{rcl}
\iota \colon  \cH_{out} \times L^p_{1, \delta}(\Omega^1(X)) & \longrightarrow & \A^{1,p}(\mathcal{T}_\Gamma)\\
(h, V) & \longmapsto & \iota(h, V) \defeq i(h) + V
\end{array}
\end{equation}
This map $\iota$ is a $\C^m$-diffeomorphism with inverse given by $A \mapsto (p_T(A), A - i(p_T(A)))$. It follows immediately from the definitions that 
$$p_T(i(h)) = p_T(\iota(h, V))  =  h$$
for all $h \in \cH_{out}$ and $V \in L^p_{1, \delta}(\Omega^1)$. We view $\iota$ as providing something of a coordinate system on the space of connections.

The tangent space to $\A^{1,p}(\mathcal{T}_\Gamma)$ at $A$ is the space of all 1-forms $W \in L^p_{1, loc}(\Omega^1(X))$ so that there is some $\eta \in T_{p(A)} \cH$ with
$$W - (D i)_{p_T(A)} \eta \in L^p_{1, \delta}(X)$$
where $(D i)_h $ is the linearization at $h \in \cH_{out}$ of the map $i\colon  \cH_{out} \rightarrow\A^{1,p}(\mathcal{T}_\Gamma)  $. Linearizing $\iota$ at $(h, V)$, we obtain a Banach space isomorphism
\eqncount\begin{equation}\label{eq:Dnorma}
\begin{array}{rcl}
(D \iota)_{(h, V)}\colon  T_h \cH_{out} \times L^p_{1, \delta}(\Omega^1(X)) & \longrightarrow & T_A \A^{1,p}(\mathcal{T}_\Gamma) \\
(\eta, W)&  \longmapsto & (Di)_h \eta +  W
\end{array}
\end{equation}
The 1-form $(Di)_h \eta$ vanishes on $X_0$, so the operator norm of $(D \iota)_{(h, V)}$ is independent of the metric on $X_0$.

\subsubsection{The gauge group}\label{sec:GaugeGroup}

When $2 < p < 4$, we will write $\G^{2,p}_\delta(\Gamma)$ for the set of bundle automorphisms $u$ of $E$ with the property that $u^* A \in \A^{1, p}(\mathcal{T}_\Gamma)$ for all $A \in \A^{1,p}(\mathcal{T}_\Gamma)$. The condition that $p$ be less than 4 is a carry-over from Remark \ref{rem:pbound} (b). The condition that $p$ be larger than 2 ensures that $\G^{2,p}_\delta(\Gamma)$ is a well-defined Banach Lie group that acts $\C^m$-smoothly on $\A^{1,p}(\mathcal{T}_\Gamma)$; these claims follow from the Sobolev multiplication maps $W^{2, p}_{loc} \times W^{2, p}_{loc} \rightarrow W^{2, p}_{loc}$ and $W^{2, p}_{loc} \times W^{1, p}_{loc} \rightarrow W^{1, p}_{loc}$ being well-defined in dimension 4 for $p > 2$. See, for example, \cite[Lemma A.6]{Wehrheim}. We will only consider $\G^{2, p}_\delta(\Gamma)$ for $p$ satisfying $2 < p < 4$. 

The proof of \cite[Lemma 7.2.7]{\mmr} carries over to this setting to imply that the group $\G^{2, p}_\delta(\Gamma)$ is equal to the space of $L^{p}_{2, loc}$-gauge transformations with the property that there is some $\tau_u \in \Stab(\Gamma)$, viewed as a $t$-invariant gauge transformation on $\End  X$, so that $u \vert_{\End  X} \circ \tau^{-1}_u$ is in $L^{p}_{2, \delta}(\End  X)$. The gauge transformation $\tau_u$ is uniquely determined by $u$, and we denote by $\G^{2, p}_\delta \subseteq \G^{2,p}_\delta(\Gamma)$ the (normal) subgroup of all gauge transformations $u$ with $\tau_u = \mathrm{I}$ equal to the identity. Thus, we have a short exact sequence of groups:
\eqncount\begin{equation}\label{eq:gaugesequence}
\left\{e \right\} \longrightarrow \G^{2, p}_\delta \longrightarrow \G^{2, p}_{\delta}(\Gamma) \longrightarrow \mathrm{Stab}(\Gamma) \longrightarrow \left\{e \right\}
\end{equation}

We will write $\Stab(A)$ for the stabilizer of $A$ under the action of $\G^{2,p}_\delta(\Gamma)$. 
The center $Z(G)$ of $G$ embeds into $\G^{2, p}_\delta(\Gamma)$ as the set of constant maps $X \rightarrow Z(G)$, and we will identify $Z(G)$ with its image in the gauge group. Note that $Z(G)$ is also the center of $\G^{2,p}_\delta(\Gamma)$ and $Z(G) \subseteq \Stab(A)$. We will say that $A$ is \emph{irreducible} if $Z(G)$ and $\Stab(A)$ have the same dimension (equivalently, if they have isomorphic Lie algebras). Note that the term ``irreducible'' is only defined when $2 < p < 4$.

\begin{lemma}\label{lem:irreducible}
Fix $2 < p  < 4$, and assume $A \in \A^{1, p}(\mathcal{T}_\Gamma)$ is irreducible. Then there is a neighborhood $U \subseteq \A^{1, p}(\mathcal{T}_\Gamma)$ of $A$ so that $A'$ is irreducible for all $A' \in U$. 
\end{lemma}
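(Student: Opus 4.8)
The plan is to show that irreducibility is an open condition by arguing that the dimension of the stabilizer can only jump up under limits, so the set where it equals $\dim Z(G)$ is open. Concretely, an element $A' \in \A^{1,p}(\mathcal{T}_\Gamma)$ is irreducible precisely when $\Stab(A')$ has the minimal possible dimension $\dim Z(G)$, which (since the Lie algebra of $\Stab(A')$ is $\ker d_{A'} \cap L^2_{2,\delta}(\Omega^0)$, roughly the covariantly constant adjoint sections decaying on the end, together with $\Stab(\Gamma)$-type contributions) is equivalent to saying $d_{A'}$ has no ``extra'' kernel on $0$-forms beyond what the center forces. So the statement reduces to: if $d_A$ has minimal kernel on the relevant space of $0$-forms, then so does $d_{A'}$ for $A'$ near $A$.

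First I would make precise the claim that the Lie algebra $\mathfrak{stab}(A')$ of $\Stab(A')$ is identified with a space of the form $\{\xi \in L^2_{2,loc}(\Omega^0(\mathfrak{g}_E)) : d_{A'}\xi = 0,\ \xi|_{\End X} \text{ asymptotic to an element of }\mathrm{Lie}(\Stab(\Gamma))\}$, using the description of $\G^{2,p}_\delta(\Gamma)$ given just above the lemma (gauge transformations asymptotic to $\tau_u \in \Stab(\Gamma)$). Since $Z(G) \subseteq \Stab(A')$ always and $Z(G)$ sits inside as the constants, irreducibility of $A'$ is the statement $\dim \mathfrak{stab}(A') = \dim \mathfrak{z}(\mathfrak{g})$, i.e. there are no covariantly constant sections transverse to the center directions. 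Next I would set up the linear operator controlling this: on each slice $\{t\}\times N$, $d_{A'}\xi = 0$ forces $\xi|_{\{t\}\times N}$ into the kernel of $d_{A'|_{\{t\}\times N}}$, and the asymptotic condition plus $\delta > \mu_\Gamma^-$ pins down the limit; the relevant operator is essentially $d_{A'} \oplus d_{A'}^*$ (or $\nabla_{A'}^*\nabla_{A'}$) acting between appropriate weighted Sobolev spaces, which is Fredholm in this $\mASD$ setup. The key point is upper semicontinuity of the dimension of the kernel of a continuously-varying family of Fredholm operators: the map $A' \mapsto d_{A'}$ (suitably interpreted as an operator on a fixed Banach space via the diffeomorphism $\iota$ and parallel transport identifying nearby fibers of $\A^{1,p}(\mathcal{T}_\Gamma)$) is continuous, the index is locally constant, and $\dim\ker$ is upper semicontinuous; hence $\dim\mathfrak{stab}(A') \le \dim\mathfrak{stab}(A) = \dim\mathfrak{z}(\mathfrak{g})$ for $A'$ in a neighborhood. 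Since the reverse inequality $\dim\mathfrak{stab}(A') \ge \dim\mathfrak{z}(\mathfrak{g})$ always holds, equality holds on a neighborhood, which is exactly irreducibility.

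The main obstacle I anticipate is making the ``continuously-varying family of Fredholm operators on a fixed space'' rigorous in this non-standard function-space setting: $\A^{1,p}(\mathcal{T}_\Gamma)$ is not affine, the operators $d_{A'}$ act between spaces that a priori depend on the asymptotic data $h = p_T(A')$, and one must use the diffeomorphism $\iota$ of \eqref{eq:iotamapdiff} to trivialize, then check that after this trivialization the coefficients of $d_{A'}$ depend continuously (in the appropriate operator norm) on $A'$. One also has to be slightly careful that the stabilizer is genuinely governed by the kernel on weighted $0$-forms together with the finite-dimensional $\Stab(\Gamma)$-contribution, and that the ``extra'' directions coming from a possibly nondiscrete $\Stab(\Gamma)$ are already accounted for in $\dim Z(G)$ when $A$ is irreducible — this is where the hypothesis $2<p<4$ and the structure of $\G^{2,p}_\delta(\Gamma)$ enter. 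A cleaner alternative, if the direct Fredholm argument is cumbersome, is to argue by contradiction: if no such neighborhood existed, take $A_n \to A$ with $A_n$ reducible, pick $\xi_n \in \mathfrak{stab}(A_n)$ orthonormal and transverse to $\mathfrak{z}$, and use elliptic estimates (uniform in $n$ once trivialized) plus the weighted-space decay to extract a limit $\xi_\infty \ne 0 \in \mathfrak{stab}(A)$ transverse to $\mathfrak{z}$, contradicting irreducibility of $A$; the compactness needed here is again supplied by the Fredholm/elliptic structure of the $\mASD$ setup and the $\C^m$-smoothness of $\iota$.
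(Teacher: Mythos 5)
Your proposal is essentially the paper's argument, phrased slightly differently. The paper's proof also reduces irreducibility of $A'$ to the statement that $d_{A'}$, restricted to a fixed complement $\Upsilon$ of $\mathfrak{z}$ in $\mathrm{Lie}(\G^{2,p}_\delta(\Gamma))$, is injective; it then builds an $A'$-independent Banach space target via the chart $\iota$ and a parallel-transport map on $T\cH$, observes that the resulting operator $D_{A'} = (\mathrm{PT}_h\times\mathrm{Id})\circ(D\iota)_{A'}^{-1}\circ d_{A'}|_\Upsilon$ depends continuously on $A'$ in operator norm (via $d_{A'} = d_A + [A'-A,\cdot]$), and concludes by noting that $D_A$, being injective with closed, complemented image, admits a bounded left inverse $L_A$, so that $L_A D_{A'}$ remains invertible for $A'$ near $A$. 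This last step is exactly the concrete realization of the ``$\dim\ker$ is upper semicontinuous for a norm-continuous semi-Fredholm family'' principle you invoke. You also correctly identify the main technical obstacle---that $\A^{1,p}(\mathcal{T}_\Gamma)$ is not affine and the domain and codomain of $d_{A'}$ a priori move with $A'$, so one must trivialize via $\iota$ (and the analogous chart $\iota_{\Omega^0}$ on $\mathrm{Lie}(\G)$); the paper makes this precise.

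One small point worth tightening in your write-up: $d_{A'}$ by itself, from $0$-forms to $1$-forms, is not Fredholm (it is overdetermined elliptic), so ``the index is locally constant'' is not quite the right tool; what one actually needs is that $d_{A'}$ has finite-dimensional kernel and closed range with complemented image (semi-Fredholm with a left inverse), which is exactly what the paper states and uses. Bringing in $d_{A'}\oplus d_{A'}^{*,\delta}$ or $\nabla_{A'}^*\nabla_{A'}$ would give a genuinely Fredholm operator with the same kernel, but this detour is unnecessary. Your alternative contradiction argument with a sequence $A_n\to A$ and normalized stabilizer elements $\xi_n$ is also viable; the compactness needed to extract $\xi_\infty\ne 0$ does hold here, but only because of the weighted spaces ($\delta>0$) together with the elliptic estimate, since Rellich compactness fails on the unweighted cylinder---you flag this correctly, but it is the step that would require the most work to make rigorous.
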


\begin{proof}
We begin with a few preliminaries. Set $\A \defeq \A^{1,p}(\mathcal{T}_\Gamma)$ and $\G \defeq \G^{2, p}_\delta(\Gamma)$. Linearizing the gauge group action at $A \in \A$, we obtain a map
$$d_A: \mathrm{Lie}(\G) \longrightarrow T_A \A \hspace{2cm} \phi \longmapsto d_A \phi.$$
Then a connection $A \in \A$ is irreducible if and only if the kernel of $d_A$ equals the Lie algebra $\mathfrak{z} \defeq \mathrm{Lie}(Z(G))$ of the center of $\G$. It follows form the definition of the topologies on $\A$ and $\G$, as well as from standard elliptic estimates for $\delta$-decaying spaces, that the operator $d_A$ is bounded. Moreover, this operator has a range that is closed and admits a complement in $T_A \A$. 

Let $H^0_\Gamma \defeq \ker(d_\Gamma) \subseteq \Omega^0(N)$ be the Lie algebra of $\Stab(\Gamma)$. The center $\mathfrak{z}$ is naturally a subalgebra of $H^0_\Gamma$, so we can write
$$H^0_\Gamma = \mathfrak{z} \oplus \mathfrak{z}^\perp$$
where $\mathfrak{z}^\perp$ is the $L^2(N)$-orthogonal complement of $\mathfrak{z}$. Then for $\tau \in H^0_\Gamma$, we will write $\tau^\perp \in \mathfrak{z}^\perp$ for its projection.

Just as the map $\iota$ provides ``coordinates'' for $\A$, there is an analogous Banach space isomorphism
$$\iota_{\Omega^0}: H^0_\Gamma \times L^p_{2, \delta}(\Omega^0) \longrightarrow \mathrm{Lie}(\G) \hspace{2cm} (\tau, \xi) \longmapsto (\tau- \tau^\perp) + \beta'' \tau^\perp + \xi$$
where we are viewing $\tau - \tau^\perp \in \mathfrak{z}$ as a 0-form on $X$. This map $\iota_{\Omega^0}$ takes $\mathfrak{z} \times \left\{0 \right\}$ isomorphically to $\mathfrak{z} \subseteq \mathrm{Lie}(\G)$. Let $\Upsilon \subseteq \mathrm{Lie}(\G)$ be the image under $\iota_{\Omega^0}$ of the complement $\frak{z}^\perp \times  L^p_{2, \delta}(\Omega^0) $ to $\mathfrak{z} \times \left\{0 \right\}$. Then we have a direct sum decomposition
$$\mathrm{Lie}(\G) = \mathfrak{z} \oplus \Upsilon.$$
The key point is that $A$ is irreducible if and only if the restriction
 $$d_A^\perp \defeq d_A \vert_{\Upsilon} : \Upsilon \longrightarrow T_A \A $$
 is injective. We will want to view this operator as a function of $A$, and for this it would be convenient if $d_A^\perp$ were to have a codomain that is independent of $A$. Though this is not the case presently, we can arrange for $A$-independence of the codomain as follows: Let $(D \iota)_A: T_h \cH \times L^p_{1, \delta}(\Omega^1)\rightarrow T_A \A $ be the linearization of the coordinate map $\iota$. The $L_2^2$-inner product for 1-forms on $N$ provides a Riemannian metric on the finite-dimensional subspace $\cH \subseteq L^2_2(\A(N))$ of connections on $N$. Use this Riemannian metric to define the parallel transport map $\mathrm{PT}_h: T_h \cH \rightarrow T_\Gamma \cH$. Letting $I$ denote the identity on $L^p_{1, \delta}(\Omega^1)$, we will be interested in the operator
 $$D_A \defeq (\mathrm{PT}_h \times I) \circ (D \iota)_A^{-1} \circ d_A^\perp: \Upsilon \longrightarrow T_\Gamma \cH \times L^p_{1, \delta}(\Omega^1).$$
This is a bounded linear map, and expansions of the form $d_{A} = d_{A'} + \left[ (A -A'), \cdot \right]$ show it depends continuously on $A \in \A$ in the operator norm topology on the space $\mathcal{B}(\Upsilon, T_\Gamma \cH \times L^p_{1, \delta}(\Omega^1))$ of bounded linear maps from $\Upsilon$ to $T_\Gamma \cH \times L^p_{1, \delta}(\Omega^1)$. Since $\Upsilon$ has finite codimension, and $d_A$ has closed range, the operator $D_A$ has closed range as well.

It follows from the construction that $D_A$ is injective if and only if $A$ is irreducible. Assume that $A$ is irreducible. Then the fact that $\mathrm{im}(d_A)$ has a complement in $T_A \A$ implies that $D_A$ admits a bounded left inverse, which we denote by $L_A$. In summary, the map 
 $$\A \longrightarrow \mathcal{B}(\Upsilon, \Upsilon) \hspace{2cm}A' \longmapsto L_A D_{A'}$$ 
 is a continuous map into the space of bounded linear operators on the Banach space $\Upsilon$. It is clearly invertible at $A' = A$. Since the set of invertible bounded linear maps on a Banach space is open, there is some neighborhood $U \subseteq \A$ of $A$ so that $L_A D_{A'}$ is invertible for all $A' \in U$. Thus if $A' \in U$, then $A'$ is irreducible. 
\end{proof}

\begin{remark}\label{rem:irreduciblesmallp}
Completing $L^p_{2, \delta}(\Upsilon)$ to $L^p_{1, \delta}(\Upsilon)$, the map $D_A$ extends to a bounded linear operator of the form
$$D_A: L^p_{1, \delta}(\Upsilon) \longrightarrow T_\Gamma \cH \times L^p_\delta(\Omega^1(X)).$$
Let $p^* = 4p / (4 - p)$ be the Sobolev conjugate of $p \in (2, 4)$. Then one can show that the map $A \mapsto D_A \in \mathcal{B}(L^p_{1, \delta}(\Upsilon) , T_\Gamma \cH \times L^p_\delta(\Omega^1(X)))$ is continuous in $A = i(h) + V$ relative to the topology $(h, V) \in \C^0(N) \times L^{p^*}_{\delta}(X)$. The proof we gave for Lemma \ref{lem:irreducible} carries over to show that $A' = \iota(h', V')$ is irreducible whenever $A = \iota(h, V)$ is irreducible, and $\Vert h - h' \Vert_{\C^0} + \Vert V - V' \Vert_{L^{p^*}_\delta}$ is sufficiently small.
\end{remark}

\subsubsection{The $\mASD$ equation}

Fix a cut off function $\beta' $ on $X$ that is identically 1 on the cylinder $[T+1/2, \infty) \times N$ and supported on the slightly larger cylinder $[T, \infty) \times N$. Consider the map
$$s  \colon  \A^{1,p}(\mathcal{T}_\Gamma)   \longrightarrow L^p_\delta(\Omega^+(X)) \hspace{2cm} A \longmapsto F^+_A - \beta' F_{i(p_T(A))}^+.$$
We will call $s$ the \emph{modified $\ASD$ ($\mASD$) operator}. The equation $s(A) = 0$ is the \emph{modified $\ASD$ ($\mASD$) equation}, and any $A$ satisfying $s(A) = 0$ will be called \emph{modified $\ASD$ ($\mASD$)}. The map $s$ is $\C^{m}$ in the specified topologies; see \cite[Lemma 7.1.1]{\mmr} and use the fact that the composition of $\C^m$ functions is again $\C^m$.

The following will help us understand the linearization of $s$.

\begin{lemma}
If $A = \iota(h, V)$ for $(h, V)  \in \cH_{out} \times L^p_{1, \delta}(\Omega^1(X)) $, then
\eqncount\begin{equation}\label{eq:formfors}
s(A)  = s(\iota(h, V)) = (1-\beta') F^+_{i(h)} + d_{i(h)}^+ V + \frac{1}{2} \left[ V \wedge V \right]^+.
\end{equation}
\end{lemma}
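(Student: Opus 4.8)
The plan is to simply unwind the definitions of $s$ and $\iota$ and use the standard formula for the curvature of a perturbed connection. Recall that $s(A) = F_A^+ - \beta' F_{i(p_T(A))}^+$, and that for $A = \iota(h,V)$ we have $p_T(A) = h$, so that $i(p_T(A)) = i(h)$ and the subtracted term is exactly $\beta' F_{i(h)}^+$. Thus the whole computation reduces to expanding $F_A^+ = F_{i(h)+V}^+$.

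First I would recall the elementary identity $F_{B+V} = F_B + d_B V + \tfrac12[V\wedge V]$, valid for any connection $B$ and any adjoint-bundle-valued $1$-form $V$; applying it with $B = i(h)$ gives $F_{\iota(h,V)} = F_{i(h)} + d_{i(h)}V + \tfrac12[V\wedge V]$. Taking self-dual parts (which is a pointwise linear operation, hence commutes with everything) yields $F_{\iota(h,V)}^+ = F_{i(h)}^+ + d_{i(h)}^+ V + \tfrac12[V\wedge V]^+$. Then
\[
s(\iota(h,V)) = F_{i(h)}^+ + d_{i(h)}^+ V + \tfrac12[V\wedge V]^+ - \beta' F_{i(h)}^+ = (1-\beta')F_{i(h)}^+ + d_{i(h)}^+ V + \tfrac12[V\wedge V]^+,
\]
which is the claimed formula \eqref{eq:formfors}.

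There is essentially no obstacle here; the only points worth a sentence of care are (i) noting that $p_T(\iota(h,V)) = h$, which is recorded explicitly in the excerpt just after the definition of $\iota$, so the perturbation term in $s$ is genuinely $\beta' F_{i(h)}^+$ and not something involving $V$; and (ii) checking that the right-hand side indeed lands in $L^p_\delta(\Omega^+(X))$, i.e. that the expression is a legitimate identity of elements of that space and not merely a formal one — the term $d_{i(h)}^+ V$ lies in $L^p_\delta$ since $V \in L^p_{1,\delta}$ and $i(h)$ differs from $A_{ref}$ by a bounded (indeed $\mathcal{C}^0$) term supported where $\beta''$ is, the quadratic term $[V\wedge V]^+$ lies in $L^p_\delta$ by the multiplication properties of weighted Sobolev spaces (the same estimate invoked for the quadratic part of $s$ elsewhere, cf. the reference to Lemma on quadratic estimates), and $(1-\beta')F_{i(h)}^+$ is compactly supported and smooth-enough. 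But since the lemma is really just the statement that the two sides agree as forms, the computation above suffices, and the mapping-property remarks can be omitted or relegated to a parenthetical.
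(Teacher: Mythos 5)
Your proof is correct and is essentially identical to the paper's: both simply apply the expansion $F_{i(h)+V}^+ = F_{i(h)}^+ + d_{i(h)}^+V + \tfrac{1}{2}[V\wedge V]^+$ together with the fact that $p_T(\iota(h,V)) = h$. The additional remarks about mapping properties are fine but, as you note, not needed for the identity itself.
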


\begin{proof}
This follows from the identity $F_{i(h) + V}^+ = F_{i(h)}^+ + d_{i(h)}^+ V + \frac{1}{2} \left[ V \wedge V \right]^+$ and the fact that $p_T(\iota(h, V)) = h$. 
\end{proof}

\begin{remark}\label{rem:noneq}
Unfortunately, when $G$ is not abelian, the $\mASD$ operator $s$ is not generally well-behaved under any suitable gauge group; e.g., it is not equivariant relative to the action of the gauge group of Section \ref{sec:GaugeTheory}. The issue is that the term $F_{i(p_T(A))}$ is gauge equivariant relative to the \emph{trivial} $G$-action on $\mathfrak{g}$, while the term $F_A$ is gauge equivariant relative to the \emph{adjoint} $G$-action on $\mathfrak{g}$. Consequently, any non-trivial linear combination of these (e.g., as in the above formula for $s$) is not equivariant relative to either $G$-action. This issue is apparent even in the smooth compactly supported setting, and hence persists regardless of which Sobolev completion we choose. See Section \ref{sec:mmrerror} for some comments about the effect of this issue on the results of \cite{\mmr}.
\end{remark}

\subsubsection{A Coulomb slice}

To obtain a Fredholm operator, we will restrict the operator $s$ to a Coulomb (gauge) slice
$$\Sl(A') \defeq \left\{\iota(h, V) \; \Big| \; h \in \mathcal{H} \; \; \textrm{and}\;\; V \in \ker(d_{A'}^{*, \delta})\subseteq L^p_{1, \delta}(\Omega^1)  \right\}$$
for some fixed connection $A'$. Here $d^{*, \delta}_{A} = e^{-t \delta} d_{A}^* e^{t \delta}$ is the adjoint relative to the $L^2_\delta$-inner product. For more details on this slice, see \cite[Prop.~10.3.1]{\mmr}. Since $\delta/2$ is not in the spectrum of $-*d_\Gamma$ on 1-forms, it follows from \cite[Lem. 8.3.1]{\mmr} that the restriction $s \vert_{\Sl(A')}$ of $s$ to this slice is a Fredholm map.

 We set
$$\rM = \rM(\mathcal{T}_\Gamma, A') \defeq s^{-1}(0) \cap \Sl(A')$$
which we refer to as the \emph{space of $\mASD$-connections}. For us, this will play the role that the $\ASD$ moduli space usually plays in the closed setting (though, as discussed in the introduction, this is less than satisfying for global considerations due to its dependence on $A'$). Elliptic regularity implies that any element of $\rM$ has regularity $\C^m$.

Consider the restriction of the $\mASD$ operator $s$ to this slice $\Sl(A')$. Then the linearization at $A \in \Sl(A')$ of this restriction is a bounded linear map 
$$\big(D s \vert_{\Sl(A')}\big)_A \colon  T_A \Sl(A') \longrightarrow L^p_\delta(\Omega^+).$$
We will say that an $\mASD$ connection $A$ is \emph{{$A'$-}regular} if this operator is surjective. {When $A' = A$, we drop the $A'$ and say that $A$ is \emph{regular} if it is $A$-regular.} We will also be interested in connections that are not regular, and for these we will need to consider the cokernel
\eqncount\begin{equation}\label{eq:H+def}
H^+_{A, \delta} \defeq \mathrm{coker} \; \big(D s \vert_{\Sl(A)}\big)_A.
\end{equation}
Clearly $H^+_{A, \delta} = 0$ if and only if $A$ is regular. 

We will write
$$\rM_{reg} = \rM_{reg}(\mathcal{T}_\Gamma, A') \subseteq \rM(\mathcal{T}_\Gamma, A')$$
for the subset of $A'$-regular $\mASD$ connections. It follows from the implicit function theorem that $\rM_{reg}$ is a $\C^m$-smooth manifold. 

\subsubsection{Navigating Morgan--Mrowka--Ruberman \cite{\mmr}}\label{sec:mmrerror} 

Since the operator $s$ is not gauge equivariant, we do not currently know how to define a suitable gauge quotient of $s^{-1}(0) \subset \A^{1,p}(\mathcal{T}_\Gamma) $ in order to obtain the \mASD\ moduli space $\cM_{\ell, w}(\mathcal{T}_\Gamma, T_0, g)$ envisaged in \cite[p.~125]{\mmr}. Though this appears to be a significant error in \cite{\mmr}, the major results of \cite{\mmr} remain intact. For the reader's benefit, we will now review \cite{\mmr}, highlighting those areas that need adjustment. In this section, we refer freely to the notation established in \cite{\mmr}, and the phrase ``thickened moduli space'' will refer to any of the following spaces
$$\cM_{\ell, w}(\mathcal{T}_\Gamma, T_0, g), \hspace{1cm} \cM_{\ell, w}^0(\mathcal{T}_\Gamma, T_0, g), \hspace{1cm} \cM_{\ell, w}(h, \mathcal{T}_\Gamma, T_0, g)$$
(note that sometimes the $w$ or the $g$ are dropped from the notation in \cite{\mmr}).

The thickened moduli space does not appear in any significant way in Chapters 1--6, 10--13, 15--16, and the results therein remain intact as stated. In Chapters 7--9, it is often the case that claims about the thickened moduli space remain intact if one interprets the term as meaning an object defined by a gauge slice as opposed to a quotient by the gauge group (e.g., replace the based version $\cM_{\ell, w}^0(\mathcal{T}_\Gamma, T_0, g)$ by what we called $\rM(\mathcal{T}_\Gamma, A')$ above). Such is the case with the patching results of \S~7.4, the index calculations of Ch. 8 (more details below), and most of the generic metrics results of Ch.~9. However, it is not immediately clear to us how best to interpret the claims in Lemma 7.5.3 and \S 9.4 since they refer to the $\mu$-map on the thickened moduli space, which is an inherently global object.

At first sight, Chapter 14 appears to have issues but, upon closer inspection, the results of this chapter remain intact as well, with the possible exception of the claims of \cite[Remark 14.0.5]{\mmr} (which don't appear to be used elsewhere in \cite{\mmr}). Here are some more details regarding Chapter 14: The main goal of this chapter is to prove Theorem 14.0.1, which is a structural result that gives dimension formulas for various $\ASD$ moduli spaces. This theorem has two cases. In the situation of Case I, as the authors point out explicitly (p.~202), every $\mASD$ connection is actually $\ASD$ and so the ``thickened moduli space'' is indeed well-defined because it is really (an open set of) the $\ASD$ moduli space (this coincidence is one that we too exploit, and is discussed more in Section \ref{sec:ProofOfThmA}). Thus, there is no problem with this case. The remaining Case II in Theorem 14.0.1 does not appeal to the thickened moduli space at all, and instead works with the space $\mathcal{B}^0_{\ell, \delta}(X)$ (despite being a gauge group quotient, the space $\mathcal{B}^0_{\ell, \delta}(X)$ is indeed well-defined as it does not make reference to the $\mASD$ operator $s$). Thus, the statement and proof of Theorem 14.0.1 need no adjustment and appear to be correct as written. However, we are not sure how to interpret the argument of Remark 14.0.5, which uses the thickened moduli space to deduce a Whitney stratification on the $\ASD$-moduli space. In the setting of this remark, there are $\mASD$ connections that are not $\ASD$, so the above-referenced coincidence does not apply.

\medskip

We end this section with a more detailed discussion of Chapter 8 in \cite{\mmr}, with the aim of salvaging the index calculation of Proposition 8.5.1. As in \cite[Ch. 8]{\mmr}, in addition to our usual hypotheses on $\delta$, we also assume that $\delta < 2 \mu_\Gamma^-$. We also restrict to the case where $G = \SO(3)$ or $G = \SU(2)$; the index for more general compact $G$ can be computed using the strategy outlined in \cite[Section 7.1]{donaldson-kronheimer1} (e.g., when $G$ is simple and simply-connected, use the data from \cite[Table 8.1]{AHS} to pin down the constants specific to $G$).

The first issue appears on p.~139, since the image of the map $D_\omega m$ is not generally contained in the kernel of $D_\omega s$ (this is the linear version of the fact that $s$ is not gauge-equivariant). As a consequence, $E_\delta(\omega)$ is not a complex in the usual sense. Nevertheless, much of what is desired of $E_\delta(\omega)$ can be salvaged by ``wrapping it up" and considering the operator
$$D\defeq (d_\omega^{*, \delta}, D_\omega s): T_\omega \A_{\ell, w} (\T_\Gamma, T_0) \longrightarrow T_e \G_\delta(\Gamma) \times \Omega^2_{+, 1, \delta}(X)$$ 
where $d_\omega^{*, \delta} = e^{-t \delta} d_\omega^* e^{t \delta}$ is the $L^2_\delta$-adjoint of $d_\omega$ and $D_\omega s$ is the notation used in \cite{\mmr} for the linearization of $s$ at a connection $\omega$ (what they call $\omega$ is what we call $A$). Then Proposition 8.5.1 can be interpreted as saying that this operator $D$ is Fredholm with index given by the formula:
$$Ind(D) = 8 \ell - \frac{3}{2}(\chi(X) + \sigma(X)) + \frac{h_\Gamma^1 - h_\Gamma^0}{2} + \frac{\rho(\Gamma)}{2}.$$
There is a ``based'' version of this that replaces $\G_\delta(\Gamma)$ by the normal subgroup $\G_\delta$. Wrapping in this case produces a map
$$D^0\defeq (d_\omega^{*, \delta}, D_\omega s): T_\omega \A_{\ell, w} (\T_\Gamma, T_0) \longrightarrow T_e \G_\delta \times \Omega^2_{+, 1, \delta}(X).$$
This operator is also Fredholm and, in light of the sequence (\ref{eq:gaugesequence}), its index is given by
$$Ind(D^0) = Ind(D) + h_\Gamma^0 = 8 \ell - \frac{3}{2}(\chi(X) + \sigma(X)) + \frac{h_\Gamma^1 + h_\Gamma^0}{2} + \frac{\rho(\Gamma)}{2}$$
since $h_\Gamma^0 = \mathrm{dim}(\mathrm{Stab}(\Gamma))$. Restricting $D^0$ to a slice $\Sl(A')$ and projecting to the $\Omega^2_{+, 1, \delta}(X)$-component, we obtain the operator that we called $(D s \vert_{\Sl(A')})_A$ above. When $A = A'$ and this connection is flat down the end, it follows readily that the index of $(D s \vert_{\Sl(A)})_A$ equals that of $D^0$. Since the index remains unchanged under addition of compact operators, it follows that 
\eqncount\begin{equation}\label{eq:indexcomp}
Ind((D s \vert_{\Sl(A')})_A) = 8 \ell - \frac{3}{2}(\chi(X) + \sigma(X)) + \frac{h_\Gamma^1 + h_\Gamma^0}{2} + \frac{\rho(\Gamma)}{2}
\end{equation}
for all $A \in \Sl(A')$. The right-hand side of (\ref{eq:indexcomp}) is the expected dimension of the $\mASD$ space $\rM(\mathcal{T}_\Gamma, A')$, and it is the actual dimension of the $\C^m$-manifold $\rM_{reg}(\mathcal{T}_\Gamma, A')$ of $A'$-regular $\mASD$ connections.

\subsection{Special cases}\label{sec:SpecialCases}

\subsubsection{Flat connections}\label{sec:FlatConnections}

In this section, we study the linearized operator $(Ds)_A$ and its cokernel in the special case when $A$ is flat {and asymptotic to $\Gamma$}. To simplify the discussion, we assume $A$ is in temporal gauge on the end \cite[p.~15]{donaldson10} (though we continue to work in the general setting where the metric is asymptotically cylindrical). It follows that, for each $t \geq T$, the restriction $A \vert_{\left\{t \right\} \times N} = \Gamma$ is constantly equal to $\Gamma$ on the end. Then $A \in\A^{1,p}(\mathcal{T}_\Gamma) $ and $p(A) = \Gamma$. The associated flow $\alpha(\Gamma) = A$ recovers the flat connection $A$ on $\End  X$. This implies $A$ is $\mASD$.

The operator $s$ is defined in terms of the map $\iota$, and we recall that the definition of $\iota$ required the choice of a reference connection $A_{ref}$ on $X$. {Likewise, the space of $\mASD$ connections is defined by restricting $s$ to a slice $\Sl(A')$ for some choice of connection $A'$. It is convenient to take $A_{ref} \defeq A$ and $A' \defeq A$}; the reader can check that any other choice of $A_{ref}$ does not affect the outcome of the discussion that follows, though different choices of $A'$ may. In particular, our choice of $A_{ref}$ gives
$$A =i(\Gamma) = \iota(\Gamma, 0)$$

Let $b^+(X, A)$ be the dimension of a maximal positive definite subspace for the pairing map $q_A \colon  \hat{H}^2(\overline{X}, \mathrm{ad}(A)) \otimes \hat{H}^2(\overline{X}, \mathrm{ad}(A)) \rightarrow \bb{R}$, as in \cite[Section 8.7]{\mmr}, where $\overline{X}$ is the natural compactification of $X$ obtained by adding a copy of $N$ at infinity. For example, when $A= A_{\mathrm{triv}}$ is the trivial connection on the trivial $G$-bundle, then $b^+(X, A_{\mathrm{triv}}) = \dim(G) b^+(X)$ is a multiple of the usual self-dual Betti number of $X$. We will need the following result.

\begin{proposition}\label{prop:index}
Assume $0<\delta/2 < \mu_\Gamma^-$, where $\mu_\Gamma^-$ is as in Section \ref{sec:AuxiliaryChoices}. Then the cokernel $H^+_{A, \delta} = (Ds \vert_{\Sl(A)})_A$ has dimension $b^+(X, A)$.
\end{proposition}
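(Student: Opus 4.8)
The plan is to identify the cokernel $H^+_{A,\delta}$ of the linearized operator $(Ds)_A$ at the flat connection $A$ with a harmonic space on the compactified manifold $\overline{X}$, and then count its dimension using the intersection-form interpretation of $b^+(X,A)$. Since $A = \iota(\Gamma,0)$ and $A_{ref} = A$, the tangent space to the slice $\Sl(A)$ at $A$ is $\ker(d^{*,\delta}_A) \subseteq L^p_{1,\delta}(\Omega^1(X))$, and on this space $(Ds)_A V = d_A^+ V$ because the term $(1-\beta')F^+_{i(h)}$ vanishes (its linearization at $\Gamma$ is tangent to $\cH$, and $d\beta'$ enters only through $i(h)$-terms that are flat near their support) — more carefully, since $A$ is flat and $\beta' F^+_{i(\Gamma)} = 0$, formula (\ref{eq:formfors}) linearizes to $(Ds)_A(\eta, V') = (1-\beta')d^+_A\big((Di)_\Gamma\eta\big) + d_A^+ V'$, and one restricts to the slice where $\eta$ is constrained. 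Thus $H^+_{A,\delta} = \mathrm{coker}\big(d_A^+ \oplus d_A^{*,\delta}\colon L^p_{1,\delta}(\Omega^1) \to L^p_\delta(\Omega^+) \oplus (\text{range of }(Di))\big)$, up to the finite-dimensional correction coming from the $\cH$-directions, which I expect to contribute trivially to the cokernel when $\delta/2 < \mu_\Gamma^-$ since in that weighted range the harmonic space $H^1_\Gamma$ is exactly absorbed.

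Next I would invoke the linear analysis of \cite[Ch. 8]{\mmr}, as the excerpt explicitly permits (``essentially all of the linear analysis of \cite[Ch. 8]{\mmr} remains valid, with the index of $(Ds)_A$ used in place of the index of the complex $E_\delta(w)$''). The deformation complex for the flat connection $A$ in the $\delta$-weighted spaces, with $0 < \delta/2 < \mu_\Gamma^-$, has cohomology that \cite{\mmr} identifies in Section 8.7 with the $L^2$-cohomology / the cohomology $\hat H^\bullet(\overline{X}, \ad(A))$ of the compactification: the hypothesis $\delta/2 < \mu_\Gamma^-$ places the weight below the first nonzero eigenvalue, so the weighted harmonic forms coincide with the ones that extend over $\overline{X}$, and there is no contribution from the asymptotic continuous spectrum. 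The cokernel $H^+_{A,\delta}$ is then the $\Omega^+$-part of this complex, i.e. the space of self-dual $\ad(A)$-valued harmonic 2-forms, which by the Hodge-theoretic description is $\hat H^2_+(\overline{X}, \ad(A))$. Finally, the dimension of this self-dual harmonic 2-form space is, by definition, the dimension of a maximal positive-definite subspace for the pairing $q_A$ on $\hat H^2(\overline{X},\ad(A))$ — which is precisely $b^+(X,A)$.

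So the key steps in order are: (1) reduce $(Ds)_A$ at the flat connection, using $A_{ref} = A$ and (\ref{eq:formfors}), to (essentially) $d_A^+ \oplus d_A^{*,\delta}$, checking that the $\cH_{out}$-directions do not enlarge the cokernel in the range $0 < \delta/2 < \mu_\Gamma^-$; (2) apply the weighted Hodge theory of \cite[Sections 8.5--8.7]{\mmr} to identify $\mathrm{coker}$ with the self-dual part $\hat H^2_+(\overline{X},\ad(A))$ of the cohomology of the compactification, the weight condition guaranteeing Fredholmness and the absence of spurious asymptotic contributions; (3) recall that $\dim \hat H^2_+(\overline{X},\ad(A)) = b^+(X,A)$ by the definition of $b^+(X,A)$ via the form $q_A$.

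The main obstacle I anticipate is step (1): carefully showing that restricting to the Coulomb slice and the presence of the $\cH_{out}$-factor (through the map $i$ and its linearization $(Di)_\Gamma$, whose image is spanned by decaying-to-harmonic 1-forms on the end) does not change the cokernel. One must verify that the extra finite-dimensional ``$\eta$'' directions, together with the gauge-slice condition $d_A^{*,\delta}V = 0$, assemble into exactly the ordinary $\delta$-weighted deformation complex of the flat connection whose cohomology \cite{\mmr} already computes — in particular that no piece of $H^1_\Gamma$ leaks into $H^+$. Since $\delta/2$ lies below $\mu_\Gamma^-$, the weighted theory should behave like the decaying ($L^2$) theory on the end, and this identification should go through cleanly, but it is the one place where the non-affine, $\cH$-twisted structure of $\A^{1,p}(\mathcal{T}_\Gamma)$ genuinely interacts with the cohomology count and so deserves the most care.
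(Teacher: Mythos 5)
Your overall strategy matches the paper's: identify the cokernel $H^+_{A,\delta}$ with a concrete harmonic space, and obtain the dimension count from the linear analysis of \cite[Sections 8.5--8.7]{\mmr}, with bijectivity coming from \cite[Prop.\ 8.7.1(4)]{\mmr}. So the route is the same, and steps (2)--(3) go through as you describe.

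The one place where your proposal is off is the description of the role of the $\cH_{out}$-directions in step (1). You say you expect the $\eta$-directions ``to contribute trivially to the cokernel when $\delta/2 < \mu_\Gamma^-$ since in that weighted range the harmonic space $H^1_\Gamma$ is exactly absorbed.'' That is the wrong intuition, and it is the one substantive thing to get right. Adding the $\eta$-directions to the domain of $(Ds)_A$ does not leave the cokernel unchanged; it strictly cuts it down. Concretely: if $W$ is $L^2_\delta$-orthogonal to the image of $(Ds)_A$, then orthogonality to the $V$-directions gives $d_A\bigl(e^{\delta t}W\bigr) = 0$, while orthogonality to the $\eta$-directions --- i.e.\ to the compactly supported terms $(1-\beta')(\partial_t\beta'')(dt\wedge\eta)^+$ as $\eta$ ranges over $H^1_\Gamma$ --- forces the $\Gamma$-harmonic part of $W|_{\{t\}\times N}$ to vanish for $t\in(T-1/2,T)$, and hence (since $d_A(e^{\delta t}W)=0$ propagates harmonic parts constantly in $t$) for all $t\ge T$. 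This second condition is \emph{not} automatic from the weight choice $\delta/2<\mu_\Gamma^-$: without the $\eta$-directions, the cokernel of $d_A^+$ on the Coulomb slice alone would still contain elements $W$ for which $e^{\delta t}W$ is a closed self-dual form whose slice-wise harmonic part is a nonzero constant --- such forms lie in $L^2_{-\delta}$ but not in $L^2$, so they are not in $H^+(X,\ad(A))$, and the cokernel would be larger than $b^+(X,A)$. The $\cH_{out}$-directions in the domain are precisely the mechanism that rules these out, which is why the $\mASD$ machinery is set up with them. Once this is corrected, the rest of your plan goes through: $W\mapsto e^{\delta t}W$ injects the cokernel into $H^+(X,\ad(A))$, and surjectivity --- hence $\dim H^+_{A,\delta} = b^+(X,A)$ --- is the content of the cited index computation in \cite{\mmr}, which uses $0<\delta/2<\mu_\Gamma^-$.
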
 

This is proved in \cite[Prop.~8.7.1(4)]{\mmr}, however the discussion there does not deal with the linearized operator $(Ds)_A$ directly. In preparation for our gluing arguments below, we will summarize the argument given in \cite[Prop.~8.7.1(4)]{\mmr}, but from the present perspective. Our proof is sketched below, after we give some preliminary computations that will be useful in their own right. 

The restriction $\iota\vert\colon  \cH \times \ker(d^{*, \delta}_{A}) \rightarrow \Sl(A)$ is a diffeomorphism, essentially by definition. To understand the cokernel $H^+_{A, \delta}$ it suffices to understand the cokernel of the linearization of $s \circ \iota \vert$. Towards this end, differentiating (\ref{eq:formfors}) at $A = \iota(\Gamma, 0)$ in the direction of $(\eta, V) \in T_\Gamma \cH_{out} \times \ker(d^{*, \delta}_{A})$ gives
\eqncount\begin{equation}\label{eq:dsa}
(D s)_A \circ (D \iota)_{(\Gamma, 0)}(\eta, V) = (1- \beta') d_A^+ (D i)_\Gamma \eta + d_A^+ V.
\end{equation}
Next, it follows from Lemma \ref{lem:derofalpha} that $(Di)_\Gamma \eta = \beta'' \eta$. Since we also have $d_\Gamma \eta = 0$ for $\eta \in T_\Gamma \cH = H^1_\Gamma$, it follows that
\eqncount\begin{equation}\label{eq:daformulafordi}
d_A^+(D i)_\Gamma \eta  = (\partial_t \beta'') (dt \wedge \eta)^+ .
\end{equation}
This is zero everywhere except on $\left(T-1/2, T\right) \times N$, where it vanishes if and only if $\eta = 0$. Combining this with (\ref{eq:dsa}), we therefore have the formula
\eqncount\begin{equation}\label{eq:formforlin}
(D s)_A \circ (D \iota)_{(\Gamma, 0)}(\eta, V)  = (1-\beta') (\partial_t \beta'')(dt \wedge \eta)^+ + d_A^+ V.
\end{equation}
This shows that, relative to the coordinates afforded by $\iota$, the leading order term of $\big(D s \vert_{\Sl(A)}\big)_A$ is $d_A^+ \vert_{\ker(d_A^{*, \delta})}$. The remaining term is compactly-supported and of order zero.

\begin{proof}[Proof of Proposition \ref{prop:index} (sketch)]
A maximal positive definite subspace for $q_A$ can be realized as the space $H^+(X, \mathrm{ad}(A))$ of self-dual 2-forms $W \in L^2(\Omega^+(X, \mathrm{ad}(A)))$ satisfying $d_A W = 0$ and so that the restriction to any slice $\left\{t \right\} \times N$ has trivial $\Gamma$-harmonic part; note that this definition does not depend on $\delta$, but see also Lemma \ref{lem:deltadecay}. Setting 
$$D \defeq \big(D s \vert_{\Sl(A)}\big)_A$$ 
we can similarly represent the cokernel $H^+_{A, \delta}$ of $D$ as the $L^{2}_\delta$-orthogonal complement $(\mathrm{im}\:D)^{\perp, \delta}$ to the image. Then Proposition \ref{prop:index} follows by showing that the map
$$j\colon  (\mathrm{im}\: D)^{\perp, \delta} \longrightarrow H^+(X, \mathrm{ad}(A)) \hspace{2cm} W \longmapsto e^{\delta t} W$$
is well-defined and bijective. That the map $j$ is well-defined follows from the formula in (\ref{eq:formforlin}). Indeed, if $W \in  (\mathrm{im}\; D)^{\perp, \delta}$, then the identity (\ref{eq:dsa}) implies 
$$0 = (W, D \circ (D \iota)_{(\Gamma, 0)}(0, V))_\delta = (W, d_A^+ V)_\delta = (d_A^{*, \delta} W, V)_\delta$$
where $(\cdot, \cdot)_\delta$ is the $L^2_\delta$-inner product. This holds for all $V \in L^2_{1, \delta}(\Omega^1)$, so it follows that $d_A j(W) =  -e^{t \delta} * d_{A}^{*, \delta} W =   0$. Similarly, we have
$$ 0 = (W, D \circ (D \iota)_{(\Gamma, 0)}(\eta, 0))_\delta = (W, (1-\beta') (\partial_t \beta'')(dt \wedge \eta)^+)_\delta.$$ 
Since $(1- \beta') \partial_t \beta''$ is non-zero on the cylinder $\left(T-1/2, T\right) \times N$, and since $\eta \in T_\Gamma \cH = H^1_\Gamma$ is allowed to roam freely over the $\Gamma$-harmonic space, it follows that the harmonic part of $W \vert_{\left\{t\right\} \times N}$ must vanish for any $t \in \left(T-1/2, T\right)$. 

It is clear that the map $j$ is injective; this already gives $\dim(H^+_{A, \delta}) \leq b^+(X, A)$. Surjectivity of $j$ is equivalent to the reverse inequality holding, and this follows from a dimension count: By \cite[Prop.~8.7.1(4)]{\mmr} (which uses the assumption $0 < \delta/2 < \mu^-_\Gamma$), the integer $b^+(X, A)$ is equal to the dimension of the cokernel of $(Ds)_A: T_A \A^{1, p}(\mathcal{T}_\Gamma) \rightarrow L^p_\delta(\Omega^+)$. The result follows because cokernels are non-decreasing under domain restriction:
$$\begin{array}{rcl}
b^+(X, A) & = & \dim\; \mathrm{coker}\big((Ds)_A: T_A \A^{1, p}(\mathcal{T}_\Gamma) \rightarrow L^p_\delta(\Omega^+)\big) \\
& \leq & \dim \; \mathrm{coker}\big((Ds \vert_{\Sl(A)})_A: T_A \Sl(A) \rightarrow L^p_\delta(\Omega^+)\big)\\
& = & \dim \; H^+_{A, \delta}.
\end{array}$$
\end{proof}

\begin{remark}\label{rem:RegularityRemark}
Proposition \ref{prop:index} is the observation that makes the $\mASD$-operator---\emph{and not the $\ASD$-operator}---a viable candidate for our existence results, including Theorem \ref{thm:A} which is purely in the $\ASD$ setting. Indeed, consider the case where $b^+(X) = 0$ and $A$ is the trivial flat connection. Then $b^+(X, A) = b^+(X) = 0$ and so Proposition \ref{prop:index} implies $A$ is regular as an $\mASD$ connection. However, when $\Gamma$ is degenerate, the trivial flat connection $A$ is \emph{not} regular as an $\ASD$ connection: The proof just given shows that, for the operator (\ref{eq:formforlin}), the image of $\eta \mapsto (1-\beta') (\partial_t \beta'')(dt \wedge \eta)^+$ is \emph{not} contained in the image of $d^+_A$. Thus, the additional degree of freedom afforded by $\eta \in T_\Gamma \cH$ in (\ref{eq:formforlin}) is necessary (and sufficient) to obtain surjectivity of $(Ds \vert_{\Sl(A)})_{A}$ when $A$ is the trivial flat connection, $\Gamma$ is degenerate, and $b^+(X) = 0$.
\end{remark}

We end with the following exponential decay estimate that we will use in Section \ref{sec:b^+=1}.

\begin{lemma}\label{lem:deltadecay}
For each $W \in H^+(X, \mathrm{ad}(A))$, there is some $C$ so that the restriction $W\vert_{\left\{t \right\} \times N}$ satisfies
$$\Vert W\vert_{\left\{t \right\} \times N} \Vert_{\mathcal{C}^0(N)} \leq C e^{- \mu_\Gamma^{+} t}$$
for all $t \geq 0$. In particular, $H^+(X, \mathrm{ad}(A)) \subseteq L^2_\delta(X)$ for any $\delta < 2\mu_\Gamma^{+}  $. 
\end{lemma}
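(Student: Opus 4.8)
The plan is to prove exponential decay of $W\vert_{\{t\}\times N}$ by exploiting the fact that $W$ is $\Gamma$-harmonic on each slice in the complement of the harmonic space, and that $\Gamma$ is a flat connection, so on the cylindrical end the operator $d_A^+ \oplus d_A^{*,\delta}$ (or rather its $\delta=0$ version, since we want the sharp rate $\mu_\Gamma^-$) decouples into a $t$-translation-invariant operator whose $N$-part has a spectral gap. First I would restrict attention to the end $\End\: X \cong [0,\infty)\times N$, where the metric is a product (or asymptotically so, but the sharp exponential rate is governed by the product model; the error terms decay at rate $\max(\mu_\Gamma^-,\mu_\Gamma^+)$ and can be absorbed). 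There, using the identification $\Omega^+(\mathbb{R}\times N)\cong \Omega^1(N)$ via $w \mapsto w_0 + *_N w_0 \wedge dt$-type isomorphism (as in \cite[Section 8]{\mmr} or \cite{donaldson10}), the equations $d_A W = 0$ and $d_A^* W = 0$ on a flat end translate into an ODE of the form $\frac{\partial}{\partial t} w = -*d_\Gamma w$ for the corresponding path $w(t)$ of $1$-forms on $N$, at least on the subspace orthogonal to $\ker(*d_\Gamma) = \ker(d_\Gamma)\cap\ker(d_\Gamma^*) = H^1_\Gamma$.

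Next, since by hypothesis $W \in H^+(X,\mathrm{ad}(A))$ means precisely that $W$ is in $L^2$ and that $W\vert_{\{t\}\times N}$ has vanishing $\Gamma$-harmonic part for every $t$, the path $w(t)$ lives in the closed subspace $(H^1_\Gamma)^\perp = \overline{\mathrm{im}(d_\Gamma)} \oplus \overline{\mathrm{im}(d_\Gamma^*)}$ where $*d_\Gamma$ is invertible with smallest eigenvalue (in absolute value) $\mu_\Gamma^-$ on the relevant part of the spectrum. The standard argument (cf. \cite{donaldson10}, or the asymptotic analysis in \cite{\mmr}) then shows that an $L^2$-solution of $\partial_t w = -*d_\Gamma w$ lying in this subspace must decay: expanding $w(t)$ in eigenmodes of $*d_\Gamma$, the $L^2$-finiteness kills all modes with non-positive decay rate, leaving $\|w(t)\|_{L^2(N)} \leq C e^{-\mu_\Gamma^- t}$. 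Upgrading from $L^2(N)$ to $\mathcal{C}^0(N)$ is then a routine elliptic bootstrap: $w(t)$ satisfies an elliptic equation on $N$ with $t$-derivatives controlled by the same ODE, so interior elliptic estimates on slabs $[t-1,t+1]\times N$ combined with Sobolev embedding give the $\mathcal{C}^0$-bound with the same exponential rate (at the cost of enlarging $C$). Finally, since $W$ restricted to the compact part $X_0$ is smooth hence bounded, and $t$ is bounded there, we get the estimate for all $t \geq 0$. The last sentence, $H^+(X,\mathrm{ad}(A)) \subseteq L^2_\delta(X)$ for $\delta < 2\mu_\Gamma^-$, is then immediate: $\|e^{\delta t/2} W\|_{L^2}^2$ splits as a bounded contribution from $X_0$ plus $\int_0^\infty \int_N e^{\delta t} |W|^2 \lesssim \int_0^\infty e^{\delta t} e^{-2\mu_\Gamma^- t}\,dt < \infty$ since $\delta < 2\mu_\Gamma^-$.

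The main obstacle I anticipate is handling the asymptotically-cylindrical (rather than exactly cylindrical) metric: on the nose, the ODE $\partial_t w = -*d_\Gamma w$ only holds for a product metric, and for a general $g \in B$ one picks up a perturbation term of size $\|g - g_0\|_{\mathcal{C}^1} \leq e^{-\max(\mu_\Gamma^-,\mu_\Gamma^+)t}$. One must check that this perturbation is small enough not to spoil the decay rate $\mu_\Gamma^-$ — i.e. run a perturbed-ODE / Gronwall-type argument, or invoke the general asymptotic-decay machinery of \cite[Section 8]{\mmr} directly. Since the perturbation decays at rate $\geq \mu_\Gamma^-$, this is exactly borderline but works (it contributes at most a polynomial-in-$t$ correction which is still absorbed once $\delta < 2\mu_\Gamma^-$ strictly); alternatively, one can simply cite the relevant exponential decay statement from \cite{\mmr} or \cite{morgan-mrowka-ruberman1} and note that the hypothesis $W\vert_{\{t\}\times N} \in (H^1_\Gamma)^\perp$ is what forces decay at the first nonzero rate $\mu_\Gamma^-$ rather than mere boundedness. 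I would structure the written proof to lean on that cited machinery for the analytic core, and spend the visible effort on identifying the correct subspace and extracting the sharp constant.
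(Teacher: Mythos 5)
Your proposal is sound and reaches the same conclusion, but by a genuinely different argument from the one the paper uses. You argue by spectral decomposition: expand $v(t)$ in eigenmodes of $*_N d_\Gamma$, observe that $L^2$-finiteness kills all non-decaying modes, and read off the exponential rate $\mu_\Gamma^-$ mode by mode; then you upgrade from $L^2(N)$ to $\mathcal{C}^0(N)$ by a separate elliptic bootstrap on slabs. The paper instead runs a \emph{convexity} (differential inequality) argument: it defines $f(t) = \Vert v(t)\Vert_{L^2(N)}^2 + \Vert \partial_t^2 v(t)\Vert_{L^2(N)}^2$, differentiates twice, integrates by parts using $\partial_t v = *_N d_\Gamma v$, and shows $f'' \geq 4(\mu_\Gamma^-)^2 f$; the exponential decay then follows from the standard convexity lemma (citing Dostoglou--Salamon), and the $\mathcal{C}^0$ bound is baked in from the start because the second summand of $f$ is exactly the $\Vert\Delta_\Gamma v\Vert_{L^2}$ term appearing in the slice-wise elliptic estimate. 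The paper's choice of $f$ thus achieves the $L^2$-to-$\mathcal{C}^0$ upgrade and the decay simultaneously, avoiding the two-stage structure of your argument; conversely, your spectral route is more elementary and makes it transparent exactly where the orthogonality to $H^1_\Gamma$ and the $L^2$-condition enter. Both methods handle the asymptotically cylindrical metric identically (by reducing to the exact product case), and your discussion of the borderline perturbation rate is a reasonable elaboration of what the paper dispatches in a single sentence. One small point worth tightening: you should say explicitly that self-duality makes $d_A^* W = 0$ a consequence of $d_A W = 0$ (since $d_A^*W = -*d_AW$ on a 4-manifold), as the definition of $H^+(X,\mathrm{ad}(A))$ only imposes closedness.
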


\begin{proof}
It suffices to establish the estimate of the lemma under the assumption that the metric $g$ is cylindrical. Since $A$ is in temporal gauge on the end, its covariant derivative decomposes as $d_A = dt \wedge \partial_t + d_\Gamma$. Standard elliptic estimates for the operator $\Delta_\Gamma = d_\Gamma^* d_\Gamma + d_\Gamma d_\Gamma^*$ on $N$ provide a uniform constant $C$ so that
$$\Vert v \Vert_{\C^0(N)} \leq C \big( \Vert v \Vert_{L^2(N)}  + \Vert \Delta_\Gamma v \Vert_{L^2(N)} \big)$$
for all $v \in \Omega^1(N)$. 

Fix $W \in H^+(X, \mathrm{ad}(A))$. On $\End  X$, we can write $W = dt \wedge v + *_N v $ for some path $v = v(t) \in \Omega^1(N)$ of 1-forms. The condition $d_A W = 0$ implies $d_\Gamma *_N v = 0$ and $*_N d_\Gamma v = \partial_t v$. In particular, the above elliptic estimate implies
$$\Vert v(t) \Vert_{\mathcal{C}^0(N)} \leq C \big( \Vert v(t) \Vert_{L^2(N)}  + \Vert \partial_t^2 v(t) \Vert_{L^2(N)}\big).$$
It suffices to show that $f(t) \defeq \Vert v(t) \Vert_{L^2(N)}^2 + \Vert \partial_t^2 v(t) \Vert_{L^2(N)}^2$ decays exponentially in $t$ at a rate of $ 2\mu_\Gamma^+$. To see this, differentiate twice to get
$$\begin{array}{rcl}
 f''(t) & = & 2\Vert \partial_t v(t) \Vert_{L^2(N)}^2 + 2\Vert \partial_t^3 v(t) \Vert_{L^2(N)}^2 + 2 \big(\partial_t^2 v(t), v(t)\big) + 2\big(\partial_t^4 v(t), \partial_t^2 v(t)\big)\\
& = & 4\Vert d_\Gamma v(t) \Vert_{L^2(N)}^2 + 4\Vert d_\Gamma \partial_t^2 v(t) \Vert_{L^2(N)}^2
\end{array}$$
where we used $\partial_t v = *_N d_\Gamma  v $ and integration by parts. By definition of $H^+(X, \mathrm{ad}(A))$, $v(t)$ is orthogonal to the $\Gamma$-harmonic space and $d_\Gamma *_N v(t) = 0$. It follows that $v(t)$ lies in the image of $*_N d_\Gamma$. Moreover, the 2-form $W$, and hence $v$, is in $L^2$, by definition. This combines with the equation $*_N d_\Gamma v = \partial_t v$ to imply that $v(t)$ lies in the span of the negative eigenspaces of $*_N d_\Gamma $: express $v(t)$ as a $t$-dependent linear combination of an orthonormal basis of eigenvectors, use a separation of variables argument to show that each coefficient decays or grows down the end according to whether the eigenvalue is negative or positive, and then use the fact that $v$ is in $L^2$ and so must decay down the end.

Likewise, $\partial^2_t v(t)$ always lies in the negative eigenspace of $*_N d_\Gamma$. Recall that $\mu_\Gamma^{{+}}$ is the smallest positive eigenvalue of $-*_N d_\Gamma$. Thus, it is the absolute value of the largest negative eigenvalue of $*_N d_\Gamma$, and so
$$f''(t) = 4\Vert d_\Gamma v(t) \Vert_{L^2(N)}^2 + 4\Vert d_\Gamma \partial_t^2 v(t) \Vert_{L^2(N)}^2 \geq 4(\mu_\Gamma^{+})^2 f(t).$$
Since $f(t)$ is non-negative and converges to 0 as $t$ approaches $\infty$, it follows from this estimate that $f(t) \leq C e^{- 2 \mu_\Gamma^{+} t}$ (e.g., see \cite[p.~623]{DS}).
\end{proof}

\subsubsection{Non-degenerate $\Gamma$} \label{sec:Non-Deg}

 Here we assume that $\Gamma$ is non-degenerate in the sense that the harmonic space $H^1_\Gamma = \left\{0 \right\}$ is trivial. Then any center manifold necessarily consists of a single point and so there is a unique choice of cutoff function $\beta$. Then the $\mASD$ operator is the $\ASD$ operator. Moreover, non-degeneracy implies that any finite-energy $\ASD$ connection asymptotic to $\Gamma$ decays exponentially on the end at a rate of $e^{-\delta t}$ for any $ \delta/2 <  \mu_\Gamma^-$, where $\mu_\Gamma^-$ is as in Section \ref{sec:AuxiliaryChoices} (the proof of this assertion is similar to the proof of Lemma \ref{lem:deltadecay}, but $\mu_\Gamma^-$ appears in the present setting since the curvature is \emph{anti}-self dual). We also assumed that $\delta$ is greater than $\mu_\Gamma^-$, but that was only to construct the map $p_T$, whose existence is trivial in the non-degenerate setting since the center manifold consists of a single point. Thus, in the non-degenerate case this lower bound restriction on $\delta$ can be dropped, though we still need to retain the assumption that $\delta / 2$ is not an eigenvalue of $-*_N d_\Gamma$. In particular, it follows that whenever $-\mu_\Gamma^+ < \delta/2 <  \mu_\Gamma^- $, the resulting $\mASD/\ASD$ space is independent of the choice of this $\delta$, and the dimension of the space is given by (\ref{eq:indexcomp}) (apart from this paragraph, we always assume $\delta > 0$). 
 
 In summary, when $\Gamma$ is non-degenerate, there is an essentially canonical choice of thickening data $\mathcal{T}_{\Gamma, \mathrm{can}}$. Moreover, if $A'$ is any connection defining a slice, then the associated space of $\mASD$ connections
$$\rM(\mathcal{T}_{\Gamma, \mathrm{can}}, A') = \left\{ A \in \A(X) \; \Big| \; F_A^+ = 0, \;\;\; d^*_{A'} (A - A') = 0,  \;\;\;\lim_{t \rightarrow \infty} A \vert_{\left\{t \right\} \times N} = \Gamma \right\}$$
is the set of $\ASD$ connections in the $A'$-Coulomb slice that are asymptotic to $\Gamma$.

\subsubsection{Closed $X$} \label{sec:ClosedX}

Here we assume that $X$ is closed. View $X$ as a cylindrical end 4-manifold with an empty end. Then one can check that it makes sense to choose the empty set $\mathcal{T}_{\emptyset} = \emptyset$ of thickening data, and that, e.g., the $\mASD$ space $\rM(\mathcal{T}_{\emptyset}, A')$ is exactly the set of $\ASD$ connections on $E$ in the $A'$-Coulomb slice.

\section{Gluing two $\mASD$ connections}\label{sec:GenGluing}

Here we state and prove our first gluing result, which discusses gluing together $\mASD$ connections over the compact parts of two cylindrical end 4-manifolds. When the connections are not regular, the resulting connection may not be $\mASD$, and its failure to be $\mASD$ is captured by a suitable obstruction map. Our set-up is very similar to that of $\ASD$ gluing outlined in Donaldson--Kronheimer \cite[Section 7.2]{donaldson-kronheimer1}, to which we refer the reader for more details at various points. When introducing new terms for the analysis, we have tried to keep our notation as consistent with that of \cite{donaldson-kronheimer1} as possible. Our emphasis below will be on the new features that arise in the $\mASD$ setting. In the present section, the only serious new features arise from the fact that the $\mASD$ operator $s$ has a nonlinear term not present in the usual $\ASD$ setting; these features manifest themselves in the proofs of the claims appearing in the proof of Theorem \ref{thm:1}.

\subsection{Set-up for gluing}\label{sec:Setupforgluing}

Let $X_1$ and $X_2$ be oriented cylindrical end 4-manifolds equipped with asymptotically cylindrical metrics as in Section \ref{sec:TheChoiceOfMetric}. We will write $X_{k0}$ for compact part of $X_k$ and we set $N_k \defeq \partial X_{k0}$. We will need parameters $\lambda> 0$ and $L > 1$ so that $b \defeq  4 L \lambda^{1/2}  \ll 1$. The constant $L$ will be fixed later, but we will ultimately be interested in allowing $\lambda$ to be arbitrarily small. For each $k$, fix a point
$$x_k \in B_b(x_k) \subset \mathrm{int}(X_{k0})$$ 
in the interior of the compact part. To simplify the discussion, we assume that the metric on $X_{k}$ is flat over $B_b(x_k)$; see \cite[Section IV(vi)]{donaldsonb} for how to extend the discussion to handle more general metrics.

Following the approach in \cite[Section 7.2.1]{donaldson-kronheimer1}, we glue along the annuli
$$\Omega_k \defeq B_{L \lambda^{1/2}}(x_k)  \backslash B_{L^{-1} \lambda^{1/2}}(x_k)$$ 
using an ``inversion'' map $f_{\lambda}\colon  \Omega_1 \rightarrow \Omega_2$ to produce a connected sum
$$X = X(L, \lambda) \defeq \big(X_1 \backslash  B_{L^{-1} \lambda^{1/2}}(x_1)\big)  \cup_{f_{L,\lambda}} \big(X_2 \backslash  B_{L^{-1} \lambda^{1/2}}(x_2)\big).$$
Then $X$ is an oriented cylindrical end 4-manifold with asymptotic 3-manifold $N = N_1 \sqcup N_2$. We will write $X_0$ for the compact part of $X$; this is formed by analogously gluing the compact parts $X_{k0}$ of the $X_k$. The metrics on the $X_k$ can be glued to form a metric on $X$, and we assume this is done as outlined at the end of p.~293 in \cite{donaldson-kronheimer1}. We denote this metric by $g_{L, \lambda}$. Since we are interested in the limiting behavior of this for small $\lambda$, we will include the metric in the notation for our various norms and spaces of connections, forms, etc. whenever it is relevant.

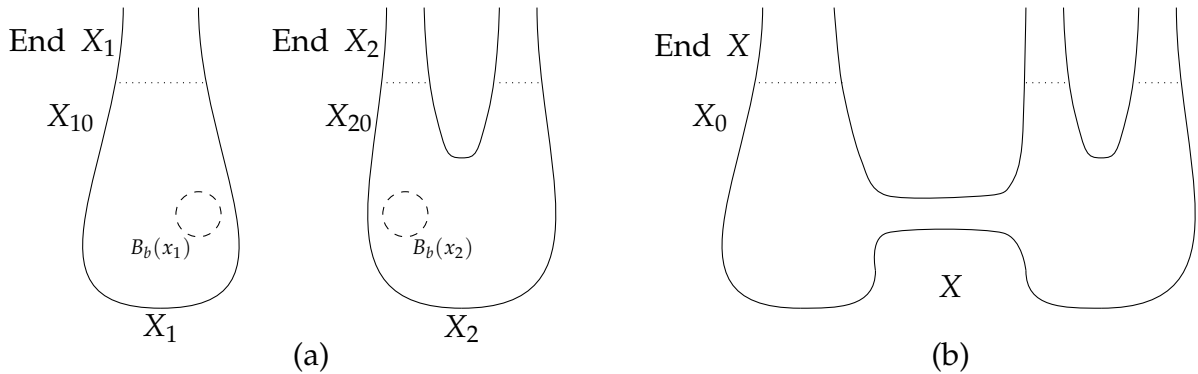
\begin{figure}[h]
\begin{tikzpicture}
\tikzstyle{every node} = [sloped,above] 
  \draw (0,-3).. controls +(left:2cm) and +(down:2cm)  ..(-.5,1);
   \draw (0,-3).. controls +(right:2cm) and +(down:2cm)  ..(.5,1);
   \draw[dashed] (.5,-1.75) circle (.3cm);
   \node[font=\tiny] at (0,-2.5) {$B_b(x_1)$};
   \draw [dotted] (-.55,0) -- (.6,0);
   \node at (-1.2,-.8) {$X_{10}$};
   \node at (-1.3,.2) {$\End  X_1$};
   \node at (0,-3.6) {$X_{1}$};
     \draw (4,-3).. controls +(left:2cm) and +(down:2cm)  ..(3,1);
     \draw plot [smooth, tension = 1] coordinates {(3.5,1) (3.65,-.4) (4,-1) (4.35,-.4) (4.5,1)};
   \draw (4,-3).. controls +(right:2cm) and +(down:2cm)  ..(5,1);
    \draw[dashed] (3.25,-1.75) circle (.3cm);
    \node[font = \tiny] at (3.75,-2.5) {$B_b(x_2)$};
    \draw [dotted] (3,0) -- (3.55,0);
    \draw [dotted] (4.5,0) -- (5.05,0);
       \node at (2.5,-.8) {$X_{20}$};
   \node at (2.2,.2) {$\End  X_2$};
    \node at (4,-3.6) {$X_{2}$};
    \node at (2,-4) {(a)};
\begin{scope}[shift={(8.5,0)}]
     \draw (0,-3).. controls +(left:2cm) and +(down:2cm)  ..(-.5,1);
     \draw (0,-3).. controls  +(right:.5cm) and +(down:.5cm) ..(1,-2.5);
     \draw [dotted] (-.55,0) -- (.6,0);
        \node at (-1.2,-.8) {$X_{0}$};
   \node at (-1.3,.2) {$\End  X$};
      \node at (2,-3) {$X$};
      \node at (2,-4) {(b)};
     \draw (4,-3).. controls  +(left:.5cm) and +(down:.5cm) ..(3,-2.5);
     \draw plot [smooth, tension = 1] coordinates {(3.5,1) (3.65,-.4) (4,-1) (4.35,-.4) (4.5,1)};
   \draw (4,-3).. controls +(right:2cm) and +(down:2cm)  ..(5,1);
       \draw [dotted] (3,0) -- (3.55,0);
    \draw [dotted] (4.5,0) -- (5.05,0);
        \draw plot [smooth] coordinates { (1,-2.5) (1.15,-2) (2.7,-2) (3,-2.5)};
         \draw plot [smooth] coordinates {(.45,1) (.55,0) (.8,-1) (1.15,-1.5) (2.5,-1.5) (2.8,-1.35)(2.95,-.8) (3,1)};
    \end{scope}
\end{tikzpicture}
    \caption{Illustrated above are the manifolds $X_1, X_2$ in (a), and their connected sum $X$ in (b). The 3-manifolds $N_1, N_2$, and $N$ are unlabeled, but are illustrated as dotted lines in the figure above.} 
    \label{fig:1}
    \end{figure}

Fix principal $G$-bundles $E_k \rightarrow X_k$ and flat connections $\Gamma_k \in \A(N_k)$ for $k = 1, 2$. These induce a bundle over $N$ as well as a flat connection $\Gamma$ on $N$. Fix $\delta > 0$ as in Section \ref{sec:ThickeningData} associated to this flat connection $\Gamma$. It follows that, for $k = 1, 2$, the quantity $\delta/ 2$ is not in the spectrum of $-*d_{\Gamma_k} $ on 1-forms. Let $\mathcal{T}_{k, \Gamma_k}$ be thickening data for $E_k$ with this $\delta$.

Fix an isomorphism $\rho\colon  (E_1)_{x_1} \rightarrow (E_2)_{x_2}$ of $G$-spaces, as well as flat connections $A_{\flat, k}$ for $E_k$ over $B_b(x_k)$. Using these flat connections and radial parallel transport, we can extend $\rho$ to a bundle isomorphism $E_1 \vert_{\Omega_1} \cong E_2 \vert_{\Omega_2}$ covering $f_{\lambda}$. It is with this bundle isomorphism that we glue the $E_k$ over the $\Omega_k$ to obtain a bundle 
$$E = E(\rho, L, \lambda)  \longrightarrow X(L, \lambda).$$
Since the gluing takes place away from the cylindrical end, the thickening data $\mathcal{T}_{k, \Gamma_k}$ for the $E_k$ induce thickening data $\mathcal{T}_\Gamma$ for $E$.

Fix $1 \leq  p < 4$ and suppose that, for each $k$, we have a smooth $\mASD$ connection 
$$A_k \in  \A^{1,p}(\mathcal{T}_{k, \Gamma_k})$$ 
on $X_k$. By performing the cutting off procedure described in Sections 7.2.1 and 4.4.5 of \cite{donaldson-kronheimer1}, we can form a connection $A_k'$ on $E_k$ that is equal to $A_k$ outside of the ball $B_{b}(x_k)$ and equal to the flat connection $A_{\flat, k}$ inside of the ball $B_{b/2}(x_k)$. Then the $A_k'$ patch together to determine a smooth connection $A' = A'(A_1, A_2)$ on $E$; this depends on $\rho, L, \lambda$ and the $A_k$. It follows that $A'$ is equal to $A_k$ in $X_k \backslash B_b(x_k) \subseteq X$ and that $A'$ is approximately $\mASD$:
\eqncount\begin{equation}\label{eq:our727}
\Vert s(A') \Vert_{L^p_\delta(X, g_{L, \lambda})}  \leq C_{(\ref{eq:our727})} b^{4/p}
\end{equation}
where $C_{(\ref{eq:our727})}$ is a constant independent of $L, \lambda$ (see (7.2.36) in \cite{donaldson-kronheimer1}). We will refer to $A'$ as the \emph{preglued connection}. We define the maps $i$ and $\iota$ of Section \ref{sec:TheSpaceOfConnections} by taking $A_{ref} \defeq A'$. 

\begin{remark}\label{rem:irreducible}
Assume $2< p < 4$ and set $p^* = 4p/(4-p)$. By \cite[Eq. (7.2.37)]{donaldson-kronheimer1}, as $b \rightarrow 0$, the connections $A'_k$ converge in $L^{p^*}_\delta$ to $A_k$. In particular, by Remark \ref{rem:irreduciblesmallp}, if $A_k$ is irreducible, then so too is $A'_k$ provided $b$ is sufficiently small. The stabilizer group of $A'$ is contained in that of $A'_k$ and so it follows that $A'$ is irreducible when either of $A_1$ or $A_2$ is irreducible and $b$ is sufficiently small.  
\end{remark}

On $X_k$, use the slice $\Sl(A_k)$ defined by $A_k$ and denote by $H_k^+ \defeq H^+_{A_k, \delta}$ the cokernel of the linearized operator $D_k \defeq (Ds\vert_{\Sl(A_k)})_{A_k}$, as in (\ref{eq:H+def}). As described in \cite[p.~290]{donaldson-kronheimer1}, we can choose lifts 
$$\sigma_k\colon  H^+_k \longrightarrow L^p_{\delta}(\Omega^+(X_k))$$
so that the operator $D_k \oplus \sigma_k$ is surjective. Moreover, we can do this in such a way that, for every $v \in H^+_k$, the form $\sigma_k(v)$ is supported in the complement of the ball $B_{2b}(x_k)$. Set 
$$H^+ \defeq H^+_1 \oplus H^+_2$$
and consider the linear map
$$\sigma \defeq \sigma_1 \oplus \sigma_2 \colon  H^+ \longrightarrow L^p_\delta(\Omega^+(X), g_{L, \lambda}).$$
Relative to the $L^p_\delta(X, g_{L, \lambda})$-norm on $H^+$, this map $\sigma$ is bounded with a bound independent of $L$ and $\lambda$.

\subsection{Gluing two connections}\label{sec:MainResultsForGluing}

The main result of this section is as follows.

\begin{theorem}\label{thm:1}
Assume $2 \leq  p < 4$ and set $p^* = 4p / (4 - p)$. Fix $\rho, \delta$, thickening data, and $\mASD$ connections $A_1$, $A_2$ as in Section \ref{sec:Setupforgluing}. Then there are constants $C, L, \lambda_0 > 0$ so that the following holds for each $0 < \lambda < \lambda_0$. 

Let $A' = A'(A_1, A_2)$ be the preglued connection constructed from $\rho, L, \lambda$, and the $A_k$.

\begin{itemize}
\item[(a)] There is a $\C^m$-map $J_{A_1, A_2} \colon  L^p_\delta(\Omega^+(X), g_{L, \lambda}) \rightarrow {\Sl(A')} \subseteq  \A^{1,p}(\mathcal{T}_\Gamma)$ that satisfies $J_{A_1, A_2}(0) = A'$. The first $m$ derivatives of $\xi \mapsto J_{A_1, A_2}(\xi)$ are bounded in operator norm by a bound that is independent of $\lambda$. 
\item[(b)] There is a linear map $\pi\colon  L^p_\delta(\Omega^+(X), g_{L, \lambda}) \rightarrow H^+$ satisfying $\sigma \circ \pi \circ \sigma  = \sigma$ and
$$\Vert \pi \xi \Vert_{H^+} \leq C \Vert \xi \Vert_{L^p_\delta(X, g_{L, \lambda})} \hspace{1cm} \forall \xi \in \Omega^+(X).$$
\item[(c)] There is a unique 2-form $\xi({A_1, A_2}) \in L^p_\delta(\Omega^+(X))$ so that
\eqncount\begin{equation}\label{eq:estypre}
 \Vert \xi(A_1, A_2) \Vert_{L^p_\delta(X, g_{L, \lambda})}  \leq  C b^{4/p} 
\end{equation}
and so that the connection $\mathcal{J}(A_1, A_2)  \defeq J_{A_1, A_2}(\xi(A_1, A_2))$ satisfies
\eqncount\begin{equation}\label{eq:estyp}
s(\mathcal{J}(A_1, A_2) )  =  -\sigma \pi \xi(A_1,A_2).
\end{equation}
\end{itemize}
In particular, for $k = 1, 2$ the connection $\mathcal{J}(A_1, A_2)$ is close to $A_k$ on $ X_k \backslash B_{L \lambda^{1/2}}(x_k) \subseteq X$ in the sense that
\eqncount\begin{equation}\label{eq:close}
\Vert \iota^{-1}(\mathcal{J}(A_1, A_2)) - \iota^{-1}(A_k) \Vert_{L^2_2(N_k) \times L^{p^*}_{\delta}(  X_k \backslash B_{L \lambda^{1/2}}(x_k))} \leq C b^{4/p}.
\end{equation}

If $A_1$ and $A_2$ are regular, then the connection $\mathcal{J}(A_1, A_2)$ is regular. In this case, $\mathcal{J}(A_1, A_2)$ is $\mASD$ and the maps $(A_1, A_2, \xi) \mapsto J_{A_1, A_2}(\xi)$ and $(A_1, A_2) \mapsto \xi(A_1, A_2)$ are both $\C^m$-smooth, relative to the specified topologies. 

If $p > 2 $ and either $A_1$ or $A_2$ is irreducible, then so is $\mathcal{J}(A_1, A_2)$.
\end{theorem}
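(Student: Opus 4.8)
The plan is to prove Theorem \ref{thm:1} by a contraction-mapping/implicit-function-theorem argument carried out uniformly in the gluing parameter $\lambda$, following the template of \cite[Section 7.2]{donaldson-kronheimer1} but accounting for the extra nonlinearities of $s$. First I would fix coordinates: using the diffeomorphism $\iota$ of (\ref{eq:iotamapdiff}) centered at the preglued connection $A'$, I would write a general nearby connection as $\iota(h, V)$ with $(h,V) \in \cH_{out} \times L^p_{1,\delta}(\Omega^1)$, and restrict to the Coulomb slice $\Sl(A')$ so that the $V$-component ranges over $\ker(d^{*,\delta}_{A'})$. By (\ref{eq:formfors}), in these coordinates $s$ splits as a constant term $s(A')$, a linear term $(Ds)_{A'}$, and explicitly quadratic (and higher, through the $h$-dependence of $i(h)$) remainders. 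The heart of the matter is the linear operator $P \defeq (Ds)_{A'} \oplus \sigma \colon T_{A'}\Sl(A') \oplus H^+ \to L^p_\delta(\Omega^+(X), g_{L,\lambda})$: one must show it is uniformly surjective with a uniformly bounded right inverse $Q$ as $\lambda \to 0$. This is the standard ``matching up approximate solutions'' estimate — splitting $\xi$ with a partition of unity subordinate to the decomposition $X = (X_1 \setminus B) \cup (X_2 \setminus B)$, applying the right inverses for $(Ds)_{A_k} \oplus \sigma_k$ coming from regularity of $A_k$ (or, if not regular, from the surjectivity of $(Ds)_{A_k} \oplus \sigma_k$ arranged in Section \ref{sec:Setupforgluing}), correcting the cross terms supported in the neck $\Omega_k$, and iterating; the key input is that the neck has small Sobolev constant (controlled by $b = 4L\lambda^{1/2}$) so the correction converges. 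From $Q$ one reads off $\pi$ in part (b) (the $H^+$-component of $Q$ precomposed appropriately, yielding $\sigma\pi\sigma = \sigma$) and $J_{A_1,A_2}$ in part (a) (set $J_{A_1,A_2}(\xi) \defeq \iota$ applied to $A' + Q(\xi - s(A'))$ composed with the nonlinear correction, so $J_{A_1,A_2}(0) = A'$); the $\C^m$ bounds independent of $\lambda$ follow because the nonlinear terms of $s$ and of $\iota$ are $\C^m$ with locally bounded derivatives and $Q$ is uniformly bounded.

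Next I would set up the fixed-point problem for part (c). Writing $s(\iota(h,V)) = s(A') + P(V - V', 0) + \mathcal{Q}(h, V)$ where $\mathcal{Q}$ collects all terms that are at least quadratic, solving $s(\mathcal{J}) = -\sigma\pi\xi$ is equivalent to a fixed-point equation $\xi = -s(A') - P\circ(\text{nonlinear in }Q\xi)$, i.e. $\xi = \mathcal{F}(\xi)$ for a map $\mathcal{F}$ with $\mathcal{F}(0) = -s(A')$, $\Vert \mathcal{F}(0)\Vert \leq Cb^{4/p}$ by (\ref{eq:our727}), and $\mathcal{F}$ a contraction on the ball of radius $\sim b^{4/p}$ once $\lambda$ is small — the quadratic estimate $\Vert \mathcal{Q}(h,V) - \mathcal{Q}(h',V')\Vert \lesssim (\Vert(h,V)\Vert + \Vert(h',V')\Vert)\Vert (h-h',V-V')\Vert$ is exactly the content of the ``claims'' the authors mention will appear in the proof, and uses the multiplication/Sobolev estimates on the weighted spaces together with the uniform bound on $Q$. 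The contraction mapping theorem then gives the unique $\xi(A_1,A_2)$ with the bound (\ref{eq:estypre}), and $\mathcal{J}(A_1,A_2) = J_{A_1,A_2}(\xi(A_1,A_2))$ satisfies (\ref{eq:estyp}). The closeness estimate (\ref{eq:close}) follows by tracking the construction: $\mathcal{J} - A_k$ is, away from the neck, the sum of the pregluing error $A' - A_k$ (which is $O(b^{4/p})$ in $L^p_{1,\delta}$ on $X_k \setminus B$ by the cutting-off estimates, and the $h$-components agree since the gluing is away from the end so $p_T$ is unchanged) and the correction $Q\xi$, which is $O(b^{4/p})$ by (\ref{eq:estypre}) and the bound on $Q$.

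For the final three assertions: if $A_1, A_2$ are regular then $H_k^+ = 0$, so $H^+ = 0$, $\sigma = 0$, $\pi = 0$, hence (\ref{eq:estyp}) reads $s(\mathcal{J}(A_1,A_2)) = 0$, i.e. $\mathcal{J}$ is $\mASD$; and regularity of $\mathcal{J}$ (surjectivity of $(Ds)_{\mathcal{J}}$) follows because $(Ds)_{A'}$ is uniformly surjective for small $\lambda$ — being a small perturbation of $(Ds)_{A_1} \oplus (Ds)_{A_2}$ in the neck-gluing sense — and $(Ds)_{\mathcal{J}} = (Ds)_{A'} + [\mathcal{J} - A', \cdot\,] + (\text{lower order})$ differs from it by an operator small in norm since $\Vert \mathcal{J} - A'\Vert \leq Cb^{4/p}$. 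The $\C^m$-smoothness of $(A_1,A_2,\xi) \mapsto J_{A_1,A_2}(\xi)$ and $(A_1,A_2) \mapsto \xi(A_1,A_2)$ is then the usual consequence of the implicit function theorem with parameters, using that all the ingredients ($s$, $\iota$, the right inverse $Q$ which now depends smoothly on $(A_1,A_2)$ via $(Ds)_{A'}$, and the fixed point) are $\C^m$. Finally, if $p > 2$ and say $A_1$ is irreducible, then by Remark \ref{rem:irreducible} the preglued connection $A'$ is irreducible for $b$ small, and then $\mathcal{J}(A_1,A_2) = \iota(h,V)$ with $\Vert(h,V) - \iota^{-1}(A')\Vert$ small (again by (\ref{eq:estypre})); irreducibility is an open condition on $\A^{1,p}(\mathcal{T}_\Gamma)$ by Lemma \ref{lem:irreducible} (or its $L^{p^*}$-refinement in Remark \ref{rem:irreduciblesmallp}, which is what matches the topology in which $A' \to A_1$), so $\mathcal{J}(A_1,A_2)$ is irreducible. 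I expect the main obstacle to be the uniform-in-$\lambda$ surjectivity and right-inverse bound for $(Ds)_{A'} \oplus \sigma$: this is where the weighted spaces $L^p_{k,\delta}$, the non-affine structure of $\A^{1,p}(\mathcal{T}_\Gamma)$, and the behavior of the center-manifold term $i(h)$ all interact with the neck-stretching analysis, and one must be careful that the $h$-direction (which lives on the end, far from the neck) does not interfere with the gluing estimates concentrated near $x_1, x_2$.
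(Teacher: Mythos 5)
Your overall plan (pass to coordinates via $\iota$, build a uniformly bounded right inverse by gluing the right inverses on the $X_k$, apply a contraction-mapping argument to solve the quadratic equation, then read off regularity and irreducibility) matches the paper's architecture. However, there are two genuine gaps.

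First, you propose to restrict to the Coulomb slice $\Sl(A')$ from the outset, i.e.\ to work with the operator $(Ds)_{A'}\vert_{\Sl(A')}\oplus\sigma$ and a right inverse $Q$ valued in $T_{A'}\Sl(A')\oplus H^+$. The paper deliberately does \emph{not} do this: the right inverse $P\oplus\pi$ of Claim~1 is valued in the full $T_{h'}\cH\times L^p_{1,\delta}(\Omega^1)$, and Remark~\ref{rem:GaugeSlice}(c) records the obstruction — the glued right inverse, built by patching $P_1,P_2$ (each valued in $\ker(d^{*,\delta}_{A_k})$) with cutoffs, produces a 1-form whose $d^{*,\delta}_{A'}$ is nonzero on the gluing region, so $J_{A_1,A_2}$ does not map into $\Sl(A')$ or any fixed slice. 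Your route would require projecting the glued operator back into the slice and re-establishing the uniform-in-$\lambda$ bound, which you don't address, and which is precisely the kind of issue the paper defers to the separate gauge-fixing result Theorem~\ref{thm:3}. (Your formula for $J$, ``$\iota$ applied to $A' + Q(\xi - s(A'))$,'' also does not satisfy $J(0)=A'$; the paper takes $J(\xi)=\iota(\exp_{(h',V')}(P\xi))$, exponentiating in the chart rather than subtracting $s(A')$.)

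Second, your treatment of the quadratic estimate is too thin where it matters most. You say it ``uses the multiplication/Sobolev estimates on the weighted spaces together with the uniform bound on $Q$,'' but the hard terms in the expansion (\ref{eq:expansionofQ}) are $[V_j\wedge q_{h'}(\eta_j)]^+$ and $[V_j\wedge(Di)_{h'}\eta_j]^+$: the center-manifold corrections $q_{h'}(\eta_j)$ and $(Di)_{h'}\eta_j$ do \emph{not} decay to zero down the end, so ordinary weighted Sobolev multiplication gives nothing. The paper deals with this by combining the vanishing of $q_{h'}(\eta)$ on $X_0$ (equation (\ref{eq:qvan})) with Claim~3 — an exponential-decay estimate $\Vert V\Vert_{L^p_\delta([T_1,\infty)\times N)}\lesssim\Vert d^+_{A'}V\Vert_{L^p_\delta}$ valid on the end for $V\in\ker(d^{*,\delta}_{A'})$, which in turn rests on the invertibility of $d^+_{h'_\infty}\oplus d^{*,\delta}_{h'_\infty}$ on the cylinder since $\delta/2$ avoids the spectrum. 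This idea is absent from your proposal, and without it the contraction estimate does not close.

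The remainder — reading $\pi$ off the $H^+$-component, deriving (\ref{eq:close}), passing to regularity via $\pi=0$ when $H^+_k=0$, and using Remark~\ref{rem:irreducible}/Lemma~\ref{lem:irreducible} for irreducibility — is aligned with the paper.
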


Before getting to the proof, we briefly discuss the maps appearing in this theorem, and their analogues in the standard $\ASD$ theory; precise definitions of these maps are given in the proof, below. First, the most interesting part of the theorem is part (c), with parts (a) and (b) serving to set up (c). The map $\pi$ from (b) is a measure of the failure of $A'$ to be regular. It serves the same role here and enjoys the same properties as the map of the same name \cite[pp.~290---291]{donaldson-kronheimer1} in the $\ASD$ setting. As for $J$ in (a), this map is formed from a near-right inverse $P$ of $(Ds\vert_{\Sl(A')})_{A} \oplus \sigma$, pre-composed by an exponential map (see Claim 1 below for a precise statement). As an example, in the special case where $\Gamma$ is non-degenerate (so $\mASD = \ASD$) the relevant space of connections is an affine space, and this exponential map is simply given by the affine action. In this case, the map $J_{A_1, A_2}$ simplifies to $J_{A_1, A_2}(\xi) = A' + P\xi$, just as in the usual $\ASD$ setting \cite[p.~289]{donaldson-kronheimer1}. 

The object $\mathcal{J}(A_1, A_2)$ is the glued connection we are after. Equation (\ref{eq:estyp}) expresses the degree to which this connection is $\mASD$. In particular, the obstruction map mentioned in the introduction can be taken to be the map $(A_1, A_2) \mapsto \pi \circ \xi(A_1, A_2)$. Finally we mention that the diffeomorphism $\iota^{-1}$ appears in (\ref{eq:close}) only to make explicit the sense in which $\mathcal{J}(A_1, A_2)$ approximates the $A_k$ away from the gluing points.  

The proof of Theorem \ref{thm:1} that we adopt relies on the following two lemmas. 

\begin{lemma}\label{lem:dklem}
Let $\St\colon  B \rightarrow B$ be a $\C^{m}$-map on a Banach space $B$ with $\St(0) = 0$ and 
\eqncount\begin{equation}\label{eq:quad}
\Vert \St( \xi_1) - \St( \xi_2) \Vert \leq \kappa ( \Vert \xi_1 \Vert + \Vert \xi_2 \Vert) \Vert \xi_1 - \xi_2 \Vert,
\end{equation}
 for some $\kappa > 0$ and all $\xi_1, \xi_2 \in B_1(0) \subset B$ in the unit ball. Then for each $\eta \in B$ with $\Vert \eta \Vert < 1/(10\kappa)$, there is a unique $\xi \in B$ with $\Vert \xi \Vert \leq 1/ (5\kappa)$ such that
$$\xi + \St(\xi) = \eta.$$
Moreover, if $\eta = \eta(a)$ depends $\C^{m}$-smoothly on a parameter $a$, then the solution $\xi = \xi(a)$ depends $\C^{m}$-smoothly on this parameter as well. 
\end{lemma}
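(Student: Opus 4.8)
The plan is to read the equation $\xi + \St(\xi) = \eta$ as a fixed-point equation for the map $T_\eta(\xi) \defeq \eta - \St(\xi)$ and apply the Banach fixed point theorem; the smooth dependence on parameters then follows from the uniform contraction principle (equivalently, from the implicit function theorem). As a harmless normalization, note that hypothesis (\ref{eq:quad}) is preserved if $\kappa$ is enlarged, so we may assume $\kappa \geq 1$; then the closed ball $\overline{B}_{1/(5\kappa)}(0)$ is contained in the unit ball $B_1(0)$ on which (\ref{eq:quad}) is valid. Setting $\xi_2 = 0$ in (\ref{eq:quad}) and using $\St(0) = 0$ gives the growth bound $\Vert \St(\xi) \Vert \leq \kappa \Vert \xi \Vert^2$ for $\xi \in B_1(0)$.

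Next I would check that, for $\Vert \eta \Vert < 1/(10\kappa)$, the map $T_\eta$ carries $\overline{B}_{1/(5\kappa)}(0)$ into itself and is a contraction there. For the self-map property, if $\Vert \xi \Vert \leq 1/(5\kappa)$ then
$$\Vert T_\eta(\xi) \Vert \leq \Vert \eta \Vert + \Vert \St(\xi) \Vert \leq \frac{1}{10\kappa} + \kappa \Vert \xi \Vert^2 \leq \frac{1}{10\kappa} + \frac{1}{25\kappa} < \frac{1}{5\kappa}.$$
For the contraction property, (\ref{eq:quad}) gives, for $\xi_1, \xi_2 \in \overline{B}_{1/(5\kappa)}(0)$,
$$\Vert T_\eta(\xi_1) - T_\eta(\xi_2) \Vert = \Vert \St(\xi_1) - \St(\xi_2) \Vert \leq \kappa\big(\Vert \xi_1 \Vert + \Vert \xi_2 \Vert\big)\Vert \xi_1 - \xi_2 \Vert \leq \tfrac{2}{5}\,\Vert \xi_1 - \xi_2 \Vert.$$
Since $\overline{B}_{1/(5\kappa)}(0)$ is a complete metric space and $T_\eta$ is a $\tfrac{2}{5}$-contraction of it into itself, the Banach fixed point theorem yields a unique $\xi \in \overline{B}_{1/(5\kappa)}(0)$ with $T_\eta(\xi) = \xi$, that is, $\xi + \St(\xi) = \eta$; this gives the claimed existence and uniqueness within the ball of radius $1/(5\kappa)$.

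For the parameter dependence, I would first upgrade (\ref{eq:quad}) to a bound on the derivative: differentiating $t \mapsto \St(\xi + tv)$ at $t=0$ and using (\ref{eq:quad}) gives $\Vert (D\St)_\xi \Vert \leq 2\kappa \Vert \xi \Vert$ for $\xi \in B_1(0)$, hence $\Vert (D\St)_\xi \Vert \leq \tfrac{2}{5} < 1$ on $\overline{B}_{1/(5\kappa)}(0)$, so $\mathrm{Id} + (D\St)_\xi$ is invertible there via its Neumann series. Now consider $F(a, \xi) \defeq \xi + \St(\xi) - \eta(a)$, which is $\C^m$ jointly as a composition of $\C^m$ maps, with $D_\xi F = \mathrm{Id} + (D\St)_\xi$ invertible at every solution. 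The implicit function theorem then gives, near each parameter value, a $\C^m$ solution $a \mapsto \xi(a)$; since the solution inside $\overline{B}_{1/(5\kappa)}(0)$ is globally unique, these local solutions agree and assemble into a single $\C^m$ map. (Alternatively one can invoke the uniform contraction principle directly, as $(a,\xi) \mapsto T_{\eta(a)}(\xi)$ is jointly $\C^m$ and a uniform contraction in $\xi$.)

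The argument is essentially routine. The only step needing care is passing from the finite-difference bound (\ref{eq:quad}) to the operator bound on $(D\St)_\xi$ used to invert $\mathrm{Id} + (D\St)_\xi$ for the smoothness statement, together with the bookkeeping of numerical constants so that all relevant balls remain inside $B_1(0)$; I do not expect any genuine obstacle.
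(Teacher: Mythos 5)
Your proof is correct and follows essentially the same route the paper takes: the paper simply cites \cite[Lemma 7.2.23]{donaldson-kronheimer1} for the contraction-mapping argument and \cite[Section I.5]{lang} for the $\C^m$-dependence, and your proposal just carries out those same steps explicitly (contraction of $T_\eta$ on $\overline{B}_{1/(5\kappa)}(0)$, then the implicit function theorem via invertibility of $\mathrm{Id} + (D\St)_\xi$). The constant bookkeeping, including the harmless normalization $\kappa \geq 1$ to keep $\overline{B}_{1/(5\kappa)}(0) \subset B_1(0)$, all checks out.
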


\begin{proof}
The existence and uniqueness claims follow from the contraction mapping principle and is carried out in \cite[Lemma 7.2.23]{donaldson-kronheimer1}. The $\C^{m}$-smooth dependence of $\xi$ on $a$ follows from, e.g., the discussion in \cite[Section I.5]{lang}. 
\end{proof}

The remaining lemma will be used to establish the nonlinear estimate (\ref{eq:quad}) in our $\mASD$ setting. 

\begin{lemma}\label{lem:quad1}
Assume $2 \leq p < 4$ and set $p^* = 4p / (4 - p)$. There is a constant $C_{(\ref{lem:quad1})}$ so that if $L, \lambda > 0 $ are any constants for which the connected sum $X$ is defined, then
$$\Vert fg \Vert_{L^p_\delta(X, g_{ L, \lambda})} \leq C_{(\ref{lem:quad1})} \Vert f \Vert_{L^{p^*}_\delta(X, g_{ L,\lambda})} \Vert g \Vert_{L^{p^*}_\delta(X, g_{L, \lambda})}$$
for all real-valued functions $f, g \in L^{p^*}_\delta(X)$. 
\end{lemma}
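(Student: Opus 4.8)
The plan is to reduce the weighted estimate to an unweighted one and then apply a scale-invariant Sobolev multiplication inequality on the connected sum $X$. First I would absorb the weight: since $\Vert fg\Vert_{L^p_\delta} = \Vert e^{\delta t/2}fg\Vert_{L^p}$ and $e^{\delta t/2} = e^{\delta t/4}\cdot e^{\delta t/4}$, I can write $e^{\delta t/2}fg = (e^{\delta t/4}f)(e^{\delta t/4}g)$, so the claim is equivalent to
\[
\Vert (e^{\delta t/4}f)(e^{\delta t/4}g)\Vert_{L^p(X,g_{L,\lambda})} \le C\,\Vert e^{\delta t/4}f\Vert_{L^{p^*}(X,g_{L,\lambda})}\,\Vert e^{\delta t/4}g\Vert_{L^{p^*}(X,g_{L,\lambda})},
\]
i.e. to the unweighted bilinear bound $\Vert FG\Vert_{L^p}\le C\Vert F\Vert_{L^{p^*}}\Vert G\Vert_{L^{p^*}}$ applied to $F = e^{\delta t/4}f$, $G = e^{\delta t/4}g$. (Here I use that the weight commutes with multiplication and that the equivalence of $\Vert\cdot\Vert_{L^p_\delta}$ with the sum of weighted derivative norms recorded in Section \ref{sec:AuxiliaryChoices} lets me pass between the two formulations cleanly; strictly only the $k=0$ case is needed, which is an identity.)

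Next I would prove the unweighted inequality $\Vert FG\Vert_{L^p(X,g_{L,\lambda})}\le C\Vert F\Vert_{L^{p^*}(X,g_{L,\lambda})}\Vert G\Vert_{L^{p^*}(X,g_{L,\lambda})}$ with $C$ independent of $L,\lambda$. Since $p^* = 4p/(4-p)$, H\"older's inequality with exponents $p^*/p$ and its conjugate gives $\Vert FG\Vert_{L^p}\le \Vert F\Vert_{L^{2p^*/?}}\cdots$ — more precisely, $\tfrac1p = \tfrac1{p^*}+\tfrac1{p^*}+\tfrac1{q}$ forces $\tfrac1q = \tfrac1p - \tfrac2{p^*} = \tfrac1p - \tfrac{2(4-p)}{4p} = \tfrac{2p-4}{4p}\ge 0$ for $p\ge 2$, so actually H\"older with the three exponents $(p^*,p^*,q)$ on $F\cdot G\cdot 1$ over a set of finite volume would work, but to keep the constant scale-invariant I instead use the borderline Sobolev embedding $L^{p^*}\hookrightarrow$ (no embedding), so the cleaner route is: the Sobolev multiplication theorem $L^{p^*}(X)\cdot L^{p^*}(X)\hookrightarrow L^p(X)$ on a $4$-manifold is precisely the statement that $W^{0,p^*}\cdot W^{0,p^*}\subset W^{0,p}$, which by H\"older requires $\tfrac1p\le\tfrac2{p^*}$, i.e. $p\le 8/3$... which is false for $p$ near $4$. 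So the correct argument must instead exploit a \emph{derivative}: one does not have the pure $L^p$-product bound, so Lemma \ref{lem:quad1} as stated presumably uses that its application (in Lemma \ref{lem:quad}) has $f,g$ that are themselves controlled in $L^{p^*}_\delta$ via $L^p_{1,\delta}\hookrightarrow L^{p^*}_\delta$. Granting the statement as written, the mechanism is the scale-invariant Sobolev embedding $L^p_1(B)\hookrightarrow L^{p^*}(B)$ on Euclidean balls, patched across the neck; so the proof is: cover $X$ by the two pieces $X_k\setminus B_{L^{-1}\lambda^{1/2}}(x_k)$ together with the neck region, apply the standard (unweighted, scale-invariant) multiplication inequality on each piece with a constant depending only on the local geometry, and observe that the connected-sum construction (flat metric near $x_k$, controlled gluing as in \cite{donaldson-kronheimer1}) makes these local constants uniform in $L,\lambda$.

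The one genuinely delicate point — and the step I expect to be the main obstacle — is the \textbf{uniformity of the constant $C$ in $\lambda$}. On the neck $\Omega_k$ the metric $g_{L,\lambda}$ degenerates as $\lambda\to 0$, and a careless choice of partition of unity or of local Sobolev estimate will produce a constant that blows up. The standard resolution, which I would follow, is to use the conformal/scale invariance of the relevant norms on $4$-manifolds: the quantity $\Vert FG\Vert_{L^p}$ is \emph{not} conformally invariant but $\Vert F\Vert_{L^{p^*}}$ is, and one rescales the neck to a fixed-size cylinder (or fixed annulus), applies the fixed-geometry inequality there, and rescales back — the powers of $\lambda$ cancel precisely because $p^*$ is the Sobolev conjugate of $p$ in dimension $4$. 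I would make this cancellation explicit on $\Omega_k$ and note that away from the neck the metric is $\lambda$-independent (equal to the original $g$ on $X_k$), so the global $C$ is the max of finitely many $\lambda$-independent local constants. The weighted reduction from the first paragraph is harmless here because $e^{\delta t/4}$ is supported on the cylindrical ends, which are disjoint from the necks and carry a $\lambda$-independent metric.
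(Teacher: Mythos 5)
Your opening reduction is where the argument breaks. You factor $e^{\delta t/2}fg = (e^{\delta t/4}f)(e^{\delta t/4}g)$ and claim this reduces the lemma to an unweighted bound $\Vert FG\Vert_{L^p(X)} \le C\Vert F\Vert_{L^{p^*}(X)}\Vert G\Vert_{L^{p^*}(X)}$. But that unweighted inequality is \emph{false} on $X$ as soon as $p>2$: by H\"older, $\Vert FG\Vert_{L^{p^*/2}}\le\Vert F\Vert_{L^{p^*}}\Vert G\Vert_{L^{p^*}}$, and since $p^*/2 = 2p/(4-p) > p$ when $p>2$, passing from $L^{p^*/2}$ to $L^p$ requires a finite-measure space. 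The cylindrical end has infinite volume, so no such estimate holds there. You in fact notice a version of this obstruction mid-argument (``which is false for $p$ near $4$''), but then you wander into Sobolev multiplication with derivatives and conformal rescaling of the neck — neither of which is relevant. The neck is not the problem (the metric there stays inside the compact part and the volume is uniformly controlled); the problem is the end, and your reduction throws away exactly the tool that controls it.

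The mechanism the paper actually uses is simpler and crucially keeps a leftover factor of the weight on the end. On $\End X$ write $e^{t\delta/2}fg = \bigl(e^{-t\delta/2}\cdot e^{t\delta/2}f\bigr)\cdot\bigl(e^{t\delta/2}g\bigr)$ and apply H\"older twice: first with exponents $(4,p^*)$ using $1/p = 1/4 + 1/p^*$, then with exponents $(r,p^*)$ where $1/4 = 1/r + 1/p^*$ (this needs $p^*\ge 4$, i.e.\ $p\ge 2$). The constant becomes $\Vert e^{-t\delta/2}\Vert_{L^r(\End X)}$, which is finite precisely because of the exponential decay — this is what replaces the missing finite-volume assumption on the end, and it is manifestly $\lambda$-independent since the metric on $\End X$ does not see the gluing. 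On the compact part $X_0$ the same two H\"olders produce a factor $\mathrm{vol}(X_0,g_{L,\lambda})^{1/r}$, which is uniformly bounded in $L,\lambda$. No Sobolev embedding, no derivatives, and no rescaling of the neck are needed; the whole content is two applications of H\"older plus one integrability observation on the end.
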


\begin{proof}
Writing $X = X_0 \cup \End  X$, it suffices to show that there is a uniform constant $C$ so that
$$\begin{array}{rcl}
\Vert fg \Vert_{L^p(X_0, g_{L, \lambda})} & \leq & C \Vert f \Vert_{L^{p^*}(X_0, g_{L,  \lambda)}} \Vert g \Vert_{L^{p^*}(X_0, g_{ L, \lambda})}\\
\Vert fg \Vert_{L^p_\delta(\End  X, g_{L,  \lambda})} & \leq & C \Vert f \Vert_{L^{p^*}_\delta(\End  X, g_{L, \lambda})} \Vert g \Vert_{L^{p^*}_\delta(\End  X, g_{L, \lambda})}.
\end{array}$$
We begin with the estimate over $\End  X$. Note that the metric $g_{L, \lambda}$ is independent of $L, \lambda$ over this region, so we do not need to worry about showing that any such constant $C$ is independent of $L, \lambda$. To obtain the estimate, use the definition of the $\delta$-dependent norms, together with H\"{o}lder's inequality to get
$$\begin{array}{rcl}
\Vert f g \Vert_{L^p_\delta(\End  X)}  &= & \Vert e^{t \delta/2 } fg \Vert_{L^p(\End  X)}\\
&= & \Vert (e^{-t \delta/2 }  e^{t \delta/2 }  f) (e^{t \delta/2 } g) \Vert_{L^p(\End  X)}\\
& \leq & \Vert e^{-t \delta/2 }  (e^{t \delta/2 }  f) \Vert_{L^4(\End  X)} \Vert e^{t \delta/2 } g \Vert_{L^{p^*}(\End  X)}
\end{array}$$
Since $2 \leq p < 4$, we have $4 \leq p^* < \infty$. Hence there is some $4 < r \leq \infty$ with $4^{-1} = r^{-1} + (p^*)^{-1}$. Then we can use H\"{o}lder's inequality again to continue the above:
$$\begin{array}{rcl}
\Vert f g \Vert_{L^p_\delta(\End  X)}  & \leq & \Vert e^{-t \delta/2 }  \Vert_{L^r(\End  X)} \Vert  f \Vert_{L^{p^*}_\delta(\End  X)} \Vert g \Vert_{L^{p^*}_\delta(\End  X)}.
\end{array}$$
Then the requisite estimate holds with $C = \Vert e^{-t \delta/2 }  \Vert_{L^r(\End  X)} $, which is plainly finite. 

As for the estimate over $X_0$, the same type of argument gives
$$\Vert fg \Vert_{L^p(X_0, g_{L, \lambda})}  \leq \mathrm{vol}(X_0, g_{L, \lambda})^{1/r} \Vert f \Vert_{L^{p^*}(X_0, g_{L,  \lambda)}} \Vert g \Vert_{L^{p^*}(X_0, g_{ L, \lambda})}.$$
As discussed on p.~293 of \cite{donaldson-kronheimer1}, the condition $p \geq 2$ implies that $\mathrm{vol}(X_0, g_{L, \lambda})$ can be taken to be independent of $L, \lambda$, provided $L \lambda^{1/2}$ is uniformly bounded from above (which is necessarily the case whenever the connected sum is defined).
\end{proof}

\begin{proof}[Proof of Theorem \ref{thm:1}]
Our intention is to apply Lemma \ref{lem:dklem}. To do this, we need to recast solving $s(A) = 0$ for $A$ into solving an equation for a self-mapping $\St$ of a Banach space. Ultimately, the Banach space will be the codomain of the $\mASD$ operator $s$, and $\St$ will essentially be the quadratic part of $s$. 

We begin this process by passing to a local chart on $\A^{1,p}(\mathcal{T}_\Gamma)$ (recall from Section \ref{sec:TheSpaceOfConnections} that this space of connections is generally \emph{not} an affine space). For this, write 
$$A' = \iota(h', V') = i(h') + V'$$ 
for $(h', V') \in \cH_{out} \times L^p_{1, \delta}(\Omega^1(X))$. Let $\exp_{h'}\colon  B_\eps(0) \subset T_{h'} \cH \rightarrow \cH$ be the exponential map associated to the $L^2_2(N)$-metric on $\cH \defeq \cH_\Gamma$; here $\eps> 0$ is small enough so that the exponential is well-defined. This is all taking place on the 3-manifold $N$, and so this exponential and this $\eps$ are manifestly independent of $L, \lambda$. Coupling this exponential on $\cH$ with the exponential on $\Omega^1(X)$ given by the affine action, we obtain a map
$$\begin{array}{rcl}
\exp_{(h', V')}\colon  B_\eps(0) \times L^p_{1, \delta}\big(\Omega^1(X)\big) & \longrightarrow & \cH \times L^p_{1, \delta}\big(\Omega^1(X)\big)\\
(\eta, V) & \longmapsto & \big(\exp_{h'}(\eta), V' + V\big).
\end{array}$$
The chart for $\A^{1, p}(\mathcal{T}_\Gamma)$ that we will use is $\iota \circ \exp_{(h', V')}$. {We note that $\iota \circ \exp_{(h', V')}$ identifies the ``slice'' $B_{\eps}(0) \times L^p_{1, \delta}(\ker(d_{A'}^{*, \delta}))$ with a neighborhood of $A'$ in $\Sl(A')$.} 

\begin{remark}
Throughout the proof that follows, we will work with the $L^2_2(N)$-norm on $T_{h'} \cH$; we will often not keep track of this in the notation. Note that this choice of norm is effectively immaterial since $\cH$ is finite-dimensional and so any two norms are equivalent, provided they are well-defined. 
\end{remark}

Consider the map
$$\begin{array}{rcl}
\widetilde{s} \colon  B_\eps(0) \times L^p_{1, \delta}\big(\ker(d_{A'}^{*, \delta})\big) & \longrightarrow & L^p_{\delta}\big(\Omega^+(X), g_{L, \lambda}\big)\\
(\eta,V) & \longmapsto & s\Big(\iota\big(\exp_{(h', V')}(\eta, V)\big)\Big)
\end{array}$$
which is the map $s\vert_{\Sl(A')}$ relative to the chart just described. This satisfies $\widetilde{s}(0, 0) = s(A')$ and so (\ref{eq:our727}) gives
\eqncount\begin{equation}\label{eq:bounds}
\Vert \widetilde{s}(0, 0) \Vert_{L^p(X, g_{L, \lambda})} \leq  C_{(\ref{eq:our727})} b^{4/p}.
\end{equation}
Write $(D \widetilde{s})_{(\eta, V)}$ for the linearization of $\widetilde{s}$ at $(\eta, V)$. As the following claim indicates, the definition of $\sigma$ implies that the operator $(D \widetilde{s})_{(0, 0)} \oplus \sigma$ is surjective.

\medskip

\noindent \emph{Claim 1: For $2 \leq p < 4$, there are constants $C_{(\ref{eq:estyp0})}, \lambda_0 > 0$, and $L > 1$, as well as linear maps 
$$\begin{array}{rcl}
P \colon  L^p_\delta(\Omega^+(X), g_{L, \lambda}) & \longrightarrow & T_{{h'}} \cH \times L^p_{1, \delta}\big({\ker(d_{A'}^{*, \delta})}, g_{L, \lambda}\big) \\
\pi\colon  L^p_\delta(\Omega^+(X), g_{L, \lambda}) & \longrightarrow &H^+
\end{array}$$ 
so that $P \oplus \pi$ is a right inverse to $(D \widetilde{s})_{(0, 0)} \oplus \sigma$ that, for all $0 < \lambda < \lambda_0$, satisfies 
\eqncount\begin{equation}\label{eq:estyp0}
\Vert (P \oplus \pi)\xi \Vert_{(L^2_2(N) \times L^{p^*}_\delta(X, g_{L, \lambda})) \oplus L^p_\delta(X, g_{L, \lambda})}  \leq C_{(\ref{eq:estyp0})}\Vert \xi \Vert_{L^p_\delta(X,g_{L, \lambda})} \hspace{1cm} \forall \xi \in \Omega^+(X) .
\end{equation}
}

\medskip

This claim also has an extension to some $p < 2$; see Corollary \ref{cor:smallp0} for details. We will prove Claim 1 shortly. At the moment, we will show how we use it to finish the proof of the theorem. To prove Theorem \ref{thm:1} (a), set
$$J(\xi) =J_{A_1, A_2}(\xi) \defeq \iota\big(\exp_{(h', V')}(P \xi)\big) { \in \Sl(A')}.$$ 
Clearly $J(0) = A'$ and $J$ is an immersion near $0$. That the derivatives of $J$ are bounded uniformly ($\lambda$-independent) follows from the fact that $P$ is uniformly bounded (by the claim) and the fact that the maps $\iota$ and $\exp_{(h', V')}$ are uniformly bounded. The map in Theorem \ref{thm:1} (b) is the map $\pi$ from Claim 1. 

We now prove Theorem \ref{thm:1} (c). Define $\St\colon  L^p_\delta(\Omega^+(X), g_{L, \lambda}) \rightarrow L^p_\delta(\Omega^+(X), g_{L, \lambda})$ by
$$\St(\xi) \defeq   \widetilde{s}(P\xi) - (D \widetilde{s})_{(0, 0)} P \xi  - \widetilde{s}(0, 0).$$
This is $\C^{m}$ and is the nonlinear part of the map $\widetilde{s} \circ P$. The following claim says that this map satisfies the requisite nonlinear estimates.

\medskip

\noindent \emph{Claim 2: The map $\St$ satisfies the hypotheses of Lemma \ref{lem:dklem} with a constant $\kappa$ that is independent of $0< \lambda < \lambda_0$.}
 
\medskip

Once again, we defer the proof of this claim until after we have finished the argument for Theorem \ref{thm:1}. It follows from Claim 2, Lemma \ref{lem:dklem}, and the estimate (\ref{eq:bounds}) that, provided $\lambda$ is sufficiently small, there is a unique $\xi = \xi({A_1, A_2})  \in  L^p_{\delta}(\Omega^+)$ satisfying
\eqncount\begin{equation}\label{eq:xisttilde}
\xi + \St(\xi) = - \widetilde{s}(0, 0) \indent \indent \textrm{and} \indent \indent \Vert \xi \Vert_{L^p_\delta(X)} \leq 1/(5\kappa).
\end{equation}
Setting $\Vert \cdot \Vert_{L^p_\delta} \defeq \Vert \cdot \Vert_{L^p_\delta(X, g_{L, \lambda})}$ and using (\ref{eq:quad}), we get
$$\Vert \xi \Vert_{L^p_\delta} \leq \Vert \widetilde{s}(0, 0) \Vert_{L^p_\delta} + \Vert \St(\xi) \Vert_{L^p_\delta} \leq C_{(\ref{eq:our727})} b^{4/p} + \frac{1}{5}\Vert \xi \Vert_{L^p_\delta}. $$
This implies the requisite estimate on $\xi$. Unraveling the definitions, we also have 
$$s(\mathcal{J}(A_1, A_2)) = \widetilde{s}(P \xi) = \widetilde{s}(0, 0) + (D \widetilde{s})_{(0, 0)} P \xi + \St(\xi) =  - \xi + (D \widetilde{s})_{(0, 0)} P \xi = - \sigma \pi \xi,$$
where $\mathcal{J}(A_1, A_2) \defeq J(\xi)$, by definition. This finishes the proof of (c). 

To prove (\ref{eq:close}), note that
$$\iota^{-1}(\mathcal{J}(A_1, A_2))- \iota^{-1}(A')   = \exp_{(h', V')}(P \xi) - \exp_{(h', V')}(0).$$
From our definition of the exponential appearing on the right, we see that the difference $\exp_{(h', V')}(P \xi) - \exp_{(h', V')}(0)$ equals $P \xi$ plus some higher order terms supported only on the center manifold component, and thus in a component with a norm independent of $\lambda$. In particular, since $\xi$ is small, these higher order terms can be uniformly controlled to yield a uniform first order estimate of the form
$$\Vert \iota^{-1}(\mathcal{J}(A_1, A_2)) - \iota^{-1}(A') \Vert_{L^2_2(N) \times L^{p^*}_{\delta}(X, g_{L, \lambda})} \leq C_1 \Vert P \xi \Vert_{L^2_2(N) \times L^{p^*}_{\delta}(X, g_{L, \lambda})}$$
We can then combine this with (\ref{eq:estyp0}) to get
\eqncount\begin{equation}\label{eq:closeinA}
\Vert \iota^{-1}(\mathcal{J}(A_1, A_2)) - \iota^{-1}(A') \Vert_{L^2_2(N) \times L^{p^*}_{\delta}(X, g_{L, \lambda})} \leq  C_1 C_{(\ref{eq:estyp0})} \Vert \xi \Vert_{L^p_\delta(X, g_{L, \lambda})} \leq C_2 b^{4/p}
\end{equation}
where the second inequality comes from the estimates of the previous paragraph. The estimate (\ref{eq:close}) follows from this and the fact that $A_k$ agrees with $A' = A'(A_1, A_2)$ on $X_k \backslash B_{L \lambda^{1/2}}(x_k)$.

When the $A_k$ are regular, the map $\pi$ is the zero map so $\mathcal{J}(A_1, A_2)$ is automatically $\mASD$ by (\ref{eq:estyp}). In this case, the operator $(D\iota)_{(h', V')} \circ P$ is a right inverse to $(Ds \vert_{\Sl(A')})_{A'}$, essentially by definition; in particular, $(Ds \vert_{\Sl(A')})_{A'}$ is surjective. {Thus $(Ds \vert_{\Sl(A)})_{A}$ is surjective whenever $A$ is close to $A'$. In particular, by (\ref{eq:closeinA}), this is the case with $A =  \mathcal{J}(A_1, A_2)$ and so $\mathcal{J}(A_1, A_2)$ is regular.} The $\C^m$-smooth dependence of $J$ on the $A_k$ follows from Remark \ref{rem:GaugeSlice} (a) below, and the $\C^m$-smoothness of $\xi$ follows from the $\C^m$-smoothness assertion of Lemma \ref{lem:dklem}. 

Finally, assume $A_1$ or $A_2$ is irreducible (assume $p > 2$ so this term is defined). It follows from Remark \ref{rem:irreducible} that $A'$ is irreducible as well. The irreducibility of $\mathcal{J}(A_1, A_2)$ then follows from (\ref{eq:closeinA}) and Lemma \ref{lem:irreducible}.

\medskip

To finish the proof of Theorem \ref{thm:1}, it therefore suffices to verify the claims; we begin with Claim 1. Let $\cH_k$ be the space $\cH_{out}$ for the connection $\Gamma_k$, and set
$$h_k = p_T(A_k) \in \cH_k$$
where $p_T$ is the map from Section \ref{sec:TheSpaceOfConnections}. Similar to what we did above over $X$, for each $k$, we can form a map 
$$\widetilde{s}_k\colon  T_{h_k} \cH_{k} \times L^p_{1, \delta} (\ker(d^{*, \delta}_{A_k})) \longrightarrow L^p_\delta(\Omega^+(X_k))$$ 
by precomposing $s$ with $\iota$ and the exponential $\exp_{h_k}$ for $\cH_{k}$ based at $h_k$. Linearizing at $(0, 0)$, and coupling with $\sigma_k$, we obtain a bounded linear map
$$\widetilde{D}_k  \defeq (D\widetilde{s}_k)_{(0, 0)} \oplus \sigma_k\colon  \Big(T_{h_k} \cH_{k} \times L^p_{1, \delta}\big(\ker(d^{*, \delta}_{A_k})\big)\Big) \oplus H^+_k \longrightarrow L^p_\delta(\Omega^+(X_k)).$$ 
This is surjective by the construction of $\sigma_k$. Standard elliptic theory for $\delta$-decaying spaces \cite{lockhart-mcowen} and the finite-dimensionality of $\cH_{k}$ imply that $\widetilde{D}_k$ restricts to a bounded map of the form 
$$\widetilde{D}_k   :\Big( T_{h_k} \cH_k \times L^p_{\ell+1,\delta}\big(\ker(d^{*, \delta}_{A_k})\big)\Big) \oplus H^+_k \longrightarrow L^p_{\ell,\delta}(\Omega^+)$$ 
for each $\ell \geq 0$, and this restriction remains surjective. In particular, the ``Laplacian'' 
$$ \widetilde{D}_k \widetilde{D}_k^{*, \delta}: L^p_{2, \delta}(\Omega^+) \longrightarrow L^p_{\delta}(\Omega^+)$$
is a Banach space isomorphism, where $\widetilde{D}_k^{*, \delta}$ is the adjoint of $\widetilde{D}_k$ relative to the $\delta$-decaying $L^2$-inner products on the domain and codomain. It follows from these observations that the formula
$$P_k \defeq \widetilde{D}_k^{*, \delta} \big( \widetilde{D}_k \widetilde{D}_k^{*, \delta}\big)^{-1}\colon  L^p_{\delta}(\Omega^+) \longrightarrow \Big( T_{h_k} \cH_k \times L^p_{1,\delta}\big(\ker(d^{*, \delta}_{A_k})\big)\Big) \oplus H^+_k$$
defines a bounded right inverse for $\widetilde{D}_k$. Coupling this with the embedding $L^{p}_{1, \delta} \hookrightarrow L^{p^*}_\delta$ it follows that there is a constant $c_k$ with
\eqncount\begin{equation}\label{eq:rightinverse}
\Vert P_k\xi \Vert_{(L^2_2(N_k) \times L^{p^*}_\delta(X_k) ) \oplus L^p_\delta(X_k)} \leq c_k \Vert \xi \Vert_{L^p_\delta(X_k)}, \hspace{.3cm} \forall \xi \in \Omega^+(X_k).
\end{equation}
The argument at this stage is almost identical to that given in \cite[Section 7.2.3]{donaldson-kronheimer1} (see also \cite[Prop.~7.2.18]{donaldson-kronheimer1}); however, we supply some of the details since we will refer to them below. Following \cite[p.~288]{donaldson-kronheimer1}, the operators $P_1, P_2$ can be glued together to produce an operator 
$$\Qt \colon  L^p_\delta\big(\Omega^+(X), g_{L, \lambda}\big)  \longrightarrow  \Big(T_{{h'}} \cH \times L^p_{1, \delta}\big(\Omega^1(X), g_{L, \lambda}\big) \Big) \oplus H^+$$
that satisfies
$$\Vert \Qt \xi \Vert_{(L^2_2(N) \times L^{p^*}_\delta(X, g_{L, \lambda}) )\oplus L^p_\delta(X, g_{L, \lambda})}  \leq (c_1 + c_2) \Vert \xi \Vert_{L^p_\delta(X,g_{L, \lambda})} \hspace{1cm} \forall \xi \in \Omega^+(X).$$
Somewhat more explicitly, we have
$$\Qt = \beta_1 P_1 \gamma_1 + \beta_2 P_2 \gamma_2$$
where the $\left\{\beta_1, \beta_2 \right\}$ and $\left\{\gamma_1, \gamma_2 \right\}$ are partitions of unity on $X$. The only property we need about these partitions is that the derivatives $\nabla \beta_k$ are supported in the gluing region and satisfy $\Vert \nabla \beta_k \Vert_{L^4(X)} \rightarrow 0$ as $L \rightarrow \infty$ and $b \rightarrow 0$. For future reference we note that since $A'$ equals the $A_k$ away from the gluing region and the $P_k$ take values in the $A_k$-slice, there is a uniform constant $C_{3}$ so that
\eqncount\begin{equation}\label{eq:betaestimate}
\Vert d_{A'}^{*, \delta} \Qt \xi \Vert_{L^p_\delta} \leq C_{3} ( \Vert \nabla \beta_1 \Vert_{L^4} + \Vert \nabla \beta_2 \Vert_{L^4} ) \Vert \Qt \xi \Vert_{L^{p^*}_\delta}.
\end{equation}
It also follows that $\Qt$ is an approximate right inverse to $(D \widetilde{s})_{(0, 0)} \oplus \sigma$ in the sense that 
$$\big((D \widetilde{s})_{(0, 0)} \oplus \sigma\big) \circ \Qt = I + \Rt$$
 for some $\Rt$ satisfying
$$\Vert \Rt(\xi) \Vert_{L^p_\delta} \leq \eps(L, b,p) \Vert \xi \Vert_{L^p_\delta}$$
where $\eps(L, b, p) \rightarrow 0$ as $L \rightarrow \infty$ and $b  \rightarrow 0$ (the assumption $p \geq 2$ is used here to establish this decay property for $\eps(L, b, p)$, see \cite[pp.~293,294]{donaldson-kronheimer1}). 

{At this stage, Donaldson and Kronheimer \cite{donaldson-kronheimer1} take their right inverse to be $\Qt(I + \Rt)^{-1}$. However, this is not sufficient for us since we want our right inverse to take values in the slice, and the operator $\Qt(I + \Rt)^{-1}$ generally does not. We thus want to modify the construction above, and we do this by simply projecting to the slice. Specifically, let $\Pi$ be the $L^2_\delta$-orthogonal projection of $\Omega^1(X)$ to the kernel of $d_{A'}^{*, \delta}$ (note that $d_{A'}^{*, \delta}$ is injective on the image of $I - \Pi$). This projection $\Pi$ extends to the codomain of $\Qt$ by acting on the $\Omega^1(X)$-factor only. Thus, the map
$$\widetilde{Q} \defeq \Pi \circ \Qt$$
takes values in the slice. We want to use this $\widetilde{Q}$ in place of $\Qt$, but for this we need to port the estimates on $\Qt$ over to $\widetilde{Q}$. To achieve this goal, note that the difference $\Qt - \widetilde{Q}$ takes values in the image of $I - \Pi$, on which the operator $d_{A'}^{*, \delta}$ is injective. From this we have the following Sobolev-type estimate
$$\Vert \Qt \xi -  \widetilde{Q} \xi \Vert_{(L^2_2(N) \times L^{p^*}_\delta(X, g_{L, \lambda}) )\oplus L^p_\delta(X, g_{L, \lambda})} \leq C_{4} \Vert d^{*, \delta}_{A'}(\Qt \xi -  \widetilde{Q} \xi ) \Vert_{L^p_\delta}$$
for all $\xi \in L^p_\delta(\Omega^+(X))$. As in \cite[Section 7.2.3]{donaldson-kronheimer1}, our range restriction on $p$ implies that this constant $C_{4}$ can be taken to be uniform in the neck-scaling parameter $b$. By construction, $\widetilde{Q}$ takes values in the kernel of $d^{*, \delta}_{A'}$ and so we can combine the above with (\ref{eq:betaestimate}) to get
\eqncount\begin{equation}\label{eq:restimatepre}
\Vert Q \xi - \widetilde{Q} \xi \Vert_{(L^2_2(N) \times L^{p^*}_\delta(X, g_{L, \lambda}) )\oplus L^p_\delta(X, g_{L, \lambda})} \leq C_{3}C_{4} ( \Vert \nabla \beta_1 \Vert_{L^4} + \Vert \nabla \beta_2 \Vert_{L^4}) \Vert Q \xi \Vert_{L^{p^*}_\delta}
\end{equation}
By taking $L$ large and $b$ sufficiently small, we can now transfer our estimate from $\Qt$ to $\widetilde{Q}$ to conclude that $\widetilde{Q}$ is uniformly bounded and is an approximate right inverse to $(D \widetilde{s})_{(0, 0)} \oplus \sigma$ in the sense that 
$$((D \widetilde{s})_{(0, 0)} \oplus \sigma) \circ \widetilde{Q} = I + \widetilde{R}$$
 for some $\widetilde{R}$ satisfying
 \eqncount\begin{equation}\label{eq:restimate}
\Vert \widetilde{R}(\xi) \Vert_{L^p_\delta} \leq \widetilde{\eps}(L, b,p) \Vert \xi \Vert_{L^p_\delta}
\end{equation}
where $\widetilde{\eps}(L, b, p) \rightarrow 0$ as $L \rightarrow \infty$ and $b  \rightarrow 0$. The whole point, of course, is that 
$$\widetilde{Q} \colon  L^p_\delta\big(\Omega^+(X), g_{L, \lambda}\big)  \longrightarrow  \Big(T_{{h'}} \cH \times L^p_{1, \delta}\big(\ker(d_{A'}^{*, \delta})\big) \Big) \oplus H^+$$
takes values in the slice where $\Qt$ may not have.}

Choose $L > 1, \lambda_0>0$ so that $\widetilde{\eps}(L, 4L \lambda_0^{1/2}, p) < 1/3$ and {$C_{3}C_{4} ( \Vert \nabla \beta_1 \Vert_{L^4} + \Vert \nabla \beta_2 \Vert_{L^4}) < 1$}. Then $\widetilde{Q}(I + \widetilde{R})^{-1}$ is a right inverse to $\widetilde{D} \oplus \sigma $ and has operator norm at most $3(c_1 + c_2+1)$. Then we can write this right inverse as $\widetilde{Q}(I + \widetilde{R})^{-1} = P \oplus \pi$, where the splitting corresponds to the direct sum decomposition of the codomain of $\widetilde{Q}$. The estimate (\ref{eq:estyp0}) immediately follows, so this finishes the proof of Claim 1. 

\begin{remark}\label{rem:GaugeSlice}\mbox{}
(a) It is not hard to show from the construction outlined in \cite{donaldson-kronheimer1} that, when the $A_k$ are regular, then the right inverse $P$ depends $\C^m$-smoothly on the $A_k$. The key observation here is that, though many choices have been made in this construction (e.g., cutoff functions), the only ones that depend on the $A_k$ are the choices of $\sigma_k$, but these can be taken to be zero when the $A_k$ are regular. 

\medskip

(b) {By construction, the connection $\mathcal{J}(A_1, A_2)$ of Theorem \ref{thm:1} naturally belongs to two slices: The one centered at itself, and the one centered at $A'$.}
\end{remark}

Now we move on to prove Claim 2. Fix $L, \lambda_0$ as in Claim 1 and we assume $\lambda \in (0, \lambda_0)$. We clearly have $\St(0) = 0$, so it suffices to show that $\St$ satisfies the quadratic estimate (\ref{eq:quad}) for a uniform constant $\kappa$. For this, note that by Lemma \ref{lem:regularityofalpha} and Taylor's Theorem, we can write
$$i(\exp_{h'}(\eta)) = i({h'}) + (D i)_{h'} \eta + q_{h'}(\eta)$$
where $q_{h'}\colon  T_{h'} \cH \rightarrow L^p_{1, loc}(X) \cap \C^0(X)$ vanishes to first order. Since $\cH_{out}$ is finite-dimensional, we can quantify this relative to any metric with respect to which the terms are well-defined. In particular, there is a constant $C_{(\ref{eq:qest})}$ so that 
\eqncount\begin{equation}\label{eq:qest}
\Vert q_{h'}(\eta_1) - q_{h'}(\eta_2)  \Vert_{\C^0(X)} \leq C_{(\ref{eq:qest})} ( \Vert \eta_1 \Vert_{L^2_{2}(N)} + \Vert \eta_2 \Vert_{L^2_2(N)})\Vert \eta_1 - \eta_2 \Vert_{L^2_2(N)}
\end{equation}
for all $\eta_1, \eta_2$ in the unit ball in $T_{h'} \cH$. Note that $q_{h'}(\eta)$ need not decay to zero down the ends of $X$ since $i({h'})$ and $i(\exp_{h'}(\eta))$ generally do not converge to the same connection at infinity. However, on the compact part we have
\eqncount\begin{equation}\label{eq:qvan}
q_{h'}(\eta) \vert_{X_0} = 0.
\end{equation}
Indeed, on $X_0$ the connection $i(h)$ equals the reference connection for all $h \in \cH_{out}$, and $i$ vanishes to all but the zeroth order on $X_0$. 

To verify (\ref{eq:quad}), fix $\xi_1, \xi_2 \in L^p_\delta(\Omega^+(X), g_{ \lambda})$ with $\Vert \xi_j \Vert_{L^p_\delta} \leq 1$ and set
$$(\eta_j, V_j) \defeq P \xi_j \in T_{{h'}} \cH \times L^p_{1, \delta}(\Omega^1(X)).$$
Then using the definition of $\St$ and the formula (\ref{eq:formfors}), we can write
\eqncount\begin{equation}\label{eq:expansionofQ}
\begin{array}{rcl}
\St(\xi_j) & = & \frac{1}{2} \left[ V_j \wedge V_j \right]^+  +  \frac{1- \beta'}{2} \left[\big( (D i)_{h'} \eta_j + q_{h'}(\eta_j)\big) \wedge \big((D i)_{h'} \eta_j + q_{h'}(\eta_j)\big) \right]^+  \\
&&+ (1- \beta')  d^+_{i({h'})} q_{h'}(\eta_j)  + \left[ V'  \wedge q_{h'}(\eta_j) \right]^+ \\
&&  + \left[  V_j \wedge q_{h'}(\eta_j)  \right]^+ +\left[  V_j \wedge  (D i)_{h'} \eta_j\right]^+. 
\end{array}
\end{equation}
(These are the higher order terms in the $\mASD$ operator $s$, expressed in terms of $V_j$ and $\eta_j$.) It suffices to show that each term on the right satisfies an estimate of the form (\ref{eq:quad}). Below we set $\Vert \cdot \Vert_{L^p_\delta} \defeq \Vert \cdot \Vert_{L^p_{\delta}(X, g_\lambda)}$. 

We begin with the first term on the right of (\ref{eq:expansionofQ}). This shows up in the $\ASD$ setting as well (see \cite[p.~289]{donaldson-kronheimer1}), but our argument is a bit more involved due to the non-compactness of $X$. For this, we use Lemma \ref{lem:quad1} to get
$$\begin{array}{rcl}
\frac{1}{2} \big\|  \left[ V_1 \wedge V_1 \right]^+ - \left[ V_2 \wedge V_2 \right]^+  \big\|_{L^p_\delta} & = & \frac{1}{2} \big\|  \left[ \big(V_1 + V_2\big) \wedge \big(V_1 - V_2\big) \right]^+ \big\|_{L^p_\delta}\\
& \leq & {\mathfrak{c}_{\mathfrak{g}}} \big\|  \vert V_1 + V_2\vert \vert V_1 - V_2\vert  \big\|_{L^p_\delta}\\
& \leq & {\mathfrak{c}_{\mathfrak{g}}}C_{(\ref{lem:quad1})} \Big( \Vert   V_1  \Vert_{L^{p^*}_\delta} + \Vert V_2 \Vert_{L^{p^*}_\delta} \Big)\Vert V_1 - V_2 \Vert_{L^{p^*}_\delta}
\end{array}$$
where $\mathfrak{c}_{\mathfrak{g}}$ is defined by 
\eqncount\begin{equation}\label{eq:defoffrakg}
\mathfrak{c}_{\mathfrak{g}}  \defeq \sup_{(\nu_1, \nu_2, \nu_3)} \vert \langle \nu_1, \left[ \nu_2, \nu_3 \right] \rangle \vert
\end{equation}
with the supremum running over all $\nu_j \in \mathfrak{g}$ with $\vert \nu_j \vert = 1$. Since $V_j$ is a component of $P\xi_j$, we can then use the estimate of Claim 1 to continue the above and get
$$\begin{array}{rcl}
\frac{1}{2} \big\|  \left[ V_1 \wedge V_1 \right]^+ - \left[ V_2 \wedge V_2 \right]^+  \big\|_{L^p_\delta} & \leq & {\mathfrak{c}_{\mathfrak{g}}} C_{(\ref{lem:quad1})} C_{(\ref{eq:estyp0})}^2 \Big( \Vert  \xi_1 \Vert_{L^{p}_\delta} + \Vert  \xi_2\Vert_{L^{p}_\delta} \Big)\Vert \xi_1 - \xi_2 \Vert_{L^{p}_\delta} 
\end{array}$$
which is the desired estimate.

Now we move on to the second term in (\ref{eq:expansionofQ}). Set $\rt(\eta) \defeq (D i)_{h'} \eta + q_{h'}(\eta) $, so we want to bound the $L^p_\delta$-norm of 
$$\begin{array}{l}
\frac{1- \beta'}{2} \Big(\left[ \rt(\eta_1) \wedge \rt(\eta_1) \right]^+  -\left[ \rt(\eta_2) \wedge \rt(\eta_2) \right]^+ \Big) = \frac{1- \beta'}{2} \left[ \big( \rt(\eta_1) + \rt(\eta_2)\big) \wedge \big(\rt(\eta_1) - \rt(\eta_2) \big) \right]^+ 
 \end{array}$$ 
in terms of the right-hand side of (\ref{eq:quad}). Note that this is supported on the compact cylinder $\mathrm{Cyl}_0 \defeq \left[T- 1/2, T + 1/2 \right] \times N$, and so its $L^p_\delta$-norm is bounded by a constant times
$$\begin{array}{l}
 \Big(\sum_{j = 1}^2 \Vert \rt(\eta_j) \Vert_{L^{p^*}_\delta( \mathrm{Cyl}_0)}  \Big) \Vert \rt(\eta_1) - \rt(\eta_2) \Vert_{L^{p^*}_\delta(\mathrm{Cyl}_0)} \\
\hspace{1cm}  \leq \Vert e^{t \delta/2} \Vert_{L^{p^*}(\mathrm{Cyl}_0)}^2\Big(\sum_{j = 1}^2  \Vert \rt(\eta_j) \Vert_{\C^0(\mathrm{Cyl}_0)}  \Big)\Vert \rt(\eta_1) - \rt(\eta_2) \Vert_{\C^0(\mathrm{Cyl}_0)} 
\end{array}$$
By Lemma \ref{lem:regularityofalpha}, this is bounded by a constant times
$$\begin{array}{l}
\Big(\Vert \eta_1 \Vert_{L^2_2(N)}   + \Vert \eta_2 \Vert_{L^2_2(N)}\Big)  \Vert \eta_1 - \eta_2 \Vert_{L^2_2(N)} \leq C_{(\ref{eq:estyp0})}^2 \Big( \Vert  \xi_1 \Vert_{L^{p}_\delta} + \Vert  \xi_2\Vert_{L^{p}_\delta} \Big)\Vert \xi_1 - \xi_2 \Vert_{L^{p}_\delta}
\end{array}$$
as desired.

The estimate for the third term $(1- \beta')  d^+_{i({h'})} q_{h'}(\eta_j) $ is similar and we leave it to the reader. Moving on to the fourth term in (\ref{eq:expansionofQ}), recall from (\ref{eq:qvan}) that $q_{h'}(\eta_j)$ vanishes on $X_0$. This observation combines with (\ref{eq:qest}) and then Claim 1 to give
$$\begin{array}{l}
\big\| \left[ V' \wedge q_{h'}(\eta_1) \right]^+   - \left[ V' \wedge q_{h'}(\eta_2) \right]^+   \big\|_{L^p_\delta} \\
\hspace{1cm} =  \big\| \left[ V' \wedge \big(q_{h'}(\eta_1) - q_{h'}(\eta_2)\big) \right]^+   \big\|_{L^p_\delta(\End  X)} \\
\hspace{1cm}  \leq   {\mathfrak{c}_{\mathfrak{g}}}\Vert V' \Vert_{L^p_\delta(\End  X)}  \Vert q_{h'}(\eta_1) - q_{h'}(\eta_2)  \Vert_{\C^0(\End  X)} \\
\hspace{1cm}  \leq  {\mathfrak{c}_{\mathfrak{g}}} C_{(\ref{eq:qest})} \Vert V' \Vert_{L^p_\delta(\End  X)} \Big( \Vert \eta_1 \Vert_{L^2_2(N)} + \Vert \eta_2 \Vert_{L^2_2(N)} \Big) \Vert \eta_1 - \eta_2 \Vert_{L^2_2(N)}\\
\hspace{1cm}  \leq  {\mathfrak{c}_{\mathfrak{g}}} C_{(\ref{eq:qest})} C_{(\ref{eq:estyp0})}^2 \Vert V' \Vert_{L^p_\delta(\End  X)} \Big( \Vert \xi_1 \Vert_{L^p_\delta} + \Vert \xi_2 \Vert_{L^p_\delta} \Big) \Vert \xi_1 - \xi_2 \Vert_{L^p_\delta}
\end{array}$$
This is the desired estimate for this term because $\Vert V' \Vert_{L^p_\delta(\End  X)}$ is plainly independent of $\lambda$ and the $\xi_j$. 

The remaining two terms are the most difficult to bound. This is because (i) these terms involve both the infinite-dimensional terms $V_i$ as well as the finite-dimensional terms $q_{h'}(\eta_j)$ and $(D i)_{h'} \eta_j$, and (ii) neither of these finite-dimensional terms generally decays to zero at infinity (nor do the differences $q_{h'}(\eta_1)- q_{h'}(\eta_2)$ and $(Di)_{h'} \eta_1 - (D i)_{h'} \eta_j$). The main estimate we need is the following, which we will see is equivalent to the fact that the operator $d_{A'}^+$ is Fredholm (on the appropriate spaces) with our choice of $\delta$.

\medskip

\noindent \emph{Claim 3: There is some $T_1 \gg T + 3/2 $ and a constant $C_{(\ref{eq:Lemma3})}$ so that 
\eqncount\begin{equation}\label{eq:Lemma3}
\Vert V \Vert_{L^p_\delta(\left[ T_1 , \infty \right) \times N)} \leq C_{(\ref{eq:Lemma3})} \Vert d_{A'}^+ V \Vert_{L^p_\delta(\left[ T_1-1 , \infty \right) \times N)}
\end{equation}
for all $V \in L^p_{1, \delta}(\Omega^1(X))$ with $d_{A'}^{*, \delta} V \vert_{\left[ T_1-1 , \infty \right) \times N} = 0$. }

\medskip

We prove Claim 3 after we finish our estimates for the last two terms in (\ref{eq:expansionofQ}). The argument we give applies to both of these last two terms, so we focus on establishing the estimate for the second-to-last term:
$$\begin{array}{l}
\left[  V_1 \wedge q_{h'}(\eta_1) \right]^+ - \left[  V_2 \wedge q_{h'}(\eta_2) \right]^+ \\
\hspace{1cm}= \frac{1}{2} \Big( \left[ \big(V_1 - V_2\big)  \wedge \big(q_{h'}(\eta_1) + q_{h'}(\eta_2)\big) \right]^+  +\left[ \big(V_1 + V_2 \big) \wedge \big(q_{h'}(\eta_1) - q_{h'}(\eta_2)\big) \right]^+    \Big) .
\end{array}$$
It suffices to bound the $L^p_\delta$-norm of each term on the right by the right-hand side of (\ref{eq:quad}); we will carry this out for $\left[ V_1 + V_2  \wedge q_{h'}(\eta_1) - q_{h'}(\eta_2) \right]^+  $, the other term is similar. Since the $q_{h'}(\eta_j)$ are supported on $\End  X$, we do not need to worry whether our constants are $\lambda$-dependent. With $T_1$ as in Claim 3, write
\eqncount\begin{equation}\label{eq:last}
\begin{array}{l}
 \big\| \left[ \big(V_1 + V_2\big)  \wedge \big(q_{h'}(\eta_1) - q_{h'}(\eta_2)\big) \right]^+ \big\|_{L^p_\delta} \\
 \hspace{1cm}  \leq  \Big( \big\| \left[ \big(V_1 + V_2 \big) \wedge \big(q_{h'}(\eta_1) - q_{h'}(\eta_2)\big) \right]^+ \big\|_{L^p_\delta(\left[0, T_1 \right] \times N)} \\
\hspace{4cm} + \big\| \left[ \big(V_1 + V_2\big)  \wedge \big(q_{h'}(\eta_1) - q_{h'}(\eta_2)\big) \right]^+ \big\|_{L^p_\delta(\left[T_1, \infty \right) \times N)} \Big).
\end{array}
\end{equation}
Set $\mathrm{Cyl}_1 \defeq \left[0, T_1 \right] \times N$ and estimate the first term on the right as follows:
$$\begin{array}{l}
\big\| \left[ \big(V_1 + V_2\big)  \wedge \big(q_{h'}(\eta_1) - q_{h'}(\eta_2)\big) \right]^+ \big\|_{L^p_\delta(\mathrm{Cyl}_1 )}\\
\hspace{1cm}  \leq  {\mathfrak{c}_{\mathfrak{g}}}  \Big(\Vert  V_1  \Vert_{L^{p^*}_\delta(\mathrm{Cyl}_1 )} +\Vert  V_2 \Vert_{L^{p^*}_\delta(\mathrm{Cyl}_1 )}\Big) \Vert  q_{h'}(\eta_1) - q_{h'}(\eta_2) \Vert_{L^{4}_\delta(\mathrm{Cyl}_1 )}\\
\hspace{1cm} \leq {\mathfrak{c}_{\mathfrak{g}}}  \Vert e^{t \delta/2} \Vert_{L^{4}(\mathrm{Cyl}_1)} \Big(\Vert  V_1  \Vert_{L^{p^*}_\delta} +\Vert  V_2 \Vert_{L^{p^*}_\delta}\Big) \Vert  q_{h'}(\eta_1) - q_{h'}(\eta_2) \Vert_{\C^0(\mathrm{Cyl}_1 )}\\
\hspace{1cm}  \leq {\mathfrak{c}_{\mathfrak{g}}} C_{(\ref{eq:qest})} C_{(\ref{eq:estyp0})}^3 \Vert e^{t \delta/2} \Vert_{L^{4}(\mathrm{Cyl}_1)} \Big(\Vert  \xi_1  \Vert_{L^{p}_\delta} +\Vert  \xi_2 \Vert_{L^{p}_\delta}\Big)^2 \Vert  \xi_1 - \xi_2  \Vert_{L^{p}_\delta}
 \end{array}$$
which is the desired estimate for this terms since we have assumed $\Vert  \xi_j \Vert_{L^{p}_\delta} \leq 1$, and so
$$(\Vert  \xi_1  \Vert_{L^{p}_\delta} +\Vert  \xi_2 \Vert_{L^{p}_\delta})^2 \leq 2(\Vert  \xi_1  \Vert_{L^{p}_\delta} +\Vert  \xi_2 \Vert_{L^{p}_\delta}).$$ 
As for the remaining term on the right of (\ref{eq:last}), note that it is bounded by a constant times
$$\begin{array}{l}
\Big( \Vert V_1 \Vert_{L^p_\delta(\left[T_1, \infty \right) \times N)}+ \Vert V_2 \Vert_{L^p_\delta(\left[T_1, \infty \right) \times N)}\Big) \Vert q_{h'}(\eta_1) - q_{h'}(\eta_2) \Vert_{\C^0(\left[T_1, \infty \right) \times N)}\\
\hspace{.75cm}  \leq C_{(\ref{eq:qest})} C_{(\ref{eq:estyp0})}^2 \Big( \Vert V_1 \Vert_{L^p_\delta(\left[T_1, \infty \right) \times N)}+ \Vert V_2 \Vert_{L^p_\delta(\left[T_1, \infty \right) \times N)}\Big) (\Vert  \xi_1  \Vert_{L^{p}_\delta} +\Vert  \xi_2 \Vert_{L^{p}_\delta}) \Vert  \xi_1 - \xi_2  \Vert_{L^{p}_\delta}
\end{array}$$
We will therefore be done with the proof of Claim 2 if we can show that the terms $\Vert V_j \Vert_{L^p_\delta(\left[T_1, \infty \right) \times N)}$ are bounded. For this, note that the linearization of $\widetilde{s}$ can be written as
$$\begin{array}{rcl}
(D\widetilde{s})_{(0, 0)} (\eta, V) & = & d^+_{A'}\big((D i)_{h'} \eta + V\big) - \beta' d^+_{i({h'})} (D i)_{h'} \eta\\
& = & d_{A'}^+  V + (1- \beta') d^+_{i({h'})} (D i)_{h'} \eta + \left[V' \wedge (Di)_{h'} \eta \right]^+.
\end{array}$$
Note also that, by the definition of $P$, the 1-form $V_j$ lies in the kernel of $d^{*, \delta}_{A'}$. We can therefore use Claim 3 and the above formula for $(D\widetilde{s})_{(0, 0)} $ to write
$$\begin{array}{rcl}
\Vert V_j \Vert_{L^p_\delta(\left[T_1, \infty \right) \times N)} & \leq & C_{(\ref{eq:Lemma3})} \Vert d^+_{A'} V_j \Vert_{L^p_\delta( \left[ T_1 - 1 , \infty \right) \times N)}\\
& \leq &  C_{(\ref{eq:Lemma3})} \Big( \Vert (D \widetilde{s})_{(0, 0)} (\eta_j, V_j) \Vert_{L^p_\delta} + \Vert \left[ V' \wedge (D i)_{{h'}} \eta_j \right]^+ \Vert_{L^p_\delta } \Big)
\end{array}$$
where we also used the fact that $\beta' = 1$ on $\left[ T_1 - 1 , \infty \right) \times N$. Since $(\eta_j, V_j) = P\xi_j$ and $P \oplus \pi$ is a right-inverse for $(D \widetilde{s})_{(0, 0)} \oplus \sigma$, we can continue this as
$$\begin{array}{rcl}
& \leq & C_{(\ref{eq:Lemma3})} \Big( \Vert \xi_j \Vert_{L^p_\delta} + \Vert \sigma \pi \xi_j \Vert_{L^p_\delta} +  {\mathfrak{c}_{\mathfrak{g}}} \Vert V' \Vert_{L^p_\delta} \Vert (D i)_{h'} \eta_j \Vert_{\C^0} \Big)\\
& \leq & C_{(\ref{eq:Lemma3})} \Big( 1 + C_{\sigma \pi} +  {\mathfrak{c}_{\mathfrak{g}}} C_{(Di)_{h'}} \Vert V' \Vert_{L^p_\delta} \Vert \eta_j \Vert_{L^2_2(N)} \Big)\\
& \leq & C_{(\ref{eq:Lemma3})} \Big( 1 + C_{\sigma \pi}+  {\mathfrak{c}_{\mathfrak{g}}}C_{(Di)_{h'}}C_{(\ref{eq:estyp0})}  \Vert V' \Vert_{L^p_\delta} \Big)
\end{array}$$
where $C_{\sigma \pi} $ and $C_{(Di)_{h'}}$ are the operator norms of $\sigma \pi$ and $(Di)_{h'}$, respectively (we are viewing the latter as a map $L^2_2(N) \rightarrow \C^0(X)$; see Lemma \ref{lem:regularityofalpha} and the definition of $i$). This is the uniform bound we are after, and thus finishes the proof of Claim 2. 

\medskip

Finally, we prove Claim 3. Let $h'_T \colon  \left[T, \infty \right) \rightarrow \cH_{out}$ denote the flow of the trimmed vector field $\Xi^{tr}$ with $h'_T(T) =  h'$. Let 
$$h'_\infty \defeq \lim_{t \rightarrow \infty} h'_T(t) \in \cH_{out}$$ 
be the limiting connection of this flow. This is a connection on $N$, but we will view it as a connection on $\End  X = \left[ 0, \infty \right) \times N$ that is constant in the $t$-direction. Let $\mathcal{X}$ be the $L^p_{1, \delta}$-completion of the space of 1-forms on $X$ supported on $\End  X$, and let $\mathcal{Y}$ be the $L^p_\delta$-completion of the elements of $\Omega^+ \oplus \Omega^0$ supported on $\End  X$ (so the elements of $\mathcal{X}$ and $\mathcal{Y}$ vanish on the compact part). Then the map 
$$d^+_{h'_\infty} \oplus d^{*, \delta}_{h'_\infty}\colon  \mathcal{X} \longrightarrow \mathcal{Y}$$
 is bounded and elliptic. We have assumed that $\delta/ 2$ is not in the spectrum of $-* d_h$, so it follows that the above operator has trivial kernel (it also has trivial cokernel, though we do not need this). In particular, there is a constant $C_5$ so that
$$\Vert V \Vert_{L^p_{\delta}} \leq \Vert V \Vert_{L^p_{1, \delta}}  \leq C_5 \Big( \Vert d^+_{h'_\infty} V \Vert_{L^p_\delta} + \Vert d^{*, \delta}_{h'_\infty} V \Vert_{L^p_\delta}\Big)$$
for all $V \in \mathcal{X}$. The connection $A'$ is $\C^0$-asymptotic to $h'_\infty$. In particular, we can choose $T_1$ large enough so that 
$$\Vert A' - h'_\infty \Vert_{\C^0(\left[T_1 - 1 , \infty \right) \times N)}  < 1/(6 {\mathfrak{c}_{\mathfrak{g}}} C_4).$$
Then if $V \in \mathcal{X}$ is supported on $\left[ T_1 - 1, \infty \right) \times N$ and in the kernel of $d_{A'}^{*, \delta}$ we have
$$\begin{array}{rcl}
\Vert V \Vert_{L^p_\delta( \left[ T_1 - 1, \infty \right) \times N)} & \leq & C_5\Big( \Vert d^+_{h'_\infty} V \Vert_{L^p_\delta( \left[ T_1 - 1, \infty \right) \times N)} + \Vert d^{*, \delta}_{h'_\infty} V \Vert_{L^p_\delta( \left[ T_1 - 1, \infty \right) \times N)}\Big)\\
& = & C_5\Big( \big\| d^+_{A'} V + \left[ \big(h'_\infty - A'\big) \wedge V \right]^+ \big\|_{L^p_\delta( \left[ T_1 - 1, \infty \right) \times N)}\\
&& \indent \indent \indent \indent + \big\| \left[ \big(h'_\infty - A'\big) \wedge * V \right] \big\|_{L^p_\delta( \left[ T_1 - 1, \infty \right) \times N)} \Big)\\
& \leq & C_5 \Vert d^+_{A'} V  \Vert_{L^p_\delta( \left[ T_1 - 1, \infty \right) \times N)} + \frac{1}{3} \Vert V  \Vert_{L^p_\delta( \left[ T_1 - 1, \infty \right) \times N)}.
\end{array}$$
Then Claim 3 follows (with $C_{(\ref{eq:Lemma3})} \defeq 3C_5/2$) from this estimate and a cutoff function.
\end{proof}

\subsection{Extensions to $p < 2$}\label{sec:smallp}

 In our existence result of Section \ref{sec:b^+=1}, we will need extensions to $p < 2$ of the estimates (\ref{eq:estypre}) and (\ref{eq:estyp0}); we state and prove the relevant extensions here. In fact, all we will need is an extension to $p = 4/3$ (so $p^* = 2$); we leave any more general extensions to the interested reader. Throughout this section, we fix data as in the statement of Theorem \ref{thm:1}.

For the first result, let $L > 1, \lambda_0> 0$, and $P \oplus \pi$ be as in the statement of Claim 1 appearing in the proof of Theorem \ref{thm:1}.

\begin{corollary}\label{cor:smallp0}
There is a constant $C$ so the following holds for all $0 < \lambda < \lambda_0$ and $\xi \in \Omega^+(X)$:
$$\Vert (P \oplus \pi)\xi \Vert_{(L^2_2(N) \times L^{2}_\delta(X, g_{L, \lambda}) ) \oplus L^{{4/3}}_\delta(X, g_{L, \lambda})}  \leq C\Vert \xi \Vert_{L^{4/3}_\delta(X,g_{L, \lambda})} .$$
\end{corollary}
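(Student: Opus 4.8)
The plan is to establish the estimate for the very operator $P\oplus\pi$ produced in the proof of Claim~1, viewing the corollary as the $p=4/3$ analogue of (\ref{eq:estyp0}). Note first that the Sobolev conjugate of $4/3$ in dimension $4$ is $(4/3)^*=2$, which is exactly why the target norm on the $\Omega^1$-factor is now $L^2_\delta$ rather than $L^4_\delta$. One cannot simply re-run the construction of a right inverse with $p=4/3$, since the decay of the gluing error $\eps(L,b,p)$ in (\ref{eq:restimate}) was only obtained for $p\ge 2$; instead one exploits that $P\oplus\pi$ is a single operator, defined on smooth forms by the explicit formula $\Qt(I+\Rt)^{-1}$ with $\Qt$ glued from the local operators $P_k=D_k^{*,\delta}(D_kD_k^{*,\delta})^{-1}$, and is therefore a priori bounded between many pairs of completions; one only needs to track its norm in the new spaces.

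First I would record the $p=4/3$ version of (\ref{eq:rightinverse}). Since $D_k$ is elliptic and $\delta/2$ is not in the spectrum of $*d_{\Gamma_k}\vert_{\mathrm{Im}(d_{\Gamma_k}^{*})}$, the Lockhart--McOwen theory \cite{lockhart-mcowen} gives that $D_k$ is bounded and surjective between the $L^{4/3}$-based $\delta$-weighted Sobolev spaces as well, so $P_k$ is bounded $L^{4/3}_\delta(X_k)\to (T_{h_k}\cH_k\times L^{4/3}_{1,\delta}(\ker d_{A_k}^{*,\delta}))\oplus H^+_k$, with a constant $c_k$ that does not involve $\lambda$ (as $X_k$ is $\lambda$-independent). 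Composing with the embedding $L^{4/3}_{1,\delta}\hookrightarrow L^2_\delta$ and using $\dim\cH_k<\infty$ yields $\Vert P_k\xi\Vert_{(L^2_2(N_k)\times L^2_\delta(X_k))\oplus L^{4/3}_\delta(X_k)}\le c_k\Vert\xi\Vert_{L^{4/3}_\delta(X_k)}$. Gluing $P_1,P_2$ with the same cutoffs as in Claim~1 and noting that the target norm involves no derivative of the $\Omega^1$-component --- so that multiplication by the bounded gluing cutoffs does not enlarge it --- shows that $\Qt$ is bounded $L^{4/3}_\delta(X,g_{L,\lambda})\to (L^2_2(N)\times L^2_\delta(X,g_{L,\lambda}))\oplus L^{4/3}_\delta(X,g_{L,\lambda})$ with norm $\le c_1+c_2$, uniformly in $\lambda$.

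The remaining point, and the one I expect to be the main obstacle, is to control the Neumann correction in the $L^{4/3}$-topology. For $\xi\in\Omega^+(X)$ the form $\zeta\defeq(I+\Rt)^{-1}\xi$ is well defined; since $\Rt$ is supported in the (relatively compact) gluing annulus, $\zeta$ agrees with $\xi$ off that annulus and is bounded on it, so $\Vert\zeta\Vert_{L^{4/3}_\delta}<\infty$. From $\zeta=\xi-\Rt\zeta$ it suffices to show $\Vert\Rt\zeta\Vert_{L^{4/3}_\delta}\le\tfrac12\Vert\zeta\Vert_{L^{4/3}_\delta}$, which is the $p=4/3$ counterpart of (\ref{eq:restimate}). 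Here I would rerun the Donaldson--Kronheimer estimate of \cite[pp.~293--294]{donaldson-kronheimer1} with the exponent pair $(4/3,2)$ in place of $(2,4)$: $\Rt$ is supported in the gluing annulus, where its nontrivial contributions have the schematic form $(\nabla\chi)\,P_k(\chi'\zeta)$ for gluing cutoffs $\chi,\chi'$; H\"older with $\tfrac34=\tfrac14+\tfrac12$ gives $\Vert(\nabla\chi)P_k(\chi'\zeta)\Vert_{L^{4/3}_\delta}\le\Vert\nabla\chi\Vert_{L^4}\,\Vert P_k(\chi'\zeta)\Vert_{L^2_\delta}$, and the bound on $P_k$ above (together with $L^{4/3}_{1,\delta}\hookrightarrow L^2_\delta$) controls $\Vert P_k(\chi'\zeta)\Vert_{L^2_\delta}$ by a constant times $\Vert\zeta\Vert_{L^{4/3}_\delta}$, while $\Vert\nabla\chi\Vert_{L^4}$ depends only on $L$ and can be made as small as desired. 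The subtlety is that $L$ and $\lambda_0$ are already fixed by Claim~1, so one must check that the cutoff used there --- in whose $p=2$ analysis $\Vert\nabla\chi\Vert_{L^4}$ likewise appears --- can be taken small enough to also yield $C\Vert\nabla\chi\Vert_{L^4}\le\tfrac12$ here; since the proof of Claim~1 is free to enlarge $L$ and shrink $\lambda_0$ beyond what the $p=2$ estimate strictly requires, one simply imposes this extra requirement there. Granting this, $\Vert\zeta\Vert_{L^{4/3}_\delta}\le\Vert\xi\Vert_{L^{4/3}_\delta}+\tfrac12\Vert\zeta\Vert_{L^{4/3}_\delta}$ gives $\Vert\zeta\Vert_{L^{4/3}_\delta}\le 2\Vert\xi\Vert_{L^{4/3}_\delta}$, and then $(P\oplus\pi)\xi=\Qt\zeta$ together with the bound on $\Qt$ finishes the proof with $C=2(c_1+c_2)$.
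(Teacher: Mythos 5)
Your proof is correct, and it takes a genuinely different route from the paper's. You work directly on the primal side: you re-establish the $L^{4/3}_\delta$-boundedness of the local operators $P_k$ via Lockhart--McOwen, glue to bound $\Qt$, and then rerun the Donaldson--Kronheimer remainder estimate for $\Rt$ at the exponent pair $(4/3,2)$, observing that the same H\"older split with $\Vert\nabla\chi\Vert_{L^4}$ governs both the $p=2$ and $p=4/3$ cases (the borderline $L^4$-norm of the cutoff gradient goes to zero like $(\log L)^{-3/4}$, uniformly in $\lambda$, and the Sobolev embedding $L^{4/3}_{1,\delta}\hookrightarrow L^2_\delta$ plays the role that $L^2_{1,\delta}\hookrightarrow L^4_\delta$ played for $p=2$). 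The paper instead argues by duality: it reduces the corollary to an $L^4_\delta$-bound for the formal adjoint $(P\oplus\pi)^{*,\delta}$, using the isometries $(L^{4/3}_\delta)^*\cong L^4_\delta$, $(L^2_\delta)^*\cong L^2_\delta$, $(L^2_2(N))^*\cong L^2_{-2}(N)$, and then carries out the adjoint gluing: the local adjoint $P_k^{*,\delta}=(D_kD_k^{*,\delta})^{-1}D_k$ has the needed smoothing property into $L^4_\delta$, the glued $\Qt^{*,\delta}$ is its $L^2_\delta$-adjoint, and one writes $(P\oplus\pi)^{*,\delta}=(I+\Rt^{*,\delta})^{-1}\Qt^{*,\delta}$. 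The two routes are dual to one another --- controlling $\Rt$ on $L^{4/3}_\delta$ is the same fact as controlling $\Rt^{*,\delta}$ on $L^4_\delta$ --- so neither is more powerful, but the paper's formulation avoids any explicit discussion of $p<2$ Sobolev spaces by working with the smoothing direction of the adjoint, whereas yours is more self-contained about why the remainder estimate survives below $p=2$.

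Two small expository remarks. First, the sentence where you establish that $\zeta=(I+\Rt)^{-1}\xi$ has finite $L^{4/3}_\delta$-norm before you have the norm bound on $\Rt$ is slightly circular as written; the clean fix is to note that once $\Vert\Rt\Vert_{L^{4/3}_\delta\to L^{4/3}_\delta}\le\tfrac12$ is established, the Neumann series converges in $L^{4/3}_\delta$ to a limit which coincides (in $L^1_{loc}$) with the $L^2_\delta$-limit already known, and so $\zeta\in L^{4/3}_\delta\cap L^2_\delta$. Second, you correctly isolate the real subtlety --- that one cannot simply rerun Claim~1 at $p=4/3$, because the construction of the right inverse uses $p\ge 2$ --- which is a point the paper's own proof is terse about when it invokes (\ref{eq:restimate}) for the adjoint remainder; your direct verification that the H\"older exponent for the cutoff gradient is still $4$ at $p=4/3$ is exactly the content needed to close that.
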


\begin{proof}
We refer to the notation established in the proof of Claim 1. Momentarily suppressing Sobolev completions, let 
$$(P \oplus \pi)^{*,\delta} : \Big(T_{h'} \cH \times \ker(d^{*, \delta}_{A'})\Big) \oplus H^+ \longrightarrow \Omega^+(X)$$ 
be the formal adjoint of $P \oplus \pi$, relative to the $L^2_\delta$-inner product on $X$. By the duality isometries $(L_\delta^2(X))^* \cong L^2_\delta(X)$, $(L_\delta^4(X))^* \cong L^{4/3}_\delta(X)$, and $(L_2^2(N))^* \cong L^2_{-2}(N)$, we will be done if we can establish a uniform bound of the form
$$\Vert (P \oplus \pi)^{*,\delta}(\eta, V, \mu) \Vert_{L^4_\delta(X)} \leq C_1\left( \Vert \eta \Vert_{ L^2_{-2}(N)} + \Vert V \Vert_{L^{2}_\delta(X)} + \Vert \mu \Vert_{L^{4}_\delta(X)}\right) .$$
Since $T_{h'} \cH$ is finite-dimensional, there is a bound of the form $\Vert \eta \Vert_{L^2_{2}(N)} \leq C_2 \Vert \eta \Vert_{L^2_{-2}(N)}$ for all $\eta \in T_{h'} \cH$. It therefore suffices to show
\eqncount\begin{equation}\label{eq:popluspi}
\Vert (P \oplus \pi)^{*,\delta}(\eta, V, \mu) \Vert_{L^4_\delta(X)} \leq C_3\left( \Vert \eta \Vert_{L^2_{2}(N)} + \Vert V \Vert_{L^{2}_\delta(X)} + \Vert \mu \Vert_{L^{4}_\delta(X)}\right) 
\end{equation}
for a uniform constant $C_3$. This is precisely the estimate of Claim 1, except with the adjoint operator $(P \oplus \pi)^{*, \delta}$ in place of $P \oplus \pi$. We will show that the proof of Claim 1 can be sufficiently modified to hold for this adjoint. 

Towards this end, note that the adjoint of $P_k = \widetilde{D}_k^{*, \delta}(\widetilde{D}_k\widetilde{D}_k^{*,\delta})^{-1}$ is given by $P_k^{*,\delta} = (\widetilde{D}_k\widetilde{D}_k^{*,\delta})^{-1} \widetilde{D}_k$ and so satisfies 
$$\Vert P_k^{*, \delta} (\eta_k, V_k, \mu_k) \Vert_{L^4_\delta(X_k)} \leq c_k \left( \Vert \eta_k \Vert_{L^2_2(N_k)} + \Vert V_k \Vert_{L^2_\delta(X_k)} + \Vert \mu_k \Vert_{L^4_\delta(X_k)}\right).$$
Just as before, these can be glued together to form an operator $\Qt^{*, \delta}$ that satisfies
\eqncount\begin{equation}\label{eq:tqestta}
\Vert \Qt^{*, \delta} (\eta, V, \mu) \Vert_{L^4_\delta(X)} \leq (c_1 + c_2) \left( \Vert \eta \Vert_{L^2_2(N)} + \Vert V \Vert_{L^2_\delta(X)} + \Vert \mu \Vert_{L^4_\delta(X)}\right).
\end{equation}
Moreover, it is not hard to see that this gluing can be done so that $\Qt^{*, \delta}$ is exactly the formal $L^2_\delta$-adjoint of the operator $\Qt$ appearing in the proof of Claim 1. It follows that the formal $L^2_\delta$-adjoint of $\widetilde{Q}$ is the restriction of $\Qt^{*, \delta}$ to the slice (this just restricts $V$ to line in the kernel of $d^{*, \delta}_{A'}$). 

Then the defining formula $P \circ \pi = \widetilde{Q}(I + \widetilde{R})^{-1}$ implies 
\eqncount\begin{equation}\label{eq:formulaforpcircpiadjoint}
(P \circ \pi)^* = (I + \widetilde{R}^{*, \delta})^{-1}\widetilde{Q}^{*,\delta}
\end{equation}
where $\widetilde{R}^{*, \delta}$ is the formal adjoint of $\widetilde{R}$ and so satisfies $\Vert \widetilde{R}^{*, \delta} \xi \Vert_{L^2_\delta(X)} = \Vert \widetilde{R} \xi \Vert_{L^2_\delta(X)}$. Then the estimate (\ref{eq:popluspi}) follows from (\ref{eq:formulaforpcircpiadjoint}), (\ref{eq:tqestta}), and (\ref{eq:restimate}).
\end{proof}

For the second and last of the extensions we need, let $\xi({A_1, A_2}) \in L^2_\delta(\Omega^+(X))$ be as in the conclusion of Theorem \ref{thm:1} (c).

\begin{corollary}\label{cor:smallp1}
There are $C, \lambda_0' > 0$, so that if $0 < \lambda < \lambda_0'$ then
$$\Vert \xi({A_1, A_2})  \Vert_{L^{4/3}_\delta(X, g_{L, \lambda})}  \leq  C \lambda^{3/2}.$$
\end{corollary}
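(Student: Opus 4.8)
The plan is to mimic the bootstrapping strategy used to prove the $L^p_\delta$-estimate \eqref{eq:estypre} in Theorem \ref{thm:1}(c), but now run it with the $L^{4/3}_\delta$-norm in place of the $L^p_\delta$-norm, using Corollary \ref{cor:smallp0} as the input estimate for the right inverse instead of Claim 1. First I would observe that, by construction, $\xi = \xi(A_1,A_2)$ satisfies the fixed-point equation $\xi + \St(\xi) = -\widetilde s(0,0)$ from \eqref{eq:xisttilde}, where $\St$ is the nonlinear part of $\widetilde s \circ P$. So $\Vert \xi \Vert_{L^{4/3}_\delta} \le \Vert \widetilde s(0,0) \Vert_{L^{4/3}_\delta} + \Vert \St(\xi) \Vert_{L^{4/3}_\delta}$, and I want to show each term on the right is $O(\lambda^{3/2})$. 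Note $b = 4L\lambda^{1/2}$ with $L$ fixed, so $b^{4/p} = b^3$ when $p = 4/3$, and $b^3 = (4L)^3 \lambda^{3/2}$ is exactly of the claimed order; this is why $p = 4/3$ is the relevant exponent.

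For the first term, I would revisit the pregluing estimate. The bound \eqref{eq:our727} states $\Vert s(A') \Vert_{L^p_\delta} \le C b^{4/p}$, and the proof (via (7.2.36) in \cite{donaldson-kronheimer1}) is really an $L^\infty$-bound on $s(A')$ on the gluing region of radius $\sim b$ combined with a volume factor; so the same argument gives $\Vert s(A') \Vert_{L^{4/3}_\delta} \le C b^{4/(4/3)} = C b^3 \le C' \lambda^{3/2}$, and hence $\Vert \widetilde s(0,0) \Vert_{L^{4/3}_\delta} \le C'\lambda^{3/2}$ as well since $\widetilde s(0,0) = s(A')$. For the second term, the key is that $\St$ is \emph{quadratic}: rerunning the term-by-term analysis of Claim 2, but tracking the $L^{4/3}_\delta$-norm of the output while estimating the inputs $(\eta_j, V_j) = P\xi$ in the norms supplied by Corollary \ref{cor:smallp0} (namely $\eta \in L^2_2(N)$ and $V \in L^2_\delta(X)$), I would show a bound of the shape
$$\Vert \St(\xi) \Vert_{L^{4/3}_\delta} \le \kappa' \Vert \xi \Vert_{L^{4/3}_\delta}^2,$$
where the gain of a full power of $\xi$ (rather than just $\Vert\xi\Vert \le 1/(5\kappa)$ as in the plain quadratic estimate) comes from the fact that every term in the expansion \eqref{eq:expansionofQ} is genuinely at least quadratic in $(\eta_j, V_j)$, hence at least quadratic in $\xi$. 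Here I would also use the $L^{4/3}_\delta$-version of Lemma \ref{lem:quad1} — i.e. the multiplication estimate $\Vert fg\Vert_{L^{4/3}_\delta} \lesssim \Vert f\Vert_{L^2_\delta}\Vert g\Vert_{L^4_\delta}$, which follows from Hölder exactly as in the proof of that lemma since $\tfrac34 = \tfrac12 + \tfrac14$ — and the analogue of Claim 3 (the Fredholm estimate for $d^+_{A'}$) in the $p$-range under consideration, which is available by the same Lockhart–McOwen theory since $\delta/2$ avoids the spectrum of $*d_\Gamma$.

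Combining these, $\Vert \xi \Vert_{L^{4/3}_\delta} \le C'\lambda^{3/2} + \kappa' \Vert \xi \Vert_{L^{4/3}_\delta}^2$. Since we already know from Theorem \ref{thm:1}(c) that $\Vert \xi \Vert_{L^p_\delta} \le Cb^{4/p}$ and (shrinking $\lambda_0$) that $\xi$ is small in all the relevant norms, the term $\kappa'\Vert\xi\Vert_{L^{4/3}_\delta}^2$ is absorbed: for $\lambda < \lambda_0'$ small enough we have $\kappa'\Vert\xi\Vert_{L^{4/3}_\delta} \le \tfrac12$, giving $\Vert \xi \Vert_{L^{4/3}_\delta} \le 2C'\lambda^{3/2}$, which is the claim. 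The main obstacle I anticipate is bookkeeping the last two (mixed) terms of \eqref{eq:expansionofQ} — the ones involving both $V_j$ and the non-decaying finite-dimensional pieces $q_{h'}(\eta_j)$ and $(Di)_{h'}\eta_j$ — in the new norms: one must again split the cylinder at some large $T_1$, use Claim 3 to control $\Vert V_j\Vert_{L^{4/3}_\delta([T_1,\infty)\times N)}$ in terms of $\Vert(D\widetilde s)_{(0,0)}(\eta_j,V_j)\Vert$ and hence in terms of $\Vert\xi\Vert$, and make sure the Sobolev/Hölder exponents still close up with $4/3 = \tfrac12 + \tfrac14$ rather than the $1/p = 1/p^* + 1/r$ used before. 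This is routine but is where an exponent mismatch could bite, so it deserves care.
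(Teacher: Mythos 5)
The overall shape of your proposal matches the paper's: the $L^{4/3}_\delta$-estimate on $\widetilde{s}(0,0) = s(A')$ (via $b^{4/(4/3)} = b^3$) and the reuse of the fixed-point identity (\ref{eq:xisttilde}) are exactly right, as is the instinct to bring in Corollary \ref{cor:smallp0} and the H\"older inequality $\Vert fg\Vert_{L^{4/3}_\delta} \lesssim \Vert f\Vert_{L^2_\delta}\Vert g\Vert_{L^4_\delta}$. The gap is in the form of the quadratic estimate you claim for $\St$. You assert a \emph{pure} $L^{4/3}_\delta$-quadratic bound $\Vert \St(\xi)\Vert_{L^{4/3}_\delta} \le \kappa' \Vert \xi\Vert_{L^{4/3}_\delta}^2$ and then absorb it using a priori smallness of $\Vert \xi\Vert_{L^{4/3}_\delta}$; neither half of this is available, and both fail for concrete reasons.

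First, the H\"older step forces one factor of $P\xi$ into $L^4_\delta$. Corollary \ref{cor:smallp0} controls the $\Omega^1$-component of $P\xi$ in $L^2_\delta$ by $\Vert\xi\Vert_{L^{4/3}_\delta}$, which is a one-Sobolev-level gain ($(4/3)^* = 2$); it cannot give $L^4_\delta$-control by $\Vert\xi\Vert_{L^{4/3}_\delta}$, as that would require two levels of Sobolev gain ($L^{4/3}\to L^2\to L^4$) from an operator $P$ that gains only one derivative. The $L^4_\delta$-factor is instead controlled by $\Vert\xi\Vert_{L^2_\delta}$ via (\ref{eq:estyp0}) with $p=2$, $p^*=4$. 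So the machinery actually yields the \emph{mixed} bilinear estimate
$$\Vert \St(\xi)\Vert_{L^{4/3}_\delta} \le C \Vert \xi\Vert_{L^2_\delta}\, \Vert \xi\Vert_{L^{4/3}_\delta},$$
not a pure $L^{4/3}_\delta$-quadratic. Second, even granting a pure quadratic bound, your absorption step is circular: you invoke Theorem \ref{thm:1}(c) to claim ``$\xi$ is small in all the relevant norms,'' but (\ref{eq:estypre}) is established only for $2\le p<4$, and $L^2_\delta \not\subseteq L^{4/3}_\delta$ on a cylindrical-end manifold (for example $e^{-\delta t/2}(1+t)^{-\alpha}$ lies in $L^2_\delta$ but not $L^{4/3}_\delta$ for $1/2<\alpha\le 3/4$), so smallness of $\Vert\xi\Vert_{L^{4/3}_\delta}$ does not follow from the $p\ge 2$ estimates and is exactly what this corollary is trying to prove. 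The mixed bound sidesteps both problems: one absorbs the $\St$ term using the \emph{already-established} smallness of $\Vert\xi\Vert_{L^2_\delta}$, arriving at $\Vert\xi\Vert_{L^{4/3}_\delta}\le C_1 b^3 + \tfrac12\Vert\xi\Vert_{L^{4/3}_\delta}$, which is the route the paper takes.
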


\begin{proof}
Setting $\xi \defeq \xi(A_1, A_2)$, the identity in (\ref{eq:xisttilde}) gives
$$\Vert \xi \Vert_{L^{4/3}_\delta(X)} \leq \Vert \widetilde{s}(0, 0) \Vert_{L^{4/3}_\delta(X)} + \Vert \St(\xi) \Vert_{L^{4/3}_\delta(X)}.$$
The estimate (\ref{eq:our727}) holds with $p = 4/3$, so the same is true of (\ref{eq:bounds}); that is,
$$\Vert \widetilde{s}(0, 0) \Vert_{L^{4/3}_\delta(X)}  \leq C_1 b^3$$
for a uniform constant $C_1$, where $b = 4 L \lambda^{1/2}$. To estimate $\St(\xi)$, note that the formula (\ref{eq:expansionofQ}) implies that $\St(\xi)$ is quadratically bounded in $P \xi$. Then we can argue as we did in the proof of Claim 2, but use H\"{o}lder's inequality $\Vert fg \Vert_{L^{4/3}_\delta} \leq \Vert f \Vert_{L^4_\delta} \Vert g \Vert_{L^2}$, to get a uniform estimate of the form
$$ \Vert \St(\xi) \Vert_{L^{4/3}_\delta(X)} \leq C_2 \Vert P \xi \Vert_{L^4_\delta(X)} \Vert P \xi \Vert_{L^2_\delta(X)}.$$
By (\ref{eq:estyp0}) and Corollary \ref{cor:smallp0}, this implies
$$ \Vert \St(\xi) \Vert_{L^{4/3}_\delta(X)} \leq C_3 \Vert \xi \Vert_{L^2_\delta(X)} \Vert  \xi \Vert_{L^{4/3}_\delta(X)}.$$
It follows from (\ref{eq:estypre}) that we can assume $\Vert  \xi \Vert_{L^2_\delta(X)} < (2 C_3)^{-1}$, provided $\lambda > 0$ is sufficiently small. In summary, this implies
$$\Vert \xi \Vert_{L^{4/3}_\delta(X)} \leq C_1 b^3 + \frac{1}{2}  \Vert \xi \Vert_{L^{4/3}_\delta(X)} $$
from which the corollary follows with $C = 128 L^3 C_1$.
\end{proof}

\section{Gauge fixing and the $\mASD$ condition}\label{sec:gaugefixing}

In the next section, we will find ourselves in the situation where we have an $\mASD$ connection $A$ and a nearby connection $A_{ref}$. We will want to find a gauge transformation $u$ so that $u^*A$ is in the Coulomb slice of $A_{ref}$. The issue is that, due to the failure of the $\mASD$ equation to be gauge invariant, the connection $u^* A$ will no longer be $\mASD$. Nevertheless, we will show in this section that, when $A$ is regular, the connection $u^*A$ is close to a unique $\mASD$ connection that lies in the $A_{ref}$-Coulomb slice. This is made precise in Theorem \ref{thm:3}, which extends the discussion to handle connections $A$ that are not regular by means of an obstruction map. To accomplish this, we first prove a general gauge fixing result that is tailored to our setting; this is stated in Proposition \ref{prop:1}.

\subsection{Gauge fixing}\label{subsec:GaugeFixing}

We begin by refining our choices of $\delta$ and the cut-off function $\beta$ used to create $\mathcal{H}_{out}$. For the former, we assume $\delta^2/4$ is not in the spectrum of the Laplacian $\Delta$ on real-valued functions. It then follows from Sobolev embedding that, for each $1 < q < 4$, there is a constant $\mathfrak{c}_q$ so that
\eqncount\begin{equation}\label{eq:24}
\Vert f \Vert_{L^q_\delta(X)} + \Vert f \Vert_{L^{q^*}_\delta(X)} \leq \mathfrak{c}_q \Vert d f \Vert_{L^q_\delta(X)}
\end{equation}
for all compactly supported real-valued smooth functions $f$, where $q^* = 4q/(4-q)$ is the Sobolev conjugate. 

As for the cutoff function $\beta\colon  \cH \rightarrow \left[0, 1 \right]$, we assume this is chosen so that it has small support in the sense that
\eqncount\begin{equation}\label{eq:supdis}
\sup_{h, h_0 \in \mathrm{supp}(\beta)} \Vert h - h_0 \Vert_{\C^0(N)} + \Vert \Theta(h) - \Theta(h_0) \Vert_{\C^0(N)}  < \frac{1}{2\mathfrak{c}_2 \mathfrak{c}_{\mathfrak{g}}}
\end{equation}
where $\mathfrak{c}_{\mathfrak{g}}$ is the constant from (\ref{eq:defoffrakg}) and $\mathfrak{c}_2$ is the constant from (\ref{eq:24}) with $q = 2$.

The main gauge fixing result we will need is as follows.

\begin{proposition}\label{prop:1}
Fix $2 < p < 4$, set $p^* = 4p/ (4 - p)$, and assume $\delta, \beta$ are as above. There are constants $C, \epsilon > 0$ so that if $A = \iota(h, V)$ and $A_{ref} = \iota(h_{ref}, V_{ref})$ are in $ \A^{1, p}(\mathcal{T}_\Gamma)$ and satisfy
$$\Vert V - V_{ref} \Vert_{L^{p^*}_{\delta}(X)} + \Vert d^{*, \delta}_{A_{ref}}(V - V_{ref}) \Vert_{L^p_\delta(X)}  < \epsilon$$
then there is a unique $\mu = \mu({A, A_{ref}}) \in L^p_{2, \delta}(\Omega^0(X))$ so that 
$$\exp(\mu)^* A \in \Sl(A_{ref}), \hspace{.5cm} \mathrm{and} \hspace{.5cm}  \Vert d^{*, \delta}_{A_{ref}} d_{A}  \mu \Vert_{L^p_{\delta}(X)} \leq C \Vert d_{A_{ref}}^{*, \delta}(V - V_{ref}) \Vert_{L^p_{\delta}(X)} . $$
Moreover, this 0-form $\mu({A, A_{ref}})$ depends $\C^m$-smoothly on the pair $(A, A_{ref})$.
\end{proposition}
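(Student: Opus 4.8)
The plan is to imitate the contraction-mapping argument used for Theorem~\ref{thm:1}: I would recast the condition $\exp(\mu)^*A\in\Sl(A_{ref})$ as a fixed-point equation for $\mu\in L^p_{2,\delta}(\Omega^0(X))$ and solve it with Lemma~\ref{lem:dklem}. First some preliminaries. Since $\mu$ decays, $\exp(\mu)$ lies in the subgroup $\G^{2,p}_\delta\subseteq\G^{2,p}_\delta(\Gamma)$ of Section~\ref{sec:GaugeGroup}, hence acts $\C^m$-smoothly on $\A^{1,p}(\mathcal{T}_\Gamma)$ and preserves the asymptotic data, $p_T(\exp(\mu)^*A)=p_T(A)=h$. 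As every element of $\Sl(A_{ref})$ has $p_T=h_{ref}$, the asserted conclusion is possible only when $h=h_{ref}$; I take this as understood (it is automatic in the settings where the proposition is applied), and then $A-A_{ref}=V-V_{ref}\in L^p_{1,\delta}(\Omega^1(X))$. Put $W\defeq\mathrm{im}(d^{*,\delta}_{A_{ref}}\colon L^p_{1,\delta}(\Omega^1(X))\to L^p_\delta(\Omega^0(X)))$, a closed subspace of finite codimension $\dim\Stab(A_{ref})$, and define the $\C^m$-map $\Phi\colon L^p_{2,\delta}(\Omega^0(X))\to W$, $\Phi(\mu)\defeq d^{*,\delta}_{A_{ref}}(\exp(\mu)^*A-A_{ref})$; then $\exp(\mu)^*A\in\Sl(A_{ref})$ if and only if $\Phi(\mu)=0$, and $\Phi(0)=d^{*,\delta}_{A_{ref}}(V-V_{ref})$ has small $L^p_\delta$-norm by hypothesis.

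The crux is the leading-order operator. I would write the linearization as $(D\Phi)_0\mu=d^{*,\delta}_{A_{ref}}d_A\mu=L_0\mu+R_1\mu$, where $L_0\defeq d^{*,\delta}_{A_{ref}}d_{A_{ref}}$ and $R_1\mu\defeq d^{*,\delta}_{A_{ref}}[(V-V_{ref}),\mu]$, and observe that $R_1\colon L^p_{2,\delta}(\Omega^0)\to W$ has norm $\le C\epsilon$ by distributing the derivative, using $L^p_{1,\delta}\hookrightarrow L^{p^*}_\delta$, the multiplication estimate of Lemma~\ref{lem:quad1}, and \emph{both} smallness hypotheses on $V-V_{ref}$ (this is what the form of the hypothesis is designed for). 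The main step is then: $L_0\colon L^p_{2,\delta}(\Omega^0(X))\to W$ is a Banach space isomorphism. It is elliptic, and, by the conditions imposed on $\delta$ and on $\mathrm{supp}(\beta)$ before the statement (in particular (\ref{eq:24}) and (\ref{eq:supdis})), its model operator on the cylindrical end is invertible, so $L_0$ is Fredholm with $p$-independent kernel and cokernel (cf.\ \cite{\mmr} and \cite{lockhart-mcowen}). It is injective, since $L_0\mu=0$ forces $\Vert d_{A_{ref}}\mu\Vert_{L^2_\delta}^2=0$ by integration by parts (valid for the decaying $\mu$), whence $\mu$ is $d_{A_{ref}}$-parallel and, being decaying on the connected $X$, vanishes. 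Finally $\mathrm{im}(L_0)\subseteq W$, and $\mathrm{im}(L_0)$ and $W$ are closed of the same finite codimension $\dim\Stab(A_{ref})$ in $L^p_\delta(\Omega^0(X))$, so $\mathrm{im}(L_0)=W$; thus $L_0$ is surjective onto $W$ --- no irreducibility of $A_{ref}$ is used, the passage to $W$ being the standard device for accommodating reducibles. Since invertibility is an open condition, $(D\Phi)_0=L_0+R_1$ is an isomorphism onto $W$ for $\epsilon$ small, with inverse bounded uniformly over the admissible $A,A_{ref}$.

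The rest reuses the machinery of Theorem~\ref{thm:1}. I would write $\Phi(\mu)=(D\Phi)_0\mu+\Phi(0)+Q(\mu)$ with $Q$ the $\C^m$-remainder, and, expanding $\exp(\mu)^*A$ and bounding the resulting products of $\mu$ and $d_{A_{ref}}\mu$ by Sobolev multiplication ($L^p_{2,\delta}\hookrightarrow L^\infty$ since $p>2$, and Lemma~\ref{lem:quad1}), just as in the argument for Claim~2 in the proof of Theorem~\ref{thm:1}, check that $Q$ vanishes to second order and obeys a quadratic estimate $\Vert Q(\mu_1)-Q(\mu_2)\Vert_{L^p_\delta}\le\kappa(\Vert\mu_1\Vert+\Vert\mu_2\Vert)\Vert\mu_1-\mu_2\Vert$ in $L^p_{2,\delta}$-norms on a fixed ball. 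Then $\Phi(\mu)=0$ becomes $\mu+\St(\mu)=\eta$ with $\St\defeq(D\Phi)_0^{-1}Q$ and $\eta\defeq-(D\Phi)_0^{-1}\Phi(0)$; since $\Vert\eta\Vert\le C\Vert\Phi(0)\Vert_{L^p_\delta}<1/(10\kappa)$ for $\epsilon$ small, Lemma~\ref{lem:dklem} supplies a unique small $\mu=\mu(A,A_{ref})$ depending $\C^m$-smoothly on $(A,A_{ref})$. Finally $\mu+\St(\mu)=\eta$ gives $\Vert\mu\Vert\le(5/4)\Vert\eta\Vert\le C\Vert\Phi(0)\Vert_{L^p_\delta}$, and applying $(D\Phi)_0$ to this identity gives $d^{*,\delta}_{A_{ref}}d_A\mu=-\Phi(0)-Q(\mu)$, so $\Vert d^{*,\delta}_{A_{ref}}d_A\mu\Vert_{L^p_\delta}\le\Vert\Phi(0)\Vert_{L^p_\delta}+C\Vert\mu\Vert_{L^p_{2,\delta}}^2\le C'\Vert d^{*,\delta}_{A_{ref}}(V-V_{ref})\Vert_{L^p_\delta}$, which is the claimed estimate.

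The main obstacle is the isomorphism statement for the weighted Laplace-type operator $L_0$ --- chiefly its surjectivity onto $W$, which is precisely what dictates the restrictions on $\delta$ and on the size of $\mathrm{supp}(\beta)$ imposed just before the proposition; the remaining steps are the contraction-mapping framework already developed for Theorem~\ref{thm:1}. A secondary point, genuinely new to the $\mASD$ setting, is the need to notice at the outset that the Coulomb condition is even well-posed here, i.e.\ that $p_T(A)=p_T(A_{ref})$: because $s$ is not gauge-equivariant, this matching of asymptotic values cannot be arranged after the fact, in contrast to the usual $\ASD$ gauge fixing.
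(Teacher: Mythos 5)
Your overall framework matches the paper's: set up $\exp(\mu)^*A\in\Sl(A_{ref})$ as a fixed-point equation for $\mu$, show the linearization $\mu\mapsto d^{*,\delta}_{A_{ref}}d_A\mu$ is invertible on suitable spaces, and run Lemma~\ref{lem:dklem}. But there is a genuine gap at the very outset, and your route to invertibility differs from the paper's in a way worth noting.

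The gap. You claim that $\exp(\mu)^*A\in\Sl(A_{ref})$ forces $h=h_{ref}$ and take this as understood. This is a misreading of $\Sl(A_{ref})$: although the notation $\{A_{ref}+V\}$ is suggestive, the paper's actual slice is $\iota\big(\cH_{out}\times(V_{ref}+\ker d^{*,\delta}_{A_{ref}})\big)$ — the center-manifold coordinate varies (cf.\ the statement ``$\iota\vert\colon\cH\times\ker(d^{*,\delta}_A)\to\Sl(A)$ is a diffeomorphism,'' and the construction of $K_A$ in the proof of Theorem~\ref{thm:3}, which explicitly moves $h$). Accordingly, the paper's Coulomb map is $\mathcal{F}(A,A_{ref},\mu)=d^{*,\delta}_{A_{ref}}\big(u^*A-i(p(u^*A))-V_{ref}\big)$ — not $d^{*,\delta}_{A_{ref}}(u^*A-A_{ref})$ — so that the center-manifold part of $u^*A$ is projected out before imposing the Coulomb condition. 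With your simplification $A-A_{ref}=V-V_{ref}$, the term $\beta''\big(h-h_{ref}+(\Theta(h)-\Theta(h_{ref}))\,dt\big)$ simply disappears from $A-A_{ref}$, and this is precisely the term that the hypothesis~(\ref{eq:supdis}) on $\mathrm{supp}(\beta)$ was introduced to control. Theorem~\ref{thm:3} applies Proposition~\ref{prop:1} with $\Vert h-h_{ref}\Vert$ small but nonzero, so the restriction $h=h_{ref}$ is not ``automatic in the applications.'' Your proof would need to be rerun with the correct $\mathcal{F}$, at which point the mixed term reappears in $R_1$ and the estimate~(\ref{eq:supdis}) becomes necessary — exactly as in the paper's injectivity argument.

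The different route. Rather than decomposing the linearization as $L_0+R_1$ with $L_0=d^{*,\delta}_{A_{ref}}d_{A_{ref}}$ an isomorphism onto $W=\mathrm{im}(d^{*,\delta}_{A_{ref}})$ and $R_1$ small, the paper avoids the cokernel/codimension bookkeeping entirely by putting the graph norm $\Vert\mu\Vert_{\mathcal{X}}\defeq\Vert d^{*,\delta}_{A_{ref}}d_A\mu\Vert_{L^p_\delta}$ on the domain, so the linearization is tautologically an isometry. The entire analytic content is then the single claim that $\Vert\cdot\Vert_{\mathcal{X}}$ is a norm, i.e.\ that $d^{*,\delta}_{A_{ref}}d_A$ is injective, proved by pairing against $\mu$ in $L^2_\delta$, invoking Kato's inequality, the Poincar\'e-type inequality~(\ref{eq:24}), and the smallness~(\ref{eq:supdis}) of the center-manifold diameter. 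Your decomposition approach is perfectly legitimate and has the virtue of separating the model operator from the perturbation, but it requires you to verify more (Fredholmness of $L_0$, matching codimensions of $\mathrm{im}(L_0)$ and $W$ — a Hodge-theory statement — and a $\delta$-weighted estimate on $R_1$), and in the end the hypotheses~(\ref{eq:24}) and~(\ref{eq:supdis}) must still be used, now to bound $R_1$ rather than to prove injectivity directly. Once you drop the false reduction to $h=h_{ref}$ and use the paper's $\mathcal{F}$, your approach should go through, but as written there is a hole where the $\beta''(h-h_{ref})$ contribution ought to be.
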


\begin{proof}
We will show below that 
$$\Vert \mu \Vert_{\mathcal{X}} \defeq \Vert d^{*, \delta}_{A_{ref}} d_{A} \mu \Vert_{L^p_\delta(X)}$$
 defines a norm on the space $\Omega^0(X)$ of smooth rapidly decaying adjoint bundle-valued 0-forms. Assuming this for now, we denote by $\mathcal{X}$ the completion of $ \Omega^0(X)$ relative to $\Vert \cdot \Vert_{\mathcal{X}}$. Let $\mathcal{Y}$ be the completion of $\Omega^0(X)$, but relative to the norm $\Vert \cdot  \Vert_{\mathcal{Y}} \defeq \Vert \cdot \Vert_{L^p_\delta(X)}$. Since $p > 2$, the map
$$\begin{array}{rcl}
\mathcal{F}\colon  \A^{1, p}(\mathcal{T}_\Gamma)\times \A^{1, p}(\mathcal{T}_\Gamma)  \times \mathcal{X} &\longrightarrow & \mathcal{Y}\\
(A, A_{ref}, \mu) & \longmapsto &  d^{*, \delta}_{A_{ref}}(u^* A - i(p(u^*A)) - V_{ref})
\end{array}$$
is $\C^{m}$-smooth, where we have set $u = \exp(\mu) \in \G^{2, p}_\delta$. Note that, relative to the product structure given by $\iota$ via (\ref{eq:iotamapdiff}), the quantity $u^*A - i(p(u^*A))$ is the $L^p_{1, \delta}(\Omega^1(X))$-component (i.e., non-center manifold-component) of $u^* A$, and so $u^* A \in \Sl(A_{ref})$ for $u = \exp(\mu)$ if and only if $\mathcal{F}(A, A_{ref}, \mu) = 0$. It therefore suffices to solve $\mathcal{F}(A, A_{ref}, \mu) = 0$ for $\mu$. For this, we have that $\mu = 0$ is an approximate solution since 
$$\mathcal{F}(A, A_{ref}, 0) = d^{*, \delta}_{A_{ref}}(V - V_{ref})$$ 
which we have assumed is bounded by $\eps$. The linearization in the third component of $\mathcal{F}$ at $(A, A_{ref}, 0)$ is the operator
$$\mu \longmapsto d^{*, \delta}_{A_{ref}} d_A \mu .$$
This has operator norm 1 relative to the norms on $\mathcal{X}$ and $\mathcal{Y}$. In particular, it is invertible and so the proposition follows from the inverse function theorem (e.g., precompose $\F$ in the third component with the inverse of $d^{*, \delta}_{A_{ref}} d_A$ and then use Lemma \ref{lem:dklem}). 

All that remains is to show that $\Vert \cdot \Vert_{\mathcal{X}}$ defines a norm; it suffices to show that the operator 
$$d^{*, \delta}_{A_{ref}} d_{A}\colon  L^p_{2, \delta} \longrightarrow L^p_\delta$$ 
is injective. For this, suppose $\mu $ lies in its kernel and let $(\cdot, \cdot)_\delta$ be the $\delta$-dependent $L^2$-inner product. Note that $\mu \in L^2_{1, \delta}$ by Sobolev's embedding theorem $L^p_2 \hookrightarrow L^2_1$ on cylindrical end 4-manifolds \cite[Prop.~3.20]{donaldson10}. This justifies the following computation:
$$0 = (d^{*, \delta}_{A_{ref}} d_A \mu, \mu)_\delta = (d_A \mu , d_{A_{ref}} \mu)_\delta = \Vert d_{A_{ref}} \mu \Vert^2_{L^2_\delta} + \Big(\big[ \big(A - A_{ref}\big), \mu \big], d_{A_{ref}} \mu\Big)_\delta.$$
Hence
$$\Vert d_{A_{ref}} \mu \Vert_{L^2_\delta} \leq \Big\| \big[ \big(A - A_{ref} \big), \mu \big] \Big\|_{L^2_\delta}.$$
The definition of $\iota$ gives $A - A_{ref} = \beta''(h- h_{ref} +(\Theta(h) - \Theta(h_{ref})) dt ) + V - V_{ref}$. Then H\"{o}lder's inequality and (\ref{eq:supdis}) allow us to continue the above inequality to get
$$\begin{array}{rcl}
\Vert d_{A_{ref}} \mu \Vert_{L^2_\delta} & \leq & \mathfrak{c}_{\mathfrak{g}} \big(\Vert V - V_{ref} \Vert_{L^4} \Vert \mu \Vert_{L^4_\delta} + \Vert h - h_{ref} + (\Theta(h) - \Theta(h_{ref})) dt \Vert_{\C^0} \Vert \mu \Vert_{L^2_\delta}\big)\\
&\leq & \mathfrak{c}_{\mathfrak{g}} \Vert e^{-\delta t/2} \Vert_{L^r} \Vert V - V_{ref} \Vert_{L^{p^*}_\delta} \Vert \mu \Vert_{L^4_\delta} + \frac{1}{2 \mathfrak{c}_{2}} \Vert \mu \Vert_{L^2_\delta}\\
\end{array}$$
where $r$ is defined by $r^{-1} + (p^*)^{-1} = 4^{-1}$. Using (\ref{eq:24}) with $f = \vert \mu \vert$, and then Kato's inequality $\vert d \vert \mu \vert \vert \leq \vert d_{A_{ref}} \mu \vert$ (which holds for arbitrary metric connections), we can use the above to get
$$\begin{array}{rcl}
\Vert \mu \Vert_{L^2_\delta} + \Vert \mu \Vert_{L^4_\delta} & \leq & \mathfrak{c}_2 \Vert d \vert \mu \vert \Vert_{L^2_\delta}\\
& \leq & \mathfrak{c}_2 \Vert d_{A_{ref}} \mu \Vert_{L^2_\delta}\\
& \leq &  \mathfrak{c}_2 \mathfrak{c}_{\mathfrak{g}} \Vert e^{-\delta t/2} \Vert_{L^r} \Vert V- V_{ref} \Vert_{L^{p^*}_\delta} \Vert \mu \Vert_{L^4_\delta}   + \frac{1}{2} \Vert \mu \Vert_{L^2_\delta}.
\end{array}$$
When $\eps < 1/(2 \mathfrak{c}_2 \mathfrak{c}_{\mathfrak{g}} \Vert e^{-\delta t/2} \Vert_{L^r} )$, this implies that $\mu = 0$. 
\end{proof}

\begin{remark}\label{rem:bounds2}
The operator $d^{*, \delta}_{A_{ref}} d_{A}\colon  L^p_{2, \delta} \rightarrow L^p_\delta$ is Fredholm, and we have just seen that it has trivial kernel under the hypotheses of the proposition. It then follows from the embedding $L^p_{2, \delta} \subseteq L^p_\delta \cap \C^0$ that there is a constant $C$ so that 
$$\Vert \mu \Vert_{L^p_\delta(X)} + \Vert \mu \Vert_{\C^0(X)} \leq C\Vert d^{*, \delta}_{A_{ref}} d_{A}  \mu \Vert_{L^p_{\delta}(X)}$$
for all $\mu \in L^p_{2, \delta}(\Omega^0(X))$. It follows from arguments similar to those just used that this constant can be chosen to be independent of $A$ and $A_{ref}$ provided these connections satisfy the hypotheses of Proposition \ref{prop:1}.
\end{remark}

\subsection{Recovering the $\mASD$ condition within a slice}\label{sec:gluinginagaugeslice}

Throughout this section, we assume $2 < p < 4$, and $\delta, \beta$ are chosen as in Section \ref{subsec:GaugeFixing}.

As suggested in the introduction to this section, we will use Proposition \ref{prop:1} to put $\mASD$ connections into a fixed nearby slice, but this process will generally not preserve the $\mASD$ condition. The following theorem is our main readjustment tool that will recover the $\mASD$ condition, while simultaneously preserving the slice condition. To state it, use the $L^2_\delta$-inner product to identify the cokernel $H^+_{A, \delta} = \mathrm{coker}(Ds\vert_{\Sl(A)})_A$ with the subset of $A$-harmonic self-dual forms in $L^p_\delta( \Omega^+(X))$. We denote by 
$$\sigma_A\colon  H^+_{A, \delta} \longrightarrow L^p_\delta(\Omega^+(X)) \hspace{2cm} \pi_{A}\colon  L^p_\delta( \Omega^+(X)) \longrightarrow H^+_{A, \delta}$$ 
the inclusion and $L^2_\delta$-orthogonal projection, respectively. (These maps will play a role analogous to the one played by $\sigma$ and $\pi$ in Section \ref{sec:GenGluing}.) It follows that $(Ds\vert_{\Sl(A)})_A \oplus \sigma_A$ maps surjectively onto $L^p_\delta(\Omega^+(X))$.

\begin{theorem}\label{thm:3}
Fix $A_{ref} =\iota(h_{ref}, V_{ref}) \in \A^{1,p}(\mathcal{T}_\Gamma)$. Then there are constants $C, \eps > 0$ so that the following holds for all $\mASD$ connections $A = \iota(h, V) $ satisfying 
\eqncount\begin{equation}\label{eq:epsaa}
 \Vert h - h_{ref} \Vert_{L^2_2(N)} + \Vert V - V_{ref} \Vert_{L^{p^*}_{\delta}(X)} + \Vert d^{*, \delta}_{A_{ref}}(V - V_{ref}) \Vert_{L^p_\delta(X)}  < \epsilon.
 \end{equation}
\begin{itemize}
\item[(a)] There is a $\C^m$-map $K_{A}\colon  L^p_\delta(\Omega^+(X)) \rightarrow \Sl(A_{ref})$ that restricts to an embedding on a neighborhood $U$ of $0$.
\item[(b)] There is a unique 2-form $\zeta(A) \in U \subseteq L^p_\delta(\Omega^+(X))$ so that 
$$\Vert \zeta(A) \Vert_{L^p_\delta(X)}  \leq  C \Vert d^{*, \delta}_{A_{ref}}(V - V_{ref}) \Vert_{L^p_\delta(X)}$$
and so that the connection $\mathcal{K}(A) \defeq K_A(\zeta(A))$ satisfies
$$s(\mathcal{K}(A) )  =  -\sigma_A \pi_{A} \zeta(A).$$ 
\end{itemize}
In particular, the connection $\mathcal{K}(A)$ is close to $A$ in the sense that there is a constant $C'$ so that
$$\Vert \iota^{-1}(\mathcal{K}(A) ) - \iota^{-1}(A) \Vert_{L^2_2(N) \times L^{p^*}_{\delta}(  X)} \leq C' \Vert d^{*, \delta}_{A_{ref}}(V - V_{ref}) \Vert_{L^p_\delta(X)}.$$
If either $A$ or $A_{ref}$ is regular, then {all three connections $A$, $A_{ref}$ and $\mathcal{K}(A)$ are regular and $\mathcal{K}(A)$ is $A_{ref}$-regular}. In this case, the connection $\mathcal{K}(A) $ is $\mASD$ and the maps $(A, \zeta) \mapsto K_A(\zeta)$ and $A \mapsto \zeta(A)$ are both $\C^m$-smooth, relative to the specified topologies. If $A$ is irreducible, then so is $\mathcal{K}(A)$. The constants $\epsilon, C, C'$ can be chosen to vary continuously in $A_{ref}$.
\end{theorem}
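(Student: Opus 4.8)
The plan is to run the proof of Theorem~\ref{thm:1} essentially verbatim, with two substitutions: the role played there by the preglued connection $A'$ is now played by the Coulomb-gauge-fixed connection $u^{*}A$ produced by Proposition~\ref{prop:1}, and the collapsing-neck connected sum is replaced by the single fixed manifold $X$ together with the slice $\Sl(A_{ref})$, so that $\lambda$-uniformity is no longer the issue; instead every constant must be uniform as $A$ ranges over the neighborhood~(\ref{eq:epsaa}). First I would apply Proposition~\ref{prop:1} (shrinking $\eps$ if necessary) to obtain $\mu=\mu(A,A_{ref})\in L^{p}_{2,\delta}(\Omega^{0}(X))$, depending $\C^{m}$-smoothly on $(A,A_{ref})$, with $u\defeq\exp(\mu)$ satisfying $u^{*}A\in\Sl(A_{ref})$ and $\Vert d^{*,\delta}_{A_{ref}}d_{A}\mu\Vert_{L^{p}_{\delta}}\le C\Vert d^{*,\delta}_{A_{ref}}(V-V_{ref})\Vert_{L^{p}_{\delta}}$; by Remark~\ref{rem:bounds2} this upgrades to $\Vert\mu\Vert_{L^{p}_{2,\delta}}+\Vert\mu\Vert_{\C^{0}}\le C\Vert d^{*,\delta}_{A_{ref}}(V-V_{ref})\Vert_{L^{p}_{\delta}}$ with $C$ uniform over~(\ref{eq:epsaa}).

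The genuinely new point is to bound the $\mASD$-defect $s(u^{*}A)$ created by this gauge change, since $s$ is not gauge equivariant (Remark~\ref{rem:noneq}). Using $s(A)=0$, hence $F^{+}_{A}=\beta' F^{+}_{i(p_{T}(A))}$, together with $F^{+}_{u^{*}A}=\ad(u^{-1})F^{+}_{A}$, one gets, writing $h\defeq p_{T}(A)$,
$$s(u^{*}A)=\beta'\bigl(\ad(u^{-1})-\mathrm{Id}\bigr)F^{+}_{i(h)}+\beta'\bigl(F^{+}_{i(h)}-F^{+}_{i(p_{T}(u^{*}A))}\bigr).$$
The first term is pointwise $O\!\bigl(\Vert\mu\Vert_{\C^{0}}\,|\beta' F^{+}_{i(h)}|\bigr)$, and $\Vert\beta' F^{+}_{i(h)}\Vert_{L^{p}_{\delta}}=\Vert F^{+}_{A}\Vert_{L^{p}_{\delta}}$ is uniformly bounded over the relevant range of $h$ by continuity of $s,i,p_{T}$ and Lemma~\ref{lem:regularityofalpha}; the second term is controlled via $\Vert p_{T}(u^{*}A)-h\Vert\le C\Vert u^{*}A-A\Vert\le C'\Vert\mu\Vert$ and the continuity of $h\mapsto F^{+}_{i(h)}$ from Lemma~\ref{lem:regularityofalpha}. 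Combining with the first paragraph gives $\Vert s(u^{*}A)\Vert_{L^{p}_{\delta}}\le C\Vert d^{*,\delta}_{A_{ref}}(V-V_{ref})\Vert_{L^{p}_{\delta}}$.

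Next I would set up the contraction mapping inside the slice, as in Theorem~\ref{thm:1}. Parametrize $\Sl(A_{ref})$ by $\cH\times\ker(d^{*,\delta}_{A_{ref}})$ through $\iota$, take the chart $\iota\circ\exp_{(h_{0},V_{0})}$ about $u^{*}A$ with $(h_{0},V_{0})=\iota^{-1}(u^{*}A)$, and let $\widetilde{s}$ be $s$ in this chart, so $\widetilde{s}(0)=s(u^{*}A)$. As in the sketch of Proposition~\ref{prop:index}, $H^{+}_{A}$ is the $L^{2}_{\delta}$-orthogonal complement of $\mathrm{im}(Ds)_{A}$, so (mirroring Claim~1 of Theorem~\ref{thm:1}, adapted to $u^{*}A\in\Sl(A_{ref})$) one builds a bounded right inverse whose $H^{+}_{A}$-component equals the orthogonal projection $\pi_{A}$ up to an operator of norm $O(\eps)$; absorbing that, and the discrepancy coming from $(Ds)_{u^{*}A}$ versus $(Ds)_{A}$ (continuity of $(Ds)_{\bullet}$ in the topology of~(\ref{eq:epsaa}), cf.\ Remark~\ref{rem:irreduciblesmallp}), one obtains $P$ with $(D\widetilde{s})_{0}P=\mathrm{Id}-\sigma_{A}\pi_{A}+E$, $\Vert E\Vert$ as small as desired, and $\sigma_{A}=0$ when $A$ is regular. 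Set $K_{A}(\zeta)\defeq\iota(\exp_{(h_{0},V_{0})}(P\zeta))$ (this lies in $\Sl(A_{ref})$ and embeds near $0$: part (a)), and let $\St(\zeta)\defeq\widetilde{s}(P\zeta)-(D\widetilde{s})_{0}P\zeta-\widetilde{s}(0)$ be the nonlinear part; the estimates of Claim~2 of Theorem~\ref{thm:1} (the six terms of~(\ref{eq:expansionofQ}), Lemma~\ref{lem:quad1}, and the end-of-cylinder estimate of Claim~3 there, now with $A_{ref}$ in place of $A'$) show $\St$ satisfies~(\ref{eq:quad}) with $\kappa$ uniform over~(\ref{eq:epsaa}). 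The equation $\widetilde{s}(P\zeta)=-\sigma_{A}\pi_{A}\zeta$ rearranges to $(\mathrm{Id}+E)\zeta+\St(\zeta)=-\widetilde{s}(0)$, hence to $\zeta+(\mathrm{Id}+E)^{-1}\St(\zeta)=-(\mathrm{Id}+E)^{-1}\widetilde{s}(0)$, to which Lemma~\ref{lem:dklem} applies, yielding a unique small $\zeta=\zeta(A)$; re-running the bound as in Theorem~\ref{thm:1} gives $\Vert\zeta(A)\Vert_{L^{p}_{\delta}}\le C\Vert\widetilde{s}(0)\Vert_{L^{p}_{\delta}}\le C\Vert d^{*,\delta}_{A_{ref}}(V-V_{ref})\Vert_{L^{p}_{\delta}}$, and unwinding the identities (the $E$- and $\St$-terms cancel) gives $s(\mathcal{K}(A))=-\sigma_{A}\pi_{A}\zeta(A)$: part (b). Closeness of $\mathcal{K}(A)$ to $A$ follows from $\iota^{-1}(\mathcal{K}(A))-\iota^{-1}(u^{*}A)=O(\Vert P\zeta(A)\Vert)$ and $\iota^{-1}(u^{*}A)-\iota^{-1}(A)=O(\Vert\mu\Vert)$.

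For the remaining assertions: if one of $A,A_{ref}$ is regular then so is the other (surjectivity of the Fredholm operator $(Ds)_{\bullet}$ is open in the topology of~(\ref{eq:epsaa})); $\mathcal{K}(A)$, being $\iota$-close to $A$, is then regular by an approximate-right-inverse argument as in Theorem~\ref{thm:1}, so $\sigma_{A}\pi_{A}=0$ and $\mathcal{K}(A)$ is $\mASD$, and the $\C^{m}$-dependence statements follow from the smooth-dependence clauses of Lemma~\ref{lem:dklem} and Proposition~\ref{prop:1}. Irreducibility of $\mathcal{K}(A)$ follows from Lemma~\ref{lem:irreducible}, since $u^{*}A$ is irreducible whenever $A$ is (irreducibility is gauge invariant) and $\mathcal{K}(A)$ is close to $u^{*}A$. \textbf{The main obstacle} is the estimate of the second paragraph: quantifying exactly how the Coulomb gauge fixing of Proposition~\ref{prop:1} breaks the $\mASD$ condition and tying that defect to the slice distance $\Vert d^{*,\delta}_{A_{ref}}(V-V_{ref})\Vert_{L^{p}_{\delta}}$; a secondary burden is maintaining uniformity of all constants over~(\ref{eq:epsaa}), which requires continuity of $(Ds)_{\bullet}$, of the right inverse, and of $F^{+}_{i(p_{T}(\cdot))}$ in the weak topology appearing there.
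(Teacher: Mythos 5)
Your proposal follows the paper's proof closely: apply Proposition~\ref{prop:1} to obtain $u=\exp(\mu)$ putting $A$ into $\Sl(A_{ref})$ with $\Vert\mu\Vert$ controlled by the slice-distance, compute $s(u^{*}A)$ from $s(A)=0$ together with $F^{+}_{u^{*}A}=\mathrm{Ad}(u^{-1})F^{+}_{A}$, and run the contraction-mapping scheme of Theorem~\ref{thm:1} in the chart about $u^{*}A$. Two minor remarks: your extra term $\beta'\bigl(F^{+}_{i(h)}-F^{+}_{i(p_{T}(u^{*}A))}\bigr)$ is identically zero, since $u\in\G^{2,p}_{\delta}$ decays to the identity down the end so $p_{T}(u^{*}A)=p_{T}(A)=h$; and your retention of the error term $E$ in $(D\widetilde{s})_{0}P=\mathrm{Id}-\sigma_{A}\pi_{A}+E$, rather than taking the right inverse's second component to be exactly the orthogonal projection $\pi_{A}$, is a legitimate and arguably more careful variant that reaches the same conclusion via $(\mathrm{Id}+E)^{-1}$.
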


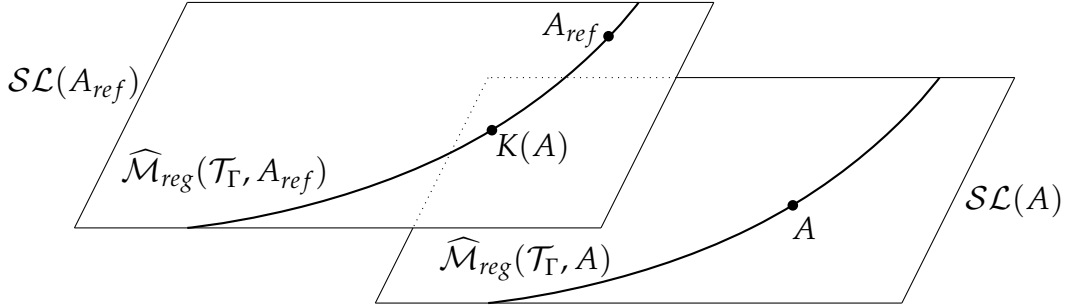
\begin{figure}[h]
\begin{tikzpicture}
\tikzstyle{every node} = [sloped,above] 
\coordinate (a) at (-2,0);
\coordinate (b) at (5,0);
\coordinate (c) at (6.5,3);
\coordinate (d) at (-.5,3);
 \draw  (a) -- (b);
  \draw (c) -- (b);
   \draw (c) -- (d);
    \draw (a) -- (d);
    \node at (-2,1.5) {$\Sl(A_{ref})$};
    	   \draw [thick] plot [smooth, tension = 1] coordinates {(-.5,0) (3,1) (5.5,3)};
    	      \node at (4.1,.7) {$\mathcal{K}(A)$};
    	      \fill [black] (3.55,1.3) circle (2pt);
    	         \node at (4.6,2.25) {$A_{ref}$};
\fill [black] (5.1,2.55) circle (2pt);
 \node at (0,.3) {$\rM_{reg}(\mathcal{T}_\Gamma, A_{ref})$};
    \coordinate (a') at (2,-1);
\coordinate (b') at (9,-1);
\coordinate (c') at (10.5,2);
\coordinate (d') at (3.5,2);
\coordinate (m) at (6,2);
\coordinate (n) at (2.5,0);
 \draw  (a') -- (b');
  \draw (c') -- (b');
  \draw (c') -- (m);
   \draw [dotted] (m) -- (d');
   \draw [dotted] (d') -- (n);
    \draw  (a') -- (n);
    \node at (10.5,0) {$\Sl(A)$};
        \draw [thick] plot [smooth, tension = 1] coordinates {(3.5,-1) (7,0) (9.5,2)};
            	      \fill [black] (7.55,.3) circle (2pt);
    	         \node at (7.7,-.3) {$A$};
  \node at (4,-.85) {$\rM_{reg}(\mathcal{T}_\Gamma, A)$};
\end{tikzpicture}
    \caption{Pictured above is the special case of Theorem \ref{thm:3} where $A$ is regular. The curved lines represent the spaces of regular $\mASD$ connections in the slices $\Sl(A_{ref})$ and $\Sl(A)$, respectively.} \label{fig:2}
    \end{figure}

\begin{proof}
Take $\eps> 0$ to be no larger than the epsilon from the statement of Proposition \ref{prop:1}. Then it follows from that proposition and Remark \ref{rem:bounds2} that, given $A = \iota(h, V)$ with
$$s(A) = 0, \hspace{.5cm} \mathrm{and} \hspace{.5cm}  \Vert V - V_{ref} \Vert_{L^{p^*}_{\delta}(X)} + \big\| d^{*, \delta}_{A_{ref}}(V - V_{ref}) \big\|_{L^p_\delta(X)}  < \epsilon$$
there is a unique $\mu \in L^p_{2, \delta}(\Omega^0)$ so that $\exp(\mu)^* A \in \Sl(A_{ref})$ and
$$\Vert \mu \Vert_{L^p_\delta} + \Vert \mu \Vert_{\C^0} \leq C_1 \big\| d_{A_{ref}}^{*, \delta}(V - V_{ref})\big\|_{L^p_\delta}.$$  
Set $u = \exp(\mu)$ and write $u^* A = \iota(h_A, V_A)$ for $h_A \in \cH_{out}$ and $V_A \in L^p_{1, \delta}(\Omega^1)$. Let $\exp_{h_A}\colon  B_{\eps}(0) \subseteq T_{h_A} \cH \rightarrow \cH$ be the exponential map for the center manifold based at $h_A$, and extend this to a map 
$$\begin{array}{rcl}
\exp_{(h_A, V_A)}\colon  B_{\eps}(0) \times L^p_{1, \delta}\big(\ker(d_{A_{ref}}^{*, \delta})\big) & \longrightarrow & \cH \times L^p_{1, \delta}(\ker(d_{A_{ref}}^{*, \delta}))\\
(\eta, V) & \longmapsto & \big(\exp_{h_A}(\eta), V_A + V\big)
\end{array}$$
which is a $\C^m$-diffeomorphism in a neighborhood of $(0, 0)$. Using this, define
$$\widetilde{s}\colon  T_{h_A} \cH \times L^p_{1, \delta}\big(\ker(d_{A_{ref}}^{*, \delta})\big) \longrightarrow L^p_\delta(\Omega^+), \hspace{.5cm} (\eta, V) \longmapsto s\Big(\iota\big(\exp_{(h_A, V_A)}(\eta, V)\big)\Big).$$
By definition of $\sigma_A$, the operator $(Ds\vert_{\Sl(A)})_{A} \oplus \sigma_{A}$ is surjective. The operators $(Ds)_A$ and $(Ds)_{u^*A}$ are approximately equal when $u$ is $\C^0$-close to the identity (i.e., when $\Vert \mu \Vert_{\C^0}$ is small). Thus, when $\eps$ is sufficiently small, the operator $(Ds\vert_{\Sl(A_{ref})})_{u^*A} \oplus \sigma_A$ is also surjective, as is $(D\widetilde{s})_{(0, 0)} \oplus \sigma_A$. Then we can choose a right inverse to $(D\widetilde{s})_{(0, 0)} \oplus \sigma_A$ of the form $P \oplus \pi_A$, where $\pi_A$ is the projection to $H^+_{A, \delta}$. For $\zeta \in L^p_\delta(\Omega^+(X))$, define
$$K_A(\zeta) \defeq \iota\big(\exp_{(h_A, V_A)} (P \zeta)\big).$$
This proves (a) in the statement of the theorem, by taking $U \subseteq L^p_\delta(\Omega^+(X))$ to be small enough so that $P(U) \subseteq B_{\eps}(0) \times \ker(d^{*, \delta}_{A_{ref}})$. 

To prove (b), we use the same implicit function theorem argument as in Theorem \ref{thm:1}. Namely, set
$$\St(\zeta) \defeq  \widetilde{s}(P \zeta) - (D \widetilde{s})_{(0, 0)} P \zeta   - \widetilde{s}(0, 0).$$
The argument of Claim 2 in the proof of Theorem \ref{thm:1} carries over to show that $\St$ satisfies the quadratic estimate of Lemma \ref{lem:dklem}. We will show in a moment that there is a uniform constant $C_2$ so that
\eqncount\begin{equation}\label{eq:tilde}
\Vert \widetilde{s}(0, 0) \Vert_{L^p_\delta} \leq C_2 \big\| d_{A_{ref}}^{* , \delta}(V - V_{ref}) \big\|_{L^p_\delta}.
\end{equation}
From this and Lemma \ref{lem:dklem} it follows that, by assuming $\epsilon$ is sufficiently small, there is a unique $\zeta(A)$ so that 
$$\zeta(A) + \St(\zeta(A)) = - \widetilde{s}(0, 0).$$
As we argued in the proof of Theorem \ref{thm:1}, this $\zeta(A)$ satisfies the assertions of (b). The regularity and irreducibility assertions also follow as in Theorem \ref{thm:1}.

It therefore suffices to verify (\ref{eq:tilde}). By definition of $\widetilde{s}$, we have $\widetilde{s}(0, 0) = s(u^*A)$. Recall that the projection $p_T$ to the center manifold is gauge invariant, so $p_T(u^* A) = p_T(A) = h$. This implies 
$$\begin{array}{rcl}
\widetilde{s}(0, 0) = s(u^*A) & =& F_{u^* A}^+ - \beta F_{i(h)}^+\\
& = & \mathrm{Ad}(u^{-1}) F_A^+ - \beta F_{i(h)}^+\\
& = & \mathrm{Ad}(u^{-1}) \beta F^+_{i(h)} - \beta F_{i(h)}^+\\
& = & \beta (\mathrm{Ad}(u^{-1}) - I) F^+_{i(h)}
\end{array}$$
where, in the penultimate equality, we used the assumption that $s(A) = F^+_A - \beta F^+_{i(h)} =  0$ vanishes. By shrinking $\eps$ further still, we may suppose $\Vert \mu \Vert_{\C^0} \leq 1$. Then the Taylor expansion for the exponential $u = \exp(\mu)$ gives 
$$\begin{array}{rcl}
\Vert \widetilde{s}(0, 0) \Vert_{L^p_\delta} &\leq & C_3  \big\| F^+_{i(h)} \Vert_{\C^0} \Vert \mu \big\|_{L^p_\delta}\\
& \leq & C_1 C_3  \big\| F^+_{i(h)} \big\|_{\C^0} \big\| d_{A_{ref}}^{* , \delta}(V - V_{ref}) \big\|_{L^p_\delta}.
\end{array}$$
The quantity $\Vert F^+_{i(h)} \Vert_{\C^0} $ is bounded independent of $h$ since $\cH_{out}$ is compact. 
\end{proof}

\section{Gluing regular families}\label{sec:GluingRegFam}

Throughout this section we work with the space $\A^{1,p}(\mathcal{T}_\Gamma)$ for fixed $2 < p < 4$. We assume that $\delta$ and the cutoff function $\beta$ are chosen as in Section \ref{subsec:GaugeFixing}. We also assume $\delta/2 < \mu_\Gamma^-$, so the index formula discussed in Section \ref{sec:mmrerror} applies. 

We freely refer to the notation of Section \ref{sec:GenGluing}. For $k = 1, 2$, fix a precompact open set
$$G_k \subseteq \rM_{reg}(\mathcal{T}_{k, \Gamma_k}, A_{ref, k})$$ 
of $A_{ref,k}$-regular $\mASD$ connections on $X_k$ relative to some reference connection $A_{ref, k}$. Since the $G_k$ are precompact, we can fix $L, \lambda > 0$ so that conclusions of Theorem \ref{thm:1} hold for all $(A_1, A_2) \in G_1 \times G_2$. (In our applications of the material of this section, the values of $L$ and $\lambda$ will be fixed, so we do not keep track of them in the notation.) Then Theorem \ref{thm:1} produces a regular $\mASD$ connection $\mathcal{J}(A_1, A_2) \in \A^{1,p}(\mathcal{T}_\Gamma)$.

Ideally, we would want to view the mapping $(A_1, A_2) \mapsto \mathcal{J}(A_1, A_2)$ as a function from $G_1 \times G_2$ into a fixed $\mASD$ space. However, since the Coulomb slice to which $\mathcal{J}(A_1, A_2)$ belongs depends on $(A_1, A_2)$ (cf. Remark \ref{rem:GaugeSlice} (b)), it is more natural to realize this mapping as a section of a bundle. Towards this end, set
$$\E \defeq \left\{(A_1, A_2, A)\; \Big| \; A_k \in G_k, \hspace{.25cm} A \in \rM_{reg}\big(\mathcal{T}_\Gamma, \mathcal{J}(A_1, A_2)\big)\right\}.$$
Let $\Pi \colon  \E \rightarrow G_1 \times G_2$ be the projection to the first two factors. Then the map 
$$\Psi(A_1, A_2) \defeq \big(A_1, A_2,  \mathcal{J}(A_1, A_2)\big)$$ 
is clearly a section of the map $\Pi$. 

\begin{theorem}\label{thm:2} \mbox{}

(a) For all sufficiently small $\lambda>0$, there is a neighborhood $\mathcal{U} \subseteq \E$ of the image of $\Psi$ so that the restriction $\Pi \vert_{\mathcal{U} }\colon  \mathcal{U}  \rightarrow G_1 \times G_2$ is a locally trivial fiber bundle. The fibers of $\Pi \vert_{\mathcal{U}}$ can be identified with open subsets of $\rM_{reg}(\mathcal{T}_\Gamma, A_{ref})$ for some $A_{ref}$.

 More specifically, every $(A_{10}, A_{20}) \in G_1 \times G_2$ is contained in an open neighborhood $\mathcal{V} \subseteq G_1 \times G_2$ so that the following holds. Let $A_{ref} = A'(A_{10}, A_{20})$ be the preglued connection, and consider the map $\mathcal{K}$ from Theorem \ref{thm:3}, defined relative to this reference connection $A_{ref}$. Then the map
\eqncount\begin{equation}\label{eq:trivie}
\Pi^{-1}(\mathcal{V}) \cap \mathcal{U} \longrightarrow \mathcal{V} \times \rM_{reg}(\mathcal{T}_\Gamma, A_{ref}) \hspace{1cm} (A_1, A_2, A) \longmapsto (A_1, A_2, \mathcal{K}(A))
\end{equation}
is a well-defined $\C^m$-diffeomorphism onto an open subset of the codomain, and this map produces a local trivialization of $\Pi \vert_{\mathcal{U} }$ over $\mathcal{V}$. 

\medskip

(b) The map 
$$\Phi \defeq \mathcal{K} \circ \mathcal{J} : \mathcal{V} \longmapsto \rM_{reg}(\mathcal{T}_\Gamma, A_{ref})$$
is a $\C^m$-embedding. If $A_1$ or $A_2$ is irreducible, then the connection $ \Phi(A_1, A_2)$ is also irreducible. 
\end{theorem}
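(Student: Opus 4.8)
The plan is to prove Theorem \ref{thm:2} in two stages, handling part (a) first and then deducing part (b), and to organize the argument so that the technical work in Theorems \ref{thm:1} and \ref{thm:3} is repackaged rather than repeated. Before anything else, I would verify that the total space $\E$ is honestly a $\C^m$-Banach manifold near the image of $\Psi$, and that $\Pi$ is a $\C^m$-submersion there: the fiber $\Pi^{-1}(A_1,A_2) = \rM_{reg}(\mathcal{T}_\Gamma, \mathcal{J}(A_1,A_2))$ is a $\C^m$-manifold by the implicit function theorem applied to the $\mASD$ operator restricted to the slice $\Sl(\mathcal{J}(A_1,A_2))$ (as recorded in Section \ref{sec:TheSpaceOfConnections}), and the smooth dependence of $\mathcal{J}(A_1,A_2)$ on $(A_1,A_2)$ from the final assertions of Theorem \ref{thm:1} makes these fit together smoothly. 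The key point to nail down is that $\rM_{reg}(\mathcal{T}_\Gamma, \mathcal{J}(A_1,A_2))$ is nonempty and contains $\mathcal{J}(A_1,A_2)$ itself (the canonical base point lying in its own slice), so the section $\Psi$ really does land in $\E$.

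For part (a), the heart of the matter is constructing the local trivialization (\ref{eq:trivie}). Fix $(A_{10},A_{20}) \in G_1\times G_2$ and set $A_{ref} = A'(A_{10},A_{20})$, the preglued connection, which we use to define the maps $\iota$, $s$, and $\Sl(A_{ref})$ as well as the map $\mathcal{K}$ of Theorem \ref{thm:3}. The plan is: (i) Choose a neighborhood $\mathcal{V} \subseteq G_1\times G_2$ of $(A_{10},A_{20})$ small enough that for all $(A_1,A_2)\in\mathcal{V}$ the connection $\mathcal{J}(A_1,A_2)$ is $\C^0$-close to $A_{ref}$ in the norms appearing in (\ref{eq:epsaa}) — this uses the estimate (\ref{eq:close}) of Theorem \ref{thm:1} together with $\Vert \mathcal{J}(A_{10},A_{20}) - A_{ref}\Vert$ being controlled by $Cb^{4/p}$ for small $\lambda$. (ii) Shrink $\mathcal{U}$ around the image of $\Psi$ so that any $(A_1,A_2,A)\in\Pi^{-1}(\mathcal{V})\cap\mathcal{U}$ has $A = \iota(h,V)$ satisfying (\ref{eq:epsaa}) with respect to this fixed $A_{ref}$; this is where I use that $A$ lies in the slice $\Sl(\mathcal{J}(A_1,A_2))$ which is itself close to $\Sl(A_{ref})$. (iii) Apply Theorem \ref{thm:3} to produce $\mathcal{K}(A) \in \Sl(A_{ref})$, which is $\mASD$ (since $A$, hence $\mathcal{J}(A_1,A_2)$, hence — by proximity — $A$, is regular), regular, and depends $\C^m$-smoothly on $A$; this defines the map in (\ref{eq:trivie}). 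For it to be a diffeomorphism onto an open set I would construct the inverse: given $(A_1,A_2,B)$ with $B\in\rM_{reg}(\mathcal{T}_\Gamma,A_{ref})$ near $\mathcal{K}(\mathcal{J}(A_1,A_2))$, apply Proposition \ref{prop:1} (or rather the slice-changing part of Theorem \ref{thm:3}, now going from $\Sl(A_{ref})$ back to $\Sl(\mathcal{J}(A_1,A_2))$) to recover a unique $\mASD$ connection in $\Sl(\mathcal{J}(A_1,A_2))$; the two readjustment maps are inverse near the base point by uniqueness in both theorems. Local triviality of $\Pi\vert_{\mathcal{U}}$ over $\mathcal{V}$ then follows since (\ref{eq:trivie}) exhibits $\Pi^{-1}(\mathcal{V})\cap\mathcal{U}$ as a product over $\mathcal{V}$, and reparametrizing the second factor by $\mathcal{K}(\mathcal{J}(A_{10},A_{20}))$ identifies the fibers with open subsets of $\rM_{reg}(\mathcal{T}_\Gamma,A_{ref})$.

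For part (b), the map $\Phi = \mathcal{K}\circ\mathcal{J}$ is $\C^m$ as a composite of the $\C^m$ maps from Theorems \ref{thm:1} and \ref{thm:3}. That it is a local diffeomorphism onto an open subset of $\rM_{reg}(\mathcal{T}_\Gamma,A_{ref})$ I would get by computing its linearization at $(A_{10},A_{20})$: the differential of $\mathcal{J}$ is an isomorphism onto the tangent space of the fiber $\rM_{reg}(\mathcal{T}_\Gamma,\mathcal{J}(A_{10},A_{20}))$ at the base point (this is essentially Donaldson--Kronheimer's gluing-is-an-embedding statement, built into the regular case of Theorem \ref{thm:1} via the right inverse $P$), and the differential of $\mathcal{K}$ is an isomorphism between the corresponding tangent spaces of the two slices (it is a slice-change, which is an $L^p_{1,\delta}$-isomorphism near the identity gauge transformation). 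Composing gives an isomorphism $T_{(A_{10},A_{20})}(G_1\times G_2) \to T_{\Phi(A_{10},A_{20})}\rM_{reg}(\mathcal{T}_\Gamma,A_{ref})$, noting both sides have the same (finite) dimension by the index formula of Remark \ref{rem:mu} and additivity of indices under gluing; then the inverse function theorem and injectivity (again from uniqueness in Theorems \ref{thm:1} and \ref{thm:3}, shrinking $\mathcal{V}$ if needed) finish it. The irreducibility claim is immediate: if $A_1$ or $A_2$ is irreducible then $\mathcal{J}(A_1,A_2)$ is irreducible by the last line of Theorem \ref{thm:1}, and then $\mathcal{K}(\mathcal{J}(A_1,A_2)) = \Phi(A_1,A_2)$ is irreducible by the last line of Theorem \ref{thm:3}.

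I expect the main obstacle to be step (ii) in part (a): carefully controlling in which norms, and with which uniform constants over $\mathcal{V}$, the connection $A\in\Sl(\mathcal{J}(A_1,A_2))$ near the base point actually satisfies the hypothesis (\ref{eq:epsaa}) relative to the \emph{fixed} reference $A_{ref} = A'(A_{10},A_{20})$ rather than to the moving $\mathcal{J}(A_1,A_2)$. This requires combining three layers of estimates — the closeness of $\mathcal{J}(A_1,A_2)$ to $A_{ref}$ from (\ref{eq:close}), the closeness of $A$ to $\mathcal{J}(A_1,A_2)$ within $\mathcal{U}$, and the comparison of the two slice conditions $d^{*,\delta}_{\mathcal{J}(A_1,A_2)}(\cdot)=0$ versus $d^{*,\delta}_{A_{ref}}(\cdot)$ small — and verifying that $\lambda$ can be chosen small enough, uniformly over the precompact $\mathcal{V}$, to make all three small simultaneously. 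A secondary subtlety is checking that the two readjustment operations (Theorem \ref{thm:1}'s $\mathcal{J}$-construction and Theorem \ref{thm:3}'s slice change) genuinely compose to an invertible map and not merely an immersion; this rests on the uniqueness clauses in both theorems, applied in a neighborhood small enough that the relevant contraction-mapping fixed points are unique.
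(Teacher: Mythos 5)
Your outline of the overall architecture is correct and matches the paper: both approaches use the slice-change map $\mathcal{K}$ of Theorem~\ref{thm:3} to produce the trivialization (\ref{eq:trivie}), both observe that the source and target of $\Phi = \mathcal{K} \circ \mathcal{J}$ have equal dimension by Remark~\ref{rem:mu} and index additivity, and both reduce to showing $\Phi$ is an immersion. The issue you flag as the ``main obstacle'' --- propagating the hypothesis (\ref{eq:epsaa}) from the moving slice to the fixed $A_{ref}$ --- is in the paper handled in a few lines with (\ref{eq:closeinA}) and two triangle inequalities; it is not the bottleneck.

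The genuine gap is in the immersion step for part~(b), where you assert rather than prove the key facts. You write that the differential of $\mathcal{K}$ is an isomorphism because ``it is a slice-change, which is an $L^p_{1,\delta}$-isomorphism near the identity gauge transformation.'' But $\mathcal{K}$ is \emph{not} merely a gauge transformation: precisely because the $\mASD$ equation is not gauge-equivariant (Remark~\ref{rem:noneq}), the gauge change $\exp(\mu)^*$ in Theorem~\ref{thm:3} is followed by a nonlinear correction $K_A(\zeta(A))$ that restores the $\mASD$ condition, and there is no \emph{a priori} reason this correction cannot collapse directions of the tangent space. Showing the derivative of $\zeta$ is small (so the gauge-change term dominates) is the content of Lemma~\ref{lem:2b}, and bounding $\Vert W \Vert_{\L(X);A}$ by $\Vert D K(W,z)\Vert + \Vert z\Vert$ uniformly is Lemma~\ref{lem:2a}; neither is implicit in Theorem~\ref{thm:3}. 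Similarly, your claim that $D\mathcal{J}$ is an isomorphism because this is ``built into the regular case of Theorem~\ref{thm:1}'' is not supported by the statement of that theorem, which produces $\mathcal{J}$ as a $\C^m$ map but says nothing about its rank. The $\mASD$ setting adds several terms to the quadratic expansion (\ref{eq:expansionofQ}) not present in Donaldson--Kronheimer, and one must show the correction $\xi(A_1,A_2)$ has derivative of size $O(b^{4/p})$ (Lemma~\ref{lem:1b}) and that the linearized pregluing is coercive, which uses unique continuation on the finite-dimensional spaces $T_{A_k}G_k$ (Lemma~\ref{lem:1a}). These four lemmas --- the content of Section~\ref{sec:ImmersionLemmas} --- are exactly what your proposal is missing, and without them the conclusion ``hence $\Phi$ is a local diffeomorphism'' does not follow.
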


Part (b) can be restated by saying that, relative to the local trivilization of (a), locally the section $\Psi$ becomes a $\C^m$-embedding onto an embedded $\C^m$-submanifold of the fiber. This is an $\mASD$ version of the familiar result for $\ASD$ connections that gluing produces a parametrized family of connections in the $\ASD$ moduli space for a connected sum. See Figure \ref{fig:3} for an illustration of the fiber bundle in (a), and Figure \ref{fig:4} for an illustration of the specified trivialization, as well as the map $\Phi$.

\begin{figure}[h]
\begin{tikzpicture}
\tikzstyle{every node} = [sloped,above] 
\coordinate (R1) at (5,2);
\coordinate (R2) at (5,1);
\coordinate (R3) at (5,0);
\coordinate (R4) at (5,-1);
\coordinate (R5) at (5,-2);
\coordinate (L1) at (-5,2);
\coordinate (L2) at (-5,1);
\coordinate (L3) at (-5,0);
\coordinate (L4) at (-5,-1);
\coordinate (L5) at (-5,-2);
\coordinate (A1) at (.2,-2.2);
\coordinate (B1) at (.2,-3.6);
\coordinate (A2) at (-.2,-2.2);
\coordinate (B2) at (-.2,-3.6);
\coordinate (RB) at (-5,-3.8);
\coordinate (LB) at (5,-3.8);
\draw  (L1) \irregularline{1mm}{10} ;
\draw  (L5) \irregularline{1mm}{10} ;
    \draw [dashed] plot [smooth, tension = 1] coordinates {(R2) (0,1.2) (L2)};
  \draw [dashed] plot [smooth, tension = 1] coordinates {(R4) (0,-.8) (L4)};
     \draw [ line width = .4mm]  plot [smooth, tension = 1] coordinates {(R3) (0,.2) (L3)};
   \draw (L1) -- (L5);
   \draw (R1) -- (R5);
   \draw [ line width = .4mm]  (RB) -- (LB);
         \draw[->, line width = .2mm] (A2) -- (B2);
          \draw[->, line width = .2mm] (B1) -- (A1);
   \node at (-5.85,-.25) {$\mathcal{U}$};
    \node at (-7.15, -.25) {$\E$};
     \node at (0,.20) {$\mathrm{im}(\Psi)$};
       \node at (-.5,-3.2) {$\Pi$};
        \node at (.5,-3.2) {$\Psi$};
      \node at (-6,-4.15) {$G_1 \times G_2$};
      \draw [decorate,decoration={brace,amplitude=20pt},xshift=-4pt,yshift=0pt]
(-6,-2) -- (-6,2) node [black,midway,xshift=-0.6cm] {};
 \draw [decorate,decoration={brace,amplitude=10pt},xshift=0pt,yshift=0pt]
(-5.25,-1) -- (-5.25,1) node [black,midway,xshift=-0.3cm] {};
\end{tikzpicture}
    \caption{The above picture illustrates the fiber bundle $\Pi \vert_{\mathcal{U}}: \mathcal{U} \rightarrow G_1 \times G_2$ obtained by restricting the projection $\Pi: \E \rightarrow G_1 \times G_2$ to the open submanifold $\mathcal{U} \subseteq \E$. The fibers are $\C^m$-diffeomorphic to open subsets of $\rM_{reg}(\mathcal{T}_\Gamma, A_{ref})$.} \label{fig:3}
    \end{figure}
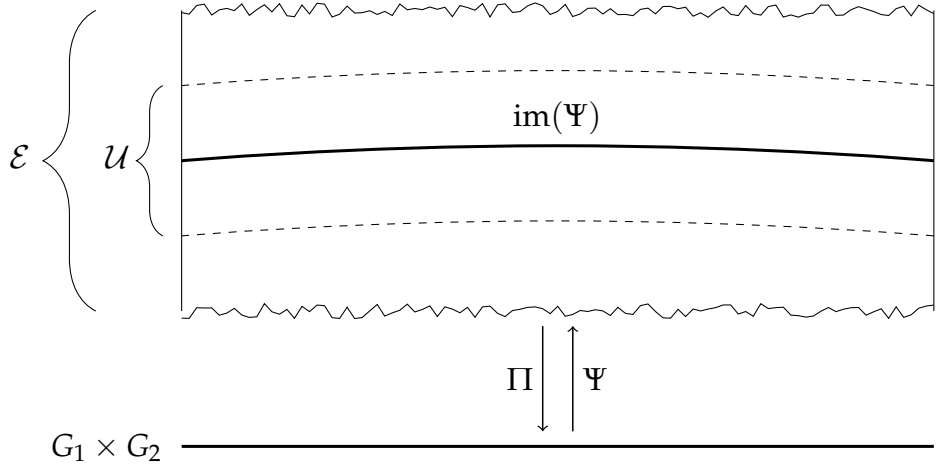

\begin{figure}[h]
\begin{tikzpicture}
\coordinate (L1) at (0,2);
\coordinate (L2) at (3,2);
\coordinate (L4) at (0,0);
\coordinate (L3) at (3,0);
\coordinate (LM1) at (0,1);
\coordinate (LM2) at (3,1);
 \draw[dashed]  plot [smooth, tension = 1] coordinates {(L1) (1.5,2.1) (L2)};
  \draw[dashed] (L2) -- (L3);
   \draw[dashed]  plot [smooth, tension = 1] coordinates {(L3) (1.5,.1) (L4)};
    \draw[dashed] (L1) -- (L4);
      \draw[ line width = .4mm]  plot [smooth, tension = 1] coordinates {(LM1) (1.5,1.1) (LM2)};
         \node at (1.5,1.35) {$\mathrm{im}(\Psi)$};
          \node at (1.5,2.6) {$\Pi^{-1}(\mathcal{V}) \cap \mathcal{U}$};
\coordinate (M1) at (6,2);
\coordinate (M2) at (9,2);
\coordinate (M4) at (6,0);
\coordinate (M3) at (9,0);
\coordinate (M1') at (6,2.5);
\coordinate (M2') at (9,2.5);
\coordinate (M4') at (6,-.5);
\coordinate (M3') at (9,-.5);
\coordinate (M2'') at (9,1.5);
\coordinate (M4'') at (6,.5);
 \draw[dashed] (M1) -- (M2);
  \draw[dashed] (M3) -- (M2);
   \draw[dashed] (M4) -- (M3);
    \draw[dashed] (M1) -- (M4);
     \draw[dotted] (M1') -- (M2');
  \draw[dotted] (M1') -- (M1);
   \draw[dotted] (M2') -- (M2);
   \draw[dotted] (M4') -- (M3');
      \draw[dotted] (M3') -- (M3);
   \draw[dotted] (M4') -- (M4);
     \draw [ line width = .4mm]  (M4'') -- (M2'');
     \node at (7.5,3) {$\mathcal{V} \times \rM_{reg}(\mathcal{T}_\Gamma, A_{ref})$};
\coordinate (R1) at (12,2);
\coordinate (R2) at (12,0);
\coordinate (R1') at (12,2.5);
\coordinate (R2') at (12,-.5);
\coordinate (R1'') at (12,1.5);
\coordinate (R2'') at (12,.5);
\draw [ line width = .4mm]  (R1'') -- (R2'');
\draw [dotted] (R1') -- (R1);
\draw [dotted] (R2') -- (R2);
\draw[dashed] (R1'') -- (R1);
\draw[dashed] (R2'') -- (R2);
\node at (12,3) {$\rM_{reg}(\mathcal{T}_\Gamma, A_{ref})$};
\coordinate (B1) at (6,-3);
\coordinate (B2) at (9,-3);
\draw [ line width = .4mm]  (B1) -- (B2);
\node at (8.5,-3.4) {$\mathcal{V} \subseteq G_1 \times G_2$};
 \coordinate (A1) at (3.5,1);
 \coordinate (A2) at (5.5,1);
    \draw[->, line width = .2mm] (A1) -- (A2);  
     \node at (4.5,1.5) {(\ref{eq:trivie})  };
     \coordinate (B1) at (9.5,1);
 \coordinate (B2) at (11.5,1);
    \draw[->, line width = .2mm] (B1) -- (B2);   
         \coordinate (D1) at (1.4,-.3);
 \coordinate (D2) at (6.7,-2.7);
    \draw[->, line width = .2mm] (D1) -- (D2);   
    \node at (3.8,-1.7) {$\Pi$};
             \coordinate (DU1) at (1.7,-.3);
 \coordinate (DU2) at (7,-2.7);
    \draw[->, line width = .2mm] (DU2) -- (DU1);   
    \node at (5,-1.5) {$\Psi$};
           \coordinate (E1) at (7.5,-1);
 \coordinate (E2) at (7.5,-2.7);
    \draw[->, line width = .2mm] (E1) -- (E2);   
             \coordinate (F1) at (11.75,.75);
 \coordinate (F2) at (8,-2.7);
    \draw[->, line width = .2mm] (F2) -- (F1);  
     \node at (10.1,-1.3) {$\Phi$}; 
\end{tikzpicture}
    \caption{The horizontal arrow on the left is the $\C^m$-diffeomorphism (\ref{eq:trivie}) trivializing $\Pi \vert_{\mathcal{U}}$ over $\mathcal{V}$. This is a $\C^m$-diffeomorphism onto the region in $\mathcal{V} \times \rM_{reg}(\mathcal{T}_\Gamma, A_{ref})$ represented by the dashed lines. It takes $\mathrm{im}(\Psi)$, represented by the solid arc, $\C^m$-diffeomorphically onto the region in $\mathcal{V} \times \rM_{reg}(\mathcal{T}_\Gamma, A_{ref})$ represented by the solid diagonal line. The horizontal arrow on the right is the projection of $\mathcal{V} \times \rM_{reg}(\mathcal{T}_\Gamma, A_{ref})$ to the second factor. This takes the region represented by the dotted (resp., dashed) lines in the domain onto the region represented by the dotted (resp., dashed) line in the codomain. It restricts to a $\C^m$-diffeomorphism from the region represented by the solid diagonal line onto the region in the codomain represented by the solid line. The vertical arrow is the projection of $\mathcal{V} \times \rM_{reg}(\mathcal{T}_\Gamma, A_{ref})$ to the first factor. This restricts to a $\C^m$-diffeomorphism from the region represented by the solid diagonal line onto $\mathcal{V}$. The map $\Phi$ is a $\C^m$-embedding and its image is represented by the solid line in the picture on the right (despite what the picture suggests, the dimension of the image of $\Phi$ may not equal the dimension of $\rM_{reg}(\mathcal{T}_\Gamma, A_{ref})$; {a higher dimensional illustration would be needed to accurately represent this phenomenon}).} \label{fig:4}
    \end{figure}
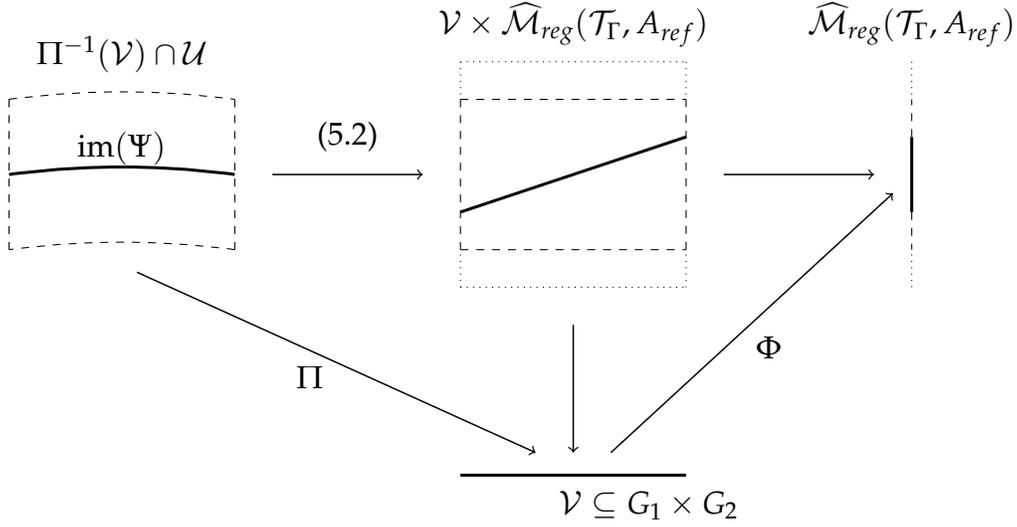

\begin{remark}\label{rem:G2point} \mbox{}
(a) We will also be interested in the case where $G_2$ consists of a single point (and so not necessarily an \emph{open} set in the $\mASD$ space). In this case, Theorem \ref{thm:2} continues to hold verbatim; there is no significant change in the proofs to account for this extension.

\medskip

(b) Recall the fiber isomorphism $\rho$ from the beginning of Section \ref{sec:GenGluing}. The usual $\ASD$ gluing results (e.g., those of \cite{taubes3,taubes4,donaldson-kronheimer1}) allows $\rho$ to vary as a ``gluing parameter''. Presumably a similar construction could be carried out in the $\mASD$ setting, with the expectation that a parametrized version of the map $\Phi$ from (a) would be a $\C^m$-diffeomorphism onto an open subset of $\rM_{reg}(\mathcal{T}_\Gamma, A_{ref})$. Though we do not pursue the details of this parametrized gluing construction here, the isomorphism $\rho$ will play an active role in our existence result of Section \ref{sec:b^+=1}.

\medskip

(c) We have chosen to phrase Theorem \ref{thm:2} in terms of the reference connection $A_{ref} = A'(A_{10}, A_{20})$ given by the preglued connection. This is only in preparation for our applications below, and this specific choice is by no means necessary. Indeed, the proof will show the connection $A_{ref}$ can be replaced by any connection that is sufficiently close to $\mathcal{J}(A_{10}, A_{20})$ in the sense that the coordinates of $A_{ref}$ and $\mathcal{J}(A_{10}, A_{20})$ satisfy the estimate (\ref{eq:hh}).
\end{remark}

We begin by giving several technical lemmas in Section \ref{sec:ImmersionLemmas}, which are used to prove that $\Phi$ is an immersion. The proof of Theorem \ref{thm:2} is given in Section \ref{sec:ProofOfThm:2}.

\subsection{Immersion lemmas}\label{sec:ImmersionLemmas}

Our ultimate goal is to show that the map $\Phi$ is an immersion. Recall this is made up of the maps $\mathcal{J}$ and $\mathcal{K}$, and hence of the maps $J, \xi, K, \zeta$ of Theorems \ref{thm:1} and \ref{thm:3}. Each of the four lemmas below establishes an estimate on the derivative of one of these latter four maps. To state the lemmas, we introduce the following seminorms on the tangent space $T_A \A^{1, p}(\mathcal{T}_\Gamma)$:   Fix an open subset $U \subseteq X$ containing $\End  X$ and let $W \in T_A \A^{1, p}(\mathcal{T}_\Gamma)$. Using the isomorphism (\ref{eq:Dnorma}), we can identify $W$ with a pair
$$(\eta, V) \in T_{p(A)} \cH_{ \Gamma} \times L^p_{1, \delta}(\Omega^1(X)).$$
Then set
$$\Vert W \Vert_{\L(U); A} \defeq \Vert \eta \Vert_{L^2_2(N)} + \Vert V \Vert_{L^p_\delta(U)} + \Vert d_A^+ V \Vert_{L^p_\delta(U)} + \Vert d_A^{*,\delta} V \Vert_{L^p_\delta(U)}$$
where the derivatives defining the norm $\Vert \cdot \Vert_{L^2_2(N)}$ on $T_{p(A)} \cH_{ \Gamma}$ are defined using the connection $\Gamma$. Then $\Vert \cdot \Vert_{\L(U); A}$ is a continuous seminorm.  These seminorms are gauge-invariant in the sense that
\eqncount\begin{equation}\label{eq:gaugeinvariantnorm}
\Vert W \Vert_{ \L(U); A} = \Vert \mathrm{Ad}(u^{-1}) W \Vert_{\L(U); u^* A}
\end{equation}
for all gauge transformations $u \in \G^{2,p}_\delta(\Gamma)$; here, via a slight abuse of notation, we are writing $\mathrm{Ad}(u^{-1}) W$ for the linearization in the direction of $W$ of the map $A \mapsto u^*A$. We use similar notation on the $X_k$. We note that if $U = X$, then $\Vert \cdot \Vert_{ \L(X); A}$ is a norm that induces the topology on $T_A \A^{1, p}(\mathcal{T}_\Gamma)$. When the metric $g_{L, \lambda}$ is relevant, we will include it in the notation by writing $\Vert \cdot \Vert_{\L(X, g_{L, \lambda}); A}$.

Our first lemma deals with the map $(A_1, A_2, \xi) \mapsto J_{A_1, A_2}(\xi)$. To first order, this map is the sum of the pregluing map $(A_1, A_2) \mapsto A'(A_1, A_2)$ together with a map that is bounded in $\xi$. We now quantify this to an extent that is sufficient for our purposes. 

\begin{lemma}\label{lem:1a}
Fix connections $A_k \in  G_k$ for $k = 1, 2$. There are constants $C, L, \lambda_0 , \eps > 0$ so that the following holds for all $0 < \lambda < \lambda_0$ and all $\xi \in  L^p_\delta(\Omega^+(X), g_{L, \lambda})$ with $\Vert \xi \Vert_{L^p_\delta(X, g_{L, \lambda})} < \eps$. Let $D J_{(A_1, A_2, \xi)}(W_1, W_2, x)$ denote the linearization at $(A_1, A_2, \xi)$ in the direction $(W_1, W_2, x)$ of the map
$$(A_1, A_2, \xi) \longmapsto J_{A_1, A_2} (\xi)$$
from Theorem \ref{thm:1} (a). Then
$$\sumdd{k=1}{2} \Vert W_k \Vert_{\L(X_k ); A_k}  \leq C\Big( \Vert D J_{(A_1, A_2, \xi)} (W_1, W_2, x) \Vert_{\L(X,g_{L, \lambda}), A'} +  \Vert x \Vert_{L^p_\delta(X,g_{L, \lambda})} \Big)$$
for all $W_k \in T_{A_k} G_k$ and all $x \in L^p_\delta(\Omega^+(X), g_{L, \lambda})$, where $A' \defeq A'(A_1, A_2)$ is the preglued connection. The constants $C, L, \lambda_0, \eps$ can be chosen to depend continuously on the $A_k \in G_k$. 
\end{lemma}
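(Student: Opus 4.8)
\textbf{Proof strategy for Lemma \ref{lem:1a}.} The plan is to unwind the construction of $J_{A_1, A_2}(\xi)$ and isolate the pregluing map at the linearized level. Recall that $J_{A_1, A_2}(\xi) = \iota(\exp_{(h', V')}(P\xi))$, where $(h', V')$ are the $\iota$-coordinates of the preglued connection $A' = A'(A_1, A_2)$ and $P$ is the right inverse from Claim 1. Linearizing at $(A_1, A_2, \xi)$, the chain rule gives
\begin{equation}\label{eq:DJdecomp}
DJ_{(A_1, A_2, \xi)}(W_1, W_2, x) = (D\iota)_{\bullet}\bigl( (D\exp_{(h',V')})_{P\xi}(\delta(h',V'), P x) + \text{(variation of } P \text{ and of } (h',V') \text{ in } A_k)\bigr),
\end{equation}
where $\delta(h', V')$ denotes the linearization of the preglued coordinates in the direction $(W_1, W_2)$. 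The first key point is that the pregluing operation $(A_1, A_2) \mapsto A'$ is, away from the gluing region $B_b(x_k)$, simply the identity: $A'$ restricted to $X_k \setminus B_b(x_k)$ equals $A_k$. Hence the $\L(X_k \setminus B_b(x_k); A_k)$-seminorm of $W_k$ is controlled by the $\L(X_{k} \setminus B_b(x_k), g_{L,\lambda}); A'$-seminorm of the corresponding component of $DJ$. The second key point is that the remaining terms in \eqref{eq:DJdecomp} — the variation of the right inverse $P$ (which is $\C^m$ in the $A_k$ by Remark \ref{rem:GaugeSlice}(a), using that $G_k$ consists of \emph{regular} connections so $\sigma_k = 0$), the variation of $(h', V')$ coming from the cutting-off procedure, and the $Px$ term — are all bounded in operator norm uniformly in $\lambda$ by Claim 1, and the $Px$ contribution is bounded by $C\Vert x\Vert_{L^p_\delta}$, accounting for the $\Vert x\Vert_{L^p_\delta(X, g_{L,\lambda})}$ term on the right-hand side.

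The heart of the argument is therefore a local-to-global estimate: the $\L(X_k; A_k)$-seminorm of $W_k$ decomposes as the sum of its restriction to $X_k \setminus B_b(x_k)$ (call it the ``outer'' part) and its restriction to the small ball $B_b(x_k)$ (the ``inner'' part). The outer part is handled as above. For the inner part, I would use that $W_k$ is a tangent vector to the \emph{finite-dimensional} manifold $G_k$ of regular $\mASD$ connections. The crucial observation is that since $\dim T_{A_k} G_k < \infty$, all norms on $T_{A_k} G_k$ are equivalent, and in particular the $\L(B_b(x_k); A_k)$-seminorm is dominated by the full $\L(X_k \setminus B_b(x_k); A_k)$-seminorm up to a constant depending on $A_k$ — because a nonzero element of $T_{A_k}G_k$ cannot vanish on the complement of a ball (a $\mASD$ connection, being a solution of an elliptic equation with the unique continuation property, together with its linearization, cannot vanish on an open set; more directly, the restriction map $T_{A_k} G_k \to$ (germs on $X_k \setminus B_b(x_k)$) is injective on a finite-dimensional space, hence bounded below). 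This gives $\Vert W_k\Vert_{\L(B_b(x_k)); A_k} \leq C(A_k)\Vert W_k\Vert_{\L(X_k\setminus B_b(x_k)); A_k}$, and summing the two pieces completes the estimate on $W_k$. One then runs the argument for $k=1,2$ and adds.

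\textbf{Main obstacle.} The delicate point is establishing the inner estimate with constants that depend \emph{continuously} on $A_k \in G_k$ (and hence are uniform over the precompact $G_k$), and simultaneously matching it to the $g_{L,\lambda}$-dependent seminorms on the glued manifold $X$. The seminorm $\Vert \cdot \Vert_{\L(X, g_{L,\lambda}); A'}$ involves the metric $g_{L,\lambda}$, which degenerates on the gluing neck as $\lambda \to 0$; one must check that the pieces of $DJ$ supported near the neck do not spoil the estimate — but since the seminorm only requires control on $X_k \setminus B_{L\lambda^{1/2}}(x_k) \supseteq X_k \setminus B_b(x_k)$ on each side (recall $b = 4L\lambda^{1/2}$), and the cut-off functions used in the gluing of $P$ are supported in the neck with derivatives controlled as in \cite[pp.~293--294]{donaldson-kronheimer1}, the $\lambda$-uniformity follows from the same estimates used in the proof of Claim 1. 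Continuity of the constants in $A_k$ follows because the right inverse $P$, the pregluing map, and the finite-dimensional lower bound all vary continuously (indeed $\C^m$) with $A_k$; precompactness of $G_k$ then gives genuine uniformity. The only genuinely new ingredient beyond bookkeeping is the finite-dimensional unique-continuation-type lower bound for the inner part, and I would expect to spend most of the effort making that precise.
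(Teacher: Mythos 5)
Your proposal is correct and takes essentially the same route as the paper: the central ingredient in both is the finite-dimensional unique-continuation lower bound $\Vert W_k \Vert_{\L(X_k);A_k} \leq C_1 \Vert W_k\Vert_{\L(X_k\setminus B_{L\lambda^{1/2}}(x_k));A_k}$, combined with the observation that the outer restriction is controlled by the linearized pregluing map $W' = DA'(W_1,W_2)$, which in turn (modulo the $(D\iota)Px$ term, bounded by $C\Vert x\Vert_{L^p_\delta}$ via Remark \ref{rem:GaugeSlice}(b)) is the leading term of $DJ$. The only stylistic difference is that the paper linearizes at $(A_1,A_2,0)$ — where the $A_k$-derivative of $P$ contributes nothing since it is multiplied by $\xi=0$ — and then uses a Taylor estimate with $\lambda$-uniform constants to pass to general small $\xi$, whereas you linearize directly at $(A_1,A_2,\xi)$ and bound $(D_{A_k}P)(W_k)\xi$ using the uniform bounds on $D_{A_k}P$ and the smallness of $\xi$; these are equivalent, and both ultimately rest on the uniform $\C^m$-dependence of $P$ on the $A_k$ from Remark \ref{rem:GaugeSlice}(a).
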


The next lemma shows that the map $(A_1, A_2) \mapsto \xi(A_1, A_2)$ depends minimally on the connections $A_1, A_2$. In the next section, this will combine with the previous lemma to show that the map $\mathcal{J}(A_1, A_2) = J_{A_1, A_2}(\xi(A_1, A_2))$ is approximately the pregluing map $(A_1,A_2) \mapsto A'(A_1, A_2)$ for $\lambda$ small; at this point it will follow that $\mathcal{J}$ is an immersion.

\begin{lemma}\label{lem:1b}
Fix $A_k \in  G_k$ for $k = 1, 2$. Then there are constants $C, L, \lambda_0 > 0$ so that the following holds for all $0 < \lambda < \lambda_0$. Let $D\xi_{(A_1, A_2)} (W_1, W_2)$ denote the linearization at $(A_1, A_2)$ in the direction $(W_1, W_2)$ of the map
$$(A_1, A_2) \longmapsto \xi(A_1 , A_2)$$
from Theorem \ref{thm:1} (c). Then this satisfies
$$\Vert D \xi_{(A_1, A_2)}(W_1, W_2) \Vert_{L^p_\delta(X, g_{L, \lambda})} \leq b^{4/p} C\sumdd{k=1}{2} \Vert W_k \Vert_{\L(X_k); A_k} $$
for all $W_k \in T_{A_k} G_k$, where $b = 4 L \lambda^{1/2}$. The constants $C, L, \lambda_0$ can be chosen to depend continuously on the $A_k \in G_k$. 
\end{lemma}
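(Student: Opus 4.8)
\textbf{Proof proposal for Lemma \ref{lem:1b}.}

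The plan is to differentiate the defining fixed-point equation for $\xi(A_1, A_2)$ and use the contraction estimates already in hand to control the derivative in $(A_1, A_2)$. Recall from the proof of Theorem \ref{thm:1} that $\xi = \xi(A_1, A_2)$ is the unique small solution of $\xi + \St(\xi) = -\widetilde{s}(0,0)$, where $\St$ is the nonlinear part of $\widetilde{s}\circ P$ and both $\St$ and $\widetilde{s}(0,0)$ depend on $(A_1, A_2)$ (through $A' = A'(A_1,A_2)$, the chart data $(h',V')$, and the right inverse $P$). Differentiating this identity in the direction $(W_1, W_2)$ gives
$$\big(I + (D_\xi \St)_{\xi}\big)\, D\xi_{(A_1,A_2)}(W_1,W_2) = -\,D_{(A_1,A_2)}\!\big(\widetilde{s}(0,0) + \St(\xi)\big)(W_1,W_2),$$
where $D_\xi$ denotes the partial derivative in the $\xi$-slot and $D_{(A_1,A_2)}$ the partial derivative in the connection variables (with $\xi$ frozen). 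The quadratic estimate \eqref{eq:quad} from Claim 2 together with $\Vert\xi\Vert_{L^p_\delta}\le Cb^{4/p}$ shows $\Vert (D_\xi\St)_\xi\Vert_{\mathrm{op}} \le 2\kappa\Vert\xi\Vert_{L^p_\delta} \le \tfrac{1}{2}$ for $\lambda$ small, so $I + (D_\xi\St)_\xi$ is invertible with uniformly bounded inverse. Hence it suffices to bound the right-hand side, i.e. to show
$$\big\Vert D_{(A_1,A_2)}\big(\widetilde{s}(0,0)+\St(\xi)\big)(W_1,W_2)\big\Vert_{L^p_\delta(X,g_{L,\lambda})} \le C\, b^{4/p}\sumdd{k=1}{2}\Vert W_k\Vert_{\L(X_k);A_k}.$$

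For the term $D_{(A_1,A_2)}\widetilde{s}(0,0)$: since $\widetilde{s}(0,0) = s(A')$ and $A'$ is the preglued connection, varying $(A_1,A_2)$ just varies the pregluing, and $s(A')$ is supported on the gluing annuli (where the flat-cutting introduces curvature) together with the trimming region $[T-1/2,T+1/2]\times N$. On the gluing region the curvature $F_{A'}$ is $O(b^{-1})$ pointwise but supported on a set of volume $O(b^4)$, and the derivative $D_{(A_1,A_2)}(\text{pregluing})$ has the same support structure with a uniformly bounded pointwise operator norm; so the $L^p_\delta$-norm of the derivative inherits the factor $b^{4/p}$ exactly as $\Vert s(A')\Vert_{L^p_\delta} \le Cb^{4/p}$ did in \eqref{eq:our727} — this is the same estimate (7.2.36)--(7.2.37) of \cite{donaldson-kronheimer1}, now applied to the derivative rather than the value. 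On the trimming region, away from the gluing points, $A'$ agrees with the (fixed-topology) $A_k$ and the derivative is controlled by $\Vert W_k\Vert_{\L(X_k);A_k}$ with no smallness, but this region has fixed volume so contributes a bounded constant; one absorbs this by noting that the relevant piece of $s(A')$ there is itself $O(b^{4/p})$ because $A_k$ is $\mASD$ on $X_k$ and hence $s$ vanishes there identically in $(A_1,A_2)$ — in fact the only $(A_1,A_2)$-dependence of $s(A')$ over the compact part living outside the gluing region comes through $\beta' F^+_{i(p(A'))}$, and by the estimate \eqref{eq:close}-type comparisons $p(A') = p(A_k)$ is essentially constant, so this contribution is genuinely $O(b^{4/p})$ as well. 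For the term $D_{(A_1,A_2)}\St(\xi)$ (again with $\xi$ frozen): $\St$ is quadratic in $P\xi$ by the explicit expansion \eqref{eq:expansionofQ}, so its derivative in the connection variables is a sum of terms each of which is either (i) linear in $P\xi$ times the derivative of $P$ or of the chart data, or (ii) quadratic in $P\xi$ times the derivative of a coefficient; using $\Vert P\xi\Vert \le C_1\Vert\xi\Vert_{L^p_\delta}\le C_1Cb^{4/p}$ from Claim 1 and \eqref{eq:estypre}, every such term carries at least one factor $b^{4/p}$, with the remaining factors bounded by $\sum_k\Vert W_k\Vert_{\L(X_k);A_k}$ exactly as in the proof of Claim 2. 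Combining, the right-hand side is $O(b^{4/p}\sum_k\Vert W_k\Vert_{\L(X_k);A_k})$, which gives the lemma.

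The continuous dependence of $C, L, \lambda_0$ on $A_k\in G_k$ follows because every estimate above is built from the estimates of Theorem \ref{thm:1}, Claim 1 and Claim 2, whose constants were already observed (in Lemma \ref{lem:1a} and the $G_k$-precompactness discussion) to vary continuously with the $A_k$; since $G_1\times G_2$ is precompact one may in fact take uniform constants, but the stated continuous-dependence formulation is all that is needed. \textbf{The main obstacle} I anticipate is bookkeeping the support of $D_{(A_1,A_2)}\widetilde{s}(0,0)$ carefully enough to see the factor $b^{4/p}$ survives the differentiation: one must check that differentiating the pregluing construction in the connection variables does not produce terms of size $O(b^{4/p - 1})$ coming from differentiating the $O(b^{-1})$ curvature concentration, i.e. that the cut-off functions used in pregluing depend on $(A_1,A_2)$ only through their support location (which is fixed) and not in a way that worsens the scaling. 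This is implicit in the construction of \cite[Section 7.2.1, 4.4.5]{donaldson-kronheimer1}, where the flat connections $A_{\flat,k}$ are chosen once and the cut-off profiles are fixed, so that $D_{(A_1,A_2)}A'(A_1,A_2)$ is again supported away from $B_{b/2}(x_k)$ with uniformly bounded pointwise norm — but it deserves an explicit remark.
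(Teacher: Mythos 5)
Your proposal is correct and takes essentially the same route as the paper's proof: both differentiate the fixed-point identity $\xi + \St(\xi) = -\widetilde{s}(0,0)$, split the result into the derivative of $\widetilde{s}(0,0) = s(A')$, the derivative of $\St$ with $\xi$ frozen, and the linearization $D_\xi\St$ acting on $D\xi$, then estimate the first two via the D--K pregluing estimate (7.2.36) and the quadratic bounds together with $\Vert\xi\Vert_{L^p_\delta}\le C b^{4/p}$, and dispose of the third by invertibility of $I + D_\xi\St$ (the paper phrases this as absorbing a $\tfrac12\Vert\cdot\Vert$ term after differentiating along a path $\tau\mapsto(A_1(\tau),A_2(\tau))$; these are the same step). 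One small inaccuracy: you list the trimming region $[T-1/2,T+1/2]\times N$ as part of the support of $s(A')$, but since $A'$ agrees with $A_k$ there and $s(A_k)=0$ identically in $(A_1,A_2)$, in fact $s(A')\equiv 0$ on all of $X_k\setminus B_b(x_k)$ and so does its $(A_1,A_2)$-derivative — you do arrive at this conclusion a few lines later, so the overall argument is unaffected, but the initial support claim could be stated cleanly from the start.
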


These next two lemmas are analogues of the previous two, but for the map $\mathcal{K}(A) = K_A(\zeta(A))$ in place of $\mathcal{J}(A_1, A_2) = J_{A_1, A_2}(\xi(A_1, A_2))$.

\begin{lemma}\label{lem:2a}
Fix a regular connection $A_{ref}  \in  \A^{1,p}(\mathcal{T}_{\Gamma})$. Then there are constants $C, \epsilon' > 0$ so that the following holds for all connections $A \in \rM_{reg}(\mathcal{T}_\Gamma, A_{ref})$ satisfying (\ref{eq:epsaa}) with respect to any $0 < \eps < \eps'$. Let $DK_{(A, \zeta)} (W, z)$ denote the linearization at $(A, \zeta)$ in the direction $(W, z)$ of the map
$$(A, \zeta) \longmapsto K_A (\zeta)$$
from Theorem \ref{thm:3} (a). Then this satisfies
$$\Vert W \Vert_{\L(X), A }  \leq C\Big( \Vert D K_{(A, \zeta)}(W, z) \Vert_{\L(X), A'} +  \Vert z \Vert_{L^p_\delta(X)}\Big)$$
for all $W \in T_{A} \rM_{reg}(\mathcal{T}_\Gamma, A_{ref})$ and all $z \in L^p_\delta(\Omega^+(X))$. The constants $C, \eps$ can be chosen to depend continuously on $A$ and $A_{ref}$. 
\end{lemma}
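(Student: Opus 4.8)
The plan is to follow the same strategy used for the analogous pregluing estimate in Lemma \ref{lem:1a}, adapted to the single-manifold setting of Theorem \ref{thm:3}. Recall that $K_A(\zeta) = \iota(\exp_{(h_A, V_A)}(P\zeta))$, where $P \oplus \pi_A$ is a right inverse to $(D\widetilde s)_{(0,0)} \oplus \sigma_A$ on the slice, and $u^*A = \iota(h_A, V_A)$ with $u = \exp(\mu(A, A_{ref}))$. The key point is that, to first order in $(W, z)$, the map $(A,\zeta) \mapsto K_A(\zeta)$ is the composition of the gauge-fixing map $A \mapsto u^*A$ (which is a $\C^m$-diffeomorphism from a neighborhood of $A$ in $\rM_{reg}(\mathcal{T}_\Gamma, A_{ref})$ onto a neighborhood in the slice $\Sl(A_{ref})$, by Proposition \ref{prop:1} and Remark \ref{rem:bounds2}) together with the exponential-chart map and the bounded operator $P$ applied to $\zeta$. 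So I would first write out $DK_{(A,\zeta)}(W,z)$ explicitly via the chain rule, separating the contribution of $W$ (which factors through $D(\text{gauge-fixing})_A W$ composed with $(D\iota)\circ(D\exp)\circ$ (something)) from the contribution of $z$ (which is $(D\iota)\circ(D\exp)\circ P z$ at leading order, hence bounded in $\Vert z\Vert_{L^p_\delta}$ by Theorem \ref{thm:3}(a) and the uniform bounds on $\iota, \exp$).

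Next I would observe that, because $A$ is regular, $(Ds)_A$ restricted to the slice $\Sl(A_{ref})$ is surjective (Theorem \ref{thm:3} shows regularity transfers between $A$ and $A_{ref}$), so $(D\iota)_{(h_A,V_A)}\circ P$ is an approximate right inverse to $(Ds)_{u^*A}$; combined with the fact that the seminorm $\Vert\cdot\Vert_{\L(X),A}$ is equivalent to a fixed norm on $T_A\rM_{reg}$, this gives a lower bound: $\Vert W\Vert_{\L(X),A}$ is controlled by the $\L$-norm of the part of $DK_{(A,\zeta)}(W,z)$ coming from $W$, which in turn is controlled by $\Vert DK_{(A,\zeta)}(W,z)\Vert_{\L(X),A'} + \Vert z\Vert_{L^p_\delta}$ after subtracting off the $z$-term. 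The gauge-invariance of the seminorms (equation (\ref{eq:gaugeinvariantnorm})) is what lets me compare the $\L$-norm computed at $A$ (via the gauge-fixed connection $u^*A$) with the one computed at the reference connection $A'$, up to the operator norm of $\mathrm{Ad}(u^{-1})$, which is bounded since $\Vert\mu\Vert_{\C^0}$ is small by Proposition \ref{prop:1} and the hypothesis (\ref{eq:epsaa}).

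I would then assemble these pieces: the map $W \mapsto$ (the $W$-part of $DK_{(A,\zeta)}(W,z)$) is a bounded linear map between finite-plus-Banach spaces that is bounded below (injective with closed range and left inverse) at $\zeta = 0$, $A = A_{ref}$, hence, by the usual open-ness of left-invertible operators and continuity of all the data in $A, A_{ref}, \zeta$, it is uniformly bounded below for $A$ satisfying (\ref{eq:epsaa}) with $\eps < \eps'$ small and $\Vert\zeta\Vert$ small. This yields $\Vert W\Vert_{\L(X),A} \le C(\Vert DK_{(A,\zeta)}(W,z)\Vert_{\L(X),A'} + \Vert z\Vert_{L^p_\delta(X)})$, with $C$ depending continuously on $A$ and $A_{ref}$ as claimed.

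The main obstacle I anticipate is bookkeeping the interplay between the three sources of ``change of basepoint'': the gauge transformation $u$ relating $A$ to its image in $\Sl(A_{ref})$, the exponential chart $\exp_{(h_A,V_A)}$ on the center manifold times affine space, and the difference between $A_{ref}$ and the actual preglued connection $A'$ at which the $\L$-seminorm in the conclusion is measured. Each of these contributes lower-order terms that must be absorbed, and one must check that the relevant constants (operator norm of $P$ from Theorem \ref{thm:3}(a), operator norm of $\mathrm{Ad}(u^{-1})$, the norm-equivalence constant on $T_A\rM_{reg}$, and the constant from Remark \ref{rem:bounds2}) are all uniform over the stated range and depend continuously on $(A, A_{ref})$. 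Since $\rM_{reg}$ here is not being varied in a connected-sum family (there is no small parameter $\lambda$ to track, unlike Lemma \ref{lem:1a}), this is genuinely easier than the corresponding step in Lemma \ref{lem:1a}; the estimates are essentially uniform continuity statements on a fixed manifold, so no delicate $\lambda$-independence arguments are needed.
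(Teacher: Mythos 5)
Your overall strategy—decompose $DK_{(A,\zeta)}(W,z)$ via the chain rule, exploit gauge-invariance of the seminorms to compare basepoints, and then argue a uniform lower bound by perturbing from $(\zeta,A)=(0,A_{ref})$—is the same in spirit as the paper's, which also reduces the estimate to the two facts $\Vert W\Vert_{\L(X);A}\le C\Vert W'\Vert_{\L(X);A'}$ (where $W'$ is the pushforward of $W$ under the gauge-fixing map) and then repeats the Taylor-expansion argument from Lemma \ref{lem:1a}. However, there is a genuine gap in the first of these two steps: you attribute the needed control of the gauge-fixing derivative to ``the constant from Remark \ref{rem:bounds2},'' but that remark bounds $\mu(A,A_{ref})$ itself in terms of $d^{*,\delta}_{A_{ref}}d_A\mu$, which is not the same as bounding the linearization $\partial_A\mu(A,A_{ref})W$, and it is this linearization that enters $W'=\mathrm{Ad}(u_0^{-1})W+d_{A'}\bigl(\tfrac{d}{d\tau}\big|_0\mu_\tau\bigr)$ through the product rule. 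The paper's key move, which your sketch does not supply, is to differentiate the defining slice identity $0=d^{*,\delta}_{A_{ref}}(u_\tau^*A_\tau-A_{ref})$ at $\tau=0$, obtaining $d^{*,\delta}_{A_{ref}}d_{A'}\bigl(\tfrac{d}{d\tau}\big|_0\mu_\tau\bigr)=-d^{*,\delta}_{A_{ref}}W$; this identity, combined with injectivity of $d^{*,\delta}_{A'}$ on $\mathrm{im}(d_{A'})$, injectivity of $d^{*,\delta}_{A'}d_{A'}$ on $0$-forms, and the $\C^0$-bound on $F^+_{A'}$ (which uses that $A$ is $\mASD$), is what converts the abstract ``the gauge-fixing map is a $\C^m$-diffeomorphism'' into the quantitative estimate $\Vert d_{A'}(\partial_\tau\mu)\Vert_{\L(X);A'}\le C\Vert W'\Vert_{\L(X);A'}$ with a constant that is uniform over the range of $A$ in question.

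A secondary issue is that the ``openness of left-invertible operators'' principle applies to operators between \emph{fixed} Banach spaces, whereas here the finite-dimensional domain $T_A\rM_{reg}(\mathcal{T}_\Gamma,A_{ref})$ varies with $A$, so the argument as stated does not directly apply; you would first need to parametrize the family of tangent spaces (or argue as the paper does, pointwise, and then close up via the contradiction argument and uniform bounds on $P$ from the proof of Lemma \ref{lem:1a}). Neither of these two gaps seems fatal to the overall plan, but as written the proposal leaves the heart of the estimate unverified.
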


\begin{lemma}\label{lem:2b}
Fix a regular connection $A_{ref}  \in  \A^{1,p}(\mathcal{T}_{\Gamma})$. Then there are constants $C, \epsilon' > 0$ so that the following holds for all connections $A \in \rM_{reg}(\mathcal{T}_\Gamma, A_{ref})$ satisfying (\ref{eq:epsaa}) with respect to any $0 < \eps < \eps'$. Let $D\zeta_A W$ denote the linearization at $A$ in the direction $W$ of the map
$$A \longmapsto \zeta(A)$$
from Theorem \ref{thm:3} (b). Then this satisfies
$$\Vert D\zeta_A W \Vert_{L^p_\delta(X)} \leq  C\Vert d^{*, \delta}_{A_{ref}}(V - V_{ref}) \Vert_{L^p_\delta(X)}  \Vert W \Vert_{\L(X), A } $$
for all $W \in T_{A} \rM_{reg}(\mathcal{T}_\Gamma, A_{ref})$. The constants $C, \eps$ can be chosen to depend continuously on $A_{ref}$.
\end{lemma}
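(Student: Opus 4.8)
The plan is to differentiate the fixed-point equation defining $\zeta(A)$ in the proof of Theorem~\ref{thm:3} and then to bound each resulting term by $\Vert d^{*, \delta}_{A_{ref}}(V - V_{ref}) \Vert_{L^p_\delta(X)}\,\Vert W \Vert_{\L(X), A}$. Recall from that proof that, with $\mu = \mu(A, A_{ref})$ the gauge parameter of Proposition~\ref{prop:1} and $u = \exp(\mu)$, the form $\zeta(A)$ is the unique small solution of $\zeta(A) + \St_A(\zeta(A)) = -\widetilde{s}_A(0,0)$ (I write $\St_A, \widetilde{s}_A$ for the objects $\St, \widetilde{s}$ there, now regarded as depending on $A$), and that, using $s(A) = 0$, one has $\widetilde{s}_A(0,0) = s(u^*A) = \beta'\,(\mathrm{Ad}(u^{-1}) - \mathrm{Id})\,F^+_{i(h)}$ with $h = p(A)$. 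From Theorem~\ref{thm:3}(b), Proposition~\ref{prop:1}, and Remark~\ref{rem:bounds2} one already has $\Vert \zeta(A) \Vert_{L^p_\delta(X)} + \Vert \mu \Vert_{L^p_\delta(X)} + \Vert \mu \Vert_{\C^0(X)} \leq C\,\Vert d^{*, \delta}_{A_{ref}}(V - V_{ref}) \Vert_{L^p_\delta(X)}$, and it is these that we differentiate once more. Note that when $A$ lies in $\Sl(A_{ref})$ itself — as it does when $A \in \rM_{reg}(\mathcal{T}_\Gamma, A_{ref})$ — one has $\mu = 0$, hence $\widetilde{s}_A(0,0) = s(A) = 0$, and uniqueness in Lemma~\ref{lem:dklem} forces $\zeta(A) = 0$; there both sides of the claimed inequality already vanish, so the substance of the lemma is the uniformity of $C, \eps'$ and their continuous dependence on $A_{ref}$, which the argument below supplies.

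First I would differentiate $\zeta(A) + \St_A(\zeta(A)) = -\widetilde{s}_A(0,0)$ at $A$ in a direction $W \in T_A \rM_{reg}(\mathcal{T}_\Gamma, A_{ref})$ — crucially, a direction tangent to the slice $\Sl(A_{ref})$ — obtaining
$$\big(I + (D_\zeta \St_A)_{\zeta(A)}\big)\, D\zeta_A W = -\,(\partial_A \widetilde{s}_A(0,0))\, W - (\partial_A \St_A)\, W ,$$
where the second $\partial_A$ is taken at fixed $\zeta = \zeta(A)$. The operator $I + (D_\zeta \St_A)_{\zeta(A)}$ is invertible with inverse of norm at most $2$: the quadratic estimate (\ref{eq:quad}) holds for $\St_A$ with a constant $\kappa$ uniform over the connections $A$ satisfying (\ref{eq:epsaa}) (this is Claim~2 in the proof of Theorem~\ref{thm:1}, whose argument carries over to the $\St_A$ of Theorem~\ref{thm:3}), so $\Vert (D_\zeta \St_A)_{\zeta(A)} \Vert_{\mathrm{op}} \leq 2\kappa \Vert \zeta(A) \Vert_{L^p_\delta(X)} < \tfrac12$ once $\eps'$ is small. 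Hence it remains to bound $\Vert (\partial_A \widetilde{s}_A(0,0)) W \Vert_{L^p_\delta(X)}$ and $\Vert (\partial_A \St_A) W \Vert_{L^p_\delta(X)}$ by $\Vert d^{*, \delta}_{A_{ref}}(V - V_{ref}) \Vert_{L^p_\delta(X)}\, \Vert W \Vert_{\L(X), A}$.

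For the first term I would differentiate $\beta'\,(\mathrm{Ad}(u^{-1}) - \mathrm{Id})\,F^+_{i(h)}$ by the product rule; the terms fall into two families: (i) those carrying the factor $D\mu_A W$, multiplied only by quantities uniformly bounded in $A$ ($F^+_{i(h)}$, whose $\C^0$-norm is bounded on $\cH_{out}$, and $\mathrm{Ad}(u^{-1})$); and (ii) those carrying the factor $\mathrm{Ad}(u^{-1}) - \mathrm{Id}$, of norm $\lesssim \Vert \mu \Vert_{\C^0}$, multiplied by the $\cH$-directional derivative $d^+_{i(h)}\big((Di)_h \eta\big)$ of the curvature, $\eta$ denoting the center-manifold component of $W$. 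Family (ii) is at once $\lesssim \Vert \mu \Vert_{\C^0}\,\Vert \eta \Vert_{L^2_2(N)} \lesssim \Vert d^{*, \delta}_{A_{ref}}(V - V_{ref}) \Vert_{L^p_\delta}\, \Vert W \Vert_{\L(X), A}$ by the bound above. For family (i) one needs $\Vert D\mu_A W \Vert_{L^p_\delta(X)} \lesssim \Vert d^{*, \delta}_{A_{ref}}(V - V_{ref}) \Vert_{L^p_\delta(X)}\, \Vert W \Vert_{\L(X), A}$; this I would extract by differentiating the defining identity $\mathcal{F}(A, A_{ref}, \mu(A, A_{ref})) = 0$ of Proposition~\ref{prop:1}. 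Since the $\mu$-linearization of $\mathcal{F}$ is $d^{*, \delta}_{A_{ref}} d_A$, which is uniformly invertible $L^p_{2, \delta} \to L^p_\delta$ (Remark~\ref{rem:bounds2}), this expresses $D\mu_A W$ through $d^{*, \delta}_{A_{ref}}$ applied to the $L^p_{1, \delta}(\Omega^1)$-component of $W$ plus lower-order corrections each of which carries $\mu$ or $A - A_{ref}$ as a factor; because $W$ is tangent to $\Sl(A_{ref})$ the would-be leading piece $d^{*, \delta}_{A_{ref}} W$ drops out, and what remains is genuinely of the required size. A parallel, and somewhat easier, bookkeeping — using that $\St_A$ is built quadratically from $\mu$, $p(A)$, and $L^p_{1, \delta}(\Omega^1)$-components, all the auxiliary maps having $A$-derivatives bounded uniformly — handles $(\partial_A \St_A) W$. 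As every constant entering the argument (from Proposition~\ref{prop:1}, Remark~\ref{rem:bounds2}, the uniform $\kappa$, and the curvature bounds on $\cH_{out}$) can be taken continuous in $A_{ref}$, the last assertion follows.

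The step I expect to be the main obstacle is the family-(i) bound on $D\mu_A W$: that the variation of the gauge-fixing parameter along a direction tangent to the moduli space is controlled not by $\Vert W \Vert$ alone but by the ``distance to the slice'' $\Vert d^{*, \delta}_{A_{ref}}(V - V_{ref}) \Vert_{L^p_\delta(X)}$. This is precisely where it is essential that $W \in T_A \rM_{reg}(\mathcal{T}_\Gamma, A_{ref})$ rather than an arbitrary variation, and it calls for careful tracking of weighted norms because the finite-dimensional pieces $\eta$ and $(Di)_h \eta$ do not decay down the end — the same difficulty encountered in Claims~2 and~3 in the proof of Theorem~\ref{thm:1}, to be handled by the same device of splitting $\End\: X$ into $\left[0, T_1\right] \times N$ and $\left[T_1, \infty\right) \times N$ and invoking the Fredholm estimate for $d^+_{A'}$ on the $\delta$-decaying spaces.
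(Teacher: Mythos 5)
Your plan — differentiating the fixed-point identity $\zeta(A) + \St_A(\zeta(A)) = -\widetilde s_A(0,0)$, inverting $I + (D_\zeta\St_A)_{\zeta(A)}$ using the uniform quadratic estimate, and bounding the two remaining source terms — is exactly the argument the paper has in mind when it writes that the lemma ``follows from the same type of argument given for Lemma~\ref{lem:1b},'' so at that level of resolution you and the paper agree.

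The observation you make in passing and then set aside is actually the crux, and you underreact to it. You are right that $A \in \rM_{reg}(\mathcal{T}_\Gamma, A_{ref})$ forces $A \in \Sl(A_{ref})$, hence $d^{*,\delta}_{A_{ref}}(V - V_{ref}) = 0$, hence $\mu(A, A_{ref}) = 0$, $\widetilde s_A(0,0) = 0$, $\zeta(A) = 0$, and, since $\zeta$ vanishes identically along paths in $\rM_{reg}(\mathcal{T}_\Gamma, A_{ref})$, also $D\zeta_A W = 0$ for $W \in T_A\rM_{reg}(\mathcal{T}_\Gamma, A_{ref})$. You then say the ``substance'' of the lemma is the uniformity of $C, \eps'$ — but for the inequality $0 \leq 0$ every constant works, uniformly and continuously, so that cannot be where the content lives. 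What your observation really shows is that with the hypotheses exactly as written the lemma is a tautology, and those cannot be the hypotheses the authors need: in the place the lemma is invoked — the absorption argument showing $\mathcal K$ is an immersion at the end of the proof of Theorem~\ref{thm:2} — it is applied with $A = \mathcal J(A_1, A_2)$, which lies in its \emph{own} slice $\Sl(\mathcal J(A_1, A_2))$ rather than in $\Sl(A_{ref}) = \Sl(A'(A_{10}, A_{20}))$, and with $W$ tangent to the varying glued family rather than to $\rM_{reg}(\mathcal{T}_\Gamma, A_{ref})$. It is only in that setting that $\Vert d^{*,\delta}_{A_{ref}}(V - V_{ref})\Vert_{L^p_\delta}$ is nonzero and does any work. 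Your subsequent sketch implicitly addresses that version, which is the right instinct; but then the claim that the ``would-be leading piece $d^{*,\delta}_{A_{ref}} W$ drops out'' is no longer available as stated — when $A \notin \Sl(A_{ref})$, the $\Omega^1$-component of $W$ lies in $\ker(d^{*,\delta}_{A})$, not $\ker(d^{*,\delta}_{A_{ref}})$, and the discrepancy is a commutator carrying a factor of $A - A_{ref}$, which is not obviously bounded by the specific quantity $\Vert d^{*,\delta}_{A_{ref}}(V - V_{ref})\Vert_{L^p_\delta}$ that the stated conclusion demands. That is the one step where your sketch asserts the required cancellation rather than producing it, and it is where the bookkeeping for this lemma genuinely has to be done.
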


Now we give the proofs of Lemmas \ref{lem:1a}, \ref{lem:2a}, \ref{lem:1b}, and \ref{lem:2b}, in that order.

\begin{proof}[Proof of Lemma \ref{lem:1a}]
The tangent space $T_{A_k} G_k$ is cut out by linear elliptic equations. In particular, unique continuation holds for the elements of this tangent space, and so the assignment
$$W_k \longmapsto \Vert W_k \Vert_{\L(X_k \backslash B_{L\lambda^{1/2}(x_k)}); A_k} $$
defines a norm on $T_{A_k} G_k$. Since $T_{A_k} G_k$ is a finite-dimensional vector space, any two norms are equivalent and so there is a constant $C_1$ so that
$$\Vert W_k \Vert_{\L(X_k ); A_k}   \leq C_1 \Vert W_k \Vert_{\L(X_k \backslash B_{L\lambda^{1/2}(x_k)}); A_k}  $$
for all $W_k \in T_{A_k } G_k$. A simple contradiction argument shows that this constant can be taken to be independent of $L, \lambda$, provided $L \geq 1$ and $\lambda$ is sufficiently small. 

Now fix tangent vectors $W_k \in T_{A_k} G_k$ for $k = 1, 2$. Since $A_k$ is regular, we can find a $\C^m$-smooth path $A_k(\tau)$ of $A_k$-regular $\mASD$ connections with $A_k(0) = A_k$ and $\frac{d}{d\tau} \vert_{\tau = 0} A_k(\tau) = W_k$. Let $W' = \frac{d}{d\tau} \vert_{\tau = 0} A'(A_1(\tau), A_2(\tau))$. Note that the construction of the preglued connection $A'(A_1, A_2)$ implies there is a uniform constant $C_2$ so that
$$\sumd{k} \Vert W_k \Vert_{\L(X_k \backslash B_{L\lambda^{1/2}}(x_k)); A_k}  \leq C_2  \left\| W' \right\|_{\L(X); A'(A_1, A_2)} $$
provided $\lambda> 0$ is sufficiently small. Thus we have
\eqncount\begin{equation}\label{eq:starthere}
\sumd{k} \Vert W_k \Vert_{\L(X_k ); A_k}  \leq C_1 C_2  \left\| W' \right\|_{\L(X); A'}.
\end{equation}

The next claim ties this in with the linearization of the map $J$ at $(A_1, A_2, \xi)$ when $\xi = 0$.

\medskip

\noindent \emph{Claim 1: $DJ_{(A_1, A_2, 0)}(W_1, W_2, x) = W' + (D \iota)_{\iota^{-1}(A')} P x \hspace{1cm} \forall x \in L^p_\delta(\Omega^+(X))$.}

\medskip

Here $P$ is the right-inverse from the proof Theorem \ref{thm:1}. This depends on $A_1, A_2$, so to emphasize this, we will temporarily write $P_{A_1, A_2} \defeq P$. Consider the map
\eqncount\begin{equation}\label{eq:psiphi}
(A, \xi) \longmapsto  \iota \circ \exp_{\iota^{-1}(A)} \big(P_{A_1, A_2} \xi\big)
\end{equation}
where $A$ ranges over all connections near $A' = A'(A_1, A_2)$ and $\xi$ ranges over all self-dual 2-forms near 0. The linearization of (\ref{eq:psiphi}) at $(A', 0)$ is the operator
$$(W, x) \longmapsto W + (D \iota)_{\iota^{-1}(A')} \big(P_{A_1, A_2} x\big).$$
Recall from the proof of Theorem \ref{thm:1} that
$$J_{A_1, A_2} (\xi) = \iota \left( \exp_{\iota^{-1}(A'(A_1, A_2))} \big( P_{A_1, A_2} \xi \big) \right).$$
That is, $(A_1, A_2, \xi) \mapsto J_{A_1, A_2}(\xi)$ is the map (\ref{eq:psiphi}) precomposed with $A'(A_1, A_2)$ in the $A$-component. Then Claim 1 follows from the chain rule and the fact that we are differentiating at $\xi  = 0$, which kills off all terms involving the $A_k$-derivatives of $P_{A_1, A_2}$. 

In summary, we have
$$\begin{array}{rcl}
\sumd{k} \Vert W_k \Vert_{\L(X_k ); A_k} & \leq & C_1 C_2 \left\| W' \right\|_{\L(X); A'(A_1, A_2)}\\
& \leq & C_1 C_2 \Big( \Vert  W' + (D \iota)_{\iota^{-1}(A')} P x \Vert_{\L(X); A'} + \Vert (D \iota)_{\iota^{-1}(A')} P x \Vert_{\L(X); A'} \Big)\\
  &= &C_1 C_2  \Big( \Vert  DJ_{(A_1, A_2, 0)}(W_1, W_2, x) \Vert_{\L(X); A'}   + \Vert (D \iota)_{\iota^{-1}(A')} P x \Vert_{\L(X); A'} \Big).
  \end{array}$$
  We will discuss each term on the right individually.
  
The first term on the right is almost satisfactory, except we linearized at $(A_1, A_2, 0)$ instead of $(A_1, A_2, \xi)$. To account for this, note that it follows from our regularity assumptions and Theorem \ref{thm:1} that $J$ is $\C^m$-smooth. In particular, Taylor's theorem gives
$$\begin{array}{l}
\Vert (D J_{(A_1, A_2, \xi)} - DJ_{(A_1, A_2, 0)}) (W_1, W_2, x) \Vert_{\L(X); A'} \\
\hspace{4cm} \leq  C_3  \Vert \xi \Vert_{L^p_\delta(X)} \Big( \Vert x \Vert_{L^p_\delta(X)} + \sumd{k} \Vert W_k \Vert_{\L(X_k); A_k} \Big)
\end{array}$$
for some constant $C_3$ that depends continuously on the $A_k$ and $\lambda$. 

\medskip

\noindent \emph{Claim 2: The constant $C_3$ can be taken to be independent of $\lambda$, provided $\lambda$ is sufficiently small.} 

\medskip

To see this, recall that the proof of Taylor's theorem shows that $C_3$ can be taken to be a constant multiple of the supremum of the operator norm of the second derivative of $J$ at $(A_1, A_2, 0)$. By the chain rule, it therefore suffices to uniformly estimate the first two derivatives of $\iota,  \exp_{\iota^{-1}(A'(A_1, A_2))}$ and $P = P_{A_1, A_2}$. Obtaining such estimates for $\iota$ and the exponential map follow readily because the gluing region is in the complement of the cylindrical end (e.g., $\iota$ is affine-linear over this gluing region). That the derivatives of $P$ are uniformly bounded is addressed in Remark \ref{rem:GaugeSlice} (a), above.

With this claim in hand, we have
$$\begin{array}{rcl}
\sumd{k} \Vert W_k \Vert_{\L(X_k); A_k}   &\leq  &C_1 C_2  \Big( \Vert  DJ_{(A_1, A_2, \xi)}(W_1, W_2, x) \Vert_{\L(X); A'}   + \Vert (D \iota)_{\iota^{-1}(A')} P x \Vert_{\L(X); A'} \Big)\\
 && +   C_1 C_2 C_3  \Vert \xi \Vert_{L^p_\delta(X)} \Big( \Vert x \Vert_{L^p_\delta(X)} + \sumd{k} \Vert W_k \Vert_{\L(X_k); A_k} \Big)
\end{array}$$
When $\Vert \xi \Vert_{L^p_\delta(X)} < \eps \defeq 1/ 2 C_1 C_2C_3$ this implies that $\sumd{k} \Vert W_k \Vert_{\L(X_k); A_k} $ is bounded by
$$\begin{array}{c}
2C_1 C_2  \Big( \Vert  DJ_{(A_1, A_2, \xi)}(W_1, W_2, x) \Vert_{\L(X); A'}   + \Vert (D \iota)_{\iota^{-1}(A')} P x \Vert_{\L(X); A'} \Big)+ \Vert x \Vert_{L^p_\delta(X)}
\end{array}$$
The lemma now follows from the next claim.

\medskip

\noindent \emph{Claim 3: There are constants $C_4, L, \lambda_0 >0$ so that}
$$\Vert (D \iota)_{\iota^{-1}(A')} P x \Vert_{\L(X, g_{L, \lambda}); A'} \leq C_4 \Vert x \Vert_{L^p_\delta(X, g_{L, \lambda})}$$
\emph{for all $x$ and all $0 < \lambda < \lambda_0$. These constants can be chosen to depend continuously on the $A_k \in \A^{1,p}(\mathcal{T}_{k, \Gamma_k})$.}

\medskip

We briefly sketch the proof, leaving the details to the reader. Use the fact that $P$ is uniformly bounded to control the zeroth order terms appearing in the definition of $\Vert \cdot \Vert_{\L(X); A'}$. {The term involving $d^{*, \delta}_{A'}$ vanishes because $P$ takes values in the slice}. To control the $d_{A'}^+$ term use the fact that $P$ is a right inverse to an operator that is essentially $d^+_{A'}$ plus lower order terms.
\end{proof}

\begin{proof}[Proof of Lemma \ref{lem:2a}]
Fix $A$ and $W$ as in the lemma. Let $A(\tau)$ be a path in $\rM_{reg}(\mathcal{T}_\Gamma, A_{ref})$ that is $\C^m$-smooth and satisfies $A(0) = A$ and $\frac{d}{d\tau} \vert_{\tau = 0} A(\tau) = W$. Let $\mu_\tau = \mu(A(\tau), A_{ref})$ be the 0-form from Proposition \ref{prop:1} associated to $A(\tau)$ and $A_{ref}$. Set 
$$u_\tau \defeq \exp(\mu_\tau), \indent A' \defeq u_0^* A , \indent W' \defeq \frac{d}{d\tau} \Big|_{\tau = 0} u_\tau^*A(\tau).$$
By the product rule, we have
$$W' = \mathrm{Ad}(u_0^{-1}) W + d_{A'} \Big(\frac{d}{d\tau} \Big|_{\tau = 0} \mu_\tau \Big).$$
Now the gauge invariance (\ref{eq:gaugeinvariantnorm}) and the definition of our norms give
$$\begin{array}{rcl}
\Vert W \Vert_{\L(X); A} & = & \Vert \mathrm{Ad}(u_0^{-1}) W \Vert_{\L(X); A'} \\
& \leq & \Vert W' \Vert_{\L(X); A'}  + \Big\| d_{A'} \Big(\frac{d}{d\tau} \Big|_{\tau = 0} \mu_\tau \Big)\Big\|_{\L(X); A'}\\
& = & \Vert W' \Vert_{\L(X); A'} + \Big\| d_{A'} \Big(\frac{d}{d\tau} \Big|_{\tau = 0} \mu_\tau \Big)\Big\|_{L^p_\delta(X) }\\
&&+ \Big\| \left[ F_{A'}^+,  \Big(\frac{d}{d\tau} \Big|_{\tau = 0} \mu_\tau \Big) \right] \Big\|_{L^p_\delta(X)} + \Big\| d_{A'}^{*, \delta} d_{A'} \Big(\frac{d}{d\tau} \Big|_{\tau = 0} \mu_\tau \Big)\Big\|_{L^p_\delta(X)}.
\end{array}$$
Focusing on the second term on the right, we note that the operator $d_{A'}^{*, \delta}$ is injective on $\mathrm{im}(d_{A'})$ so there is a bound of the form
$$ \Big\| d_{A'} \Big(\frac{d}{d\tau} \Big|_{\tau = 0} \mu_\tau \Big)\Big\|_{L^p_\delta (X)}\leq  C_1\Big\| d_{A'}^{*, \delta} d_{A'} \Big(\frac{d}{d\tau} \Big|_{\tau = 0} \mu_\tau \Big)\Big\|_{L^p_\delta(X)}.$$
As for the third term on the right, the fact that $A$ is $\mASD$ implies that $F_A^+$ is uniformly bounded in $\C^0$; the same is therefore true of $F_{A'}^+ = \mathrm{Ad}(u_0^{-1}) F_A^+$. Combining this with the fact that the operator $d_{A'}^{*, \delta} d_{A'}$ is injective on 0-forms, we obtain
$$\begin{array}{rcl}
 \Big\| \left[ F_{A'}^+,  \Big(\frac{d}{d\tau} \Big|_{\tau = 0} \mu_\tau \Big) \right] \Big\|_{L^p_\delta(X)} & \leq &  C_2 \Big\| \frac{d}{d\tau} \Big|_{\tau = 0} \mu_\tau \Big\|_{L^p_\delta(X)}\\
& \leq & C_3   \Big\| d_{A'}^{*, \delta} d_{A'} \Big(\frac{d}{d\tau} \Big|_{\tau = 0} \mu_\tau \Big)\Big\|_{L^p_\delta(X)}.
\end{array}$$
In summary, we have
$$\begin{array}{rcl}
\Vert W \Vert_{\L(X); A} & \leq & \Vert W' \Vert_{\L(X); A'} + (1+C_1 + C_3) \Big\| d_{A'}^{*, \delta} d_{A'} \Big(\frac{d}{d\tau} \Big|_{\tau = 0} \mu_\tau \Big)\Big\|_{L^p_\delta(X)}
\end{array}$$
Our hypotheses imply that $A'$ and $A_{ref}$ differ by a term that is controlled by the $\C^0$-norm of $\mu$. This implies we have an estimate of the form
$$\Big\| d_{A'}^{*, \delta} d_{A'} \Big(\frac{d}{d\tau} \Big|_{\tau = 0} \mu_\tau \Big)\Big\|_{L^p_\delta} \leq C_4 \Big\| d_{A_{ref}}^{*, \delta} d_{A'} \Big(\frac{d}{d\tau} \Big|_{\tau = 0} \mu_\tau \Big)\Big\|_{L^p_\delta}.$$
To estimate this further, differentiate the defining identity $0 = d_{A_{ref}}^{*, \delta}(u_\tau^* A_\tau - A_{ref})$ at $\tau = 0$ to get
$$d_{A_{ref}}^{*, \delta} d_{A'} \Big(\frac{d}{d\tau} \Big|_{\tau = 0} \mu_\tau \Big)= - d_{A_{ref}}^{*, \delta} W.$$
Thus
$$\begin{array}{rcl}
\Big\| d_{A_{ref}}^{*, \delta} d_{A'} \Big(\frac{d}{d\tau} \Big|_{\tau = 0} \mu_\tau \Big)\Big\|_{L^p_\delta} & = & \Big\| d_{A_{ref}}^{*,\delta} W \Big\|_{L^p_\delta}\\
& \leq & \Big\| d_{A}^{*,\delta} W \Big\|_{L^p_\delta} + C_5 \Vert W \Vert_{L^p_\delta}\\
& = & \Big\| d_{A'}^{*,\delta}  W' \Big\|_{L^p_\delta} + C_5 \Vert W' \Vert_{L^p_\delta}\\
& \leq & \max(1, C_5)\Vert W' \Vert_{\L(X); A'}.
\end{array}$$
Hence
$$\Vert W \Vert_{\L(X); A} \leq C_6 \Vert W' \Vert_{\L(X); A'}$$
where $C_6 =  1 + (1+ C_1 + C_3)C_4 \max(1, C_5)$. To finish the proof of the lemma, argue exactly as we did in the proof of Lemma \ref{lem:1a}, starting after the estimate (\ref{eq:starthere}).
\end{proof}

\begin{proof}[Proof of Lemma \ref{lem:1b}]
Let $A_k(\tau)$ be a $\C^m$-smooth path in $G_k$ satisfying $A_k(0) = A_k$ and $\frac{d}{d\tau}\vert_{\tau = 0} A_k(\tau) = W_k$. Set $\xi_\tau\defeq \xi(A_1(\tau), A_2(\tau))$. Note that the $\tau$-derivative 
$$\frac{d}{d\tau} \Big|_{\tau = 0} \xi_\tau = D \xi_{(A_1, A_2)}(W_1, W_2)$$
is the term that we are looking to bound. 

The regularity hypotheses and Theorem \ref{thm:1} (c) imply that $\xi_\tau$ satisfies $ 0  = s(J_\tau(\xi_\tau)) $ for all $\tau$, where $J_\tau \defeq J_{A_1(\tau), A_2(\tau)}$ is the map from Theorem \ref{thm:1} (a). Continuing to use a subscript $\tau$ for any term defined in terms of the $A_k(\tau)$ (and hence dependent on $\tau$), we recall the definition of $\widetilde{s}$ from the proof of Theorem \ref{thm:1}; in particular, this satisfies $s(J_\tau( \cdot)) = \widetilde{s}(P_\tau ( \cdot))$. The Taylor expansion of $\widetilde{s}$ therefore gives
$$0 = s(J_\tau(\xi_\tau)) = \widetilde{ s}(P_\tau(\xi_\tau)) = \widetilde{s}_\tau(0, 0) + \xi_\tau + \St_\tau(\xi_\tau).$$
Differentiate the right-hand side at $\tau = 0$ and rearrange to get
\eqncount\begin{equation}\label{eq:xider}
\frac{d}{d\tau} \Big|_{\tau = 0} \xi_\tau = - \frac{d}{d\tau} \Big|_{\tau = 0} \widetilde{s}_\tau(0, 0) - \frac{d}{d\tau} \Big|_{\tau = 0} \St_\tau (\xi_0) - (D\St_0)_{\xi_0} \Big(\frac{d}{d\tau} \Big|_{\tau = 0} \xi_\tau\Big)
\end{equation}
where $(D\St_0)_{\xi_0}$ is the linearization at $\xi_0$ of $\St_0$. We will return to this after we estimate each term on the right individually. 

For the first term on the right of (\ref{eq:xider}), note that $\widetilde{s}_\tau(0, 0) = s(A'_\tau)$ depends on $\tau$ only through the preglued connection $A'_\tau \defeq A'(A_1(\tau), A_2(\tau))$. Moreover, the proof of (\ref{eq:our727}) shows that $s(A'_\tau)$ is equal to a product of a cutoff function supported in the gluing region, times the connection form for $A_\tau'$ in this region. In particular, differentiating this in $\tau$, the same argument used for (\ref{eq:our727}) allows us to conclude a uniform bound of the form
$$\begin{array}{rcl}
\Big\| \frac{d}{d\tau} \Big|_{\tau = 0} \widetilde{s}_\tau(0, 0) \Big\|_{L^p_\delta(X, g_{L, \lambda})} & \leq & C_1 b^{4/p} \Big\| \frac{d}{d\tau} \Big|_{\tau = 0} A_\tau' \Big\|_{\L(X,g_{L, \lambda}), A'_0} \\
& \leq & C_2 b^{4/p}  \sumd{k} \Big\| W_k \Big\|_{\L(X); A_k}
\end{array}$$
where the second inequality follows by differentiating the defining formula for the preglued connection $A'(A_1, A_2)$. This is the desired bound on the first term. 

The second term on the right of (\ref{eq:xider}) is similar, albeit a little more involved. The point here is that the quadratic estimates on $\St_\tau$ give a uniform bound of the form
$$\Big\| \frac{d}{d\tau} \Big|_{\tau = 0} \St_\tau (\xi_0) \Big\|_{L^p_\delta(X, g_{L, \lambda})} \leq C_3 \Vert \xi_0 \Vert_{L^p_\delta(X, g_{L, \lambda})} \sumd{k} \Big\|  W_k\Big\|_{T_{A_k} \A} .$$
Theorem \ref{thm:1} (c) gives $\Vert \xi_0 \Vert_{L^p_\delta(X, g_{L, \lambda})} \leq C_4 b^{4/p}$, so the desired estimate for this term follows.

Turn now to the last term on the right of (\ref{eq:xider}). By the estimate (\ref{eq:quad}), the linearization $(D \St_0)_{\xi_0}$ satisfies
$$\Vert (D \St_0)_{\xi_0} \xi' \Vert_{L^p_\delta(X, g_{L, \lambda})} \leq 2 \kappa \Vert \xi_0 \Vert_{L^p_\delta(X, g_{L, \lambda})} \Vert \xi' \Vert_{L^p_\delta(X, g_{L, \lambda})}$$
for all $\xi'$. Since $\Vert \xi_0 \Vert_{L^p_\delta(X, g_{L, \lambda})} \leq C_4 b^{4/p}$, we may assume that $\Vert \xi_0 \Vert_{L^p_\delta(X, g_{L, \lambda})} < 1/4\kappa$, which gives
$$\Vert (D \St_0)_{\xi_0} \xi' \Vert_{L^p_\delta(X, g_{L, \lambda})}  \leq \frac{1}{2} \Vert \xi' \Vert_{L^p_\delta(X, g_{L, \lambda})}.$$

To see that the above estimates imply the lemma, take the norm of each side of (\ref{eq:xider}) and use the estimates just established to obtain
$$ \Big\| \frac{d}{d\tau} \Big|_{\tau = 0} \xi_\tau \Big\|_{L^p_\delta(X, g_{L, \lambda})}\\
 \leq  (C_2+ C_3 C_4) b^{4/p} \sumd{k} \Big\| W_k \Big\|_{T_{A_k} \A} + \frac{1}{2} \Big\| \frac{d}{d\tau} \Big|_{\tau = 0} \xi_\tau \Big\|_{L^p_\delta(X, g_{L, \lambda})}.$$
The corollary follows by subtracting the last term from both sides, and using the identity $D\xi_{(A_1, A_2)}(W_1, W_2) =\frac{d}{d\tau} \Big|_{\tau = 0} \xi_\tau$.
\end{proof}

\begin{proof}[Proof of Lemma \ref{lem:2b}]
This follows from the same type of argument given for Lemma \ref{lem:1b}.
\end{proof}

\subsection{Proof of Theorem \ref{thm:2}}\label{sec:ProofOfThm:2}
Let $\epsilon > 0$ be small enough so that Theorem \ref{thm:3} holds with this value of $\epsilon$. Define  $\mathcal{U}$ to be the set of triples $(A_1, A_2, A)$ with $A_k \in G_k$ and $A \in \rM(\mathcal{T}_\Gamma, \mathcal{J}(A_1, A_2))$, and so that
\eqncount\begin{equation}\label{eq:hh}
 \Vert h - h_{0} \Vert_{L^2_2(N)} + \Vert V - V_{0} \Vert_{L^{p^*}_{\delta}(X)} + \Vert d^{*, \delta}_{A_{0}}(V - V_{0}) \Vert_{L^p_\delta(X)}  < \epsilon/3
 \end{equation}
where $A = \iota(h, V)$ and $A_{0} = \iota(h_{0}, V_{0}) \defeq \mathcal{J}(A_1, A_2)$. Since all elements of the $G_k$ are regular, it follows from Theorem \ref{thm:1} that $\mathcal{J}(A_1, A_2)$ is regular, so any connection $A$ satisfying (\ref{eq:hh}) is automatically $A_0$-regular. Thus $\mathcal{U} \subseteq \mathcal{E}$. 

To show that $\Pi\vert_{\mathcal{U}}\colon \mathcal{U} \rightarrow G_1 \times G_2$ is locally trivial, fix $(A_{10}, A_{20}) \in G_1 \times G_2$, and set $A_{ref} \defeq A'(A_{10}, A_{20})$. By (\ref{eq:closeinA}) and the fact that $\mathcal{J}(A_{10}, A_{20})$ takes values in the $A_{ref}$-slice, by choosing $\lambda$ sufficiently small, it follows that the coordinates of $A_{ref}$ and $\mathcal{J}(A_{10}, A_{20})$ satisfy the estimate (\ref{eq:hh}); in fact, this estimate is uniform in $\lambda$, in the sense that it holds for all sufficiently small $\lambda$.  Fix any such $\lambda$; we will refine this choice in the next paragraph. Take $\mathcal{V} \subseteq G_1 \times G_2$ to be a neighborhood of $(A_{10}, A_{20})$ that is small enough so that if $(A_1, A_2) \in \mathcal{V}$, then the components of $A_{ref}$ and $A'(A_1, A_2)$ satisfy (\ref{eq:hh}). Though we do not use this observation presently, we note that the set $\mathcal{V}$ can also be chosen to be uniform in $\lambda$, provided $\lambda > 0$ is sufficiently small; this is due to the scaling properties the $L^{p^*}$-norm of 1-forms \cite[p.293]{donaldson-kronheimer1}. Use two applications of the triangle inequality to conclude that, for any triple $(A_1, A_2, A) \in \Pi^{-1}(\mathcal{V}) \cap \mathcal{U}$, the pair $A, A_{ref}$ satisfies the hypotheses of Theorem \ref{thm:3}. Thus the map (\ref{eq:trivie}) is well-defined and indeed provides a local trivialization of $\Pi\vert_{\mathcal{U}}$.

To finish the proof, it suffices to show that the map $\Phi  = \mathcal{K} \circ \mathcal{J} \colon  \mathcal{V} \rightarrow \rM_{reg}(\mathcal{T}_\Gamma, A_{ref})$ is a $\C^m$-immersion; it can then be made into an embedding by further shrinking $\mathcal{V}$, if necessary. Since $\Phi = \mathcal{K} \circ \mathcal{J}$, it suffices to show that $\mathcal{J}$ and $\mathcal{K}$ are immersions for all $\lambda > 0$ sufficiently small. For $\mathcal{J}$, note that the linearization at $(A_1, A_2)$ is the map
$$(W_1, W_2) \longmapsto D J_{(A_1, A_2, \xi(A_1, A_2))}(W_1, W_2, D\xi_{(A_1, A_2)}(W_1, W_2)).$$
Suppose this vanishes at some $(W_1, W_2)$. Then by Lemmas \ref{lem:1a} and \ref{lem:1b}, we would have
$$\sumd{k} \Vert  W_k \Vert_{\L(X_k); A_k} \leq C b^{4/p} \sumd{k} \Vert  W_k \Vert_{\L(X_k); A_k} $$
By taking $\lambda > 0$ sufficiently small, we may assume $Cb^{4/p} < 1$ and so $W_k = 0$. Thus $\mathcal{J}$ is an immersion. A similar argument, but using Lemmas \ref{lem:2a} and \ref{lem:2b}, shows that $\mathcal{K}$ is an immersion for small $\lambda$. 

The irreducibility claims follow from the analogous claims appearing in Theorems \ref{thm:1} and \ref{thm:3}. \qed

\section{Existence results}

Let $X$ be an oriented cylindrical end 4-manifold with $b^+(X) = 0$ or $1$. In this section, we will show how to use the above framework to prove the existence of families of $\mASD$ connections on $X$; the cases $b^+(X) = 0$ and $b^+(X) = 1$ are treated in Sections \ref{sec:b^+=0} and \ref{sec:b^+=1}, respectively. The $\ASD$ existence result Theorem \ref{thm:A} is proved in Section \ref{sec:ProofOfThmA}. 

Part of our existence results state that the connections we construct are topologically non-trivial in a certain sense. In the case of closed 4-manifolds, this non-triviality is captured by the non-vanishing of a characteristic class of the bundle supporting the connections. In the present cylindrical end setting, we will use a certain relative characteristic class to measure this non-triviality. The details of this are carried out in Section \ref{sec:AdaptedBundles}.

\subsection{Relative characteristic classes and adapted bundles}\label{sec:AdaptedBundles}

This section reviews topological quantities associated to 4-manifolds with cylindrical ends. We begin with a review of characteristic classes in the closed (compact with no boundary) setting.

Suppose $Z$ is a closed, oriented 4-manifold and $P \rightarrow Z$ is a principal $G$-bundle. We define
$$\kappa(P) \defeq  -\frac{1}{8 \pi^2} \intd{Z} \langle F_A \wedge F_A\rangle $$
where $A \in \A(P)$ is any connection and $\langle F_A \wedge F_A\rangle$ is obtained by combining the wedge and the inner product on $\mathfrak{g}$ defined via the immersion (\ref{eq:rep}). Then $\kappa(P)$ is independent of the choice of $A$ by the Bianchi identity. Topologically, $\kappa(P) = c_2(P \times_G \bb{C}^r)\left[Z \right]$ is the second Chern number of the $\bb{C}^r$-bundle associated to $P$ via the map (\ref{eq:rep}) and the standard action of $\SU(r)$ on $\bb{C}^r$. In particular, $\kappa(P) \in \bb{Z}$ is an integer representing an obstruction to $P$ being trivializable.

Now consider the bundle $Q \rightarrow N$ over the 3-manifold $N$, and fix a gauge transformation $u \in \G(Q)$. We can form the mapping torus $Q_u = \left[0, 1 \right] \times Q  / (0, u(q)) \sim (1 , q)$ which is a principal $G$-bundle over $S^1 \times N$. Then we define the \emph{degree} of $u$ to be the integer
$$\deg(u) \defeq \kappa(Q_u).$$
This depends only on the homotopy type of $u$ and so descends to a group homomorphism
$$\deg\colon  \pi_0(\G(Q)) \longrightarrow \bb{Z}$$
from the group of components of $\G(Q)$. The degree is an obstruction to extending $u$ to a gauge transformation on $X_0$ (or equivalently $X$). We denote by $\G_0(Q)$ the subgroup of degree-zero gauge transformations. When $G$ is simply-connected, the degree $\deg\colon  \pi_0(\G(Q)) \rightarrow \bb{Z}$ is injective, and so $\G_0(Q)$ is exactly the identity component of $\G(Q)$. 

Since the cylinder $\End X$ deformation retracts to the 3-manifold $N$, we have a natural isomorphism $\pi_0(\G(\End  X)) \cong \pi_0(\G(N))$ and so the degree provides a homomorphism $\deg\colon  \pi_0(\G(\End  X)) \rightarrow \bb{Z}$. We denote by $\G_0(\End  X)$ the degree-zero elements of $\G(\End  X)$.

We will be working with principal $G$-bundles on the cylindrical end 4-manifold $X$. Bundle isomorphism is too course of an equivalence relation to be useful in the cylindrical-end setting (e.g., when $G$ is simply-connected, all principal $G$-bundles are trivializable since $H^4(X) = 0$). A more useful relation for our purposes deals with \emph{adapted bundles}, which are pairs $(E, A_{\End})$, where $E \rightarrow X$ is a principal $G$-bundle, and $A_{\End}$ is a connection on the cylindrical end $\End  X$. Then we say that $(E, A_{\End})$ is \emph{equivalent} to $(E', A_{\End}')$ if there is a bundle isomorphism from $E$ to $E'$ that carries $A_{\End}$ to $A_{\End}'$. See Donaldson's book \cite[Section 3.2]{donaldson10} for more details; note that Donaldson only treats flat connections $A_{\End}$, but our applications require that we extend the discussion.

By the above discussion, it follows that $\G_0(\End  X)$ consists of gauge transformations on $\End  X$ that have extensions to $E \rightarrow X$. Thus, any adapted bundle $(E, A_{\End})$ depends on $A_{\End}$ only through its $\G_0(\End  X)$-equivalence class. The next example illustrates an interplay between the degree and the equivalence classes of adapted bundles; it will be relevant to our gluing discussion below. 

\begin{example}\label{ex:degree}
Fix an adapted bundle $(E, A_{\End})$ and a point $x \in X$. Suppose $E_\ell \rightarrow S^4$ is a principal $G$-bundle with $\kappa(E_\ell) = \ell \in \bb{Z}$. Taking the connected sum of $X$ and $S^4$ at $x$, we recover the same manifold back $X \cong X \# S^4$, up to diffeomorphism. At the bundle level, we can carry out a similar connected sum procedure to obtain a bundle $E' = E \# E_\ell$ over $X$. Provided $x \in X_0$ is not on the end, the connection $A_{\End}$ can be viewed as a connection on $E'$. Then the adapted bundles $(E, A_{\End})$ and $(E' , A_{\End})$ are equivalent if and only if $\ell = 0$. More generally, there is a gauge transformation $u$ on $\End  X$ with $\deg(u) = \ell$, and so that the adapted bundle $(E, u^* A_{\End})$ is equivalent to $(E', A_{\End})$. 
\end{example}

Assume that the connection $A_{\End}$ converges on the end in the sense that
$$\lim_{t \rightarrow \infty } A_{\End} \vert_{\left\{t \right\} \times N}  = \Gamma$$
for some connection $\Gamma$ on $N$, where the limit is in $L^2_1(N)$, say. Let $A$ be any connection on $E$ that restricts on $\End  X$ to $A_{\End}$. Then the quantity
$$\kappa(E, A_{\End})   \defeq  \limd{T \rightarrow \infty} -\frac{1}{8 \pi^2} \int_{X_0 \cup \left[0, T \right] \times N} \langle F_A \wedge F_A\rangle  $$
is well-defined and independent of the choice of $A$. We will call $\kappa(E, A_{\End}) $ the \emph{relative characteristic number} of the adapted bundle $(E, A_{\End})$. It depends on $A_{\End}$ only through the value of $\Gamma$ and the topological type of $E$. Indeed, if $E' = E\# E_\ell$ is as in Example \ref{ex:degree}, then
$$\kappa(E', A_{\End})  = \kappa(E, A_{\End}) + \ell.$$
Moreover, if $A_{\End}$ is asymptotic to $\Gamma$, then working modulo $\bb{Z}$, we recover 
$$ 8 \pi^2 \kappa(E, A_{\End}) = \CS(\Gamma) \indent \mod \bb{Z}$$
the Chern--Simons value of $\Gamma$ as defined in \cite[Section 2.1]{\mmr} (here one should interpret the trace in \cite{\mmr} as the one induced from (\ref{eq:rep})).

\subsection{Existence when $b^+(X) = 0$}\label{sec:b^+=0}

Let $E_{\mathrm{triv}} \rightarrow X$ be the trivial bundle, $A_{\mathrm{triv}}$ the trivial connection on $E_{\mathrm{triv}}$, and $\Gamma_{\mathrm{triv}}$ the trivial connection on the end. Fix thickening data $\mathcal{T}_{\Gamma_{\mathrm{triv}}}$. Here we assume $\delta$ and $\beta$ are chosen as in the beginning of Section \ref{sec:GluingRegFam}. We recall from Section \ref{sec:GaugeTheory} that the thickening data also includes the choice of $\eps_0 > 0$ so that any two points in the center manifold have Chern--Simons values differing by $\eps_0/2$. For each $0 < \eps < \eps_0$, we will write $\mathcal{T}(\eps)$ for the same set of thickening data as $\mathcal{T}_{\Gamma_{\mathrm{triv}}}$, but with $\eps$ in place of $\eps_0$. 

Let $E_\ell \rightarrow S^4$ be a principal $G$-bundle with $\kappa(E_\ell) = \ell \in \bb{Z}$, where $\kappa$ is the characteristic number of Section \ref{sec:AdaptedBundles}. We will write 
$$\M_\ell(S^4, G) \defeq  \left\{ A \in \A(E_\ell) \; \vert \; F^+_A = 0 \right\} \big/ \:\G(E_\ell)$$
 for the moduli space of $\ASD$ connections on $E_\ell$; here we are working relative to the standard metric on $S^4$. Let $\M_\ell^*(S^4 , G) \subseteq \M_\ell(S^4, G)$ denote the subset of irreducible $\ASD$ connections. The existence of irreducible $\ASD$ connections on $S^4$ was studied extensively in \cite[Section 8]{AHS}. For example, when $G = \SU(r)$ and the embedding (\ref{eq:rep}) is the identity, then the space $\M_\ell^*(S^4 , \SU(r))$ is nonempty if and only if $\ell \geq r/2 $. The most famous situation is when $G = \SU(2)$ and $\ell = 1$, in which case $\M_1(S^4, \SU(2))=\M^*_1(S^4, \SU(2))$ and this is diffeomorphic to the open unit ball in $\bb{R}^5$. The dimension of $\M^*_\ell(S^4, G)$ for general simple, simply-connected $G$ is given in \cite[Table 8.1]{AHS}. 

The following is the first of our main existence results for $\mASD$ connections; Theorem \ref{thm:C} with $b^+(X) = 0$ from the introduction is an immediate consequence.

\begin{theorem}[Existence of $\mASD$-connections when $b^+ = 0$]\label{thm:exist1}
Assume $b^+(X) = 0$ and $A_\ell$ is an irreducible $\ASD$ connection on $E_\ell \rightarrow S^4$ for some $\ell \in \bb{Z}$. Then for every $0 < \eps < \eps_0$, there is

\begin{itemize}
\item[(a)] a neighborhood $\mathcal{V} \subseteq \M_\ell(S^4, G)$ of $\left[A_\ell \right] \in \M_\ell(S^4, G)$;
\item[(b)] a trivializable principal $G$-bundle $E \rightarrow X$ that is canonically trivial on the end;
\item[(c)] a connection $A'$ on $E$ that is flat in the complement of a compact set, asymptotic to $\Gamma$ on the end, and satisfies $\kappa(E, A' \vert_{\End X})  = \ell$; and
\item[(d)] a $\C^m$-embedding 
$$\Phi: \mathcal{V}  \longrightarrow \rM_{reg}(\mathcal{T}(\eps), A').$$
\end{itemize}
The image of $\Phi$ consists of irreducible connections. In particular, there exist an irreducible, regular $\mASD$ connection $A$ on $E$ with $\vert \kappa(E, A \vert_{\End X}) -  \ell \vert < \eps/2$. 
\end{theorem}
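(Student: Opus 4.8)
\textbf{Proof plan for Theorem \ref{thm:exist1}.}

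The plan is to deduce this from the general family-gluing machinery, Theorem \ref{thm:2}, applied to the connected sum $X = X \# S^4$. Take $X_1 = X$ with the trivial bundle $E_{\mathrm{triv}}$, the trivial connection $A_{\mathrm{triv}}$, and thickening data $\mathcal{T}(\eps)$; take $X_2 = S^4$ (viewed, as in Section \ref{sec:SpecialCases}, as a cylindrical-end manifold with empty end and empty thickening data). The first step is to check the regularity hypotheses needed to apply Theorem \ref{thm:2}. On $X_1$: since $b^+(X) = 0$ and $\Gamma_{\mathrm{triv}}$ is the relevant flat limit, Proposition \ref{prop:index} (with $0 < \delta/2 < \mu^-_{\Gamma_{\mathrm{triv}}}$, as arranged at the start of Section \ref{sec:GluingRegFam}) gives $\dim H^+_{A_{\mathrm{triv}},\delta} = b^+(X, A_{\mathrm{triv}}) = \dim(G)\, b^+(X) = 0$, so the trivial connection is a regular $\mASD$ connection; we take the singleton $G_1 = \{A_{\mathrm{triv}}\}$, or rather a small neighborhood thereof in $\rM_{reg}$, but since $b^+ = 0$ this local $\mASD$ space is just a point together possibly with its reducible structure. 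On $X_2 = S^4$: the $\ASD$ connection $A_\ell$ is irreducible, and irreducible $\ASD$ connections on $S^4$ are automatically regular (the standard Atiyah--Hitchin--Singer vanishing, using $b^+(S^4) = 0$), so a neighborhood $G_2$ of $[A_\ell]$ in $\M^*_\ell(S^4,G)$ consists of regular irreducible $\ASD$ connections; under the identification of Section \ref{sec:SpecialCases} these are regular $\mASD$ connections for the canonical thickening data $\mathcal{T}_{\Gamma_{\mathrm{triv}},\mathrm{can}}$ on the punctured $S^4$.

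The second step is bookkeeping for the conclusions (a)--(c). Apply Theorem \ref{thm:2} with this $G_1$ (a point) and $G_2 = \mathcal{V}$ a neighborhood of $[A_\ell]$; by Remark \ref{rem:G2point}(a) the result still applies when one factor is a point, and since $\dim G_1 = 0$ the glued family is parametrized by $\mathcal{V}$ itself. The connected sum bundle is $E = E_{\mathrm{triv}} \# E_\ell \to X \# S^4 \cong X$; since $H^4(X) = 0$ this $E$ is trivializable, and because the gluing takes place away from the end it is canonically trivial on $\End X$ (this gives (b)). The preglued connection $A' = A'(A_{\mathrm{triv}}, A_\ell)$ is, by construction in Section \ref{sec:Setupforgluing}, equal to the trivial flat connection outside a compact set, hence asymptotic to $\Gamma = \Gamma_{\mathrm{triv}}$, and the relative characteristic number computation of Section \ref{sec:AdaptedBundles} (the additivity $\kappa(E\#E_\ell, \cdot) = \kappa(E,\cdot) + \ell$ of Example \ref{ex:degree}, with $\kappa(E_{\mathrm{triv}}) = 0$) gives $\kappa(E, A'\vert_{\End X}) = \ell$ (this gives (c)). Then Theorem \ref{thm:2}(b) furnishes the $\C^m$-embedding $\Phi = \mathcal{K}\circ\mathcal{J} : \mathcal{V} \to \rM_{reg}(\mathcal{T}(\eps), A')$ of (d), and the irreducibility of the image follows from the irreducibility assertions in Theorems \ref{thm:1} and \ref{thm:3} (since $A_\ell$ is irreducible). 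Picking any $[A_\ell] \in \mathcal{V}$ produces the desired regular irreducible $\mASD$ connection $A = \Phi(A_\ell)$; the estimate $|\kappa(E, A\vert_{\End X}) - \ell| < \eps/2$ comes from the inequality (\ref{eq:relclassineq}) of Theorem \ref{thm:A}/Theorem \ref{thm:1}, using the defining property of $\eps$ (that Chern--Simons values in $\mathrm{supp}(\beta)$ differ by less than $\eps/2$) together with the fact that $8\pi^2\kappa$ lifts the Chern--Simons value mod $\bb{Z}$.

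The third step handles the final sentence: when $X$ is simply-connected, $\Phi$ is a $\C^m$-diffeomorphism onto an open subset. Here the point is that Theorem \ref{thm:2}(b) already gives an open image \emph{whenever} both $G_k$ are open; the only obstruction to $G_1$ being open is that $b^+(X) = 0$ forces $\rM_{reg}(\mathcal{T}(\eps), A_{\mathrm{triv}})$ to be $0$-dimensional. So I would instead invoke the ``dimension count'' portion of the proof of Theorem \ref{thm:2}: when $X$ is simply-connected, the trivial connection on $X$ has stabilizer exactly $Z(G)$ after the connected sum destroys reducibility coming from $\pi_1$ (more precisely, irreducibility of the glued connection is already guaranteed by irreducibility of $A_\ell$, per the last line of Theorem \ref{thm:1}), and simple-connectivity ensures the index of the defining operator on $\mathcal{V}$ equals that of the operator defining $\rM_{reg}(\mathcal{T}(\eps), A')$ — by Remark \ref{rem:mu} and additivity of Fredholm indices under connected sum, together with the vanishing $H^0, H^1$ contributions that simple-connectivity provides at the trivial connection. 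Since $\Phi$ is an immersion between manifolds of equal dimension, it is a local diffeomorphism, hence (being an injective immersion, as in Theorem \ref{thm:2}(b)) a diffeomorphism onto an open subset.

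\textbf{Main obstacle.} I expect the delicate point to be the index matching in the simply-connected case: one must confirm that the cokernel $H^+_{A',\delta}$ for the glued connection $A'$ really has the same dimension as the tangent space to $\mathcal{V} = $ (neighborhood of $[A_\ell]$ in $\M_\ell(S^4,G)$), i.e. that no extra deformation parameters are introduced by the center-manifold/thickening apparatus on the $X$-side beyond those already accounted for. This is precisely where $b^+(X) = 0$ and $\pi_1(X) = 1$ are used — the former to kill $H^+$ and the latter to kill the extra $H^0$/$H^1$ contributions — and it should follow cleanly from Proposition \ref{prop:index}, Remark \ref{rem:mu}, and the additivity of the index, but it requires care to state correctly. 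The irreducibility bookkeeping (ensuring the glued connection is genuinely irreducible, not merely projectively so, and that $\Stab = Z(G)$) is a secondary point that Lemma \ref{lem:irreducible} and Remark \ref{rem:irreducible} are designed to handle.
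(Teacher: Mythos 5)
Your proposal is essentially the paper's proof: apply Theorem \ref{thm:2} (via Remark \ref{rem:G2point}(a)) after checking that $A_{\mathrm{triv}}$ is regular (Proposition \ref{prop:index}, $b^+=0$) and that $A_\ell$ is regular and irreducible; the bundle/$\kappa$ bookkeeping and the $\eps/2$ estimate are handled exactly as you describe. The only place your reasoning drifts is the simply-connected case, and there you have the logic slightly inverted: you write that ``the only obstruction to $G_1$ being open is that $b^+(X)=0$ forces $\rM_{reg}(\mathcal{T}(\eps), A_{\mathrm{triv}})$ to be $0$-dimensional,'' but $b^+(X)=0$ alone does not force $0$-dimensionality (it only kills the cokernel; the index can still be positive through $H^1$), and when the space \emph{is} $0$-dimensional that is precisely what \emph{makes} the singleton $\{A_{\mathrm{triv}}\}$ open, not an obstruction to it. The paper's argument is the clean form of what you are reaching for: when $\pi_1(X)=1$, the trivial connection is \emph{isolated} in its gauge slice (no harmonic $1$-forms in the slice), so $\{A_{\mathrm{triv}}\}$ is already an open subset of $\rM_{reg}(\mathcal{T}_{\Gamma_{\mathrm{triv}}},A_{\mathrm{triv}})$, and Theorem \ref{thm:2}(b) then applies verbatim with both $G_k$ open---no separate index-matching step is needed, because that matching is already carried out inside the proof of Theorem \ref{thm:2}. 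Your index-additivity route is logically equivalent but retraces part of that proof; the direct isolation observation is shorter and avoids the misstatement above.
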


{For $\ell \neq 0$ and $0< \epsilon < 2 \vert \ell \vert$}, the condition $\vert \kappa(E, A \vert_{\End X}) -  \ell \vert < \eps/2$ implies that $A$ is not flat. (The analogous statement in the case where $X$ is closed and $G = \SU(r)$ is that $A$ is supported on a bundle $P$ with $c_2(P)\left[ X \right] = \ell$.) 

\begin{proof}[Proof of Theorem \ref{thm:exist1}] 
View $S^4$ as a cylindrical end 4-manifold with no ends, and let $\mathcal{T}_\emptyset$ be the empty set of thickening data as in Section \ref{sec:ClosedX}. Form the connected sum of $S^4$ and $X$ at any point in $S^4$ and any point in $X$ lying in the interior of the compact part. Note that all $\ASD$ connections on $S^4$ are regular (e.g., use the Weitzenb\"{o}ck formula \cite[(7.1.23)]{donaldson-kronheimer1}). In particular, since $A_\ell$ is irreducible and regular, there is a neighborhood $\mathcal{V} \subseteq \M^*_\ell(S^4, G)$ of $\left[A_\ell \right]$ that is diffeomorphic to a precompact open set $G_1 \subseteq \rM_{reg}(\mathcal{T}_\emptyset, A_\ell)$ containing $A_\ell$; that is, $\mathcal{V}$ consists of gauge equivalence classes of regular, irreducible $\ASD$ connections on $S^4$, and $G_1$ consists of their lifts to the Coulomb slice through $A_\ell$. Using this diffeomorphism, we identify $\mathcal{V}$ and $G_1$.
 
By Proposition \ref{prop:index}, the assumption $b^+ (X) = 0$ implies that $A_{\mathrm{triv}}$ is regular; see also Remark \ref{rem:RegularityRemark}. Then the singleton set $G_2 \defeq \left\{A_{\mathrm{triv}} \right\}$ plainly consists of regular connections. By Remark \ref{rem:G2point} (a), we can apply Theorem \ref{thm:2} with this $G_2$. Define $E \defeq E_{\mathrm{triv}} \# E_\ell$ to be the connected sum bundle as in Section \ref{sec:AdaptedBundles}, equipped with thickening data $\mathcal{T}(\eps)$. Take $A'$ to be the preglued connection, which is plainly asymptotic to $\Gamma_{\mathrm{triv}}$. In particular, $\kappa(E; A' \vert_{\End X})  = \ell$ due to the discussion of Section \ref{sec:AdaptedBundles}. By possibly shrinking $\mathcal{V}$, if necessary, we define $\Phi$ to be the $\C^m$-embedding of the same name from Theorem \ref{thm:2} (b) (here we are using the identifications $\mathcal{V} \cong G_1 \cong G_1 \times G_2$). If $A$ is any connection in the image of $\Phi$, then $\vert \kappa(E, A \vert_{\End X}) -  \ell \vert < \eps/2$ follows from the definition of $\eps$ as a parameter in the set of thickening data and the fact that $\kappa(E; A \vert_{\End X})$ recovers the Chern--Simons value of the asymptotic limit of $A$. 
\end{proof}

\subsection{Existence when $b^+(X) = 1$}\label{sec:b^+=1}

Here we consider the case where $b^+(X) = 1$ and $G = \SU(2)$. We assume (\ref{eq:rep}) is the identity (so $r = 2$); then the characteristic number $\kappa$ from Section \ref{sec:AdaptedBundles} is the second Chern number. Fix thickening data $\mathcal{T}_{\Gamma_{\mathrm{triv}}}$ and assume $\delta$ and $\beta$ are chosen as in the beginning of Section \ref{sec:GluingRegFam}. Define $\mathcal{T}(\eps)$ as in Section \ref{sec:b^+=0}. The following is the second of our main existence results for $\mASD$ connections. Theorem \ref{thm:C} with $b^+(X) =1$ is an immediate consequence.

\begin{theorem}[Existence of $\mASD$-connections when $b^+ = 1$]\label{thm:exist2}
Assume $b^+(X) = 1$ and fix an integer $\ell \geq 2$. Then for every $0 < \eps < \eps_0$, there is

\begin{itemize}
\item[(a)] a trivializable principal $\SU(2)$-bundle $E \rightarrow X$ that is canonically trivial on the end;
\item[(b)] a connection $A'$ on $E$ that is flat in the complement of a compact set, asymptotic to $\Gamma$ on the end, and satisfies $\kappa(E; A' \vert_{\End X})  = \ell$; and
\item[(c)] an irreducible $\mASD$ connection $A \in \rM(\mathcal{T}(\eps), A')$ on $E$ satisfying 
$$\vert \kappa(E, A \vert_{\End X}) -\ell \vert < \eps/2.$$
\end{itemize}
\end{theorem}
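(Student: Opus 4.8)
The strategy follows the template of Taubes \cite{taubes3} closely, adapted to the $\mASD$ framework built above. The key structural point is that, unlike in the $b^+ = 0$ case, the trivial connection $A_{\mathrm{triv}}$ on $X$ is \emph{not} regular: by Proposition \ref{prop:index} its cokernel $H^+_{A_{\mathrm{triv}}, \delta}$ has dimension $b^+(X) = 1$ (using $\dim \SU(2) = 3$... in fact $b^+(X, A_{\mathrm{triv}}) = 3 b^+(X) = 3$, but the relevant obstruction for the construction is one-dimensional after accounting for the $\SU(2)$-symmetry, exactly as in \cite{taubes3}). So we cannot simply invoke Theorem \ref{thm:2}; instead we must run the gluing of Theorem \ref{thm:1} with the non-trivial obstruction map $\pi \circ \xi$ present, and then \emph{solve} the obstruction equation $\sigma \pi \xi(A_{\mathrm{triv}}, A_\ell) = 0$ by exploiting the freedom in the gluing parameters. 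As the remark following Theorem \ref{thm:A} emphasizes, the $\mASD$ operator's ability to vary the asymptotic value is precisely what makes the obstruction solvable; concretely, we have the gluing parameter $\rho \in \Hom((E_1)_{x_1}, (E_2)_{x_2})$, the scale $\lambda$, and (crucially) the position of the asymptotic data in the center manifold, and we must show the obstruction map, as a function of these, has a zero.

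The plan is as follows. First I would fix the bundle: take $E_{\mathrm{triv}} \to X$ the trivial $\SU(2)$-bundle and $E_\ell \to S^4$ with $\kappa(E_\ell) = \ell$ admitting an irreducible $\ASD$ connection $A_\ell$ (which exists for $\ell \ge 1$), form $E \defeq E_{\mathrm{triv}} \# E_\ell$ and equip it with thickening data $\mathcal{T}(\eps)$ and the preglued connection $A'$ built from $A_{\mathrm{triv}}$ and $A_\ell$. Since $\ell \ge 2$ and $G = \SU(2)$, the $\ASD$ moduli space $\M_\ell(S^4, \SU(2))$ has dimension $8\ell - 3 \ge 13$, and we will use a slice of this moduli space — together with the scaling and gluing parameters — to kill the one-dimensional obstruction, just as Taubes does. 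Second, I would invoke Theorem \ref{thm:1} (with $A_1 = A_{\mathrm{triv}}$, $A_2 = A_\ell$, working with $p = 2$ for the main estimate and $p = 4/3$ via Corollaries \ref{cor:smallp0}, \ref{cor:smallp1} to get the sharper $\lambda^{3/2}$ decay) to produce the connection $\mathcal{J}(A_{\mathrm{triv}}, A_\ell)$ satisfying $s(\mathcal{J}) = -\sigma \pi \xi(A_{\mathrm{triv}}, A_\ell)$ with $\Vert \xi \Vert_{L^p_\delta} \le C b^{4/p}$. Third — and this is the heart — I would analyze the obstruction map. Following \cite[pp.~516--518]{taubes3}, one computes the leading-order behaviour of $\pi \xi$ as a function of the gluing parameter $\rho$ (viewed in $\SU(2)/Z(\SU(2)) \cong \SO(3)$, or more precisely the relevant sphere of gluing directions) and shows that, to leading order in $\lambda$, it is a non-vanishing section of a bundle over a sphere whose degree obstruction vanishes for topological reasons (this is where $\ell \ge 2$ and the $\SU(2)$-specific computation of the self-dual curvature of the standard instanton enter). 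A degree/parity argument then forces a genuine zero of the full obstruction map for $\lambda$ small; at that zero, $\mathcal{J} = A$ is honestly $\mASD$. Fourth, I would record irreducibility: by Remark \ref{rem:irreducible} and the last clause of Theorem \ref{thm:1}, since $A_\ell$ is irreducible, $A'$ and hence $A$ is irreducible (here $p > 2$ is needed, so the $L^{4/3}$ estimates are only auxiliary bookkeeping). Fifth, the topological conclusion: $A'$ is asymptotic to $\Gamma_{\mathrm{triv}}$ and $\kappa(E, A'\vert_{\End X}) = \ell$ by the additivity $\kappa(E_{\mathrm{triv}} \# E_\ell, \cdot) = \kappa(E_{\mathrm{triv}}, \cdot) + \ell$ from Section \ref{sec:AdaptedBundles}, and the estimate $\vert \kappa(E, A\vert_{\End X}) - \ell \vert < \eps/2$ follows because $\kappa(E, A\vert_{\End X})$ recovers (a lift of) the Chern--Simons value of $A$'s asymptotic limit, which lies in $\mathrm{supp}(\beta)$, combined with the defining property of $\eps$ in the thickening data. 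Items (a), (b), (c) of the theorem then hold by construction.

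The main obstacle, by a wide margin, is the third step: the detailed analysis of the obstruction map and the verification that it has a zero. This requires importing Taubes' explicit computation \cite{taubes3} of how the self-dual part of the curvature of the preglued connection pairs against the obstruction space $H^+$ — which in our setting is $H^+(X, \mathrm{ad}(A_{\mathrm{triv}}))$, realized via Proposition \ref{prop:index} and Lemma \ref{lem:deltadecay} as $L^2_\delta$ self-dual harmonic forms — and the bookkeeping is genuinely delicate because one must track the interaction between the gluing parameter $\rho$ and the position in the center manifold $\cH$. The $\mASD$-specific subtlety, flagged in Remark \ref{rem:noneq} and Remark \ref{rem:GaugeSlice}(c), is that the obstruction map depends on the chosen Coulomb slice (via $A'$), so some care is needed to ensure the zero we find is slice-independent in the relevant sense; but since we are only asserting the \emph{existence} of an $\mASD$ connection $A \in \rM(\mathcal{T}(\eps), A')$ for the specific slice $\Sl(A')$, this is not an essential difficulty. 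A secondary technical point is that, because the trivial connection is reducible, the stabilizer $Z(\SU(2)) = \{\pm 1\}$ acts and one must either work $Z(\SU(2))$-equivariantly throughout or perturb to break the symmetry; the former is cleaner and is exactly how \cite{taubes3} proceeds, and the equivariance of $s$ under $\Stab(\Gamma_{\mathrm{triv}})$ noted at the end of Section \ref{sec:GaugeTheory} makes this available. I would present the argument by reducing, as far as possible, to direct citation of \cite[Section 7]{taubes3}, isolating only the points where the cylindrical-end/$\mASD$ structure changes the estimates — principally the replacement of the closed-manifold $L^2$ theory by the $\delta$-weighted theory of Section \ref{sec:GenGluing}, for which Lemma \ref{lem:quad1}, Claim 3 in the proof of Theorem \ref{thm:1}, and Corollary \ref{cor:smallp1} provide the needed substitutes.
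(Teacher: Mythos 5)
Your overall strategy (run Theorem \ref{thm:1} with a non-trivial obstruction map and kill $\sigma\pi\xi$ by varying the gluing parameters, using the $L^{4/3}$ refinements from Corollaries \ref{cor:smallp0} and \ref{cor:smallp1} to get the sharper $\lambda^{3/2}$ decay) is the right skeleton, and your handling of the topological bookkeeping and the irreducibility/Chern--Simons points is fine. But there is a structural gap in the setup that makes the crucial third step, as you describe it, not actually go through.

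You glue $X$ to a \emph{single} copy of $S^4$ carrying a charge-$\ell$ bundle $E_\ell$ with one irreducible $\ASD$ connection $A_\ell$. The obstruction analysis you then cite from \cite[pp.~516--518]{taubes3} is not the analysis of this situation: Taubes (and Donaldson \cite[pp.~327--334]{donaldsonb}, and the paper) glues $X$ to $\ell$ \emph{distinct} copies of $S^4$, each carrying the basic $c_2 = 1$ instanton, at $\ell$ distinct interior points $x_1,\dots,x_\ell \in X_0$. That multi-point configuration is what produces the explicit leading-order formula for the obstruction pairing,
$$\intd{X'}\langle\omega\wedge\sigma\pi\xi\rangle \;=\; \sumdd{i=1}{\ell}\lambda_i^2\,\mathrm{tr}(\rho_i\,\omega(x_i)) \;+\; O(\lambda^3),$$
and the Donaldson/Taubes degree-and-transversality argument that solves $q_\omega^\ell = 0$ hinges on having (at least) two independent gluing parameters $\rho_1,\rho_2 \in \SO(3)$ at two points where the generator $\omega_0 \in H^+(X,\bb R)$ is non-vanishing. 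This is also where the hypothesis $\ell\geq 2$ enters in an essential way: it guarantees two such points and two free $\rho_i$'s. In your single-point, single-$A_\ell$ set-up there is just one $\rho$ and one $\lambda$, the leading-order term of the boundary integral $\int_{\partial X'}\langle\omega\wedge V\rangle$ does not decompose this way (it instead involves the local behaviour of the fixed charge-$\ell$ instanton near the gluing point), and the cited arguments of \cite{taubes3,donaldsonb} simply do not apply to it. Invoking the dimension $8\ell-3$ of $\M_\ell(S^4,\SU(2))$ and ``using a slice of the moduli space'' does not substitute for the explicit form of $q_\omega^\ell$: the obstruction is solved by a concrete zero-finding argument, not by a dimension count.

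A secondary point worth tightening: by Proposition \ref{prop:index}, the cokernel $H^+_{A_{\mathrm{triv}},\delta}$ has dimension $b^+(X,A_{\mathrm{triv}})=\dim(G)\,b^+(X)=3$, not $1$; the paper works with the full three-dimensional space $H^+(X,\mathrm{ad}(A_{\mathrm{triv}}))=H^+(X,\bb R)\otimes\mathfrak g$ throughout (equivariantly under $\Stab(\Gamma_{\mathrm{triv}})$), rather than ``reducing'' the obstruction to a one-dimensional one. Your hedged remark about symmetry reduction should be replaced by the straightforward statement that one pairs against every $\omega\in H^+(X,\mathrm{ad}(A_{\mathrm{triv}}))$ and solves the resulting system. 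Once you switch to the $\ell$-point gluing set-up, both issues disappear and the rest of your outline lines up with the paper's proof.
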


\begin{proof}
Our proof follows that of \cite[Section 7]{taubes3} and \cite[pp.~327---334]{donaldsonb}. The assumption that $b^+ = 1$ implies that the cokernel of the operator $d^+: L^p_{1, \delta}(\Omega^1(X, \bb{R})) \rightarrow L^p_\delta(\Omega^+(X, \bb{R}))$ is one-dimensional. As in Section \ref{sec:FlatConnections}, this cokernel can be realized as the space $H^+(X, \bb{R})$ of closed self-dual 2-forms in $L^2(\Omega^+(X, \bb{R}))$ that restrict on each slice $\left\{t \right\} \times N$ to be orthogonal to the space of harmonic forms on $N$. Fix a non-zero element $\omega_0 \in H^+(X, \bb{R})$; this is unique up to scaling. By unique continuation for solutions of elliptic equations, it follows that the set of points in $X$ where $\omega_0$ does not vanish is open and dense. In particular, we can find two distinct points $x_1, x_2 \in \mathrm{int}(X_0)$ with $\omega_0(x_1) \neq 0$ and $\omega_0(x_2) \neq 0$. When $\ell > 2$, choose additional points $x_3, \ldots, x_{\ell - 2} \in \mathrm{int}(X_0)$; these can be arbitrarily chosen, provided the $x_i$ are all distinct. The gluing Theorem \ref{thm:1} has a straightforward extension to handle gluing for multiple connected sums that we briefly describe now. 

Fix scaling parameters $\lambda_1, \ldots, \lambda_\ell > 0$, and set 
$$\lambda \defeq \max(\lambda_1, \ldots,  \lambda_\ell).$$ 
Here we will consider $\ell$ copies of $S^4$; denote these copies by $S_1^4, \ldots, S_\ell^4$, and fix points $x_i' \in S^4_i$. Then as we did in Section \ref{sec:Setupforgluing}, glue $x_i \in X_0$ to $x_i' \in S^4_i$ over balls with radii controlled by $\lambda_i$.  

At the bundle level, let $E_{\mathrm{triv}} \rightarrow X$ be the trivial $\SU(2)$-bundle, and let $E_1 \rightarrow S^4$ be the $\SU(2)$-bundle with $\kappa(E_1) = c_2(E_1)\left[S^4 \right] = 1$. More concretely, we can take $E_1$ to be the frame bundle of $ \Lambda^+ T^* S^4$ (then $\Lambda^+ T^* S^4$ is the adjoint bundle of $E_1$). In Section \ref{sec:GenGluing}, gluing the bundles depended on the choice of fiber isomorphism $\rho$ identifying the fibers of the principal bundles at the gluing points. In the present setting with $\ell$ gluing points, this corresponds to the choice of a fiber isomorphism 
$$\rho_i \in \mathrm{Gl}_i = \Hom_{\SU(2)}((E_{\mathrm{triv}})_{x_i}, (E_1)_{x_i'})$$
for each $1 \leq  i \leq \ell$.

Let $A_{\mathrm{triv}}$ be the trivial connection on $E_{\mathrm{triv}}$. Let $A'$ be the preglued connection on $E$ obtained from $A_{\mathrm{triv}}$ and the standard ``one-instanton'' $A_{\mathrm{st}}$ on each of the bundles $E_1 \rightarrow S^4_i$ for $1 \leq i \leq \ell$. Then $\kappa(E, A' \vert_{\End  X}) = \ell$; note that $A'$ depends on the $\lambda_i$ and $\rho_i$. The proof of Theorem \ref{thm:1} extends to produce $C, L, \lambda_0, J,\pi$, and $\xi  \in L^p_\delta(\Omega^+(X))$, satisfying the conditions of Theorem \ref{thm:1} (a)---(c) and Corollary \ref{cor:smallp0} whenever $0 < \lambda < \lambda_0$; though we suppress this in the notation, these quantities depend on the connections $A_{\mathrm{triv}}$ and $A_{\mathrm{st}}$, as well as the isomorphisms $\rho_i$. In particular, the connection $A \defeq  J (\xi)$ is irreducible and satisfies 
$$s(A) = - \sigma \pi \xi \indent \indent \textrm{and} \indent \indent  \vert \kappa(E, A \vert_{\End X}) -  \ell \vert < \eps/2.$$

It suffices to show that the $\lambda_i$ and $\rho_i$ can be chosen so that $\sigma \pi \xi  = 0$, since this implies that $A$ is $\mASD$. For this, let $X'$ be the complement in $X$ of the $L \lambda_0^{1/2}$-balls around the $x_i$; we assume $\lambda_0$ is small enough so these balls do not intersect and are contained in $X_0$. Note that the bundles $E$ and $E_{\mathrm{triv}}$ are canonically identified over $X'$, and so over $X'$ we can compare 2-forms on $E_{\mathrm{triv}}$ with 2-forms on $E$. The self-dual 2-form $\sigma \pi \xi$ vanishes if and only if the integral
\eqncount\begin{equation}\label{eq:x'int}
\intd{X'} \langle \omega \wedge  \sigma \pi \xi \rangle  = 0
\end{equation}
vanishes for all $\omega \in H^+(X, \mathrm{ad}(A_{\mathrm{triv}})) = H^+(X, \bb{R}) \otimes \mathfrak{g}$. 

\medskip

\noindent \emph{Claim}: 
 \eqncount\begin{equation}\label{eq:wts}
\intd{X'} \langle \omega \wedge  \sigma \pi \xi \rangle  = q_\omega^\ell(\big\{ (\lambda_i ,\rho_i)\big\}_i)  + O(\lambda^3)
\end{equation}
\emph{where} 
$$q_\omega^\ell(\big\{ (\lambda_i ,\rho_i)\big\}_i) \defeq \sumdd{i = 1}{\ell} \lambda_i^2 \mathrm{tr}(\rho_i \omega(x_i)).$$ 
\medskip

Here $\mathrm{tr}(\rho_i \omega(x_i)) \in \bb{R}$ is the pairing of $\rho_i$ and $\omega(x_i)$ as described in \cite[Equation (5.3)]{donaldsonb}. We will prove this claim below, but first we will show how it is used to finish the proof of the theorem. From the discussion leading up to the claim, we are interested in the simultaneous system of equations
\eqncount\begin{equation}\label{eq:done?}
q_\omega^\ell(\big\{ (\lambda_i ,\rho_i)\big\}_i)  = 0 \hspace{1cm} \forall \omega \in H^+(X, \mathrm{ad}(A_{\mathrm{triv}})).
\end{equation}
When $\ell  = 2$, the argument of \cite[Section V(ii)]{donaldsonb} carries over verbatim to show that the solutions set of the system (\ref{eq:done?}) is non-empty and cut out transversely, whenever $\max(\lambda_1, \lambda_2)$ is sufficiently small. This uses the assumption $\omega_0(x_1), \omega_0(x_2) \neq 0$. Note that Donaldson's argument uses $b^+(X) = 1$. (Alternatively, the reader could follow the original argument of Taubes \cite[Prop.~7.1]{taubes3}, but our notation is more inline with that of \cite{donaldsonb}.) When $\ell > 2$, it was pointed out by Taubes \cite[Prop.~6.2]{taubes3} that by taking $\max(\lambda_3, \ldots, \lambda_\ell)$ sufficiently small relative to $\max(\lambda_1, \lambda_2)$, any transverse zero of $q_\omega^2$ implies the existence of a transverse zero of $q_\omega^\ell$. In summary, for each $\ell \geq 2$, there are $ \lambda_0' > 0$ and $\mu \in (0, 1)$ so that the system (\ref{eq:done?}) has a nonempty, transverse solution set, for all $\lambda_1, \ldots, \lambda_\ell > 0$ with
$$\max(\lambda_1, \lambda_2) < \lambda_0' \indent \indent \textrm{and} \indent \indent \max(\lambda_3, \ldots, \lambda_\ell) < \mu  \max(\lambda_1, \lambda_2).$$
For any such $\lambda_1, \ldots, \lambda_\ell$, since $q_\omega^\ell$ is $O(\lambda^2)$, it then follows from the transversality of $q_\omega^\ell = 0$ and the identity (\ref{eq:wts}) that the solution sets to (\ref{eq:x'int}) and (\ref{eq:done?}) are diffeomorphic, provided $\lambda$ is sufficiently small. In particular, there is a simultaneous zero $\{ (\lambda_i , \rho_i)\}_{i = 1}^\ell $ of the solution set to (\ref{eq:x'int}). For this collection of gluing data, the glued connection $A$ is therefore $\mASD$, as desired.

\medskip

It therefore suffices to verify the above Claim. We will first unpack the notation. Note that the preglued connection $A'$ restricts on $X'$ to equal the trivial flat connection. Let $A'(\lambda_0)$ be the preglued connection defined using $\lambda_0$ at every gluing site, and the same $\rho_i$ as was used to define $A'$ (so the only difference between $A'$ and  $A'(\lambda_0)$ is that the former uses $\lambda_i$ at the gluing site $x_i$, while the latter uses $\lambda_0$ at all gluing sites). Define the map $i$ (and hence $\iota$) using $A'(\lambda_0)$ as a reference connection. Write $\Gamma = \Gamma_{\mathrm{triv}}$ for the trivial connection on $N$. Then we can write $A' = \iota(\Gamma_{\mathrm{triv}}, V') = i(\Gamma_{\mathrm{triv}}) + V' = A'(\lambda_0) + V'$ for some 1-form $V'$. It follows that $V' \vert_{X'} = 0$, and we note also that $A'(\lambda_0) \vert_{X'} = A_{\mathrm{triv}}$. 

Next, recall the map $P: L^p_\delta(\Omega^+) \rightarrow T_{\Gamma} \cH\times L^p_{1, \delta}(\Omega^1)$ from Claim 1 in the proof of Theorem \ref{thm:1}, and write $P \xi = (\eta, V)$. The definition of the map $J = J_{A_{\mathrm{triv}}, A_{\mathrm{st}}}$ gives
$$J(\xi) = i(\exp_{\Gamma}(\eta)) + V' + V$$
where $\exp_\Gamma : T_\Gamma \cH \rightarrow \cH$ is the exponential. The observations of the previous paragraph combine with the formula (\ref{eq:formfors}) to give that the restriction of $s(A)$ takes the following form:
$$s(A) \vert_{X'} = (1- \beta')F^+_{i(\exp_\Gamma(\eta))} + d^+_{i(\exp_\Gamma(\eta))} V + \frac{1}{2} \left[ V \wedge V \right]^+.$$

Returning to the integral (\ref{eq:x'int}), we can use the defining property of $\xi$ and the above identity for $s(A)$ to get
\eqncount\begin{equation}\label{eq:expofx'int}
\begin{array}{rcl}
\intd{X'}  \langle  \omega \wedge \sigma \pi \xi \rangle  & = & - \intd{X'}  \langle \omega \wedge  s(A) \rangle \\
& = & - \intd{X'} (1-\beta') \langle \omega \wedge   F^+_{i(\exp_\Gamma(\eta))} \rangle   - \intd{X'} \langle \omega \wedge  d^+_{i(\exp_\Gamma(\eta))} V \rangle  \\
&& - \frac{1}{2} \intd{X'}  \langle \omega \wedge   \left[ V \wedge V \right]^+ \rangle.
\end{array}
\end{equation}
Focus on the last term on the right. Recall from Lemma \ref{lem:deltadecay} that $\omega$ decays in $\C^0$ like $e^{- {\mu_\Gamma^+} t}$. In particular, $\omega$ is bounded and so
$$\intd{X'}  \left| \langle \omega \wedge   \left[ V \wedge V \right]^+ \rangle \right| \leq C_1 \Vert V \Vert_{L^2(X)}^2 \leq C_1 \Vert V \Vert_{L^2_\delta(X)}^2 \leq C_1 \Vert P \xi \Vert_{L^2_\delta(X)}^2 $$
for some constant $C_1$. By Corollaries \ref{cor:smallp0} and \ref{cor:smallp1}, this term decays like $\lambda^3$:
$$- \frac{1}{2} \intd{X'}  \langle \omega \wedge   \left[ V \wedge V \right]^+ \rangle = O(\lambda^3).$$
We can control the nonlinear parts of the other two terms in (\ref{eq:expofx'int}) similarly. Indeed, use $i(\exp_\Gamma(\eta)) = A_{\mathrm{triv}} + (D i)_\Gamma \eta + O( \eta^2)$ and the expansion formulas for the curvature and covariant derivative, to get
$$\begin{array}{rcl}
\intd{X'}   \langle  \omega \wedge  \sigma \pi \xi \rangle   & =  & - \intd{X'} (1-\beta')  \langle \omega \wedge  d^+ (D i)_\Gamma \eta \rangle  - \intd{X'}  \langle \omega \wedge  d^+ V \rangle   + O( \lambda^3)
\end{array}$$
where $d = d_{A_{\mathrm{triv}}}$. Focus on the first term on the right (there is no analogue of this term in the standard $\ASD$ framework). It follows from the definitions of $\beta'$ and $i$ that $(1-\beta') (d^+ (D i)_\Gamma \eta)$ is supported on $\left[ T- 1/2, T+1/2\right] \times N$. Using the formula (\ref{eq:daformulafordi}), we have
$$- \intd{X'}(1-\beta')   \langle \omega \wedge  d^+ (D i)_\Gamma \eta \rangle   =  - \intd{X'} (1-\beta') (\partial_t \beta'')  \langle \omega \wedge (dt \wedge \eta)^+ \rangle  = 0 $$
where the last equality uses the facts that (i) $\eta \in H^1_{\Gamma_{\mathrm{triv}}}$ is in the harmonic space on $N$, and (ii) elements of $H^+(X, \mathrm{ad}(A_{\mathrm{triv}}))$ restrict on each slice to be orthogonal to $H^1_{\Gamma_{\mathrm{triv}}}$. In summary, this gives
$$\begin{array}{rcl}
\intd{X'}   \langle  \omega \wedge  \sigma \pi \xi \rangle   & = & - \intd{X'} \langle \omega \wedge  d^+ V \rangle   + O(\lambda^3)\\
& = &  \intd{\partial X'} \langle \omega \wedge  V \rangle   + O(\lambda^3).
\end{array}$$
What remains is to estimate the integral $\int_{\partial X'} \langle \omega \wedge   V \rangle$. This is an integral taking place at the boundary of the disks centered at the gluing sites $x_1, \ldots, x_\ell$. In particular, this integral is identical to the analogous term that arises when gluing in the standard $\ASD$ setting (e.g., see the top of \cite[p.~328]{donaldsonb}). Then the argument of \cite[pp.~328---331]{donaldsonb} carries over verbatim to give
$$\intd{\partial X'} \langle \omega \wedge   V \rangle   = q^\ell_\omega(\big\{ (\lambda_i , \rho_i)\big\}) + O(\lambda^3).$$
This proves (\ref{eq:wts}).

\end{proof}

\subsection{An $\ASD$ existence result and a proof of Theorem \ref{thm:A}}\label{sec:ProofOfThmA}

Recall from Section \ref{sec:AuxiliaryChoices} the definition of the vector field $\Xi_\Gamma$ on the center manifold. We will be interested in the case where the flat connection $\Gamma$ satisfies the following hypothesis:

\begin{customhyp}{H}\label{hypothesis1}
There is a neighborhood $U \subseteq \mathcal{H}_\Gamma$ of $\Gamma$ so that every $a \in U$ flows under $\Xi_\Gamma$ to a flat connection in $U$.   
\end{customhyp}

\begin{example}\label{ex:ex1}\mbox{}

(a) Recall from Section \ref{sec:AuxiliaryChoices} that $U_\Gamma$ is a neighborhood of $\Gamma$ in the Coulomb slice through $\Gamma$. Suppose the set of flat connections in $U_\Gamma$ is smooth in a neighborhood $U' \subseteq U_\Gamma$ of $\Gamma $ and has the same dimension as $\mathcal{H}_\Gamma$. Then $U \defeq U' \cap \mathcal{H}_\Gamma$ satisfies Hypothesis \ref{hypothesis1}. 

\medskip

(b) The assumption of (a) trivially holds when $\Gamma$ is non-degenerate, since $\mathcal{H}_\Gamma$ consists of a single point. More generally, the assumption of (a) also holds when the Chern--Simons function is Morse--Bott in a neighborhood of $\Gamma$ (though there is no assumption in part (a) about the nondegeneracy of the Hessian in the normal directions). 

\medskip

(c) Suppose $N = T^3$, and let $\Gamma$ be a flat connection on the trivial $\SU(2)$-bundle. If $\Gamma$ is not gauge equivalent to the trivial connection, then $\Gamma$ satisfies the assumption in (a), and hence Hypothesis \ref{hypothesis1}; see \cite[Lemma 14.2(i)]{gompfmrowka}. However, the trivial connection on $T^3$ does \emph{not} satisfy Hypothesis \ref{hypothesis1}.
\end{example}

The main usefulness of Hypothesis \ref{hypothesis1} for us is through the following theorem.

\begin{theorem}\label{prop:exists3}
Consider the situation of Theorem \ref{thm:1}, and assume $A_1$ and $A_2$ are regular. In addition, assume that $\Gamma_1$ and $\Gamma_2$ each satisfy Hypothesis \ref{hypothesis1}. Let $\lambda_0 > 0$ be the constant from Theorem \ref{thm:1}. Then there is some $0 < \lambda_0' \leq \lambda_0$ so that for all $ \lambda  \in(0, \lambda_0')$ the $\mASD$ connection $\mathcal{J}(A_1, A_2)$ guaranteed by Theorem \ref{thm:1} (and hence by Theorems \ref{thm:B} and \ref{thm:C}) is in fact $\ASD$. 
\end{theorem}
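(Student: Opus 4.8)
\textbf{Proof proposal for Theorem \ref{prop:exists3}.}

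The plan is to show that, for $\lambda$ small, the glued connection $\mathcal{J}(A_1,A_2)$ is $\ASD$ on the entire cylindrical end $\End\:X$, which together with the fact that it is $\mASD$ (hence $\ASD$ on the compact part, where $\beta'\equiv 0$ in the formula for $s$) forces it to be genuinely $\ASD$ everywhere. Recall from Section \ref{sec:Connections} that $\mASD$ differs from $\ASD$ only through the correction term $\beta' F^+_{i(p(A))}$, which is supported where $\beta'\neq 0$, i.e.\ on $[T,\infty)\times N$; and on this region $\alpha(h)=h_T(t)+\Theta(h_T(t))\,dt$ is $\ASD$ near $\{t\}\times N$ precisely when $h_T(t)\in\mathrm{int}(\cH_{in})$. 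So the key point is to control the flow line $t\mapsto h_T(t)$ associated to $h=p_T(\mathcal{J}(A_1,A_2))$, and to show it stays in a region where Hypothesis \ref{hypothesis1} applies and where the flow has already limited onto a flat connection.

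First I would use the estimate (\ref{eq:close}) (equivalently (\ref{eq:closeinA})) from Theorem \ref{thm:1}: since $\|\iota^{-1}(\mathcal{J}(A_1,A_2))-\iota^{-1}(A_k)\|\le C'b^{4/p}$ with $b=4L\lambda^{1/2}$, the center-manifold components satisfy $\|p_{T}(\mathcal{J}(A_1,A_2))-p_T(A_k)\|_{L^2_2(N_k)}\le C'b^{4/p}$ for $k=1,2$. Now $A_k$ is a (smooth, finite-energy) $\mASD$ connection on $X_k$; I claim its flow line $t\mapsto (p_T(A_k))_T(t)$ converges as $t\to\infty$ to a flat connection $\Gamma_k'\in\cH_{\Gamma_k}$ near $\Gamma_k$ — this is exactly the statement that zeros of the trimmed vector field $\Xi^{tr}$ near $\Gamma_k$ are flat (fourth bullet in the center manifold definition) combined with the completeness of the flow; and by Hypothesis \ref{hypothesis1}, for $h$ in a fixed neighborhood $U$ of $\Gamma_k$ inside $\cH_{\Gamma_k}$, the forward flow stays in $U$ and limits onto a flat connection in $U$. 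The second step is then a continuity/openness argument: the set of $h\in\cH_{\Gamma_k}$ whose forward $\Xi^{tr}$-flow line is eventually contained in $\mathrm{int}(\cH_{in})$ and limits onto a flat connection is open (by continuous dependence of the complete flow on initial conditions, plus the fact that near a flat limit the flow is trapped by Hypothesis \ref{hypothesis1}), and it contains $p_T(A_k)$; hence it contains all $h$ with $\|h-p_T(A_k)\|_{L^2_2(N_k)}<\rho_k$ for some $\rho_k>0$ depending only on $A_k,\Gamma_k$. Choosing $\lambda_0'\le\lambda_0$ small enough that $C'(4L\lambda_0'^{1/2})^{4/p}<\min(\rho_1,\rho_2)$ guarantees that $h=p_T(\mathcal{J}(A_1,A_2))$ lies in this open set for each end component.

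The final step is to translate ``the flow line of $h$ eventually lies in $\mathrm{int}(\cH_{in})$ and limits onto a flat connection'' into ``$\mathcal{J}(A_1,A_2)$ is $\ASD$ on the end.'' Once $h_T(t)\in\mathrm{int}(\cH_{in})$ for all $t$ beyond some $t_0$, the remark preceding Lemma \ref{lem:regularityofalpha} gives that $\alpha(h)=i(p_T(\mathcal J))$ is $\ASD$ in a neighborhood of $\{t\}\times N$ for every $t>t_0$, hence $F^+_{i(p(\mathcal J))}=0$ on $(t_0,\infty)\times N$, so $s(\mathcal J)=F^+_{\mathcal J}-\beta' F^+_{i(p(\mathcal J))}=F^+_{\mathcal J}$ there; combined with $s(\mathcal J)=0$ this yields $F^+_{\mathcal J}=0$ on $(t_0,\infty)\times N$. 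On the remaining region $t\le t_0$ (which is compact once we also include the compact part $X_0$), we use that $\mathcal J$ is $\mASD$: $F^+_{\mathcal J}=\beta' F^+_{i(p(\mathcal J))}$. To kill this, I would observe that by shrinking $\lambda_0'$ further we may also arrange — using the small-support condition on $\beta$ and the fact that, by Hypothesis \ref{hypothesis1}, the flow line never leaves the (small) neighborhood $U\subset\mathrm{int}(\cH_{in})$ of $\Gamma_k$ once it is close to a flat limit — that in fact $h_T(t)\in\mathrm{int}(\cH_{in})$ for \emph{all} $t\ge T$, not merely $t>t_0$; concretely, $p_T(A_k)$ itself flows within $\mathrm{int}(\cH_{in})$ (since $A_k$ is $\mASD$ and, being finite-energy $\ASD$ or at least $\mASD$ with flat limit, its associated flow stays trimmed-region-interior), so the $C'b^{4/p}$-perturbation does too. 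Then $F^+_{i(p(\mathcal J))}=0$ identically on $[T,\infty)\times N$, whence $s(\mathcal J)=F^+_{\mathcal J}$ on all of $X$, and $s(\mathcal J)=0$ gives $F^+_{\mathcal J}\equiv 0$, i.e.\ $\mathcal J(A_1,A_2)$ is $\ASD$.

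The main obstacle I anticipate is the openness argument in the second step: I need continuous dependence of the \emph{complete} forward flow of $\Xi^{tr}$ on the initial condition, uniformly enough to conclude that nearby initial data both (i) have flow lines eventually inside $\mathrm{int}(\cH_{in})$ and (ii) actually limit onto flat connections rather than merely hovering near the flat locus. Hypothesis \ref{hypothesis1} is designed to provide exactly the trapping needed for (ii) — it says a whole neighborhood $U$ of $\Gamma$ flows to flat connections in $U$ — so the real work is to verify that the perturbed initial condition $p_T(\mathcal J)$ lands in the basin $U$ of $\Gamma_k$ after finite time, which follows from combining the $L^2_2(N_k)$-closeness to $p_T(A_k)$ with the fact that $p_T(A_k)$'s flow line enters $U$; this is where I would invoke continuity of the flow over the compact time interval $[T,t_1]$ needed for $A_k$'s orbit to reach the interior of $U$, and then Hypothesis \ref{hypothesis1} takes over for $t\ge t_1$. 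One should also double-check that $A_k$ being $\mASD$ (rather than a priori $\ASD$) still forces its center-manifold flow line to stay in $\mathrm{int}(\cH_{in})$ — but this is immediate from $s(A_k)=0$: on any slice where $\beta'\neq 0$ the $\mASD$ equation reads $F^+_{A_k}=\beta' F^+_{i(p(A_k))}$, and running the same openness argument shows the glued connection inherits this interior-trapping property.
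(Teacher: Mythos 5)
Your approach is essentially the same as the paper's: use the proximity estimate (\ref{eq:close}) to control the center-manifold component $p_T(\mathcal{J}(A_1,A_2))$, then invoke Hypothesis~\ref{hypothesis1} to conclude the associated flow line lies in $\mathrm{int}(\cH_{in})$, which forces $F^+_{i(p_T(\mathcal{J}))}=0$ on the end and hence $s(\mathcal{J})=F^+_{\mathcal{J}}$, so $\mASD$ implies $\ASD$. You are more explicit than the paper about the anchor point — (\ref{eq:close}) really gives $p_T(\mathcal{J}(A_1,A_2))\to p_T(A_k)$, not $\to\Gamma$ — and you add an openness/trapping argument to fill the gap; this is a reasonable expansion of a very terse proof.

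Two of the local justifications, however, are off. First, ``zeros of $\Xi^{tr}$ near $\Gamma_k$ are flat, combined with completeness of the flow'' does not give convergence of the flow line: completeness only means the flow is defined for all $t$, not that it limits onto a stationary point. Convergence here comes entirely from Hypothesis~\ref{hypothesis1}, and it requires $p_T(A_k)$ to lie in the basin $U$, a point you (and the paper) leave implicit; it rests on the arrangement that $\mathrm{supp}(\beta)$, hence $\cH_{out}$, is contained in $U$. Second, the final remark that ``$A_k$ being $\mASD$ forces its center-manifold flow line to stay in $\mathrm{int}(\cH_{in})$ — this is immediate from $s(A_k)=0$'' is not correct: the $\mASD$ equation $F^+_{A_k}=\beta' F^+_{i(p(A_k))}$ is precisely the equation that \emph{allows} $F^+_{i(p(A_k))}\neq 0$, i.e.\ allows the flow to leave $\mathrm{int}(\cH_{in})$. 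The actual input at that step is again Hypothesis~\ref{hypothesis1} (trapping of the flow from $p_T(A_k)$ inside a small neighborhood of $\Gamma_k$ contained in $\mathrm{int}(\cH_{in})$), not the $\mASD$ equation alone. With those two corrections the argument lines up with the paper's.
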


\begin{proof}
Fix $0 < \lambda < \lambda_0$ and let $A_\lambda \defeq \mathcal{J}(A_1, A_2)$ be the $\mASD$ connection from Theorem \ref{thm:1} associated to this value of $\lambda$. Recall that $\Gamma = \Gamma_1 \sqcup \Gamma_2$, and so $\Gamma$ satisfies Hypothesis \ref{hypothesis1} since the $\Gamma_k$ do. It follows from (\ref{eq:close}) that $p_T(A_\lambda) \in \cH_{in}$ converges to $\Gamma$ as $\lambda$ approaches 0. In particular, by taking $\lambda$ sufficiently small, Hypothesis \ref{hypothesis1} implies that the $\Xi_\Gamma$-flow line beginning at $p_T(A_\lambda)$ lies in $\cH_{in}$ for all positive time. This implies $i(p_T(A_\lambda))$ is $\ASD$ (see the paragraph just before the statement of Lemma \ref{lem:derofalpha}), and so
$$F^+_{A_\lambda} = F^+_{A_\lambda} - \beta' F^+_{i(p_T(A_\lambda))} = s(A_\lambda) = 0.$$
\end{proof}

Now we can prove our application from the introduction.

\begin{proof}[Proof of Theorem \ref{thm:A}]
Take $\Gamma$ to be the trivial flat connection on the trivial $\SU(2)$-bundle. We will show that the two conditions on $N$ stated in Theorem \ref{thm:A} each imply that $\Gamma$ satisfies Hypothesis \ref{hypothesis1}; it will then be immediate that the $\mASD$ connection guaranteed by Theorem \ref{thm:C} is in fact $\ASD$, as desired.

First assume $N$ is a circle bundle over a surface with positive Euler class. Then \cite[Corollary 13.2.2]{\mmr} implies that $\Gamma$ satisfies the condition of Example \ref{ex:ex1} (a), and thus Hypothesis \ref{hypothesis1}.

Now assume that $b_1(N)  \leq 1$. If $b_1(N) = 0$, then $H^1_\Gamma = H^1(N) \otimes \frak{g} = 0$ and so $\Gamma$ is nondegenerate. Thus, $\Gamma$ again satisfies Hypothesis \ref{hypothesis1}, but this time by Example \ref{ex:ex1} (b).

Finally, suppose $b_1(N) = 1$. We will show here that $\Gamma$ satisfies the condition of Example \ref{ex:ex1} (a). Since $b_1(N) = 1$, there is a loop $\gamma: S^1 \rightarrow N$ and a harmonic 1-form $\eta \in \Omega^1(N, \bb{R})$ so that $\int_\gamma \eta = 1$. For each $\xi \in \frak{g}$, let
$$a_\xi \defeq \Gamma + \xi \otimes \eta .$$
We claim that $a_\xi$ lies in the center manifold $\mathcal{H}_\Gamma$ for all sufficiently small $\xi$. To see this, first note that $a_\xi$ is flat, since
$$F_{a_\xi} = F_\Gamma + \xi \otimes  (d \eta) + \frac{1}{2} [\xi, \xi] \otimes \eta \wedge \eta = 0.$$
This connection also lies in the Coulomb gauge slice for $\Gamma$, since
$$d^*_\Gamma( a_\xi - \Gamma) = \xi \otimes d^* \eta = 0.$$
Recall the map $\Theta$ and the vector field $\nabla f_\Gamma$ from Section \ref{sec:TheCenterManifold}. Since $F_{a_\xi} = 0$, we have
$$\Theta(a_\xi) = 0.$$
Thus $\nabla_{a_\xi} f_\Gamma = 0$. One of the defining features of $\mathcal{H}_\Gamma$ is that it contains all zeros of $\nabla f_\Gamma$ that are sufficiently close to $\Gamma$, so this proves the claim.

It thus follows that there is some $\epsilon > 0$ so that the map
$$B_\epsilon(0) \subseteq \frak{g} \longrightarrow \mathcal{H}_\Gamma \hspace{2cm} \xi \longmapsto a_\xi$$
is well-defined. It is clearly an immersion, so a dimension count implies that it must be a local diffeomorphism; this uses the fact that $b_1(N) = 1$. This establishes the condition of Example \ref{ex:ex1} (a). 
\end{proof}

\section{Partial compactification---the Taubes boundary}\label{sec:PartialCompactification}

Here we give a more global formulation of the result of Theorem \ref{thm:exist1} in the case where $G = \SU(2)$ and $\ell = 1$. Fix a closed set $X'_0$ contained in the interior of the compact part $X_0$. Let $A_{\mathrm{st}}$ be the standard one-instanton on the $\SU(2)$-bundle $E_1 \rightarrow S^4$ with $c_2(E_1)\left[S^4 \right] = 1$. For $x \in X'_0$, let $X_x$ be the connected sum of $X$ and $S^4$ obtained by gluing $x \in X$ to the north pole in $S^4$. Similarly, glue the trivial $\SU(2)$-bundle on $X$ to $E_1 \rightarrow S^4$ and let $E_x \rightarrow X_x$ be the resulting bundle. Let $A'_x = A'(A_{\mathrm{triv}},A_{\mathrm{st}}) $ be the preglued connection on $E_x$, where $A_{\mathrm{triv}}$ is the trivial connection on $X$. Note that in the present situation, all auxiliary gluing data can be chosen to be independent of $x$. For example, the fiber isomorphism $\rho$ of Section \ref{sec:Setupforgluing} can be taken to be independent of $x$ since we are starting with the trivial bundle on $X$.

\subsection{The Taubes Boundary}
Fix $\epsilon  > 0$, and let $\mathcal{T}(\eps)$ be thickening data with this choice of $\eps$, as in Section \ref{sec:b^+=0}. By Theorem \ref{thm:1}, there are $\epsilon_0, \lambda_0>0$ so the following holds: For all $0 < \epsilon < \epsilon_0$ and $0 < \lambda \leq \lambda_0$, there is an irreducible, regular $\mASD$ connection 
$$A(x, \lambda) \defeq \mathcal{J}(A_{\mathrm{triv}},A_{\mathrm{st}}) \in  \A^{1,p}(\mathcal{T}(\eps))$$ 
with the property that $A(x, \lambda)  - A_{\mathrm{triv}} \vert_{X \backslash \mathrm{nbhd}(x)}$ goes to zero in $\lambda$ in the sense of (\ref{eq:close}). This $\epsilon_0$ depends only on the trivial flat connection on the 3-manifold $N$; hence $\epsilon_0$ is independent of $x$. Since $X'_0$ is compact, we can assume this $\lambda_0$ is independent of $x$ as well. 

We want to allow $x$ to vary, and for this, we form the space
$$\E' \defeq \left\{(x, \lambda, A) \in X'_0 \times (0, \lambda_0] \times \A^{1,p}(\mathcal{T}(\epsilon)) \; \Big| \; A \in \rM_{reg}(\mathcal{T}(\eps),A(x, \lambda)) \right\}.$$ 
Let $\Pi': \E' \rightarrow X'_0 \times (0, \lambda_0]$ be the projection to the first two factors. Then the assignment $\Psi'(x, \lambda) \defeq (x, \lambda, A(x, \lambda) )$ defines a section of $\Pi'$. Just as in Theorem \ref{thm:2}, there is an open neighborhood $\mathcal{U}' \subseteq \E'$ of the image of $\Psi'$ so that the restriction $\Pi' \vert_{\mathcal{U}'}$ is a locally-trivial $\C^m$-fiber bundle over $X'_0 \times (0, \lambda_0]$. By construction, the fiber over $(x, \lambda)$ is an open subset of $\rM_{reg}(\mathcal{T}(\eps),A(x, \lambda))$ containing $A(x, \lambda)$.

\begin{remark}
Here we describe a sense in which Theorem \ref{thm:exist1} can be viewed as a local version of this fiber bundle construction. Fix a small neighborhood $U_x \subseteq X_0$ around $x$. The gluing procedure of Section \ref{sec:Setupforgluing} identifies this with a small neighborhood of the north pole in $S^4$. The standard description \cite[Ch. 6]{freed-uhlenbeck1} of the $\ASD$ moduli space $\M_1(S^4, \SU(2))$ gives an embedding $S^4 \times (0, \lambda_0] \rightarrow \M_1(S^4, \SU(2))$ with the $S^4$-component specifying the center of mass and $(0, \lambda_0]$ parametrizing the scale of the curvature; here the {energy-density of the} curvature is concentrating, as $\lambda$ approaches 0, to a Dirac delta measure supported at the center of mass. Combining these, we have a diffeomorphism 
$$f: U_x \times (0, \lambda_0] \longrightarrow \mathcal{V}  \subseteq \M_1(S^4, \SU(2))$$
onto an open set $\mathcal{V}$. It follows from this construction that there is a local trivialization of the fiber bundle $\Pi' \vert_{\mathcal{U}'}$ relative to which $\Psi'$ takes the form $(y, \lambda) \mapsto (y, \lambda, \Phi(f(y, \lambda)))$ where $\Phi$ is the map of Theorem \ref{thm:exist1}. In fact, by possibly shrinking $U_x$ further, this local trivialization can be taken to be over the full cylinder $U_x \times (0, \lambda_0]$; this due to the fact that the constructions in the proof of Theorem \ref{thm:2} can be taken to be uniform in $\lambda$. This construction is exploiting a coupling between the parameter $\lambda$ and the ``scale'' parameter for the concentration of instantons on $E_1 \rightarrow S^4$; see \cite[p.~323]{donaldson-kronheimer1} for a related discussion.
\end{remark}

Now we consider the behavior of this section $\Psi'$ near $\lambda = 0$. For this, suppose $(x_n, \lambda_n) \in X'_0 \times (0, \lambda_0]$ is a sequence with $\lambda_n \rightarrow 0$; we will call $\Psi'(x_n, \lambda_n)$ a \emph{bubbling sequence in $X_0'$}. By passing to a subsequence, we may assume the $x_n$ converge to some $x_\infty \in X'_0$. It follows from a straight-forward Uhlenbeck-type compactness argument and (\ref{eq:close}) that, after passing to a subsequence, the associated connections $A(x_n, \lambda_n)$ converge \emph{weakly} to the ideal connection $(A_{\mathrm{triv}}, x_0)$ in the sense that the {energy densities $\vert F_{A(x_n, \lambda_n)}\vert^2$} converge in measure to the delta measure supported at $x_0$, and
$$\limd{n \rightarrow \infty} \Big\| \iota^{-1}(A(x_n, \lambda_n)) - \iota^{-1}(A_{\mathrm{triv}}) \Big\|_{L^2_2(N) \times L^p_{1, \delta}(X \backslash B_r(x_0))} = 0$$
for all $r > 0$; see \cite[Section 4.4.1]{donaldson-kronheimer1} for the analogous $\ASD$ case.

Following the lead of \cite[Section 4.4.1]{donaldson-kronheimer1}, the discussion of the previous paragraph can be framed geometrically as follows. Consider the set
$$I(\mathcal{U}') \defeq \mathcal{U}' \cup (X'_0 \times \left\{A_{\mathrm{triv}}\right\})$$ 
which we view as coupling the connections in $\mathcal{U}' \subseteq \E'$ into the same space as the above-mentioned ideal connections. We can extend $\Pi'\vert_{\mathcal{U}'}$ to a map $I(\Pi'): I(\mathcal{U}') \rightarrow X'_0 \times [0, \lambda_0 ]$ by declaring it to send $(x, A_{\mathrm{triv}})$ to $(x, 0)$. Give $I(\mathcal{U}')$ any topology (more below) for which the map $I(\Pi')$ is continuous and so that the notion of weak convergence from the previous paragraph implies convergence in $I(\mathcal{U}')$; we assume also that this topology is first countable. Then the observations of the previous paragraph imply the section $\Psi'$ extends continuously over $X'_0 \times \left\{0 \right\}$ to a section $\overline{\Psi}'$ of $I(\Pi')$. It is due to this that we may view $I(\mathcal{U}')$ as a ``partial compactification'' for $\mASD$ connections: The bubbling sequences in $X_0'$ converge in $I(\mathcal{U}')$.

\subsection{Compactification Issues}
We end this section with several comments about the construction of the partial compactification $I(\mathcal{U}')$, as well as some of its limitations. This partial compactification is constructed only so that bubbling sequences in $X_0'$ converge---our assumptions on the topology on $I(\mathcal{U}')$ do not necessarily imply subsequential convergence of other types of sequences. The simple reason for this is that we do not yet know how such sequences behave, and what additional limiting objects we would need to include in $I(\Pi')$ to ensure their subsequential convergence. What we are presently lacking is a sufficiently strong version of Uhlenbeck's compactness theorem for $\mASD$ connections. In the end, such a theorem would need to (at least) address the following:

\begin{enumerate}[leftmargin=*]
\item[(a)] \emph{Bubble formation on the end}: To what extent is the $\mASD$ condition preserved under Uhlenbeck limits where the curvature concentrates at a point in $\End  X$? More fundamentally, is the connections space $\A^{1,p}(\mathcal{T}_\Gamma)$ suitably closed under such limits? This is related to (c) below. 

\smallskip
\noindent
From the gluing perspective, we avoided these questions altogether by only gluing at points in the compact part where $\mASD$ connections are $\ASD$; that is, $I(\mathcal{U}')$ only corresponds to the points in the ``Taubes boundary'' that corresponds to bubbles in $X'_0 \subseteq X \backslash \End  X$ {(and relative to a \emph{fixed} gluing parameter $\rho$; see Remark \ref{rem:G2point} (b))}. A more thorough investigation would require not only an understanding of the $\mASD$ condition under Uhlenbeck limits, but also an understanding of how to glue at points on $\End  X$.

\item[(b)] \emph{Energy escaping down the end}: One example of this is bubbling on the end, as discussed in (a). Another example is where a non-trivial amount of energy escapes down the end. This is familiar in the $\ASD$ setting, where compactification can be achieved by including spaces of translationally-invariant $\ASD$ connections on $\bb{R} \times N$ (spaces of ``Floer trajectories''); see \cite{floer1,donaldson10}. In the $\mASD$ setting, one would likely need to include spaces of $\mASD$ connections on $\bb{R} \times N$ to account for energy escaping. The details of this appear to be subtle, since the energy values of such connections are not governed by topological quantities, as is the case in the $\ASD$ setting. (In the discussion above, where we considered sequences in the image of $\Psi'$, we were able to exclude non-trivial energy on the end by appealing to (\ref{eq:close}).) 

\item[(c)] \emph{Failure of the slice-wise gauge fixing condition}: In the definition of the space $\A^{1,p}(\mathcal{T}_\Gamma)$ from Section \ref{sec:TheSpaceOfConnections}, we restricted attention to connections that restrict on each time slice $\left\{t \right\} \times N$, for $t \geq T$, to be gauge equivalent to a connection in the gauge slice $U_\Gamma$. This is an open condition in the space of all $L^p_{1, loc}$ connections, and we do not see a reason why this condition should be retained through limits of $\mASD$ connections. 
\end{enumerate}

It is clear from these observations that $I(\mathcal{U}')$ is by no means the end of the story when it comes to compactification. It is due to this that we have avoided defining a specific topology on $I(\mathcal{U}')$ above, choosing instead to axiomatize a minimal set of desirable properties. 

\begin{remark}\label{rem:endglue} A challenging problem is to establish a general ``gluing theorem {on the ends}" for \mASD-connections $(X_1, A_1)$ and $(X_2, A_2)$ with ``matching boundary conditions" on two given $4$-manifolds with cylindrical ends. Such a theorem was 
obtained in \cite{morgan-mrowka94}, for \ASD-connections assuming that their flat limits in the common $3$-manifold $N$ end where irreducible smooth points in the representation variety of $N$. In this case, the gluing was unobstructed. More generally, in order to glue two \mASD-connections, 
the matching conditions should at least include 
(i) an identification $\End X_1 \cong \End X_2 \cong N \times [0, \infty)$, (ii) the same flat reference connection $\Gamma$ on $N$ and center manifold $\cH_\Gamma$, and (iii) the same flow lines $\hat\omega_h$ in $\cH_\Gamma$, where $h = p_T(A_1) = p_T(A_2)$. We would expect the glued-up connections to provide an {embedded submanifold} of connections on the closed $4$-manifold $X = X_1 \cup X_2$ {obtained by} identifying along their cylindrical ends. {In this general $\mASD$ case, the glued-up connection would presumably satisfy some version of the $\mASD$ equation on $X$ that equals the $\ASD$ equation on the complement of the neck. How do the $\ASD$ connections on $X$ compare to these ``$\mASD$ connections'' on $X$? For example, do these ``$\mASD$ connections'' on $X$ form some sort of local thickening of the $\ASD$ moduli space, as is the case for cylindrical end manifolds? Since $X$ is compact, it seems likely that the $\ASD$ operator differs from the ``$\mASD$ operator'' by a compact operator. Can this operator be scaled in some way to show that $\ASD$ and $\mASD$ spaces on $X$ are, in some sense, cobordant?}
\end{remark}

\providecommand{\bysame}{\leavevmode\hbox to3em{\hrulefill}\thinspace}
\providecommand{\MR}{\relax\ifhmode\unskip\space\fi MR }
\providecommand{\MRhref}[2]{%
  \href{http://www.ams.org/mathscinet-getitem?mr=#1}{#2}
}
\providecommand{\href}[2]{#2}

\end{document}